\definecolor{darkgreen}{rgb}{0,0.5,0}
\theoremstyle{plain}
\newtheorem{thm}{Theorem}[section]
\newtheorem{cor}[thm]{Corollary}
\newtheorem{lem}[thm]{Lemma}
\newtheorem{prop}[thm]{Proposition}
\newtheorem{conj}[thm]{Conjecture}
\theoremstyle{definition}
\newtheorem{defn}[thm]{Definition}
\newtheorem{ass}[thm]{Assumption}
\theoremstyle{remark}
\newtheorem{rem}[thm]{Remark}
\def\presuper#1#2%
\newcommand{\V}{{\operatorname{Vol}}}
\newcommand\Vint{\presuper\V\int}
\newcommand{\lV}{{\operatorname{Vol\ }}}
\newcommand\lVint{\presuper\lV\int}
\newcommand{\delbr}{\bar{\partial}}
\newcommand{\frka}{\mathfrak{a}}
\newcommand{\dR}{{\operatorname{dR}}}
\newcommand{\supp}{{\operatorname{supp}}}
\newcommand{\dd}{d}
\newcommand{\an}{{\operatorname{an}}}
\newcommand{\CG}{{\operatorname{CG}}}
\newcommand{\MT}{{\operatorname{MT}}}
\renewcommand{\mod}{{\operatorname{mod}}}
\newcommand{\hdr}{H_{\textup{dR}}}
\newcommand{\ocol}{\Omega_V}
\newcommand{\ocola}{\Omega_{V,1}}
\newcommand{\acol}{\O_V}
\newcommand{\Res}{{\operatorname{Res}}}
\newcommand{\Div}{{\operatorname{Div}}}
\newcommand{\ord}{{\operatorname{ord}}}
\renewcommand{\div}{{\operatorname{div}}}
\newcommand{\Pic}{{\operatorname{Pic}}}
\newcommand{\tr}{{\operatorname{tr}}}
\newcommand{\id}{{\operatorname{id}}}
\newcommand{\im}{{\operatorname{im}}}
\newcommand{\bi}{\mathrm{bi}}
\newcommand{\p}{\mathfrak{p}}
\newcommand{\q}{\mathfrak{q}}
\newcommand*\Z{\mathbb{Z}}
\newcommand*\Q{\mathbb{Q}}
\newcommand*\R{\mathbb{R}}
\newcommand*\C{\mathbb{C}}
\newcommand*\G{\mathbb{G}}
\newcommand*\F{\mathbb{F}}
\renewcommand*\P{\mathbb{P}}
\newcommand*\Qp{\Q_p}
\newcommand*\Cp{\C_p}
\renewcommand{\O}{\mathcal{O}}
\newcommand*\cD{\mathcal{D}}
\newcommand*\cE{\mathcal{E}}
\newcommand*\cC{\mathcal{C}}
\newcommand*\cP{\mathcal{P}}
\newcommand*\cL{\mathcal{L}}
\newcommand*\cM{\mathcal{M}}
\numberwithin{equation}{section}
\definecolor{lightyellow}{RGB}{255, 255, 197}
\title[Coleman--Gross and N\'eron functions in genus $2$]{
Coleman--Gross Heights and $p$-adic N\'eron
Functions on Jacobians of Genus $2$ Curves}
\subjclass[2020]{Primary: 11G50, 14G40. Secondary: 14H42, 14K25, 11S40, 11S80.}
\author[Bianchi]{Francesca Bianchi}
\address{\hspace{-.2in} F. Bianchi}
\email{francesca.bianchi.maths@gmail.com}
\author[Kaya]{Enis Kaya}
\address{\hspace{-.2in} E. Kaya, Department of Mathematics, Bilkent University, 06800 Ankara, Turkey}
\email{enis.kaya@bilkent.edu.tr}
\author[M\"uller]{J. Steffen M\"uller}
\address{\hspace{-.2in} J. S. M\"uller,  Bernoulli Institute,
Rijksuniversiteit Groningen,  Nijenborgh 9,  9747 AG Groningen, The Netherlands}
\email{steffen.muller@rug.nl}
\date{\today}
\begin{document}

\maketitle

\begin{abstract}
We develop a theory of $p$-adic N\'eron functions on abelian varieties,
  depending on various auxiliary choices, and show that the global $p$-adic
  height functions constructed by Mazur and Tate  can be decomposed into a
  sum of $p$-adic N\'eron functions if the same auxiliary  choices are
  made. We also consider a decomposition of the $p$-adic height constructed
  by Coleman and Gross for good reduction, and extended to arbitrary
  reduction by Colmez and Besser, into a sum of certain local height functions for Jacobians of odd degree genus~$2$ curves. We show that this local height function is equal to the $p$-adic N\'eron function with the same auxiliary choices, regardless of the reduction type of the curve. This extends work of Balakrishnan and Besser for elliptic curves. When the curve has semistable reduction and the reduction of the Jacobian is ordinary, we also describe the $p$-adic N\'eron function that arises from the canonical Mazur--Tate splitting explicitly in terms of a generalisation of the $p$-adic sigma function constructed by Blakestad. 
\end{abstract}

\setcounter{tocdepth}{1}
\tableofcontents

\section{Introduction}\label{S:intro}

Various authors have described constructions of
$p$-adic analogues of the real-valued global N\'eron--Tate height pairing on an
abelian variety $A$ defined over a number field $F$; see, for example,
\cite{Sch82}, \cite{MT83}, \cite{CG89}, \cite{Nek93}, \cite{Zar90}.
Similar to the
N\'eron--Tate height pairing, these $p$-adic height pairings are of
great arithmetic importance; for instance, a $p$-adic version of the
N\'eron--Tate regulator appears in the $p$-adic analogue of the
Birch and Swinnerton-Dyer conjecture for elliptic curves of
Mazur--Tate--Teitelbaum~\cite{MTT86}, partially generalised by
Balakrishnan, Stein and the third author in~\cite{BMS16}. Recently,
explicit methods for $p$-adic heights were used to great success in the quadratic Chabauty method for the computation of rational points on certain
curves, developed by Balakrishnan--Dogra in~\cite{BD18} and applied, for
instance, in~\cite{BDMTV19, BDMTV23}. 

It is a natural question how the different constructions of $p$-adic heights are related. Our main result is a direct local comparison
between two constructions in the case of a Jacobian $J$ of a genus $2$ curve
$C\colon y^2=b(x)$, where $b(x)\in \O_F[x]$ has degree 5 and no repeated roots and $\O_F$ is the
maximal order of a number field $F$. 
More precisely, we compare local height functions appearing in suitable
decompositions of the global height functions of 
Coleman--Gross~\cite{CG89} and Mazur--Tate~\cite{MT83}. This is the genus~$2$
analogue of a
result of Balakrishnan--Besser~\cite{balakrishnanbesser2015} for elliptic
curves.

We first discuss
the construction of Coleman--Gross (see Section~\ref{S:CG}).  Their work in~\cite{CG89} assumes
good reduction, but it was later extended to arbitrary reduction by
Colmez~\cite{colmez1998integration} and by Besser~\cite{BesserPairing}, so
that our results do not assume good reduction. 
Coleman and Gross construct local height pairings between divisors of
degree~0 on $C$ with disjoint support, one for each finite place $v$ of
$F$ (see~\S\ref{subsec:disjoint}). For places above $p$, the local pairing
is defined in terms of Vologodsky integration \cite{vologodsky2003hodge}, which we review in
Section~\ref{S:Vol}. The sum of these pairings 
depends only on the linear equivalence
classes of the divisors; one obtains a bilinear $\Q_p$-valued 
pairing on $J(F)$ and hence a quadratic form $h^{\CG}\colon J(F)\to \Q_p$.
Following Balakrishnan and Besser~\cite{balakrishnanbesser2015}, one may extend the local pairings to
divisors with common support (see~\S\ref{subsec:local-arb}). Using this, we
define in Section~\ref{S:com}, for each finite $v$, a local height function
$$\lambda_v^{\CG}\colon J(F_v)\setminus \supp(\Theta)\to \Q_p$$ 
such that for $x\in J(F)\setminus \supp(\Theta)$, we have 
$$
h^{\CG}(x) = \sum_v \lambda^{\CG}_v(x)\,.
$$
Here, $F_v$ is the completion of $F$ at $v$ and $\Theta$ is the theta divisor on $J$ with respect to the base point
$\infty\in C(F)$.
To be more precise, the construction of Coleman--Gross requires the choice of a continuous id\`ele class character
\begin{equation}\label{E:chi}
\chi = (\chi_v)_{v}\colon \mathbb{A}_F^\times/F^\times \to \Qp
\end{equation}
and, for each $v$ such that $\chi_v$ is ramified (meaning that
$\chi_v(\O^\times_{v})\ne 0$), a splitting of the Hodge filtration on $C_v\colonequals C\otimes{F_v}$, which we encode as a subspace $W_v$ of $H_{\dR}^1(C_v/F_v)$ that is complementary to the subspace of holomorphic forms.

Mazur and Tate use biextensions to construct quite general height pairings on abelian varieties. As a special case, we obtain a $p$-adic height pairing on $J(F)$, and hence a quadratic form
$h^{\MT}\colon J(F)\to \Q_p$, which depends on various choices (see
Section~\ref{sec:MT}, in particular~\S\ref{subsec:MTglob}), including a character
$\chi$ as in~\eqref{E:chi}.
These height pairings are also sums of local pairings,
so-called $\chi_v$-splittings, between certain divisors on $J$ with disjoint support.
Extending work of the first author~\cite{Bia23} and using $p$-adic
metrics on line bundles constructed in~\cite{Bes05, BMS}, we construct, for each finite prime $v$ and each divisor $D\in \Div(J)$,
a $p$-adic analogue $\lambda_{D,v}$ of the classical (real-valued) N\'eron
function (see~\S\ref{subsec:splittings}). These give rise to $\chi_v$-splittings (see
Lemmas~\ref{L:MTNeron} and~\ref{L:MTNeronp}).
For primes $v$ such that $\chi_v$ is unramified, there is a canonical
choice: it is equal to the real-valued N\'eron function, up to a multiplicative constant. For the ramified case, we show that a choice of a complementary subspace $W_v$ leads to a choice of a $p$-adic N\'eron function such that we have 
$$
h^{\MT}(x) = \sum_v\lambda_{X,v}(x)\,;\quad X=2\Theta\,,\quad x \notin \supp
(\Theta)\,.
$$
This decomposition may be viewed as a $p$-adic analogue of the usual
decomposition of the N\'eron--Tate height function as a sum of N\'eron
functions, see for instance~\cite[Chapter~11]{Lan83}
or~\cite[Chapter~9]{BG06}.
It follows from~\cite{BMS} that the global
height functions $h^{\MT}$ and $h^{\CG}$ with respect to the same choices
$\chi$ and $(W_v)_v$ are equal for any Jacobian (see
Remark~\ref{R:globcomp}).
In the case we are considering, this also follows
from work of the first author (see \cite[Corollary~5.35]{Bia23}).

Our main result is the following direct comparison between the local 
height functions. 

\begin{thm}\label{T:main}
  Let $J/F$ be the Jacobian of an odd degree genus~$2$ curve $C/F$. 
  Fix a continuous id\`ele class character $\chi\colon
  \mathbb{A}_F^\times/F^\times \to \Qp$ and, for each $v$ such that
  $\chi_v$ is ramified, a complementary subspace $W_v$ that is
  isotropic with respect to the cup product pairing. Then the local
  heights with respect to these choices satisfy
  \begin{equation}\label{E:main}
    \lambda^{\CG}_v = \lambda_{X,v}\,.
  \end{equation}
\end{thm}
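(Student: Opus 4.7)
The plan is to identify both $\lambda^{\CG}_v$ and $\lambda_{X,v}$ as $\chi_v$-splittings of the biextension on $J$ attached to the symmetric divisor $X = 2\Theta$, and then deduce equality from uniqueness of such splittings inside a suitable class of continuous/locally analytic functions on $J(F_v)\setminus \supp(\Theta)$. Concretely, I would verify that the extended local CG height $\lambda^{\CG}_v$ satisfies the axiomatic properties characterising the $p$-adic N\'eron function $\lambda_{X,v}$ that is built from the admissible $p$-adic metric on $\O(X)$ following~\cite{BMS}.

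The first step is to pass between $J$ and $C$ via the Abel--Jacobi map $\iota\colon C\setminus\{\infty\}\hookrightarrow J$, $P\mapsto [P-\infty]$, whose image is exactly $\Theta$. Any $z\in J(F_v)\setminus \supp(\Theta)$ can (possibly after a finite extension) be written as $[P_1+P_2-2\infty]$ with $P_i\in C(F_v)\setminus\{\infty\}$, so that $\lambda^{\CG}_v(z)$ can be recovered from the CG pairing between $P_1+P_2-2\infty$ and an auxiliary representative of $2[\Theta]$ having disjoint support, via the extension procedure of \S\ref{subsec:local-arb}. A parallel unwinding applies to $\lambda_{X,v}(z)$, expressed through sections of $\O(2\Theta)$ pulled back along $\iota$; this reduces both sides to data naturally defined on divisors on $C_v$.

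For $v \nmid p$, both constructions become intersection-theoretic on a regular proper model of $C_v$: the CG local height via the Colmez--Besser extension of \cite{CG89}, and the metric underlying $\lambda_{X,v}$ via the standard description in terms of intersection multiplicities on the N\'eron model of $J$. Once the two constructions are normalised by the same character $\chi_v$, the equality \eqref{E:main} in this case follows from a direct comparison of intersection data.

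For $v \mid p$ the argument is significantly more delicate, and this is where I expect the main obstacle. On the one side, $\lambda^{\CG}_v$ is given by Coleman integration of a differential of the third kind, with the harmonic projection controlled by the complementary subspace $W_v$. On the other side, $\lambda_{X,v}$ is obtained from an admissible $p$-adic metric on $\O(2\Theta)$ whose Chern-form-like data is determined by the same $W_v$ through the construction of~\cite{Bes05,BMS}. The key check is that the $W_v$-dependence matches on both sides: I would do this by verifying directly that $\lambda^{\CG}_v$ is a $\chi_v$-splitting of the biextension attached to $X$, with the right logarithmic singularities along $\Theta$, the right locally analytic behaviour on the complement, and the right parallelogram-type functional equation associated to the symmetric divisor $2\Theta$. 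The hypothesis that $W_v$ is isotropic for the cup product enters crucially here, as it is precisely what forces the bilinear pairing underlying $\lambda^{\CG}_v$ to be symmetric, which is needed for the function to define a genuine $\chi_v$-splitting of the biextension rather than merely a bi-additive function. With these properties established, uniqueness of $\chi_v$-splittings in the prescribed class forces $\lambda^{\CG}_v = \lambda_{X,v}$ pointwise, giving the theorem.
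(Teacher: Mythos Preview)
Your plan has a genuine gap at primes where $\chi_v$ is ramified. The axiomatic route you sketch --- verify that $\lambda^{\CG}_v$ satisfies the functional equations of a $p$-adic N\'eron function for $X$, then invoke uniqueness --- cannot close, because those functional equations do \emph{not} determine the N\'eron function uniquely: different complementary subspaces $W_v$ produce genuinely different N\'eron functions, all satisfying the same additivity, translation, and parallelogram identities (this is Remark~\ref{R:uniq} in the paper). You correctly flag that ``the key check is that the $W_v$-dependence matches on both sides'', but you offer no mechanism to perform it; the ``Chern-form-like data'' and ``locally analytic behaviour'' you invoke are not sharp enough to single out the particular $W_v$. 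There is also a terminological slip: a $\chi_v$-splitting (Definition~\ref{D:splitting}) is a pairing between zero-cycles and divisors in $\Div_a(A_v)$, i.e.\ divisors algebraically equivalent to zero, whereas $X=2\Theta$ is ample; $\lambda_{X,v}$ is not itself a $\chi_v$-splitting, and the uniqueness statements for splittings do not apply to it.

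The paper's argument is quite different and treats the two cases by separate, explicit computations. For $\chi_v$ unramified (Theorem~\ref{thm:comp_away}), the bridge is Zhang's admissible pairing on a semistable model: Heinz's work expresses $\lambda_{X,v}$ through that pairing, and a calculation with the Green function on the dual graph (via the Moore--Penrose pseudoinverse of the intersection matrix) shows that $\lambda^{\CG}_v-\lambda_{X,v}$ is constant; the constant is then pinned to zero by an explicit evaluation on the Kummer surface. For $\chi_v$ ramified (Theorem~\ref{thm:comp_above}), the $W_v$-dependence is tracked through the $v$-adic sigma function $\sigma_v$ attached to $W_v$ (Definition~\ref{def:sigma}): Proposition~\ref{P:pNFSymGreen} together with~\cite[Theorem~5.30]{Bia23} identifies $\lambda_{X,v}$ with $-2\chi_v\circ\sigma_v$ on the formal group, and a prior disjoint-support comparison (Corollary~\ref{cor:comp_disjoint}) already expresses $h_v(P_1-P_2,Q_1-Q_2)$ as a signed sum of values of $\lambda_{X,v}$. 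Passing to common support is then a limit: one shows, via an explicit expansion of the formal parameters $T_1,T_2$ in the chosen local coordinate (Lemma~\ref{lemma:T1T2}), that the constant-term contributions in the sense of~\S\ref{subsec:above-common} vanish. This is precisely the step that carries the $W_v$-information and that your outline leaves unaddressed.
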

The elliptic curve analogue of Theorem~\ref{T:main} already
appears in the literature, see~\S\ref{S:ell}.

We prove Theorem~\ref{T:main} when $\chi_v$ is unramified in
\S\ref{sec:comp_away} as follows. Using work of Heinz~\cite{hei04}, we can relate the
real-valued N\'eron function (and hence $\lambda_{X,v}$) to the admissible pairing of
Zhang~\cite{zhang93:admissible_pairing}. The latter can be related to
$\lambda^{\CG}_v$, which is defined in terms of intersection
theory. See~Theorem~\ref{thm:comp_away} and its proof.
Our result expresses both the $p$-adic and the
real-valued N\'eron function purely on (a regular model of) the curve,
rather than the Jacobian itself. We are not aware of any previous result of
this kind, and we believe it could be of independent interest.

For primes $v$ of ramification of $\chi$,~\cite[Theorem~4.44]{BMS} shows
that the $p$-adic N\'eron function $\lambda_{X,v}$ is
essentially given by a symmetric Green function as introduced by
Colmez~\cite{colmez1998integration}. The latter was used by Colmez to
give a new construction of the local Coleman--Gross height pairing at $v$
for divisors with disjoint support. We prove Theorem~\ref{T:main} in this
case by studying the limit case where both divisors are equal. This is based on work
of the first author~\cite{Bia23}, in particular the comparison
result~\cite[Corollary~5.32]{Bia23}. 
See~\S\ref{S:comram}, in particular Theorem~\ref{thm:comp_above} and its proof.

\begin{rem}\label{R:}
Another comparison result  
  between local decompositions of $h^{\MT}$ and $h^{\CG}$ (into local
  pairings between divisors with disjoint support) can be deduced
from the proof of~\cite[Theorem~6.10]{BMS}, see~\cite[Remark~6.11,
  Corollary~6.15]{BMS}. For Jacobians of odd degree genus~$2$ curves, see also~\cite[Corollary~5.32]{Bia23}, which is based on Colmez's work~\cite{colmez1998integration}. However, these comparisons require various choices, such as a pair of divisors of degree~0 on $C$ with disjoint support, both representing the same point, whereas no such choices are required in the present work.  Moreover, local height functions (rather than pairings between divisors with disjoint support) provide a natural setting for the quadratic Chabauty method. See also the discussion in~\cite[\S1]{Bia23}.
\end{rem}

When $A$ is an abelian variety of semistable ordinary reduction, Mazur--Tate actually construct
a canonical $\chi_v$-splitting at all finite primes $v$, yielding a
canonical Mazur--Tate $p$-adic height. This height is of
special importance; for instance, for an elliptic curve with good ordinary
reduction, this is the height that appears in the conjecture of
Mazur--Tate--Teitelbaum \cite{MTT86}. When $\chi_v$ is ramified,
it is not immediately clear that the canonical $\chi_v$-splitting arises
from a complementary subspace $W_v$; thus it is not clear that the
canonical Mazur--Tate $p$-adic height
can be decomposed into a sum of $p$-adic N\'eron functions. For
elliptic curves this is known, see~\S\ref{S:ell}.
We conjecture that this is always the case, see Conjecture~\ref{C:TNF1}. 

We prove Conjecture~\ref{C:TNF1} in Section~\ref{S:CanThetaZeta}
for ordinary reduction Jacobians of odd degree genus~$2$ curves such that the given model of the curve has semistable
reduction (see Definition~\ref{D:CSemOrd}), as follows. 
By work of Papanikolas~\cite{papanikolas}, the canonical
$\chi_v$-splitting for primes $v$ of ramification of $\chi$ 
is induced by the $v$-adic theta function $\theta_X$ of
Norman \cite{norman1985}. We show in Theorem~\ref{T:thetaneron} that $\theta_X$ can be expressed in terms of a $v$-adic sigma function. Combined with the above-mentioned work of the first author,
this gives the link to the $p$-adic N\'eron function with respect to $X$
and a certain
complementary subspace, defined using Blakestad's zeta function. 
When the reduction is good and ordinary, then this is in fact the unit root
subspace with respect to Frobenius, as shown by the first
author~\cite[Proposition~3.8]{Bia23}. We conjecture that this extends to
all Jacobians with semistable ordinary reduction, see
Conjecture~\ref{C:TNF2}.
In order to apply Blakestad's work, we find a criterion for ordinarity when
the curve has semistable reduction (see Proposition~\ref{H1ord}). The proof
uses the Cartier--Manin matrix, extending work of Manin for
good reduction (see Lemma~\ref{Hvsord}).

Our description of the above-mentioned subspace and the resulting $p$-adic
N\'eron function is explicit. In~\cite{BKM23}, we develop algorithms for their computation. There we also describe how to compute the Coleman--Gross height in bad
reduction, see also the second author's PhD thesis~\cite{KayaPhD}. The main issue is the computation of Vologodsky integrals of differentials of the third kind, based on work of the second
author~\cite{Kaya}, partially in collaboration with Katz~\cite{KatzKaya}.
We also give in~\cite{BKM23} the first example in the literature of
a quadratic Chabauty computation in bad reduction.

\subsection{Elliptic curves}\label{S:ell}
Though our notion of $p$-adic N\'eron functions with respect to divisors on
abelian varieties seems to be new, the special case of the divisor
$X=2(\infty)$ on an elliptic curve given by a Weierstrass equation and the
relation to analogously defined local Coleman--Gross heights has been studied before.
In this case, the equality~\eqref{E:main} follows for primes $v$ such
that  $\chi_v$ is unramified from a
real-valued comparison result~\cite[Theorem~1]{BM12}. In the ramified
setting, Balakrishnan and Besser showed the elliptic analogue
of~\eqref{E:main} in the special case where the curve has good ordinary
reduction and $W_v$ is the unit root subspace
(see~\cite[Corollary~4.3]{balakrishnanbesser2015}). For this, they express
$\lambda_v^{\CG}$ as a double integral
(see~\cite[Theorem~4.1]{balakrishnanbesser2015}), which they then express in terms of
the $p$-adic sigma function due to Mazur and Tate
(see~\cite[Corollary~4.2]{balakrishnanbesser2015}), which induces the
Mazur--Tate height.
It is not hard to see that their proof extends to arbitrary reduction and subspaces. 
In fact, their results also
prove Conjectures~\ref{C:TNF1} and~\ref{C:TNF2} for good ordinary
reduction. In addition to the analogue of~\eqref{E:main}, they use a
description of the unit root subspace on elliptic curves due to
Katz~\cite[(A.2.4.1)]{katz1973p}. This remains valid for semistable
ordinary reduction, so their proof of Conjectures~\ref{C:TNF1}
and~\ref{C:TNF2} extends to this case as well.

\subsection{Previous comparison results}
There are numerous results comparing  various constructions of
$p$-adic heights in the literature. We give a brief summary.
For Jacobians with good
ordinary reduction, Coleman showed in~\cite{Col91} that the global canonical Mazur--Tate
height agrees with the global Coleman--Gross height with respect to the
unit root subspace. 
Iovita--Werner already showed in~\cite{IW03} that on any abelian variety 
with semistable
ordinary reduction, the canonical Mazur--Tate height pairing is the
same as the Zarhin~\cite{Zar90} height pairing with respect to the unit root
splitting. Similarly to the Coleman--Gross construction, the Zarhin height depends on a splitting of the Hodge
filtration, but is defined for arbitrary abelian varieties. Despite their
similar constructions, the heights of Zarhin and
Coleman--Gross are not known to be equivalent for Jacobians with arbitrary reduction. In
good reduction, this is known, since in this case both have been proved to be equivalent 
to a special case of the much more general construction of Nekov\'a\v{r}~\cite{Nek93} due
by Besser~\cite{Bes04} and Nekov\'a\v{r}~\cite[\S8.1]{Nek93}.
The equivalence between Coleman--Gross and Nekov\'a\v{r} can actually be shown 
without any assumption on the reduction (see~\cite{BesserPairing}), but the
equivalence between Zarhin and Nekov\'a\v{r} is currently only known for
good ordinary reduction.

We finally discuss the relation of the above-mentioned heights to the one
of Schneider~\cite{Sch82}, which is particularly interesting, since this is
the height that appears in the conjecture of Mazur--Tate--Teitelbaum for
elliptic curves with split multiplicative reduction. It is equivalent to the canonical Mazur--Tate height
in good ordinary reduction as shown by Mazur--Tate~\cite{MT83}, but this is
not true in general; the difference was described by Werner~\cite{Wer98}.

\subsection{Notation}\label{subsec:notn}
Throughout this paper, $p$ is a prime number and $F$ is a number field. 
Let $\O_{F}$ denote the ring of integers of $F$. For a finite place $v$ of
$F$, we write $F_v$ for the completion at $v$ and $\O_v$ for its valuation
ring. For each $v$, we
fix a uniformizer $\pi_v$ of $\O_v$ and let $\log_v$ be the unique homomorphism from $\O_v^{\times}$ to $F_v$ extending the convergent series expansion
\[\log_v(1+z) = \sum_{n=1}^{\infty} \frac{(-1)^{n+1}z^n}{n},\ \ \ z\in \pi_v\O_v.\]

We fix a continuous id\`ele class character
\[\chi = (\chi_v)_{v}\colon \mathbb{A}_F^\times/F^\times \to \Qp\,.\]
If $\chi_v$ is
unramified (that is $\chi_v(\O_v^\times)=0$, which implies $v\nmid p$), then
$\chi_v$ is determined by $\chi_v(\pi_v)$. If on the other hand, $\chi_v$ is
ramified, we decompose $\chi_v$ on $\O_v^\times$ as
\begin{equation}\label{chitrlog}
\chi_v = \tr_v\circ \log_v\,,
\end{equation}
where $\tr_v$ is a $\Qp$-linear map from $F_v$ to $\Qp$. Moreover, we extend $\log_v$ to
\begin{equation}
\label{BranchOfP-adicLogComingFromEllv}
  \log_v\colon F_v^\times \to F_v\,,
\end{equation}
so that the diagram
\[\begin{tikzcd}
F_v^\times\arrow[rr, "\chi_v"] \arrow[dr, "\log_v"'] & & \Qp \\
& F_v \arrow[ur, "\tr_v"']
\end{tikzcd}\]
is commutative. For a number field $L/F$, we define the continuous id\`ele class
character
\begin{equation}\label{chiL}
  \chi_L = (\chi_{L,w})_{w}\colonequals \chi\circ N_{L/F} \colon
  \mathbb{A}_L^\times/L^\times \to \Qp\,,
\end{equation}
where $N_{L/F}$ is the id\`ele norm.
For a variety $V/F$, we write $V_v\colonequals V\otimes F_v$.
Finally, we let $q_v$ denote the unique prime number such that
$\ord_v(q_v)>0$, and we set
\begin{equation}\label{cv}
  c_v \colonequals \log\mathrm{Nm}_{F_v/\Q_{q_v}}(v)\in \R\,,
\end{equation}
where $\mathrm{Nm}$ is the norm.

\subsection*{Acknowledgements} We thank Amnon Besser,
Clifford Blakestad, Stevan Gajovi\'c, Johannes Nicaise and Michael Stoll
for helpful discussions regarding this work. We are grateful to Tianci Kang
for comments on an earlier version of the paper and for finding
several mistakes in Section~\ref{sec:comp_away} and helping to fix
them.
The first and third author were supported by NWO Grant VI.Vidi.192.106. The second author was supported by NWO grant 613.009.124 and by FWO grant GYN-D9843-G0B1721N during various stages of this project. 

\section{Vologodsky functions and integrals}\label{S:Vol}
Several constructions in this work are given in terms of Vologodsky
functions, so we start by briefly summarising the relevant parts of the theory here. 
Let $v$ be a finite place of $F$ such that $v\mid p$ and let $X/F_v$ be a
smooth geometrically connected variety. 
For such varieties over $\C$, Chen~\cite{Che77} has developed a theory of iterated path
integrals of closed forms.
When $X$ has good reduction, then the problem of finding a $p$-adic
analogue was solved for certain affine subsets of $\P^1$ by
Coleman~\cite{coleman1982dilogarithms}, who then extended his theory to single
integrals in arbitrary dimension in~\cite{coleman85:torsion} and to iterated
integrals on curves with de Shalit in~\cite{coleman_shalit88:p_adic_regulator}. Coleman's idea
is based on a notion of ``analytic continuation along Frobenius'' using
rigid analysis and Monsky--Washnitzer cohomology. 
This technique was reinterpreted in terms of the Tannakian theory of unipotent isocrystals and extended to
projective varieties of any dimension with good reduction by Besser~\cite{Bes02} who showed  
the existence of a unique Frobenius invariant path in this theory.
By axiomatizing the situation, he defined a notion of
Coleman(-analytic) functions; locally, a Coleman function is an iterated
Coleman integral, and these are  solutions to unipotent differential
equations, compatible with respect to the Frobenius invariant path. They
are~\textit{locally analytic} in the sense that 
they can be expanded locally into a convergent power series.

In this work, we do not assume that $X$ has good reduction, and we will use Vologodsky integration (see~\cite[Theorem~B]{vologodsky2003hodge}
and~\cite[Theorem~2.1]{Bes05}). This is a generalisation of both Coleman
integration 
and certain approaches to single integrals that work in arbitrary reduction due to
Colmez~\cite{colmez1998integration} and Zarhin~\cite{Zar96}. More
precisely, when the variety under consideration has good reduction at $p$,
Vologodsky integration is the same as Coleman integration, and single
Vologodsky integration agrees with Colmez and Zarhin integration.
To define a Vologodsky integral with values in $F_v$, we need to choose a
branch of the $p$-adic logarithm. But we have already deduced a branch from
our id\`ele class character $\chi$, namely
\eqref{BranchOfP-adicLogComingFromEllv}; we will use that one. 

\subsection{Vologodsky functions}

Extending his work on Coleman functions\footnote{Except that 
while Coleman's theory extends to varieties over $\C_p$, Vologodsky's
theory takes place over a finite extension of $\Q_p$, since
it uses the Zariski topology rather than the rigid topology. This will not
be an issue for us.},
Besser used Vologodsky's work to introduce a notion of locally
analytic~\textit{Vologodsky functions}
in~\cite[\S2]{Bes05} (though he calls them Coleman functions in loc.
cit.) and associated differential forms.
Following~\cite[Theorem~3.2]{BMS}, we first summarise what we need about these functions.
Let $\Omega^i(X)$ be the space of global holomorphic $i$-forms on $X$. 
\begin{thm}
\emph{(Vologodsky~\cite{vologodsky2003hodge}, Besser~\cite{Bes05})}
\label{T:volsum}
For every $i\ge 0$, there is an $F_v$-vector space of
  \em{Vologodsky $i$-forms} $\ocol^i(X)$ with values in $\Omega^i(X)$ and differentials $\dd\colon  \ocol^i(X) \to \ocol^{i+1}(X)$,  such that 
\begin{enumerate}
\item \label{t1} 
    the sequence  
    \[
    0\to K \to \acol(X)\xrightarrow{\dd} \ocol^1(X)
  \xrightarrow{\dd} \ocol^2(X)
\]
    is exact, where $\acol(X)=\ocol^0(X)$ is the space
    of~\em{Vologodsky functions on $X$}. 
  \item $\Omega^i(X)$ embeds into $\ocol^i(X)$.
  \item $\ocol^i(X)$ embeds into the space of locally analytic $\overline{F_v}$-valued $i$-forms on
    $X$, as defined in~\cite{Bes05}, and $\dd$ becomes the usual 
    differential on the image. We will usually view Vologodsky
    functions in this way.
  \item \label{t3} For every coordinate $z$ on $\mathbb{A}^1$, we have
    $\log_v(z)\in \acol(\mathbb{A}^1-\{0\})$ and
  $d\log_v(z) = \frac{dz}{z}$.
  \item \label{t4} Vologodsky forms and functions are functorial with
    respect to pullbacks and products (these are defined
    in~\cite[p.321]{Bes05} and are compatible with differentials).
 \end{enumerate}
\end{thm}
See~\cite[Theorem~3.2]{BMS} for further properties and pointers to the
proofs in~\cite{Bes05}.
\subsection{Vologodsky integrals}\label{}

We are ready to define Vologodsky integrals.

\begin{defn}\label{D:}
  A~\textit{(single) Vologodsky integral} $\int\omega \in \acol(X)$ of a closed Vologodsky~1-form $\omega\in
  \ocol^1(X)$ is a preimage of $\omega$ under $\dd$. An \textit{iterated Vologodsky 
  integral} is a Vologodsky function that can be defined iteratively by
  $$
  \int \omega_1\circ\cdots\circ\omega_n \colonequals
   \int \left( \omega_1 \int \omega_2 \circ
  \cdots \circ \omega_n \right)\,.
  $$
\end{defn}
As usual, to obtain definite integrals, one needs to fix constants of
integration.
Iterated Vologodsky integrals are important because 
every Vologodsky function can be locally described by an iterated 
integral.

To give a rough idea how this connects to unipotent isocrystals, observe
that the condition $\dd \int\omega=\omega$ can be phrased as a unipotent
differential equation. More generally, 
the iterated integral
$ \int \omega_1\circ\cdots\circ\omega_n$ is the
$y_n$-component of the solution of
unipotent system of $p$-adic differential equations
\begin{equation}\label{ColUni}
d\vec{y} = \Omega \vec{y}, \ \ \ \text{ where } 
  \Omega\colonequals \begin{pmatrix} 0&0&\cdots & 0&0\\
  \omega_1&0&\cdots & 0&0\\
  0&\omega_2&\cdots & 0&0\\
  \vdots & \vdots & \ddots & \vdots & \vdots\\
  0&0& \cdots &\omega_n&0 \end{pmatrix},
\end{equation}
with $y_0=1$. This system can be rephrased in terms of a unipotent
isocrystal with connection 
\begin{equation}\label{basicconnection}
  \nabla(\vec{y}) = \dd\vec{y}- \Omega\cdot \vec{y}\,.
\end{equation}
See~\cite{Bes05} for more details in the Coleman setting; the statements
and proofs carry over to our present situation.

For single integrals
Vologodsky showed in {\cite{vologodsky2003hodge}} that his
construction recovers the Colmez integral when both constructions apply.
The latter has the following properties.

\begin{thm}
\emph{(Colmez,~\cite[Th\'{e}or\`{e}me~1]{colmez1998integration})}
\label{T:Colmez} 
Let $K$ be a complete subfield of $\Cp$ and let $X/K$ be a smooth, geometrically connected algebraic variety. For each pair of points $P,Q\in X(K)$ and each closed $1$-form $\omega$ on $X$, there is a unique integral $\int_P^Q \omega \in K$ with the following properties:

\begin{enumerate}
    \item For a fixed point $P_0\in X(K)$, the function $f\colon X(K)\to K$ which sends $P$ to $\int_{P_0}^P \omega$ is locally analytic and satisfies $df = \omega$.
    \item It is additive: $\int_P^R \omega = \int_P^Q \omega + \int_Q^R \omega$.
    \item It is linear: $\int_P^Q (c_1\omega_1+c_2\omega_2) = c_1\int_P^Q \omega_1 + c_2\int_P^Q \omega_2$.
    \item It is functorial: If $f\colon X\to Y$ is a $K$-morphism of varieties, $P,Q\in X(K)$ and $\omega$ is a closed $1$-form on $Y$, then $\int_P^Q f^*\omega = \int_{f(P)}^{f(Q)} \omega$.
    \item If $f$ is a rational function on $X$, then
    \[\int_P^Q df = f(Q)-f(P),\ \ \ \int_P^Q \frac{df}{f} = \log_v\left(\frac{f(Q)}{f(P)}\right)\]
    provided that all terms are defined.
\end{enumerate} 
\end{thm}

\begin{thm}
\emph{(Vologodsky,~\cite[Proposition~43]{vologodsky2003hodge})}
\label{T:Vol} 
Suppose that $K$ is a finite extension of $\Qp$. Then the single Vologodsky
  integral is the integral from Theorem~\ref{T:Colmez}.
\end{thm}

\subsection{The $\delbr$-operator}\label{subsec:delbr}
We end this summary by reviewing an operator on a certain 
$\acol(X)$-submodule $\ocola^1(X)\subset \ocol^1(X)$, since we will require
it in our discussion of canonical log functions in~\S\ref{subsec:logfns}.
The elements of this submodule arise from extensions of two trivial connections, meaning they are
locally given as $\sum_i \omega_i \int \eta_i$. Besser shows
in~\cite[\S6]{Bes02} and~\cite[\S2]{Bes05} that one may
define  an operator 
\begin{equation}\label{delbar}
\delbr\colon \ocola^1(X) \to \Omega^1(X) \otimes\hdr^1(X/F_v)
\end{equation}
that plays a similar role to $\delbr$ in the archimedean theory.
When $X$ is affine, $\delbr$ is surjective and can be described
explicitly by $\delbr (\omega_i \int
  \eta_i) = \omega_i \otimes [\eta_i]$.

\section{Coleman--Gross heights}\label{S:CG}

Suppose that $C$ is a smooth projective geometrically connected curve defined
over $F$. In this section, we review the definition of the
(extended) Coleman--Gross\footnote{When $C$ has good reduction at all
places dividing $p$, the construction is due to Coleman--Gross \cite{CG89},
hence the name ``Coleman--Gross''; Colmez~\cite{colmez1998integration} and
Besser \cite{BesserPairing} later gave an extended
definition of the Coleman--Gross pairing without any assumptions on the
reduction type, hence the adjective ``extended''. For simplicity, we will
drop the word ``extended''.} height pairing on $C$. This pairing, which in
this section we denote for simplicity by $h$ (rather than $h^{\CG}$), is a function from $\Div^0(C) \times \Div^0(C)$ to $\Qp$, where $\Div^0(C)$ denotes the group of degree $0$ divisors on $C$ defined over $F$. We first follow \cite[\S2]{BesserPairing} for the case of disjoint support. We then generalize the construction to divisors with common support, following \cite{gro86} and \cite{balakrishnanbesser2015}.

The pairing $h$ depends on $\chi$.
Moreover, for each $v\mid p$ such that $\chi_v$ is ramified, let
$H_{\dR}^{1,0}(C_v/F_v)$ denote the image of the space of holomorphic
forms inside $H_{\dR}^1(C_v/F_v)$ under the Hodge filtration and
let $W_v$ be a subspace of $H_{\dR}^1(C_v/F_v)$ that is complementary to this image.

\begin{rem}\label{R:}
When the Jacobian variety $J/F$ of $C$ has semistable ordinary reduction at $v$ in the sense
of Definition~\ref{D:SemOrd}, there is a canonical choice of such a complementary space: the unit root subspace for the action of Frobenius. See, for example, the paragraph after \cite[Lemma~2.2]{iovita2000formal} for some generalities on unit root subspaces. 
\end{rem}

We now describe the global height pairing $h =h^{\CG}  = h_{\chi,(W_v)_v}$ 
\footnote{We write
$h^{\CG}$ (respectively $h_v^{\CG}$) for the global (respectively local)
Coleman--Gross height in other sections of this paper. For ease of
notation, we drop the superscript in the present section.}.
It is, by definition, a sum
of local height pairings $h_v$ over all finite places of $F$. In more
precise terms, let $D_1$ and $D_2$ be two elements of $\Div^0(C)$ with
disjoint support, and for a finite place $v$ of $F$, let $h_v(D_1,D_2)
=h^{\CG}_v(D_1,D_2)$ denote the local height pairing at $v$, which will be
defined in~\eqref{IntersThryForm} and~\eqref{IntegralForm}. Then
\begin{equation}\label{globloc}
  h(D_1,D_2)= \sum_v h_v(D_1\otimes F_v,D_2\otimes F_v).
\end{equation}
We will remove the condition that $D_1$ and $D_2$ have disjoint
support below; see Proposition~\ref{P:decomp}. 

The following result is due to Coleman--Gross~\cite{CG89} for good reduction and was
extended by Besser~\cite{BesserPairing} to arbitrary reduction, replacing Coleman integration by
Vologodsky integration.
\begin{prop}\label{P:globalCG}
The Coleman--Gross height pairing $h=\sum_v h_v$   with respect to a choice
$\chi=(\chi_v)_v$ and $(W_v)_v$ induces a bilinear pairing
$$h= h^{\CG}\colon J(F)\times J(F)\to \Q_p\,.$$
  The pairing is symmetric if all $W_v$ are isotropic with respect to the cup product.
\end{prop}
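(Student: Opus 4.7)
The plan is to verify four properties for $h = \sum_v h_v$ restricted to divisors with disjoint support: (a) the sum is finite, (b) $h$ is bilinear, (c) $h$ vanishes on principal divisors and hence descends to $J(F)\times J(F)$, and (d) $h$ is symmetric if and only if each $W_v$ is isotropic for the cup product. Facts (a) and (b) are inherited from the local definitions. Each $h_v$ is bilinear because at non-$p$-adic and at unramified $p$-adic places it is a $\chi_v(\pi_v)$-weighted intersection pairing on a regular proper model of $C_v$, while at ramified $p$-adic places it is given by a Vologodsky integral of the (cohomologically normalised, via $W_v$) differential of the third kind attached to $D_1$ along $D_2$, all of which are linear in the input data. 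Fixing a global regular model $\mathcal{C}/\O_F$ of $C$, the local contributions vanish at all but finitely many $v$: those outside the places of bad reduction and outside the finite set where the Zariski closures of $D_1, D_2$ meet nontrivially on the special fiber.

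For (c), by bilinearity it suffices to prove $h(\div(f), D) = 0$ whenever $f \in F(C)^\times$ and $D \in \Div^0(C)$ has support disjoint from $\div(f)$. The key local identity is
\[
  h_v(\div(f), D) \;=\; \chi_v(f(D))\,,\qquad f(D) \colonequals \prod_P f(P)^{\ord_P(D)}\,,
\]
which at non-$p$-adic and unramified $p$-adic places is the standard behaviour of intersection pairings under principal divisors (a form of Weil reciprocity), and at ramified $p$-adic places follows from the Vologodsky evaluation of $\int_D df/f$ as $\log_v f(D)$ combined with the factorisation $\chi_v = \tr_v \circ \log_v$. Summing over all places gives $\sum_v \chi_v(f(D)) = 0$, which is the vanishing of $\chi$ on the principal id\`ele $f(D) \in F^\times$. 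This establishes descent of $h$ to $J(F)\times J(F)$ via choice of representatives with disjoint support.

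For (d), I separate the antisymmetric part place by place. At non-$p$-adic and unramified $p$-adic places $h_v$ is symmetric by construction: intersection pairings are symmetric, and the cohomological part involves forms with classes in the holomorphic subspace $H^{1,0}_{\dR}(C_v/F_v)$, on which the cup product vanishes. At a ramified $p$-adic place, a direct computation using Vologodsky's integration theory and Stokes/residue identities yields
\[
  h_v(D_1,D_2) - h_v(D_2,D_1) \;=\; \tr_v\bigl(\omega_{D_1}\cup \omega_{D_2}\bigr)\,,
\]
where $\omega_{D_i}$ denotes the class in $W_v$ of the differential of the third kind associated to $D_i$ (normalised so its residues match $D_i$). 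Summing over $v$, this antisymmetric part vanishes for all pairs $D_1, D_2$ precisely when the cup product restricted to each $W_v$ is trivial. For the ``only if'' direction one produces, possibly after a finite extension $L/F$ to which $\chi$ extends as $\chi_L$ via~\eqref{chiL}, $L$-rational divisors on $C_L$ whose associated cohomology classes span $W_v$, witnessing any non-isotropy. The main obstacle is this last step: the symmetry formula itself is essentially in~\cite{BesserPairing}, but isolating the antisymmetric part cleanly in terms of $W_v$ and producing rational witnesses that survive descent to $F$ is the delicate point of the argument.
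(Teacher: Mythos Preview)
Your proposal is correct and follows essentially the same approach as the paper: the paper's proof is a single sentence deferring to Proposition~\ref{P:hvprops}, which records exactly the local properties (biadditivity, behaviour on principal divisors, and the local symmetry criterion) that you spell out and assemble into the global statement. Your expansion is faithful to what the paper leaves implicit, and the subtle point you flag in the ``only if'' direction of symmetry---producing global divisors witnessing a non-isotropic $W_v$---is likewise not made explicit in the paper's one-line proof.
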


\begin{proof}
This follows from Proposition~\ref{P:hvprops} below.
\end{proof}

Now, fix a finite place $v$ of $F$, and let $D_1$ and $D_2$ be divisors of degree~0 on $C_v$. We will define the local component $h_v(D_1,D_2)$. 

\subsection{Local Coleman--Gross height pairings for disjoint divisors}
\label{subsec:disjoint}
We first consider the case where $D_1$ and $D_2$ have disjoint support. 

\subsubsection{Local Coleman--Gross height pairings for 
disjoint divisors and $\chi_v$ unramified}
\label{subsec:away-disjoint}
When $\chi_v$ is unramified, the local term is described using arithmetic intersection theory. 
For divisors $\cD$ and $\cE$ on an arithmetic surface $\mathcal{X}$ over $\O_v$ without common horizontal component, we define the intersection multiplicity
 $(\cD\cdot \cE)\in \Z $ as in~\cite[\S III.2, \S III.3]{Lan88}.
We can  extend this to $\Q$-divisors without common horizontal component by
extending scalars.

Now choose a proper regular model $\cC_v$ of $C_v$ over $\O_v$, and, for
$i=1,2$, an extension $\cD_i$  of $D_i$ to a $\Q$-divisor on $\cC_v$ that
has trivial intersection with all vertical divisors. Following
Coleman--Gross (see~\cite[Proposition~1.2]{CG89}), we define
\begin{equation}\label{IntersThryForm}
h_v(D_1,D_2) \colonequals \chi_v(\pi_v)\cdot(\cD_1\cdot\cD_2)\in \Q_p\,.
\end{equation}
This is independent of the choice of regular model or of the extensions
$\cD_i$.

\begin{rem}\label{R:CGNeron}
  By~\eqref{IntersThryForm}, we have $h_v(D_1,D_2) = \chi_v(\pi_v)\cdot
  h_v^{\Q}(D_1,D_2)$, where $h_v^{\Q}(D_1,D_2)\in \Q$. It follows
  from~\cite[Equation~(3.7)]{gro86} or~\cite[Theorem~5.2]{Lan88} that 
  $h_v^{\Q}(D_1,D_2)\cdot c_v\in \R$ is the
  \textit{local N\'eron pairing} or \textit{local N\'eron symbol} between
  $D_1$ and $D_2$, where $c_v$ is defined in~\eqref{cv}. Similar to the local Coleman--Gross height pairing, it
  is a local summand of the global N\'eron--Tate height pairing between the
  classes on $J$ of $D_1$ and $D_2$, and hence our results for $h_v$ in the
  unramified case also hold for the local N\'eron pairing and vice versa.
  Similar relations between real-valued and $p$-adic analogues will hold for the valuations
  in~\S\ref{subsec:NFreal} and the N\'eron functions
  in~\S\ref{subsec:NFunram}.
\end{rem}

\subsubsection{Local Coleman--Gross height pairings   
for disjoint divisors and $\chi_v$ ramified}
\label{subsec:above-disjoint}

The local term at a place $v\mid p$ such that $\chi_v$ is ramified is given
in terms of a Vologodsky integral.

We say that a meromorphic $1$-form $\omega$ on $C_v$ over $F_v$ is of the
third kind if it is regular, except possibly for simple poles with integer
residues. Such differentials form a group, which we denote by $T(F_v)$.
Logarithmic differentials, i.e., those of the form $df/f$ for $f\in F_v(C_v)^{\times}$, are of the third kind. The residue divisor homomorphism
\[\Res\colon T(F_v)\to \Div^0(C_v),\ \ \ \nu\mapsto \sum_P \Res_P \nu \cdot
(P)\,,\]
where the sum is taken over closed points of $C_v$, gives the following exact sequence:
\[0\rightarrow H_{\dR}^{1,0}(C_v/F_v) \rightarrow T(F_v) \rightarrow \Div^0(C_v) \rightarrow 0.\]
Therefore, there exists a form of the third kind on $C_v$ defined over
$F_v$ whose residue divisor is $D_1$, and this form is unique up to a holomorphic differential. By \cite[Proposition~2.5]{CG89}, there is a canonical homomorphism
\[\Psi\colon T(F_v)\to H_{\dR}^1(C_v/F_v)\]
which is the identity on holomorphic differentials, and our complementary subspace $W_v$ gives a unique choice $\omega_{D_1}$ satisfying
\[\Res(\omega_{D_1})=D_1,\ \ \ \Psi(\omega_{D_1})\in W_v\,.\]
The uniqueness follows from the definition of $W_v$ and the fact that $\Psi$ is the identity on holomorphic differentials. Since
$\omega_{D_1}$ is $F_v$-rational if $D_1$ is
(see~\cite[Proposition~3.2]{CG89}), we may define the local term as
\begin{equation}\label{IntegralForm}
h_v(D_1,D_2) \colonequals \tr_v\left(\Vint_{D_2}\omega_{D_1}\right)\in \Q_p\,.
\end{equation}

\subsubsection{Properties of local Coleman--Gross height pairings for disjoint support}
\label{subsec:hvprops}
Recall that we fixed a finite place $v$ of $F$. For a divisor $D=\sum_Pn_P(P)\in \Div(C_v)$ and a principal divisor $\div(f)\in \Div^0(C_v)$ relatively prime to $D$, we define $f(D) \colonequals \prod_Pf(P)^{n_P}$.

\begin{prop}\label{P:hvprops}
The local height pairing $h_v$ satisfies the following properties. 
  \begin{enumerate}
    \item\label{hvbiadd} $h_v$ is biadditive on divisors with disjoint support.
    \item\label{hvprinc} $h_v(\div(f),D_2) = \chi_v(f(D_2))$ for all $D_2\in \Div^0(C_v)$ and $f\in
      F_v(C_v)^\times$ such that $\div(f)$ and $D_2$ have disjoint support. 
    \item\label{hvsymm} If $\chi_v$ is unramified, then $h_v$ is symmetric.
      If  $\chi_v$ is ramified, then $h_v$ is symmetric if
      and only if the subspace $W_v$ is isotropic with respect to the cup
      product pairing.
    \item\label{hvcont} Fix $D_2$ and a point $P_0\in C_v(F_v)\setminus \supp(D_2)$. Then the function
    \[C_v(F_v) \setminus \supp(D_2) \to \Qp,\ \ \ P\mapsto  h_v((P)-(P_0), D_2)_v\]
    is continuous. 
    \item\label{hvS} If $\chi_v$ is unramified, then $h_v$
      takes values in the $\Q$-algebra $S\colonequals \Q\cdot
      \chi_v(\pi_v)\subset \Q_p$ and
      is locally constant. 
    \item\label{hvuniq}  If $\chi_v$ is unramified, then $h_v$
      is the unique pairing
      satisfying~\eqref{hvbiadd}--\eqref{hvS}.  
    \item\label{hvext} 
      Let $L/F$ be a finite extension, let $w$ be an extension of $v$ to $L$, and let
      $h_w$ be the local height pairing on $\Div^0(C_{w})$ with respect
      to $\chi_{L,w}$ (and to $W_v\otimes L_w$ if $\chi$ is ramified at
      $v$). 
      Then we have 
      $$
        h_{w}(D_1,D_2) = [L_w:F_v]h_v(D_1,D_2)
      $$
      for relatively prime $D_1,D_2\in \Div^0(C_v)$.

  \end{enumerate}
\end{prop}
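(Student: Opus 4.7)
The proof will proceed by treating the unramified and ramified cases separately, since the two definitions of $h_v$ are of a fundamentally different nature (intersection-theoretic versus integration-theoretic). Several properties are inherited essentially formally from the defining constructions; the main effort goes into \eqref{hvsymm} in the ramified setting, and into \eqref{hvuniq}.

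For \eqref{hvbiadd}, in the unramified case, biadditivity of arithmetic intersection multiplicity on $\Q$-divisors on $\cC_v$ gives the claim, provided one checks that the ``trivial intersection with verticals'' extension $\cD_i$ depends $\Q$-linearly on $D_i$; this is standard. In the ramified case, the assignment $D_1 \mapsto \omega_{D_1}$ is $F_v$-linear, since $\Res$ and $\Psi$ are group homomorphisms and $W_v$ is a subspace, and Vologodsky integration is linear in the form and additive in the divisor of integration. For \eqref{hvprinc} in the unramified case, one takes $\cD_1 = \div(f)$ on $\cC_v$: this is a principal Cartier divisor, it has trivial intersection with every vertical component by the projection formula, and $(\div(f)\cdot \cD_2) = v(f(D_2))$. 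Since $\chi_v$ is unramified, $\chi_v(f(D_2)) = v(f(D_2))\cdot\chi_v(\pi_v)$, as desired. In the ramified case, choose $\omega_{\div(f)} = df/f$; this is of the third kind with $\Res(df/f)=\div(f)$, and $\Psi(df/f)=0\in W_v$ since $df/f$ is exact and $\Psi$ vanishes on exact forms. Then $\Vint_{D_2} df/f = \log_v(f(D_2))$ and applying $\tr_v$ gives $\chi_v(f(D_2))$.

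For \eqref{hvsymm}, symmetry in the unramified case is the symmetry of the arithmetic intersection pairing. In the ramified case, I would compute $h_v(D_1,D_2)-h_v(D_2,D_1)$ using Riemann's bilinear relations for Vologodsky integrals of forms of the third kind; by the argument of \cite[Proposition 5.2]{CG89}, this difference equals (up to a scalar coming from $\tr_v$) the cup product $\Psi(\omega_{D_1})\smile \Psi(\omega_{D_2})$, which lies in $W_v\smile W_v$. Hence symmetry is equivalent to isotropy of $W_v$. For \eqref{hvcont} and \eqref{hvS}: in the unramified case $h_v((P)-(P_0),D_2)$ equals $\chi_v(\pi_v)$ times the intersection multiplicity of the canonical extensions, and this intersection multiplicity is locally constant in $P$ by standard properties of arithmetic intersection theory on a regular model; hence the pairing takes values in $S$ and is locally constant. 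In the ramified case \eqref{hvcont} follows from the continuity of Vologodsky integration with respect to the endpoint of integration.

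For \eqref{hvuniq}, suppose $h_v'$ is a second pairing satisfying \eqref{hvbiadd}--\eqref{hvS}. The difference $\delta = h_v - h_v'$ is biadditive, vanishes on pairs where one divisor is principal by \eqref{hvprinc}, takes values in $S$, and is locally constant in each argument. By \eqref{hvprinc}, $\delta$ descends to a pairing on $J(F_v)\times J(F_v)$; being locally constant and additive, it factors through the finite component group $\pi_0(J(F_v))$. One then reduces to showing that such a ``combinatorial'' pairing is already captured by intersection multiplicities on the special fiber of the N\'eron model of $J$, which coincide with the definition of $h_v$; alternatively, one invokes the uniqueness statement in \cite[\S3]{CG89} / \cite[\S2]{BesserPairing}, which is proved in exactly this way. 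Finally, for \eqref{hvext}, in the unramified case the intersection multiplicity on $\cC_w = \cC_v\otimes_{\O_v}\O_w$ equals $e(w/v)$ times that on $\cC_v$, while $\chi_{L,w}(\pi_{L,w}) = \chi_v(N_{L_w/F_v}(\pi_{L,w})) = f(w/v)\,\chi_v(\pi_v)$ (the unit contribution vanishes since $\chi_v$ is unramified), and $[L_w:F_v]=e(w/v)f(w/v)$ gives the claim. In the ramified case, it follows from the facts that $\omega_{D_1}$ computed over $L_w$ with $W_v\otimes L_w$ equals its base change from $F_v$, that Vologodsky integration is compatible with base change, and that $\tr_{L,w}\circ \log_{L,w} = \chi_v\circ N_{L_w/F_v}$ restricted to $L_w^\times$ yields the factor $[L_w:F_v]$.

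The main obstacle I expect is \eqref{hvuniq}: while the other items are either formal or are immediate from the construction, uniqueness requires a careful argument that a locally constant biadditive pairing on $J(F_v)$ satisfying the principal-divisor normalisation is rigid, essentially reducing to the finite component group of the N\'eron model.
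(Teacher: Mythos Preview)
Your proposal is largely correct and in fact more explicit than the paper, which for most items simply cites \cite{Lan88}, \cite{gro86}, and \cite{CG89}. The arguments you sketch for \eqref{hvbiadd}--\eqref{hvcont}, \eqref{hvS}, and \eqref{hvext} are essentially the content of those references, so there is no disagreement of approach there.

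The one genuine gap is in your treatment of \eqref{hvuniq}, which you yourself flag as the main obstacle. Your argument that $\delta$ descends to a biadditive pairing $J(F_v)\times J(F_v)\to S$ is fine, but the phrase ``factors through the finite component group $\pi_0(J(F_v))$'' is not well-posed: $J(F_v)$ is totally disconnected, so its topological component group is not the right object. You then punt to ``combinatorial pairings captured by intersection multiplicities'' or to citations, neither of which closes the argument. The paper's proof of \eqref{hvuniq} is short and direct, and you should simply use it: the difference $\delta$ is a locally constant map on the compact set $J(F_v)\times J(F_v)$, hence has \emph{finite} image; but for fixed second argument it is a group homomorphism $J(F_v)\to S$, so its image is a subgroup of $S\cong\Q$; a finite subgroup of $\Q$ is trivial. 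No N\'eron models or component groups are needed.
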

\begin{proof}
When $\chi_v$ is unramified, then $h_v(D_1,D_2)$ is, up to a multiplicative constant, the
  local (real-valued) N\'eron symbol defined in~\cite[\S III.5]{Lan88}
  and~\cite[\S3]{gro86}, so~\eqref{hvbiadd}--\eqref{hvcont} follow
  from~\cite[Theorem~III.5.1]{Lan88} and~\eqref{hvext} follows from the
  discussion in~\cite[\S2]{gro86}.
  The definition~\eqref{IntersThryForm} and the proof of~\cite[Theorem~III.5.2]{Lan88} show~\eqref{hvS}.
  For~\eqref{hvuniq}, suppose that there are two pairings
  satisfying~\eqref{hvbiadd}--\eqref{hvS}. Their difference then defines a
  locally constant pairing $J(F_v)\times J(F_v)\to S$ that is linear in one
  argument if the other is fixed. Since $S$ is a one-dimensional
  $\Q$-algebra, the image of the pairing is finite by
  construction, so it must be trivial.

  For 
  $\chi_v$ ramified,~\eqref{hvbiadd}--\eqref{hvcont}
  are~\cite[Proposition~5.1]{CG89} (the proof is given there for good reduction,
  but it works more generally). Property~\eqref{hvext} follows from the
  definition and properties of the Vologodsky (or Colmez) integral; see
  \cite[Lemme~II.2.16\thinspace{}(iv)]{colmez1998integration}. 
  \end{proof}

\subsection{Local Coleman--Gross height pairings  for arbitrary divisors}
\label{subsec:local-arb}
In~\cite[\S5]{gro86}, Gross extends the (real-valued) local N\'eron symbol
to divisors with common support. This extension is used in the seminal work
of Gross and Zagier~\cite{GZ86}, see also~\cite{Con04}. We now discuss an
analogous extension of $h_v$ due to Balakrishnan and Besser~\cite{balakrishnanbesser2015}. 

For $v\mid p$ and ramified $\chi_v$, the extension uses Besser's $p$-adic
Arakelov theory~\cite{Bes05} for which the following is required; we will
assume throughout that this is satisfied.
\begin{ass}
\label{A:iso}
Each $W_v$ is isotropic with respect to the cup product pairing.
\end{ass}

Suppose that the divisors $D_1$ and $D_2$ are not relatively prime. The
idea to define the local height pairing of $D_1$ and $D_2$ is to find a
function $f\in F_v(C_v)^\times$ such that $D_1+\div(f)$ and $D_2$ are
relatively prime and to use items~\eqref{hvbiadd} and~\eqref{hvprinc} of
Proposition~\ref{P:hvprops}. However, since $\div(f)$ and $D_2$ have common
support, $f(D_2)$ is either $0$ or not defined, so $\chi_v(f(D_2))$ is not
defined. To remedy this, let $Z^0(C_v)$ be the group of zero cycles on $C_v$,
which is the subgroup of $\Div^0(C_v)$
of divisors that have only $F_v$-rational points in their support.
Suppose that $D_2\in Z^0(C)$. We fix an
$F_v$-rational tangent vector $t_P$ at every $P\in \supp(D_2)$. The vector $t_P$
induces a normalised local parameter $z_P$ at $P$ such that $\partial_{t_P}z_P=1$, where $\partial_{t_P}$ is the derivation corresponding to $t_P$. We then define
\begin{equation}\label{fxt}
  f(P,t_P) \colonequals \frac{f}{z_P^{\ord_P(f)}}(P)\,;
\end{equation}
this only depends on $t_P$, not $z_P$. If $D_2 = \sum_P n_P(P)$, we define
\begin{equation}\label{fDt}
  f(D_2, \underline{t}) \colonequals \prod_{P} f(P,t_P)^{n_P}\,,
\end{equation}
where $\underline{t}=(t_P)_{P\in \supp(D_2)}$.
\begin{defn}\label{D:hvcommon} 
  Let $D_1,D_2\in Z^0(C_v)$ and let $f\in F_v(C_v)^\times$ such that $D_1+\div(f)$ and
  $D_2$ are relatively prime. The local height pairing between $D_1$ and $D_2$ with
  respect to $\underline{t}=(t_P)_{P\in \supp(D_2)}$ is defined as
  \begin{equation}\label{}
    h_{v,\underline{t}}(D_1,D_2)= h^{\CG}_{v,\underline{t}}(D_1,D_2)\colonequals h_v(D_1+\div(f), D_2)
    -\chi_v(f(D_2,\underline{t}))\,.
  \end{equation}
\end{defn} 
We discuss an extension to $\Div^0(C_v)$ in~\S\ref{subsec:common-nonrat} below.

\begin{lem}\label{L:hvcommon}
  Let $D_1,D_2\in Z^0(C_v)$.
  The local height pairing $h_{v,\underline{t}}$ satisfies: 
  \begin{enumerate}
    \item\label{hvcommonindep} $h_{v,\underline{t}}(D_1,D_2)$ is independent of the choice of $f$.
    \item\label{hvcommonprinc}  $h_{v,\underline{t}}(\div(g),
      D_2)=\chi_v(g(D_2, \underline{t}))$ for $g\in F_v(C_v)^\times$.
    \item\label{hvcommonchange}  For each $P\in \supp(D_2)$ choose 
      $\beta_P\in F_v^\times$ and let
      $\underline{\beta}\underline{t}=(\beta_Pt_P)$. Then 
      $$h_{v,\underline{\beta}\underline{t}}(D_1,D_2) =
      h_{v,\underline{t}}(D_1,D_2) + \sum_{P\in \supp(D_2)
      }\ord_P(D_1)\ord_P(D_2)\chi_v(\beta_P).$$ 
  \end{enumerate}
\end{lem}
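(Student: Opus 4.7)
The proof plan is to reduce all three items to the biadditivity and principal-divisor properties of $h_v$ from Proposition~\ref{P:hvprops}, together with a careful bookkeeping of how the normalisations $f(P,t_P)$ behave. The key identity to keep in mind throughout is that if $D_1+\div(f)$ is prime to $D_2$, then for $P\in\supp(D_2)$ we have $\ord_P(D_1+\div(f))=0$, so
$$\ord_P(f)=-\ord_P(D_1),$$
a fact that will appear in each part.

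For (\ref{hvcommonindep}), given two choices $f_1,f_2$, I would compare $h_{v,\underline{t}}(D_1,D_2)$ computed with $f_1$ versus $f_2$. Their difference equals $h_v(\div(f_1/f_2),D_2)-\chi_v\bigl((f_1/f_2)(D_2,\underline{t})\bigr)$ by the biadditivity of $h_v$ on disjoint divisors (Proposition~\ref{P:hvprops}(\ref{hvbiadd})). Since $\div(f_1)$ and $\div(f_2)$ each make $D_1+\div(f_i)$ prime to $D_2$, the divisor $\div(f_1/f_2)$ is disjoint from $D_2$, so $\ord_P(f_1/f_2)=0$ for $P\in\supp(D_2)$. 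Hence $(f_1/f_2)(P,t_P)=(f_1/f_2)(P)$, so $(f_1/f_2)(D_2,\underline{t})=(f_1/f_2)(D_2)$. Then $h_v(\div(f_1/f_2),D_2)=\chi_v((f_1/f_2)(D_2))$ by Proposition~\ref{P:hvprops}(\ref{hvprinc}), cancelling the second term.

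For (\ref{hvcommonprinc}), with $D_1=\div(g)$, choose $f$ so that $\div(gf)$ is prime to $D_2$. Then by Proposition~\ref{P:hvprops}(\ref{hvprinc}),
$$h_v(\div(gf),D_2)=\chi_v((gf)(D_2)).$$
The essential combinatorial point is that the normalisation is multiplicative: $(gf)(P,t_P)=g(P,t_P)\,f(P,t_P)$, since $\ord_P(gf)=\ord_P(g)+\ord_P(f)$. Because $\ord_P(gf)=0$ on $\supp(D_2)$, one also has $(gf)(D_2,\underline{t})=(gf)(D_2)$. Combining these, the $f$-contribution in $\chi_v(f(D_2,\underline{t}))$ cancels exactly, leaving $\chi_v(g(D_2,\underline{t}))$.

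For (\ref{hvcommonchange}), the crucial step is to understand how $f(P,t_P)$ depends on $t_P$. If $z_P$ is the normalised parameter associated to $t_P$, then the normalised parameter associated to $\beta_Pt_P$ is $\beta_P^{-1}z_P$ (to leading order), so a direct computation gives
$$f(P,\beta_Pt_P)=\beta_P^{\ord_P(f)}f(P,t_P).$$
Using $\ord_P(f)=-\ord_P(D_1)$ for $P\in\supp(D_2)$, one finds
$$\chi_v\bigl(f(D_2,\underline{\beta}\underline{t})\bigr)=\chi_v\bigl(f(D_2,\underline{t})\bigr)-\sum_{P\in\supp(D_2)}\ord_P(D_1)\ord_P(D_2)\chi_v(\beta_P),$$
and subtracting this from $h_v(D_1+\div(f),D_2)$ yields the claimed formula. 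None of the three parts is really hard; the only mild subtlety is the sign tracking in (\ref{hvcommonchange}) coming from the inverse scaling of the local parameter relative to the tangent vector, which is where I expect the bulk of the verification to lie.
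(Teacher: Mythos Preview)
Your proof is correct. The paper itself simply cites \cite[Propositions~2.4 and 3.4]{balakrishnanbesser2015} without giving the argument, and your direct verification via biadditivity, the principal-divisor property, and the multiplicativity of the normalised values $f(P,t_P)$ is the standard one that underlies that reference.
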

\begin{proof}
See~\cite[Propositions~2.4 and 3.4]{balakrishnanbesser2015}.
\end{proof}
\begin{rem}\label{R:section_tangent}
  One way to fix a tangent vector at every rational point is to fix a (set-theoretic)
  section $t$ of the tangent bundle defined over $F_v$. In this case, we write $h_{v,t}$ for the corresponding
  local height pairing. 
\end{rem}

\subsubsection{Local Coleman--Gross height pairings  for arbitrary
zero-cycles and $\chi_v$ unramified}
\label{subsec:away-common}

Suppose that $v\nmid p$. We now give an interpretation of the local Coleman--Gross pairing defined above in terms of intersection theory on a regular model $\mathcal{C}_v$ of $C_v$ over $\O_v$. This requires defining the self-intersection $(\bar{P}\cdot \bar{P})_{t_P}$, where $\bar{P}$ is the closure of $P\in C_v(F_v)$ in $\mathcal{C}_v$ and $t_P$ is an $F_v$-rational tangent vector at $P$. The following is taken from the proof of~\cite[Proposition~3.3]{balakrishnan2016quadratic}.

\begin{defn}\label{D:self_int}
  Let $\beta \in F_v^\times$ such
  that $\beta{t_P}$ generates the tangent space of $\mathcal{C}_v$ at $\bar{P}$. Then we
  define
  $$(\bar{P}\cdot \bar{P})_{t_P}\colonequals -\ord_v(\beta)\,.$$
\end{defn}

In particular, if $t_P$ is a generator, then $(\bar{P}\cdot
\bar{P})_{t_P}=0$, in accordance with~\cite[\S5]{gro86}. Moreover, the
dependency on the tangent vector is as in
Lemma~\ref{L:hvcommon}~\eqref{hvcommonchange} (except for the
appearance of $\chi_v$). If $t$ is a section of the
tangent bundle defined over $F_v$  and if $\cD_1,\cD_2\in
\Div(\mathcal{C}_v)$ have the property that their restrictions to the
generic fibre are pointwise $F_v$-rational, then we define $(\cD_1\cdot \cD_2)_{t}$ by linearity.

Definition~\ref{D:self_int} and Lemma~\ref{L:hvcommon} imply:
\begin{cor}\label{C:hvt}
  Let $t$ be an $F_v$-rational section of the tangent bundle of $C_v$ and
  let $D_1,D_2\in Z^0(C_v)$.
  Fix extensions $\cD_i$  of $D_i$ to $\Q$-divisors on $\mathcal{C}_v$ that have trivial
  intersection with all vertical divisors. Then we have
  $$
  h_{v,t}(D_1,D_2) = \chi_v(\pi_v)\cdot(\cD_1\cdot\cD_2)_{t}\,.
  $$
\end{cor}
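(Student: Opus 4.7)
The plan is to unravel Definition~\ref{D:hvcommon} of $h_{v,\underline{t}}$ and match it against the intersection-theoretic formula \eqref{IntersThryForm}. Choose $f\in F_v(C_v)^\times$ such that $D_1'\colonequals D_1+\div(f)$ and $D_2$ have disjoint support; by Definition~\ref{D:hvcommon} and \eqref{IntersThryForm},
$$h_{v,\underline{t}}(D_1,D_2) = \chi_v(\pi_v)(\cD_1'\cdot \cD_2) - \chi_v(f(D_2,\underline{t}))$$
for any extension $\cD_1'$ of $D_1'$ with trivial intersection against every vertical divisor. Since $\cD_1+\div_{\cC_v}(f)$ is another such extension (principal divisors have zero intersection with vertical ones), it differs from $\cD_1'$ by a $\Q$-multiple of the total fibre $F$, and $F\cdot \cD_2=\deg(D_2)=0$. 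Thus we may replace $\cD_1'$ by $\cD_1+\div_{\cC_v}(f)$, and the proof reduces to the identity
$$(\cD_1+\div_{\cC_v}(f))\cdot \cD_2 = (\cD_1\cdot \cD_2)_t + \ord_v(f(D_2,\underline{t})),$$
after which multiplying by $\chi_v(\pi_v)$ and using $\chi_v(f(D_2,\underline{t}))=\chi_v(\pi_v)\ord_v(f(D_2,\underline{t}))$ (valid because $\chi_v$ is unramified) yields the statement.

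Next I would reduce to a convenient choice of tangent vectors. By Lemma~\ref{L:hvcommon}~\eqref{hvcommonchange}, rescaling $t_P\mapsto \beta_Pt_P$ changes the left-hand side of Corollary~\ref{C:hvt} by $\sum_P\ord_P(D_1)\ord_P(D_2)\chi_v(\beta_P)$; a direct inspection of Definition~\ref{D:self_int} shows that $(\bar P\cdot \bar P)_{\beta_Pt_P}=(\bar P\cdot \bar P)_{t_P}+\ord_v(\beta_P)$, so the right-hand side changes by $\chi_v(\pi_v)\sum_P\ord_P(D_1)\ord_P(D_2)\ord_v(\beta_P)$, which coincides with the former since $\chi_v(\beta_P)=\chi_v(\pi_v)\ord_v(\beta_P)$ in the unramified case. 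Hence it suffices to verify the identity for a single choice of $\underline{t}$. For each $P\in \supp(D_2)$, choose $t_P$ so that the corresponding normalized local parameter $z_P$ on $C_v$ extends to a local parameter $\tilde z_P$ for the horizontal prime divisor $\bar P$ in the regular surface $\cC_v$ at the closed point $\bar P_0$; with this choice $(\bar P\cdot \bar P)_{t_P}=0$.

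For the local computation, fix $P\in \supp(D_2)$ and set $m_P=\ord_P(D_1)$, $n_P=\ord_P(D_2)$; by disjointness, $\ord_P(f)=-m_P$. Locally at $P$, write $f=z_P^{-m_P}w_P$ with $w_P\in \O_{C_v,P}^\times$ and $w_P(P)=f(P,t_P)$, and lift $w_P$ to $\tilde w_P\colonequals f\cdot \tilde z_P^{m_P}\in F_v(C_v)^\times$. By our choice of $\tilde z_P$, the divisor $\div_{\cC_v}(\tilde z_P)$ has the form $\bar P+\mathcal{Z}_P$ with $\mathcal{Z}_P$ disjoint from $\bar P_0$, so $\mathcal{Z}_P\cdot \bar P=0$; moreover, $\tilde w_P$ is a unit at the generic point of $\bar P$ and $\div_{\cC_v}(\tilde w_P)\cdot \bar P=\ord_v(\tilde w_P(P))=\ord_v(f(P,t_P))$, because $\bar P\cong \Spec \O_v$ has a unique closed point. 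Now decompose globally
$$\cD_1+\div_{\cC_v}(f) = (\cD_1-m_P\bar P) + \div_{\cC_v}(\tilde w_P) - m_P\mathcal{Z}_P,$$
each term being free of $\bar P$ as a component. Intersecting with $\bar P$ and using $(\bar P\cdot \bar P)_{t_P}=0$ yields
$$(\cD_1+\div_{\cC_v}(f))\cdot \bar P = (\cD_1\cdot \bar P)_{t_P} + \ord_v(f(P,t_P)).$$
Multiplying by $n_P$, summing over $P\in \supp(D_2)$, and invoking the standard fact that principal divisors have trivial intersection with vertical divisors gives the displayed identity from the first paragraph.

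The main obstacle is the delicate local bookkeeping around each $\bar P_0$, in particular matching the tangent-vector-dependent self-intersection $(\bar P\cdot \bar P)_{t_P}$ from Definition~\ref{D:self_int} with the vertical correction in $\div_{\cC_v}(\tilde z_P)$. The device of choosing $t_P$ so that $(\bar P\cdot \bar P)_{t_P}=0$ sidesteps this and reduces the argument to standard intersection theory on regular arithmetic surfaces, modulo the transformation lemma for rescaling tangent vectors.
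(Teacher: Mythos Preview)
Your proof is correct and follows essentially the approach the paper intends: the paper states Corollary~\ref{C:hvt} without proof, treating it as immediate from Definition~\ref{D:self_int}, the observation that $(\bar P\cdot\bar P)_{t_P}=0$ when $t_P$ generates the relative tangent space, and the transformation law matching Lemma~\ref{L:hvcommon}\,\eqref{hvcommonchange} (with the details deferred to the proof of \cite[Proposition~3.3]{balakrishnan2016quadratic}). You have carefully written out exactly this argument---reducing via the rescaling compatibility to integral tangent vectors and then doing the local intersection computation---so there is no substantive difference in strategy.

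One small simplification: in your first paragraph you argue that the two extensions of $D_1'$ differ by a $\Q$-multiple of the total fibre. You do not need this structural fact; it suffices that their difference is vertical and that $\cD_2$ has trivial intersection with all vertical divisors.
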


We will also use the following adjunction formula.
\begin{lem}\label{L:selfint}
  Let $t_P$ be determined by  $\omega(P)$ by duality, where $\omega$ is a differential form on $C_v$ such that $P\notin\div(\omega)$. Then we have
  $$
    (\bar{P}\cdot\bar{P})_{t_P} +(\bar{P}\cdot \div(\overline{\omega})) = 0\,,
  $$
  where $\overline{\omega}$ is the differential form on $\mathcal{C}_v$ induced by $\omega$.
\end{lem}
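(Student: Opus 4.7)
The plan is to compute both intersection numbers in a common local-coordinate system along $\bar{P}$; the identity is essentially the arithmetic adjunction formula for a section of the regular model $\mathcal{C}_v$. First, I choose a uniformizer $z$ of the DVR $\O_{\mathcal{C}_v,\bar{P}}$ obtained by localising at the generic point of $\bar{P}$; its image in the function field of $C_v$ is a local parameter at $P$, and by smoothness of the generic fibre $dz$ generates $\Omega^1_{\mathcal{C}_v/\O_v}$ near the generic point of $\bar{P}$. Writing $\overline{\omega}=\tilde f\,dz$ locally near $\bar{P}$, where $\tilde f$ is a rational function on $\mathcal{C}_v$ restricting to some $f\in F_v(C_v)$ on $C_v$, the hypothesis $P\notin\div(\omega)$ gives $f(P)\in F_v^\times$. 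Dually, the $F_v$-rational tangent vector $t_P$ that pairs to $1$ with $\omega(P)=f(P)\,dz(P)$ must equal $f(P)^{-1}\partial_z$, where $\partial_z$ is the vector dual to $dz$ and generates the tangent space of $\mathcal{C}_v$ along $\bar{P}$.

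From this I apply Definition~\ref{D:self_int} with $\beta=f(P)$: since $\beta t_P=\partial_z$ generates the tangent space at $\bar{P}$, I obtain
$$(\bar{P}\cdot\bar{P})_{t_P}=-\ord_v\bigl(f(P)\bigr).$$
For the remaining term, the local equality $\overline{\omega}=\tilde f\,dz$ and the fact that $dz$ is a local generator of $\Omega^1_{\mathcal{C}_v/\O_v}$ near $\bar{P}$ imply $\div(\overline{\omega})=\div(\tilde f)$ in a neighbourhood of $\bar{P}$. As $\bar{P}\cong\Spec\O_v$ is a section and $\tilde f$ has order $0$ along $\bar{P}$ (since $f(P)\neq 0,\infty$), the intersection is computed by pullback along the section: $\tilde f|_{\bar{P}}=f(P)\in F_v^\times$, whose divisor on $\Spec\O_v$ has degree $\ord_v(f(P))$. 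Hence $(\bar{P}\cdot\div(\overline{\omega}))=\ord_v(f(P))$, and the two terms cancel.

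The main technical point is to justify the identifications $\overline{\omega}=\tilde f\,dz$ and $\div(\overline{\omega})=\div(\tilde f)$ not just at the generic point of $\bar{P}$ but in a genuine Zariski neighbourhood, which is needed for the intersection-at-the-closed-point computation. This is immediate if $\bar{P}$ meets the special fibre at a smooth point of $\mathcal{C}_v\to\Spec\O_v$; in the general case one replaces $\Omega^1_{\mathcal{C}_v/\O_v}$ by the relative dualizing sheaf $\omega_{\mathcal{C}_v/\O_v}$, which is always invertible on the regular arithmetic surface $\mathcal{C}_v$, so that both $(\bar{P}\cdot\bar{P})_{t_P}$ and $(\bar{P}\cdot\div(\overline{\omega}))$ become degrees of line bundles on $\bar{P}$. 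In this language the lemma is the degree statement attached to the adjunction isomorphism $N_{\bar{P}/\mathcal{C}_v}\otimes i^{*}\omega_{\mathcal{C}_v/\O_v}\cong\O_{\bar{P}}$ coming from $\omega_{\bar{P}/\O_v}\cong\O_{\bar{P}}$.
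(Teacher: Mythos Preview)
Your proof is correct and is essentially the standard local computation underlying the adjunction formula; the paper itself does not give an argument but simply refers to the proof of \cite[Lemma~3.4\,(ii)]{balakrishnan2016quadratic}, which proceeds along the same lines.

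One minor remark: the caution in your final paragraph is unnecessary. Since $\mathcal{C}_v$ is regular and $\bar{P}\cong\Spec\O_v$ is a section, the image of $\bar{P}$ automatically lies in the smooth locus of $\mathcal{C}_v\to\Spec\O_v$ (a section of a flat morphism from a regular scheme always does). Hence your choice of $z$ as a local generator of the ideal of $\bar{P}$ already makes $dz$ a generator of $\Omega^1_{\mathcal{C}_v/\O_v}$ in a full Zariski neighbourhood of the closed point, and the passage to the relative dualising sheaf is not needed.
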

\begin{proof}
  See the proof of~\cite[Lemma~3.4 (ii)]{balakrishnan2016quadratic}.
\end{proof}

\subsubsection{Local Coleman--Gross height pairings for arbitrary
zero-cycles and $\chi_v$ ramified}\label{subsec:above-common}
Suppose now that $v\mid p$ and $\chi_v$ is ramified. Balakrishnan and
Besser express the local height pairing in terms of Green functions,
introduced by Besser in~\cite{Bes05}. We give a brief summary. For $D_1\in
Z^0(C_v)$, the Green function 
\[G_{D_1}\colon C_v(F_v)\setminus\supp(D_1)\to \Q_p\] 
is a (Vologodsky) Coleman function (see~\cite[Definition~2.2]{Bes05}) that satisfies 
\begin{equation}\label{G-omega}
dG_{D_1} = \omega_{D_1}, 
\end{equation}
by~\cite[Theorem~7.3]{Bes05}, where $\omega_{D_1}$ is the differential of the third kind defined in~\S\ref{subsec:above-disjoint}. In particular, $G_{D_1}$ depends on the subspace $W_v$ and we have
\begin{equation}\label{hvGD}
  h_v(D_1,D_2)=\tr_v(G_{D_1}(D_2))
\end{equation}
for $D_1,D_2\in Z^0(C_v)$ with disjoint support, where $\chi_v =
\tr_v\circ \log_v$, see~\eqref{chitrlog}. By extending scalars,~\eqref{hvGD} extends to $D_1,D_2\in
\Div^0(C_v)$ with disjoint support.

Following~\cite{balakrishnanbesser2015}, we extend~\eqref{hvGD} to divisors with common support by extending $G_{D_1}$ to its poles, equipped with the choice of a tangent vector, as in~\eqref{fxt}. The function $G_{D_1}$ has a logarithmic pole at $P\in \supp(D_1)$, so for any local parameter $z$ at $P$, we can write
$$
G_{D_1} = \ord_P(D_1)\log_v(z)+c_0+O(z)
$$
around $P$. The {\em constant term} of $G_{D_1}$ in $P$ with respect to $z$ is
$c_0$. For each $P\in \supp(D_1)$, we pick a local coordinate $z_{P}$ that is
normalised with respect to a tangent vector $t_P$ as above; let $\underline{t}\colonequals (t_P)_{P\in \supp(D_1)}$. Then we define 
\begin{equation}\label{GDxt}
  G_{D_1}(P, \underline{t})\colonequals G_{D_1}(P, t_P) \colonequals
  c_0(z_P)\,.
\end{equation}
If $t$ is an $F_v$-rational section of the tangent bundle of $C_v$, we also
define $G_{D_1}(P, t)$ (and thus $G_{D_1}(D_2, t)$) in the obvious way. 

\begin{lem}
\emph{(\cite[Proposition~3.4]{balakrishnanbesser2015})}
\label{L:hvtGD}
We have
  \[
    h_{v,t}(D_1,D_2)=\tr_v(G_{D_1}(D_2,t))
\]
  for $D_1,D_2\in Z^0(C_v)$.
\end{lem}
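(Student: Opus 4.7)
The plan is to reduce the statement to the disjoint-support formula \eqref{hvGD} and then to track the constant-term corrections at points of common support. First, I would pick $f \in F_v(C_v)^\times$ so that $D_1 + \div(f)$ has support disjoint from $D_2$; by Definition~\ref{D:hvcommon},
\[
h_{v,t}(D_1,D_2) = h_v\bigl(D_1+\div(f), D_2\bigr) - \chi_v\bigl(f(D_2, t)\bigr),
\]
and by \eqref{hvGD} applied to the now-disjoint pair, $h_v(D_1+\div(f), D_2) = \tr_v\bigl(G_{D_1+\div(f)}(D_2)\bigr)$.

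The second step is to invoke the linearity of Besser's Green-function assignment in the divisor together with the identity $\omega_{\div(f)} = df/f$ to conclude that $G_{\div(f)} = \log_v f$ (as Vologodsky Coleman functions, normalised compatibly with $G_{D_1}$). This gives
\[
G_{D_1+\div(f)} = G_{D_1} + \log_v f
\]
as Vologodsky Coleman functions on the complement of $\supp(D_1) \cup \supp(\div f)$, and it is this additivity that separates the singular and regular contributions at each common point.

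The key local computation is at a point $P \in \supp(D_1) \cap \supp(D_2)$ with $m = \ord_P(D_1)$, hence $\ord_P(f) = -m$. Writing $G_{D_1} = m \log z_P + c_0(z_P) + O(z_P)$ and $f = c\, z_P^{-m}(1+O(z_P))$ near $P$ in the parameter $z_P$ normalised by $t_P$, the logarithmic singularities cancel and one finds $G_{D_1+\div(f)}(P) = c_0(z_P) + \log_v c$. Unwinding the definitions \eqref{fxt} and \eqref{GDxt} identifies this value with $G_{D_1}(P,t_P) + \log_v\bigl(f(P,t_P)\bigr)$. Summing over $\supp(D_2)$ with multiplicities (at points outside $\supp(D_1)\cup\supp(\div f)$ the ordinary and extended evaluations coincide) yields
\[
G_{D_1+\div(f)}(D_2) = G_{D_1}(D_2,t) + \log_v\bigl(f(D_2,t)\bigr).
\]
Applying $\tr_v$ and subtracting $\chi_v(f(D_2,t)) = \tr_v\bigl(\log_v f(D_2,t)\bigr)$ then gives the claim. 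Independence from the choice of $f$ is guaranteed by Lemma~\ref{L:hvcommon}\eqref{hvcommonindep}.

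The main obstacle is the additivity $G_{D_1+\div(f)} = G_{D_1} + \log_v f$ on the nose, rather than merely up to a global constant: this uses that the de Rham class of $df/f$ vanishes, so that the Green function associated to $\div(f)$ with respect to any subspace $W_v$ is canonically $\log_v f$ under the normalisation shared by Besser's construction. Once this is in place, the remainder is bookkeeping with Laurent expansions at the common support.
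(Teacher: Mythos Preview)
Your argument is correct and is essentially the standard proof (the paper itself does not prove this lemma, it only cites \cite[Proposition~3.4]{balakrishnanbesser2015}). One simplification: the obstacle you flag at the end is not actually an obstacle. Since $D_2$ has degree~$0$, both $G_{D_1+\div(f)}(D_2)$ and $G_{D_1}(D_2,t)+\log_v\bigl(f(D_2,t)\bigr)$ are unchanged if $G_{D_1+\div(f)}$, $G_{D_1}$, or $\log_v f$ is shifted by a constant. So you only need the additivity $G_{D_1+\div(f)} = G_{D_1} + \log_v f$ up to an additive constant, and this follows immediately from $\omega_{D_1+\div(f)} = \omega_{D_1} + df/f$ (which holds because $\Psi(df/f)=0$) together with~\eqref{G-omega}. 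No normalisation issue arises.
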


\begin{rem}\label{R:constant_term}
  The constant term was defined by Besser and Furusho for more general Coleman functions in~\cite[Definition~3.1]{BF06}. In this work, we only need it for logarithmic singularities.
\end{rem}

\subsubsection{Local Coleman--Gross height pairings for arbitrary
divisors}\label{subsec:common-nonrat}
So far, we only considered $h_{v,t}(D_1, D_2)$ for $D_1,D_2\in
Z^0(C_v)$ and $t$ an $F_v$-rational section of the tangent bundle. More generally, let $D_1, D_2\in \Div^0(C_v)$. 
Choose a finite field extension $L_w/F_v$ such that 
$D_1,D_2\in Z^0(C_w)$. 
Let $h_{w}$ be the local Coleman--Gross height pairing with respect to the
character $\chi_{L,w}$ defined in~\eqref{chiL} and let $h_{w,t}$ be as
defined in Definition~\ref{D:hvcommon}.
Then we define  
\begin{equation}\label{hvtgeneral}
  h_{v,t}(D_1,D_2) \colonequals \frac{1}{[L_w : F_v]}h_{w,t}(D_1,D_2)\,. 
\end{equation}
This is well-defined by
Proposition~\ref{P:hvprops}~\eqref{hvext} and Definition~\ref{D:hvcommon}. 
By construction, the local height pairing~\eqref{hvtgeneral} is invariant
under finite extensions up to multiplication by the degree of the
extension, as in Proposition~\ref{P:hvprops}~\eqref{hvext}, provided we
make the obvious choice of $t$ when moving to an extension.

For ramified~$\chi_v$, we can use
extension of scalars to define $G_D\colon C_v(\overline{F}_v)\setminus\supp(D)
\to \Q_p$  for any $D\in \Div^0(C_v)$. The extension~\eqref{GDxt} to points
in the support of $D$ remains exactly the same. Then Lemma~\ref{L:hvtGD}
continues to hold.

\begin{rem}\label{CGvsNT}
  Suppose that $\chi_v$ is unramified.
  Recall from Remark~\ref{R:CGNeron} that, in this case, the local
  Coleman--Gross height pairing $h_v$ is the same as the local N\'eron pairing up to a
  constant factor, and that the latter has been extended to divisors with
  common support by Gross in~\cite[\S5]{gro86} in the same way as described
  above. Therefore all results in this section
  for $h_v$ also hold true for the latter.
\end{rem}

\subsubsection{Global Coleman--Gross height pairing in terms of local height pairings for arbitrary
divisors}\label{subsec:global-common-nonrat}

Suppose that $D_1,D_2$ are two global divisors of degree~0 on $C$ (not necessarily relatively
prime or with pointwise rational support) and an $F$-rational section $t$ of the tangent bundle, then the sum over all
$\sum_vh_{v,t}(D_1\otimes F_v, D_2\otimes F_v)$ does not depend on $t$, by
part~\eqref{hvcommonchange} of
Lemma~\ref{L:hvcommon} and the extension discussed
in~\S\ref{subsec:common-nonrat}.
More precisely, we obtain the following generalisation of~\eqref{globloc}:

\begin{prop}\label{P:decomp}
  Let $D_1,D_2\in \Div^0(C)$ and let $t$ be an $F$-rational section of the tangent bundle on $C$. Then
  the global Coleman--Gross height $h$ from Proposition~\ref{P:globalCG} satisfies
  $$
    h(D_1,D_2) = \sum_v h_{v,t}(D_1\otimes F_v,D_2\otimes F_v)\,.
  $$
\end{prop}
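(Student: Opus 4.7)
The plan is to reduce to the disjoint-support identity~\eqref{globloc} via a moving lemma and then check that the error terms add up to a value of $\chi$ on a principal id\`ele.

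First, using Riemann--Roch over $F$, I would choose $f\in F(C)^\times$ such that $D_1':=D_1+\div(f)$ and $D_2$ have disjoint support globally. Because the global Coleman--Gross pairing $h$ factors through $J(F)\times J(F)$ by Proposition~\ref{P:globalCG}, we have $h(D_1,D_2)=h(D_1',D_2)=\sum_v h_v(D_1'\otimes F_v,D_2\otimes F_v)$ by~\eqref{globloc}.

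Next, I would establish, at each finite place $v$, the local formula
\[
  h_{v,t}(D_1\otimes F_v,D_2\otimes F_v)\;=\;h_v(D_1'\otimes F_v,D_2\otimes F_v)\;-\;\chi_v\bigl(f(D_2,\underline t)\bigr).
\]
When $D_2\otimes F_v$ already has pointwise $F_v$-rational support, this is Definition~\ref{D:hvcommon} applied with the cleanup function $f$. In general, I would fix a finite extension $L/F$ large enough that its completion $L_w$ at a chosen place over $v$ contains all the relevant residue fields; applying Definition~\ref{D:hvcommon} over $L_w$ gives a correction $\chi_{L,w}(f(D_2,\underline t))$, and descending through~\eqref{hvtgeneral} together with Proposition~\ref{P:hvprops}\eqref{hvext} and the identity $\chi_{L,w}=\chi_v\circ N_{L_w/F_v}$ makes every factor of $[L_w:F_v]$ cancel.

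Summing over $v$ would then give
\[
  \sum_v h_{v,t}(D_1\otimes F_v,D_2\otimes F_v)\;=\;h(D_1,D_2)\;-\;\sum_v\chi_v\bigl(f(D_2,\underline t)\bigr),
\]
so the proof concludes once $\sum_v\chi_v(f(D_2,\underline t))=0$. The crucial point is that the global element $f(D_2,\underline t)=\prod_P f(P,t_P)^{n_P}$ lies in $F^\times$: the individual factors live in the residue fields of points in $\supp(D_2)$, but since $f\in F(C)$, $D_2$ is $F$-rational (so $n_{\sigma(P)}=n_P$), and $t$ is an $F$-rational section (so $\sigma(t_P)=t_{\sigma(P)}$), the Galois group permutes the factors compatibly with their exponents, leaving the product fixed. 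The sum then becomes $\chi$ applied to a principal id\`ele, which vanishes because $\chi$ factors through $\mathbb A_F^\times/F^\times$.

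The main obstacle I expect is the careful bookkeeping in the extension step: one must verify that the tangent-vector normalisation behaves well under base change (so that the correction term over $L_w$ really is $\chi_{L,w}$ applied to the base change of the global element $f(D_2,\underline t)$) and that the factor $[L_w:F_v]$ coming from the norm cancels exactly the one in~\eqref{hvtgeneral}. Everything else is a direct chain of the formulas already recorded in the text.
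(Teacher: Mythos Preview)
Your argument is correct and matches the paper's approach: the paper does not give a detailed proof but sketches, in the paragraph preceding the proposition, exactly the strategy you carry out (move $D_1$ by a global $f$, invoke Definition~\ref{D:hvcommon}, and use that the correction terms sum to $\chi$ of a principal id\`ele). Your treatment is in fact more explicit than the paper's, particularly in the Galois-invariance check that $f(D_2,\underline t)\in F^\times$ and in the bookkeeping for the extension step, both of which the paper leaves implicit.
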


\section{$p$-adic N\'eron functions}\label{sec:NF} 

N\'eron constructed the real-valued global N\'eron--Tate pairing on an abelian
variety $A$ over a number field $F$ as a sum of
local pairings over local fields, satisfying certain properties, and showed that these
local pairings exist and are uniquely determined by these
propertie; see \cite{Ner65}. Since the global pairing is symmetric and
bilinear, it gives rise to a quadratic form, the N\'eron--Tate height
function. This function can also be written as a sum of local functions,
which Lang called \textit{N\'eron functions} in~\cite[Chapter~11]{Lan83}.

In this section, we discuss $\Q_p$-valued analogues of N\'eron functions
with respect to a local character $\chi_v\colon F_v^*\to \Q_p$,
generalising work of the first author~\cite{Bia23}. 
When $\chi_v$ is unramified for a non-archimedean place $v$
 of $F$, then the $p$-adic
N\'eron function at $v$ is simply the $\R$-valued N\'eron function, up to a
constant multiplicative factor. For the remaining non-archimedean places, we use work of
Besser, Srinivasan and the third author~\cite{BMS} to construct a $p$-adic N\'eron
function at $v$ associated to a complementary subspace $W_v$ as in
Section~\ref{S:CG}. 
We show in Section~\ref{sec:MT} that sums of $p$-adic N\'eron
functions are global $\Q_p$-valued height pairings as defined by Mazur and
Tate.

\subsection{Real-valued N\'eron functions}\label{subsec:NFreal}
Let $v$ be a place of $F$ and let $A/F_v$ be an abelian variety.
In the following, for some set $S$ and two maps $\mu,\mu'$ with the same
domain and mapping to $S$, we write 
\begin{equation}\label{E:const}
  \mu \equiv \mu' \bmod S
\end{equation}
if  $\mu-\mu'\in S$ is constant.

\begin{prop}
\emph{(N\'eron, see~\cite[Theorems~1.1,~1.5,~5.1,~5.2]{Lan83})}
\label{P:rNeron}
  For each divisor $D$ on ${A}$, there is a function $\mu_D\colon
  A(F_v)\setminus\supp(D)\to \R$, unique
  up to constant functions, such that the following properties are
  satisfied:
  \begin{enumerate}
    \item\label{rNF1} We have 
        $\mu_{D+E} \equiv \mu_{D} + \mu_{E} \bmod \R$.
      \item\label{rNF2} If $D=\div(f)$ is principal, then
    $\mu_{D} \equiv -\log\circ |f|_v \bmod \R$.
  \item\label{rNF3} For $a\in A(F_v)$, we have $\mu_{D_a}(x) \equiv \mu_D(x+a)$ $
    \bmod\ \R$, where $D_a=\tau_a^*D$.
    \item \label{rNF5} We have $\mu_{[2]^*D}\equiv\mu_D\circ
      [2]\bmod \R$.
\item \label{rNF7} 
      Let $F'_w/F_v$ be a finite extension.
      Then we have 
      $$
        \mu_{D\otimes F'_w}(x) \equiv [F'_w:F_v]\mu_D(x) \bmod \R\quad \text{for}\; x\in
        A(F_v)\setminus\supp(D)\,
      $$
      where $\mu_{D\otimes F'_w}$ is a N\'eron function with
      respect to ${D\otimes F'_w}$. 
    \item\label{rNF4} If $v<\infty$, then the function $\mu_D$ is locally constant.
    \item\label{rNF8} If $v<\infty$, then there is a function
      $\mu^{\Q}_D\colon A(F_v)\setminus\supp(D)\to \Q$ such that 
      \[
        \mu_D \equiv 
        \mu^{\Q}_D\cdot c_v\bmod\R\,,
      \]
      where $c_v$ is defined in~\eqref{cv}.
      Moreover, $\mu^{\Q}_D$ has bounded denominator.
  \end{enumerate}
  We call $\mu_D$ a \textit{(real) N\'eron function} with respect to $D$.
\end{prop}
\begin{rem}\label{R:rlambdaDunique}
    Suppose that the class of $D$ is symmetric (respectively
      antisymmetric), and let $d =4$ (respectively $d =2$), so that there
      is a function $f\in F_v(A)$
    such that $[2]^*D -d  D = \div(f)$. By
    Proposition~\ref{P:rNeron}\thinspace{}\eqref{rNF5} and~\eqref{rNF2}, we then have
    $\mu_D\circ [2] - d  \mu_D \equiv
     -\log\circ |f|_v \bmod \R$.
      The choice of $f$ uniquely fixes the N\'eron function. 
\end{rem}

\begin{rem}\label{R:}
  When $F_v$ is archimedean, one can describe real N\'eron functions using theta functions
  (see~\cite[Chapter~13]{Lan83}) or, when $A$ is the Jacobian of a
  hyperelliptic curve, sigma functions (see~\cite{uch11}).
\end{rem}

One way to construct 
N\'eron functions is using (logarithms of)
metrics on line bundles. We now 
review this construction.

\subsection{Real-valued metrics, N\'eron functions and valuations}\label{subsec:metrics}
Let $v$ be any place of $F$ and let $X/F_v$ be a projective variety.
Following~\cite[\S2.7]{BG06}, we define a
\textit{metric} $\|\cdot \|$ on a line bundle $\cL/X$ to be a family of norms on each
fibre $\cL_x$ of $\cL$, where $x$ runs through $X(\bar{F}_v)$. We require that
metrics are continuous and
locally bounded. The latter means that for all open $U\subset X$ and all 
sections $s$ without zeros on $U$, the function $x\mapsto \log\|s(x)\|$ is locally bounded on $U$.
We call $\overline{\cL}\colonequals (\cL,\|\cdot\|)$ a \textit{metrised line bundle}.
One defines the tensor product of two metrised line
bundles by tensoring the bundles and locally multiplying the norms.  
An \textit{isometry} between metrised line bundles is an isomorphism of
line bundles that restricts to an isometry on each fibre. This
induces a group structure on the isometry classes of metrised line bundles. The
pullback of a metrised line bundle under morphisms of varieties is defined
in the obvious way. 

Metrised line bundles were first defined in the archimedean setting. For
instance, when $F_v=\C$, one often requires the metric to be Hermitian.
Such metrics were used to develop
Arakelov geometry and hence have important applications in arithmetic
geometry. Building on work of Chinburg and Rumely~\cite{CR93},  Zhang developed an
analogue of metrised line bundles for non-archimedean $v$ when $X$ is a
smooth projective curve in~\cite{zhang93:admissible_pairing}.

From now on suppose that $A/F_v$ is an abelian variety. It is a natural
question whether there are metrics that are well-behaved with respect to
the group law on $A$. For $F_v=\C$, the existence and uniqueness of such
metrics was shown by Moret-Bailly~\cite{MB85}. We now discuss an extension
of Moret-Bailly's results to arbitrary places $v$ by Zhang~\cite{Zha95}.
We say that $\cL/A$ is \textit{symmetric} (respectively
\textit{antisymmetric}) if there is an isomorphism
\begin{equation}\label{symmanti}
\varphi\colon  [2]^*\cL\simeq \cL^{\otimes d}
\end{equation}
where $d=4$ (respectively $d=2$). For an isomorphism $\varphi$
as in~\eqref{symmanti}, a metric $\|\cdot\|$ on a symmetric or
antisymmetric $\cL$ is \textit{$\varphi$-admissible} if $\varphi$ is an
isometry for the metrics induced by $\|\cdot\|$. Following Zhang, we call
the metric \textit{admissible} if it is $\varphi$-admissible for some
$\varphi$. Zhang shows~\cite[Theorem~2.2]{Zha95} that there is
a unique locally bounded continuous $\varphi$-admissible metric, and that
changing $\varphi$ by a constant also changes this metric by a constant.
Hence, up to a multiplicative constant, there is a unique
\textit{admissible} metric. 

\begin{rem}\label{R:NoRigid}
One can remove the indeterminacy by choosing an $F_v$-rational
  rigidificiation of all line bundles on $A$ as
in~\cite[\S9.5.6]{BG06} or~\cite[\S2]{BMS}, since this fixes an
  isomorphism $\varphi$ as in~\eqref{symmanti}. We do not rigidify our 
  bundles, since in analogy with Proposition~\ref{P:rNeron}, we
  are aiming to build a theory of $p$-adic N\'eron functions that are
  unique up to an additive constant.
\end{rem}

Via the usual decomposition
\begin{equation}\label{EvenOddDecomp}
  \cL^{\otimes 2}\cong \cL^+\otimes \cL^-
 \end{equation}
where 
$\cL^+\colonequals \cL \otimes [-1]^* \cL$ is symmetric  and $\cL^-
\colonequals \cL \otimes
([-1]^* \cL)^{-1}$ is antisymmetric,
one obtains a notion of admissible metrics on all line bundles $\cL/A$.
The set of admissible metrics has good functorial properties,
see~\cite[Lemma~3.5]{hei04} and~\cite[Theorem~3.1]{MB85}.

\begin{rem}\label{R:ZhangDyn}
  Zhang constructs admissible metrics dynamically, using a variant of
  Tate's trick for the construction of global N\'eron--Tate heights. 
  Accordingly, his construction works in more general dynamic situations,
  where one has an analogue of~\eqref{symmanti}.
\end{rem}

We are interested in admissible metrics because of the following result due
to Heinz:
\begin{thm}
\emph{(\cite[Theorem~3.7]{hei04})}
\label{T:NFmet}
  Let $\|\cdot\|$ be an admissible metric on a line bundle $\cL/A$. Let $s$
  be a rational section of $\cL$ and let $D=\div(s)$. Then $x\mapsto
  -\log\|s(x)\|$ defines a real
  N\'eron function with respect to $D$. 
\end{thm}
\begin{rem}\label{R:}
  In fact there is no minus sign in~\cite[Theorem~3.7]{hei04}; this is
  because Heinz uses a different normalisation of real N\'eron functions.
\end{rem}

\subsubsection{Canonical valuations}\label{S:CanVal}
Now suppose that $v$ is non-archimedean. 
Recall that our goal is to construct $\Q_p$-valued analogues of real
N\'eron-functions. To this end, we first note that real N\'eron functions are
given in terms of logarithms of metrics, rather than metrics themselves.
Let $\cL^\times$ denote the $\G_m$-torsor obtained by removing the zero section from
the total space of a line bundle ${\cL}/A$. 
Fix an admissible metric $\|\cdot\|$ on $\cL$.
By Proposition~\ref{P:rNeron}\eqref{rNF8} and Theorem~\ref{T:NFmet}, we
obtain a function $v_\cL\colon \cL^\times(F_v)\to \Q$  by setting
\begin{equation}\label{E:val}
  v_{\cL}(u)\colonequals -\log\|u\|\cdot c_v^{-1}\,.
\end{equation}
Following~\cite[\S2]{BMS}, we call the function $v_\cL$ a
  \textit{canonical valuation}; it is called a {\em N\'eron log-metric} in
  \cite[\S1.1.3]{Bet23}. 

  By construction, the canonical valuation is
  uniquely determined by the line bundle $\cL$ up to an
  additive constant.
  It becomes uniquely determined by fixing a
  rigidification for every line bundle on $A$, see Remark~\ref{R:NoRigid}. Changing the rigidification by a scalar factor $c$ changes
  $v_{\cL}$ by the additive constant $\ord_v(c)\in \Z$ (see~\cite[Remark~2.10]{BMS}).
Using this, the following  result follows straightforwardly from the
  corresponding properties of real N\'eron functions via
  Theorem~\ref{T:NFmet}; see~\cite[Proposition~2.9]{BMS} and~\cite[Lemma~3.0.1]{Bet23} for
  details.
\begin{prop}\label{P:goodval}
  The assignment $\cL\mapsto v_\cL$ satisfies the following properties, and
  the valuations $v_{\cL}$ are uniquely
  determined by them up to an additive constant in $\Z$.
  \begin{enumerate}
    \item $v_\cL \bmod{\Z}$ only depends on $\cL$ up to isomorphism of line bundles.
    \item\label{P:goodadd} $v_{\cL\otimes \mathcal{M}} \equiv v_\cL +
      v_\mathcal{M}\bmod{\Z}$.
    \item\label{P:TrivVal} $v_{\O_A}(x, \cdot) \equiv \ord_v(\cdot)\bmod{\Z}$\, for
      $x\in A$, via the isomorphism $A\times \mathbb{G}_m\simeq
      \O_A^\times$.
    \item\label{P:valfunc} $v_{\varphi^*\cL} \equiv \varphi^*v_\cL\bmod{\Z}$\, for homomorphisms $\varphi\colon A'\to A$ of
      abelian varieties over $F_v$.
    \item $v_\cL$ is locally constant.
    \item $v_\cL$ is $\Q$-valued and has bounded denominator.
  \end{enumerate}
\end{prop}
  
\begin{rem}\label{R:}
  For non-archimedean $v$ of good reduction, the admissible metric, and
  hence the canonical valuation can be
  defined using intersection theory on the N\'eron model,
  see~\cite[Example~2.7.20]{BG06}. 
\end{rem}

\begin{rem}\label{R:GenVal}
  More generally, for any metric $\|\cdot\|$ on a line bundle $\cL$ on a projective variety
  $X/F_v$ such that $\|\cdot\|$ takes values in $\Q\cdot c_v$,
  we can define a corresponding \textit{valuation}
  $v_{\|\cdot\|}\colon \cL^\times(F_v)\to \Q$ via
\begin{equation}\label{E:valgen}
  v_{\|\cdot\|}(u)\colonequals -\log\|u\|\cdot c_v^{-1}\,.
\end{equation}
We will use this construction when $X=C$ or $X=C\times C$ and $C$ is a
  curve of genus~2 in~\S\ref{sec:comp_away}.
  By construction, every such valuation satisfies 
  \begin{equation}\label{ValTrans}
    v_{\|\cdot\|}(bu) = v_{\|\cdot\|}(u) + \ord_v(b)\quad\text{ for all }\;b \in
    F_v^\times\quad\text{and }\;u \in {\cL}^\times(F_v)\,.
  \end{equation}
\end{rem}

\subsection{$p$-adic N\'eron functions: the unramified
case}\label{subsec:NFunram}
In the following, we let $p$ be a prime and fix  a non-archimedean place
$v$ of $F$ such that $v\nmid p$. We also fix a continuous homomorphism
$\chi_v\colon F_v^\times \to \Q_p$.
Let $A/F_v$ be an abelian variety. As in~\S\ref{subsec:hvprops}, consider the $\Q$-algebra $S =
  \Q\cdot\chi_v(\pi_v)\subset \Q_p$. 
  \begin{defn}\label{D:ellNF}
    For each line bundle $\cL/A$, fix 
    a canonical valuation $v_\cL$ on $\cL$.
 Let 
    $D\in \Div(A)$  and let $s$ be a rational section of $\O(D)$
such that $D=\div(s)$, 
    The {\em $p$-adic N\'eron function with respect to $D$ (and 
    $\chi_v$)}
    is the function
    \begin{align*}
\lambda_D\colon
    A({F_v})\setminus\supp(D)\to S \subset \Q_p\\
      x \mapsto v_{\cL}(s(x))\cdot \chi_v(\pi_v)\,.
    \end{align*}
  \end{defn}

Since the canonical valuation $v_{\cL}$ is uniquely
determined by $\cL$ up to an additive constant, the same is true for the
$p$-adic N\'eron function.

  \begin{rem}\label{R:pNFvsRNF}
    By definition and by Theorem~\ref{T:NFmet},  we have
    \begin{equation}\label{}
      \lambda_D \equiv    \mu^{\Q}_D\cdot \chi_v(\pi_v) \bmod{S}\,,
    \end{equation} where $\mu^{\Q}_D$ is the
    $\Q$-valued N\'eron function
    from Proposition~\ref{P:rNeron}\eqref{rNF8}.
  \end{rem}

\begin{prop}\label{P:Neron}
  For each divisor $D$ on ${A}$,
  the $p$-adic N\'eron function $\lambda_D$ satisfies
  the following properties, and is uniquely determined by them modulo $S$.
  \begin{enumerate}
    \item\label{NF1} We have 
        $\lambda_{D+E} \equiv \lambda_{D} + \lambda_{E} \bmod S$.
      \item\label{NF2} If $D=\div(f)$ is principal, then
    $\lambda_{D} \equiv -\chi_v\circ f \bmod S$.
  \item\label{NF3} For $a\in A(F_v)$, we have $\lambda_{D_a}(x) \equiv \lambda_D(x+a)$ $
    \bmod\ S$, where $D_a=\tau_a^*D$.
    \item \label{NF5} We have $\lambda_{[2]^*D}\equiv\lambda_D\circ
      [2]\bmod S$.
    \item\label{NF4} The function $\lambda_D$ is locally constant.
\item \label{NF7} 
      Let $F'_w/F_v$ be a finite extension.
      Then we have 
      $$
        \lambda_{D\otimes F'_w}(x) \equiv [F'_w:F_v]\lambda_D(x) \bmod S\quad \text{for}\; x\in
        A(F_v)\setminus\supp(D)\,
      $$
      where $\lambda_{D\otimes F'_w}$ is a $p$-adic N\'eron function with
      respect to ${D\otimes F'_w}$ and to $\chi_{F'_w,w}$\,.
  \end{enumerate}
\end{prop}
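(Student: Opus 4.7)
The strategy is to translate each listed property of $\lambda_D$ into the corresponding functoriality or compatibility of the canonical valuation $v_{\cL}$ on the line bundle $\cL$ with $D=\div(s)$, since by construction
$$\lambda_D(x) = -v_{\cL}(s(x))\cdot \chi_v(\pi_v) \pmod S\,,$$
and the ambiguity in the choice of rigidification (equivalently, of $s$ up to $F_v^\times$) is precisely absorbed into the class in $S$.

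For \eqref{NF1}, I would invoke the tensor-product compatibility of the canonical metric: $\|s_1\otimes s_2\|_{\cL_1\otimes\cL_2} = \|s_1\|_{\cL_1}\cdot\|s_2\|_{\cL_2}$, which after taking $-\log$ gives $v_{\cL_1\otimes\cL_2} = v_{\cL_1}+v_{\cL_2}$; applied to $D=\div(s)$, $E=\div(t)$ and $D+E = \div(s\otimes t)$, this yields additivity. For \eqref{NF2}, the canonical metric on the trivial bundle $\O_{A_v}$ is the trivial one (forced by the uniqueness characterization), so $v_{\O_{A_v}}(f(x)) = \ord_v(f(x))$; since $\chi_v$ is unramified, $\chi_v = \ord_v\cdot \chi_v(\pi_v)$ on $F_v^\times$, giving $\lambda_{\div(f)}\equiv -\chi_v\circ f \bmod S$. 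Properties \eqref{NF3} and \eqref{NF5} both follow from the functoriality of the canonical metric under pullback along morphisms of abelian varieties: by the uniqueness characterization in \cite[Theorem~9.5.7]{BG06}, if $\phi\colon A_v\to A_v$ is any endomorphism (taking $\phi=\tau_a$ or $\phi=[2]$) then $\phi^*\|\cdot\|_{\cL}=\|\cdot\|_{\phi^*\cL}$, hence $v_{\phi^*\cL}(\phi^*s(x)) = v_{\cL}(s(\phi(x)))$. Property \eqref{NF4} follows from the fact that for $v\nmid p$ the canonical valuation is locally constant on $\cL^\times$ — it is computed from the reduction on an integral model (see~\cite[\S2]{BMS}), which only depends on $x$ modulo a $v$-adic open neighbourhood. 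Property \eqref{NF7} is a base-change compatibility: the canonical metric is preserved under extension of the base field, but the normalizing factor $\log_{\R}(\mathrm{Nm}_{L/\Q}(w)) = [L_w:F_v]\log_{\R}(\mathrm{Nm}_{F/\Q}(v))$ introduces the expected factor $[L_w:F_v]$.

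The main obstacle, and the only part that requires a genuine argument rather than unwinding definitions, is the uniqueness modulo $S$. Given two functions $\lambda_D,\lambda'_D$ satisfying \eqref{NF1}--\eqref{NF4}, their difference $\psi$ extends to a locally constant function on all of $A_v(F_v)$ (its logarithmic singularities cancel by \eqref{NF2}), depends only on the class of $D$ in $\Pic(A_v)$ (by \eqref{NF1} and \eqref{NF2}), and satisfies $\psi_{[2]^*D}=\psi_D\circ[2]$ by \eqref{NF5}. Decomposing $D$ (up to algebraic equivalence) into its symmetric and antisymmetric parts under $[-1]^*$, the relations $[2]^*\cL\cong\cL^{\otimes 4}$ and $[2]^*\cL\cong\cL^{\otimes 2}$ respectively give functional equations of the form $\psi_D\circ[2] \equiv n\cdot \psi_D \bmod S$ with $n\in\{2,4\}$. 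Combined with the boundedness of $\psi_D$ (as a locally constant function on the compact space $A_v(F_v)$ taking values in the discrete set $S$) and iteration under $[2^k]$, this forces $\psi_D\in S$. This is the direct $p$-adic analogue of the dynamical uniqueness argument for real-valued canonical metrics in \cite[Theorem~9.5.4]{BG06}.
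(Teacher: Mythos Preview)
Your overall strategy is sound, and the uniqueness argument is essentially the paper's own: the paper also observes that $\nu_D=\lambda_D-\lambda'_D$ is locally constant with finite image in $S$, and that closure of that finite image under multiplication by~$4$ (resp.~$2$) in the symmetric (resp.\ antisymmetric) case forces it to be trivial (citing~\cite[Lemma~3.0.1]{Bet23}). One small imprecision in your write-up: $S=\Q\cdot\chi_v(\pi_v)$ is not discrete in $\Q_p$, so ``values in the discrete set $S$'' is not the reason the image is finite --- rather, a locally constant function on the compact space $A_v(F_v)$ takes finitely many values regardless of the target.

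For properties \eqref{NF1}--\eqref{NF7} the paper takes a shortcut you do not: since $\lambda_D$ is by construction a rescaling of the real-valued N\'eron function, these properties are simply quoted from~\cite[Chapter~11, Theorems~1.1, 1.5, 5.1, 5.2]{Lan83}. Your metric-functoriality route is more self-contained and perfectly reasonable in principle.

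There is, however, one genuine slip. In your justification of \eqref{NF3} you call $\tau_a$ an ``endomorphism'' of $A_v$ and appeal to functoriality of the canonical metric under homomorphisms via~\cite[Theorem~9.5.7]{BG06}. Translation by a nonzero point is not a group homomorphism, so that theorem does not apply to $\tau_a$. Translation-invariance of the canonical metric is true but needs a separate argument --- for instance via the theorem of the square, or by checking directly that $\tau_a^*\|\cdot\|_{\cL}$ satisfies the defining isometry characterising the canonical metric on $\tau_a^*\cL$, or simply by invoking the real-valued result from~\cite{Lan83} as the paper does. As written, your proof of \eqref{NF3} has a gap at exactly this point.
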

\begin{proof}
The $p$-adic N\'eron satisfies the listed properties
  because of Proposition~\ref{P:goodval}.

  For uniqueness, we follow the proof of~\cite[Lemma~3.0.1]{Bet23}. Suppose we have another family $(\lambda'_D)_D$ of
  functions $A({F_v})\setminus\supp(D)\to S$ satisfying \eqref{NF1}--\eqref{NF5}. For a divisor
  $D$, we can choose the rigidification implicit in constructing  
  $\lambda_D$ so that we have
  \begin{equation}\label{}
  \lambda_{[2]^*D}-\lambda_D\circ [2] = \lambda'_{[2]^*D}-\lambda'_D\circ
    [2]\,.
  \end{equation}
  If $D$ is symmetric (respectively antisymmetric), then the image of the
  locally constant function $$\nu_D\colonequals \lambda_D-\lambda'_D\colon
  A(F_v)\setminus\supp(D)\to S$$ is closed under multiplication by~4
  (respectively~2) and finite, hence trivial. Decomposing divisors up to a
  factor~2 into a symmetric and an antisymmetric part and
  using~\eqref{NF1}, uniqueness $\bmod S$ follows.
\end{proof}
\begin{rem}\label{R:lambdaDunique}
  As in Remark~\ref{R:rlambdaDunique}, we can fix 
the $p$-adic N\'eron function uniquely for symmetric and antisymmetric $D$
by fixing  $f\in F_v(A)$
    such that $[2]^*D -d D = \div(f)$ (where $d\in \{2,4\}$) and requiring 
    $\lambda_D\circ [2] - d \lambda_D =
      -\chi_v\circ f$.
      This corresponds to rigidifying $\O(D)$, see Remark~\ref{R:NoRigid}.
\end{rem}

\begin{rem}\label{R:}
  One can, of course, define $p$-adic N\'eron functions directly by
  requiring the properties in Proposition~\ref{P:Neron} and showing
  existence and uniqueness using Proposition~\ref{P:rNeron}, without using
  canonical valuations. However, when  $\chi_v$ is ramified, this does
  not work anymore. As we will explain, one instead uses certain $p$-adic analytic
  functions that behave somewhat analogously to canonical valuations. In fact,
  canonical valuations can be constructed from the latter;
  see~\cite[\S9]{BMS}.
\end{rem}

\subsection{Canonical log functions}\label{subsec:logfns}
Suppose that $\chi_v$ is ramified; in particular, $v\mid p$.
The construction of $p$-adic N\'eron functions above ultimately relies on
the real-valued analogue, where a dynamical approach can be used. This does
not work in the present setting, so a different idea is needed. 
Here, we give a construction based on $p$-adic log functions developed by Besser
in~\cite{Bes05} and extended in~\cite{BMS}.
These are $p$-adic analytic analogues of the valuations on line bundles discussed
in~\S\ref{subsec:NFreal}. We summarise the theory here, and we refer to~\cite[\S\S3-4]{BMS} for details. 

Let ${\cL}/A$ be a line bundle and let $\O_V({\cL}^\times)$ be
the $F_v$-vector space of Vologodsky functions on ${\cL}^\times$ defined
in Theorem~\ref{T:volsum}.
Recall that its elements are locally analytic functions that locally look like iterated
Vologodsky integrals. 
\begin{defn}\label{D:LogFn}
A  Vologodsky function $\mu\in \O_V({\cL}^\times)$ is a {\em
  log function with respect to $\cL$}
if 
  \begin{enumerate}
    \item $\mu(b u) = \mu(u) + \log_v(b)$ for all $b \in F_v^\times$ and $u
      \in {\cL}^\times(F_v)$; 
 \item $d \mu\in \ocola^1(\cL^\times)$\,,
  \end{enumerate}
where $\log_v$ is the branch of the
logarithm induced by $\chi_v$ via~\eqref{BranchOfP-adicLogComingFromEllv}
  and $\ocola^1(\cL^\times)$ is discussed in~\S\ref{subsec:delbr}.
  A~\textit{metrised line bundle} is a pair $(\cL, \mu)$, where $\mu$ is a log
  function with respect to $\cL$.
\end{defn}
The first condition is clearly the analogue of~\eqref{ValTrans}. 
One can think of log functions as $p$-adic analogues of
logarithms of Hermitian metrics on line bundles over $\C$. Such metrics are
uniquely determined by their curvature form up to a constant. In our case,
there is a higher degree of indeterminacy, but Besser has shown that one
may still attach a  curvature form to a log function, and this is
precisely the reason why one requires the second condition in
Definition~\ref{D:LogFn}, as we now explain.

To define Besser's curvature form, note that by Hodge theory,
the first Chern class $\mathrm{ch}_1({\cL})\in H^2_{\dR}(A/F_v)$ lies in the image of
$\Omega^1(A)\otimes H_{\dR}^1(A/F_v)$ under the cup product pairing. 
The latter is the codomain of the $\delbr$-operator,
see~\eqref{delbar}.
We write
  \begin{equation}\label{CurvSet}
    \cC_\cL 
    = \{\alpha\in \Omega^1(A)\otimes H_{\dR}^1(A/F_v)\,:\,
  \cup \alpha =
  \mathrm{ch}_1({\cL})\}\,. 
  \end{equation}
Let $\pi\colon \cL^\times\to A$ be the projection map.
\begin{prop}
\emph{(\cite[Proposition~4.4]{Bes05})}
\label{P:logfnprops}
  \begin{enumerate}
    \item Let $\alpha\in \cC_\cL$.  Then there is a log function $\mu$ with respect to
$\cL$, unique up to
      the integral of a holomorphic 1-form, such that 
\begin{equation}\label{curvature}
\pi^\ast \alpha=\delbr d \mu\,.
\end{equation}
\item Conversely, let $\mu$ be a log function with respect to $\cL$. Then
  there is a unique element $\alpha\in \cC_\cL$,      called the
      \em{curvature form of $(\cL,\mu)$}, such that~\eqref{curvature} holds.
  \end{enumerate}
\end{prop}

Log functions, metrised line bundles and curvature forms behave functorially with respect to pull-backs and tensor
products, and one  can define isometries of metrised line bundles 
in the natural way; see~\cite[Definition~3.9]{BMS} for details.

We now recall the construction of an analogue of the~\textit{canonical} valuation with
respect to a line bundle from~\cite[\S4]{BMS}. 
In loc. cit. all line bundles are assumed to be rigidified, which makes
various statements rather technical.
For simplicity, we do not make this assumption
in our summary; one then obtains a theory of canonical log functions (and hence
$p$-adic N\'eron functions) that are uniquely determined up to additive constants, similar
to the unramified case, see Remark~\ref{R:NoRigid}. However, in the ramified case there
is another choice to be made, namely the choice of a subspace
of $H_{\dR}^1(A/F_v)$ that is complementary to $H_{\dR}^{1,0}(A/F_v)$. 
When $A$ is the Jacobian of a curve, then this is exactly the same degree
of freedom as for the local Coleman--Gross height introduced
in~\S\ref{subsec:above-disjoint}, identifying the de Rham cohomology of the
curve and its Jacobian via an Abel--Jacobi map.

We first assume that $\cL$ is symmetric, and
we call a log function with respect to $\cL$~\textit{good} if 
some isomorphism $[2]^*\cL\cong \cL^{\otimes 4}$ is an isometry between
the metrised line bundles one obtains by endowing $[2]^*\cL$ and
$\cL^{\otimes 4}$ with the log functions induced by the log
function on $\cL$ via pullback and tensoring, respectively.
Instead of
using a dynamic approach as in the real-valued theory, one now uses the
indeterminacy of the log function associated to a curvature form
$\alpha\in \cC_\cL$: there is a unique choice of holomorphic form  such
that the resulting log function with curvature form $\alpha$ is good, up
to an additive constant.
See~\cite[Theorem~4.14]{BMS} and its proof.

Now we apply this to the Poincar\'e bundle $\cP$ on $A\times A^\vee$,
where $A^\vee$ denotes the dual abelian variety.
It turns
out that we need to restrict the choice of curvature forms $\alpha\in
\cC_\cP$.
Denote by $\pi_A,\pi_{A^\vee}$ the projections from $A\times A^\vee$ to the
respective component, and
consider 
\begin{align*}
  \Omega^1(A\times A^\vee)\otimes \hdr^1(A\times A^\vee/F_v) &\simeq
  \big(\pi_A^\ast \Omega^1(A) \otimes \pi_A^\ast \hdr^1(A/F_v)\big)\\&  \oplus
  \big(\pi_A^\ast \Omega^1(A) \otimes \pi_{A^\vee}^\ast
  \hdr^1(A^\vee/F_v)\big) \\&   \oplus
  \big(\pi_{A^\vee}^\ast \Omega^1(A^\vee) \otimes \pi_A^\ast
  \hdr^1(A/F_v)\big)\\&   \oplus
  \big(\pi_{A^\vee}^\ast \Omega^1(A^\vee) \otimes \pi_{A^\vee}^\ast
  \hdr^1(A^\vee/F_v)\big)\,.
\end{align*}
We call $\alpha \in \cC_\cP$ 
\textit{purely mixed} if the projections onto the first and fourth
components are trivial.

Fix a purely mixed curvature form $\alpha$.
Then this choice lets us define a canonical log function $\log_{\cL}$ on
every line bundle $\cL/A$, unique up to constants, as follows (see~\cite[Definition~4.39]{BMS}).
Let $\mu$ be a good log
function with respect to $\cP$, uniquely determined by the curvature form
$\alpha$ up to a constant.
Every antisymmetric line
bundle $\cL\in \Pic^0(A)=A^\vee$ arises by restriction as
$\cL\simeq \cP_{A\times \{\cL\}}$, and we define $\log_{\cL}$ to be the log
function induced by $\mu$ via restriction. When $\cL/A$ is symmetric, we have
$\cL^{\otimes 2}\simeq (\id\times \phi_\cL)^*\cP$, where $\phi_\cL(x) =
\tau_x\cL\otimes \cL^{-1}\in A^\vee$, and we define
$\log_{\cL}$ as the good log
    function with curvature form $\frac12(\id\times \phi_\cL)^\ast\alpha$.
For a general line bundle $\cL/A$, we define 
\begin{equation}\label{}
  \log_{\cL} \colonequals \frac{1}{2}(\log_{\cL^+}+\log_{\cL^-})\,,
\end{equation}
where $\cL^+$ and $\cL^-$ are defined in~\eqref{EvenOddDecomp}.

By~\cite[Proposition~6.13]{BMS}, there is a bijection from the set of
purely mixed curvature forms to the subpaces  $W_v\subset H_{\dR}^1(A/F_v)$  that
are complementary to $H_{\dR}^{1,0}(A/F_v)$, so we may define, for every
line bundle $\cL/A$, a canonical log function on $\cL$ with respect to
$W_v$, unique up to constant.

\begin{prop}\label{P:LogFnProps}
Set $S\colonequals \im \log_v\subset \Q_p$.
The canonical log function $\log_\cL$ with respect to $W_v$ satisfies the following properties:
  \begin{enumerate}
    \item\label{Logwelldef} $\log_{\cL}$ is uniquely determined by $\cL$ and
      $W_v$ modulo $S$.
    \item\label{Logsum} $\log_{\cL\otimes\cM} \equiv \log_{\cL}+\log_{\cM}
      \bmod{S}$.
    \item\label{Logprinc} $\log_{\O_A}(x,\cdot) \equiv \log_v(\cdot) \bmod{S}$ for
      all $x\in A$, via the isomorphism $A\times \mathbb{G}_m\simeq
      \O_A^\times$.
    \item\label{Logtrans} Suppose $\cL$ is antisymmetric. Then, for $a \in A$, we have $\log_{\tau_a^*\cL}
      \equiv \tau_a^*\log_{\cL}\bmod{S}$, where $\tau_a$ is translation by $a$.
    \item\label{Loggood} $\log_{[2]^*\cL}\equiv \log_{\cL}\circ [2]\bmod{S}$.
  \end{enumerate}
\end{prop}
\begin{proof}
  Recall that $\log_{\cL}$ is determined uniquely by choosing a
  rigidification. If we multiply the chosen rigidification 
by a scalar $c$, then $\log_{\cL}$ changes by
  $\log_v(c)$ (see~\cite[Remark~4.42]{BMS}), proving~\eqref{Logwelldef}.
  For~\eqref{Logsum}, see~\cite[Lemma~4.41]{BMS},~\eqref{Logprinc} follows 
  from~\cite[Proposition~4.30]{BMS} and property~\eqref{Logtrans}
  is~\cite[Proposition~4.38]{BMS}. For symmetric $\cL$,~\eqref{Loggood} is
  immediate, and for antisymmetric $\cL$ it follows
  from~\cite[Proposition~4.32]{BMS}; the general case follows by
  construction. 
\end{proof}

\begin{rem}\label{R:}
  Property~\eqref{Logprinc} requires $\alpha$ to be purely mixed, since
  otherwise~\cite[Proposition~4.30]{BMS} does not hold. 
\end{rem}

\subsection{$p$-adic N\'eron functions: the ramified case}\label{subsec:NFram}

We fix a subspace $W_v$
of $H_{\dR}^1(A/F_v)$ that is complementary to $H_{\dR}^{1,0}(A/F_v)$.
\begin{defn}\label{D:pNF}
For any line bundle ${\cL}$ on $A$, we fix a canonical
 log function $\log_{\cL}$ with respect to $\cL$ and $W_v$ .
For a divisor $D$ on $A$, we define the {\em
   $p$-adic N\'eron function $$\lambda_D\colon  A(F_v)\setminus\supp(D)\to \Q_p$$
on $A$ with respect to $D$, $W_v$ and $\chi_v$}
by 
$$\lambda_D \colonequals -\tr_v\circ\log_{\O(D)}\circ\, s\,, $$
where $\chi_v=\tr_v\circ\log_v$ and $s$ is a section of $\O(D)$ such that
   $D=\div(s)$.
 \end{defn}

Since the canonical log function $\log_{\cL}$ is uniquely determined by $\cL$
and $W_v$ up to an additive constant, and the section $s$ is uniquely
determined by $D$ up to a multiplicative constant, the $p$-adic N\'eron function $\lambda_D$ is
uniquely determined by $D$ up to an additive constant.

\begin{prop}\label{P:pNF}
Set $S\colonequals \im(\chi_v)$.
The $p$-adic N\'eron function satisfies:
 \begin{enumerate}
   \item\label{P:pNF-add} We have 
     $\lambda_{D+E} \equiv \lambda_{D} + \lambda_{E} \bmod S$.
      \item\label{P:pNF-princ} If $D=\div(f)$ is principal, then
        $\lambda_{D} \equiv -\chi_v\circ f \bmod S$.
  \item\label{P:pNF-trans} For $D \in \Div(A)$ algebraically equivalent to~0 and $a\in A(F_v)$, we
    have $\lambda_{D_a}(x) \equiv \lambda_D(x+a) \bmod S$, where
     $D_a=\tau_a^*D$. 
   \item \label{P:pNF-2} We have
     $\lambda_{[2]^*D}\equiv\lambda_D\circ [2]\bmod S$.
    \item\label{P:pNF-ext} 
      Let $L/F$ be a finite extension and let $w$ be an extension of $v$ to
      $L$.
      Then we have 
      $$
        \lambda_{D\otimes L_w}(x) \equiv [L_w:F_v]\lambda_D(x) \bmod
        S \quad \text{for}\; x\in
        A(F_v)\setminus\supp(D)\,,
      $$
      where $\lambda_{D\otimes L_w}$ is a $p$-adic N\'eron function with
      respect to ${D\otimes L_w}$, $W_v\otimes L_w$ and to $\chi_{L,w}$.
  \end{enumerate}
\end{prop}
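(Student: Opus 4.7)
The plan is to reduce each of the five assertions to a corresponding identity for the canonical log function $\log_{\cL}$ established in \cite[\S4]{BMS}, and then to translate via $-\tr_v$. Recall that
$$\lambda_D = -\tr_v\circ\log_{\O(D)}\circ s,$$
and that rescaling the rigidification of $\O(D)$ by $c\in F_v^\times$ alters $\log_{\O(D)}$ by the additive constant $\log_v(c)$, and hence $\lambda_D$ by $\chi_v(c)\in \im(\chi_v)$. Any identity between canonical log functions holding up to a constant in $\log_v(F_v^\times)$ will therefore yield the asserted identity modulo $\im(\chi_v)$.

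For \eqref{P:pNF-add}, I would fix sections $s,s'$ of $\O(D),\O(E)$ with $\div(s)=D$, $\div(s')=E$ and, under the canonical isomorphism $\O(D+E)\cong \O(D)\otimes\O(E)$ with compatible rigidifications, invoke the multiplicativity of $\log_{\cL}$ in the tensor-product variable to obtain $\log_{\O(D+E)}(s\otimes s') \equiv \log_{\O(D)}(s) + \log_{\O(E)}(s')$. For \eqref{P:pNF-princ}, when $D=\div(f)$ the bundle $\O(D)$ is canonically the trivial line bundle with $f$ corresponding to the constant section~$1$; the canonical log function on the trivial bundle with its standard rigidification is $u\mapsto \log_v(u)$, so $-\tr_v\circ\log_{\O(D)}\circ s$ simplifies to $-\chi_v\circ f$.

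Property \eqref{P:pNF-trans} will follow from translation-equivariance of the canonical log function for algebraically trivial line bundles, which stems from the construction of $\log_{\cP}$ in \cite[\S4]{BMS}: the pullback $\tau_a^*\O(D)$ is canonically identified with $\O(D_a)$, and under this identification one obtains $\log_{\O(D_a)}(\tau_a^*s) \equiv \log_{\O(D)}(s)\circ\tau_a$. For \eqref{P:pNF-2}, functoriality of $\log_{\cL}$ under the isogeny $[2]\colon A_v\to A_v$ yields $\log_{[2]^*\O(D)}([2]^*s) \equiv \log_{\O(D)}(s)\circ[2]$, and applying $-\tr_v$ gives the required identity. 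Finally, for \eqref{P:pNF-ext}, extension of scalars to $L_w$ produces a canonical log function on $\O(D)\otimes L_w$ whose restriction to $F_v$-valued points agrees with $\log_{\O(D)}$; combining this with the identity $\tr_{L,w}|_{F_v} = [L_w:F_v]\cdot \tr_v$, which follows from $\chi_{L,w}(b) = \chi_v(N_{L_w/F_v}(b)) = [L_w:F_v]\chi_v(b)$ for $b\in F_v^\times$, will give the claim.

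The main subtlety will be verifying that each of these identities on the canonical log function is indeed available in the framework of \cite{BMS}, and, in particular, that a single coherent system of rigidifications can be chosen so as to make the tensor-product, pullback, and isogeny compatibilities hold simultaneously. This is arranged by the construction in \cite[\S4]{BMS}, which starts from the Poincar\'e bundle $\cP$ equipped with a fixed purely mixed curvature form and defines $\log_{\cL}$ on an arbitrary ${\cL}$ by pullback along a classifying morphism; all the required compatibilities then hold up to constants in $\log_v(F_v^\times)$, which land in $\im(\chi_v)$ after applying $-\tr_v$.
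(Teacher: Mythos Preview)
Your proposal is correct and follows essentially the same approach as the paper: both reduce each item to a corresponding compatibility of the canonical log function established in \cite{BMS}, then apply $-\tr_v$. The paper's proof is simply more terse, citing the specific results (Lemma~4.36, Propositions~4.27 and~4.33, and the ``goodness'' of the canonical log function in the sense of Definitions~4.9 and~4.18 of \cite{BMS}) rather than spelling out the content as you do; for~\eqref{P:pNF-ext} the paper likewise appeals to the definitions, only adding the observation that $\im(\chi_v)=\im(\chi_{L,w})$ so that the congruence is well-posed.
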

\begin{proof}
  The first~4 properties follow from the corresponding parts of Proposition~\ref{P:LogFnProps}.
  The final statement follows from the definitions, noting that $\chi_v$
  and $\chi_{L,w}$ have the same image.
\end{proof}
\begin{rem}\label{R:lambdaDpuniq}
    Suppose that the class of $D$ is symmetric (respectively
      antisymmetric), and let $d=4$ (respectively $d=2$), so that
      there is a function $f\in F_v(A)$
    such that $[2]^*D -d D = \div(f)$; by~\eqref{P:pNF-2} and \eqref{P:pNF-princ}, we have 
    \begin{equation}\label{pNFfun}
       \lambda_D\circ [2] - d \lambda_D = -\chi_v\circ f\,. 
     \end{equation}
  Hence, for every $f$ as above, there is a unique $p$-adic N\'eron
      function $\lambda_D$ with respect to $D, W_v$ and $\chi_v$ that
      satisfies~\eqref{pNFfun}.

\end{rem}

\begin{rem}\label{R:}
  We expect property~\eqref{Logtrans} in Proposition~\ref{P:LogFnProps} and
  therefore property~\eqref{P:pNF-trans} in Proposition~\ref{P:pNF} to hold
  more generally for all $D\in \Div(A)$. 
\end{rem}

\begin{rem}\label{R:uniq}
The $p$-adic N\'eron function is not uniquely
determined by~\eqref{P:pNF-add}--\eqref{P:pNF-2} of
Proposition~\ref{P:pNF} up to constant,
since different choices of complementary subspaces give rise to different
canonical log functions. 
  It would be interesting to investigate whether  there is a unique function
satisfying \eqref{P:pNF-add}--\eqref{P:pNF-2} of
Proposition~\ref{P:pNF} if we also require a suitable analytic condition
  involving the chosen subspace.
\end{rem}
\subsubsection{Comparison with Colmez Green functions}\label{subsec:Colmez}
Now suppose that 
$A=J$ is the Jacobian variety of a curve $C$ over $F_v$ and let 
$\Theta\in \Div(J)$ 
be a theta divisor defined over $F_v$. In this setting,
we can express the $p$-adic N\'eron function in terms of the integration
theory developed by Colmez in~\cite{colmez1998integration}, see
also Section~\ref{S:Vol}. 
Colmez associates to every subspace $W_v$ of $H_{\dR}^1(C/F_v)$ that is
complementary to $H_{\dR}^{1,0}(C/F_v)$ and isotropic with
respect to the cup product pairing, a function
\[G_{\Theta, W_v}\colon J(F_v)\setminus\supp(\Theta)\to F_v,\]
unique up to an additive constant, called a {\em symmetric
Green function} (see~\cite[\S II.2]{colmez1998integration} and \cite[\S5.2]{Bia23}).
We refer the
reader to~\cite[\S\S5.1--5.3]{Bia23} for a concise summary of Colmez's
construction, containing everything we need here (and more).
\begin{prop}\label{P:pNFSymGreen}
\emph{(\cite[Theorem~4.44]{BMS})}
Up to an additive constant, the $p$-adic N\'eron function
$\lambda_{\Theta}$ 
with respect to $\Theta$, $W_v$ and $\chi_v$ is the same as $-\tr_v\circ G_{\Theta,
W_v}$, where $\chi_v=\tr_v\circ\log_v$ and $G_{\Theta, W_v}$ is a symmetric Green
  function with respect to $W_v$ and $\chi_v$. 
\end{prop}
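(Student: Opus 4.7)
The plan is to identify both $\lambda_{\Theta}$ and $-\tr_v\circ G_{\Theta, W_v}$ as $p$-adic N\'eron functions on $J_v$ with respect to $\Theta$, $W_v$ and $\chi_v$, and then appeal to the uniqueness of such functions modulo additive constants in $\im(\chi_v)$ from Remark~\ref{R:lambdaDpuniq}. I would start from the observation that the theta divisor on the Jacobian has symmetric class, so there exists a rational function $f\in F_v(J_v)^\times$ with $[2]^*\Theta - 4\Theta = \div(f)$. By Remark~\ref{R:lambdaDpuniq}, $\lambda_{\Theta}$ is uniquely characterised, modulo $\im(\chi_v)$, by the functional equation
\[
\lambda_{\Theta}\circ [2] - 4\lambda_{\Theta} = -\chi_v\circ f,
\]
so it suffices to verify that $-\tr_v\circ G_{\Theta, W_v}$ satisfies the same relation.

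The main step is to establish the analogous $F_v$-valued identity for Colmez's symmetric Green function,
\[
G_{\Theta, W_v}\circ [2] - 4 G_{\Theta, W_v} = \log_v\circ f + c
\]
for some constant $c\in F_v$. I would derive this by differentiating both sides and exploiting the characterising property $dG_{\Theta, W_v}=\omega_{\Theta}$, where $\omega_{\Theta}$ is the form of the third kind on $J_v$ with residue divisor $\Theta$ and cohomology class in $W_v$. The identity then reduces to $[2]^*\omega_{\Theta} - 4\omega_{\Theta} = df/f$, which holds on residues by the relation $\div(f)=[2]^*\Theta - 4\Theta$ and whose difference has trivial cohomology class because the isotropy of $W_v$ (Assumption~\ref{A:iso}) fixes the class of $\omega_{\Theta}$ in $W_v$ uniquely, forcing both sides to agree there. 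Integrating via Vologodsky's theory recovers the functional relation up to a constant $c$; applying $-\tr_v$, using $\chi_v=\tr_v\circ\log_v$, and absorbing $-\tr_v(c)/3$ into the additive normalisation then yields the desired functional equation for $-\tr_v\circ G_{\Theta, W_v}$.

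The hard part, I expect, is the cohomological bookkeeping in the second step, namely using isotropy of $W_v$ to pin down $[2]^*\omega_{\Theta}-4\omega_{\Theta}$ as exact, and showing that the resulting antiderivative is compatible with $G_{\Theta, W_v}\circ [2]$ globally (not just locally). An alternative and perhaps conceptually cleaner route would be to exhibit $-\tr_v\circ G_{\Theta, W_v}$ as the function on $J_v(F_v)\setminus\supp(\Theta)$ induced by a log function on $\O(\Theta)^\times$ in the sense of~\cite{BMS} whose curvature form is purely mixed with associated subspace $W_v$; by the bijection in~\cite[Proposition~6.11]{BMS} and the uniqueness of the canonical log function up to constant, this log function must coincide, modulo constants, with the one defining $\lambda_{\Theta}$, and the statement follows. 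Either way, the technical heart is verifying that Colmez's notion of ``symmetric Green function associated to $W_v$'' matches the notion of ``canonical log function with purely mixed curvature corresponding to $W_v$''.
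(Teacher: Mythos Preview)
The paper does not actually prove this statement; it is quoted verbatim as \cite[Theorem~4.39]{BMS}. So there is no in-paper argument to compare against, only the cited one, which proceeds along the lines of your \emph{alternative} route: one shows that Colmez's symmetric Green function is (minus) a log function on $\O(\Theta)^\times$ whose curvature form is purely mixed and corresponds to $W_v$, and then invokes the uniqueness of the canonical log function with prescribed curvature.

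Your \emph{primary} approach, however, has a genuine gap. Remark~\ref{R:lambdaDpuniq} asserts uniqueness of the $p$-adic N\'eron function satisfying \eqref{pNFfun} \emph{among $p$-adic N\'eron functions with respect to the fixed $W_v$}, i.e.\ among functions arising from the canonical log function of Definition~\ref{D:pNF}. It does \emph{not} assert that any locally analytic function satisfying the functional equation $\lambda\circ[2]-4\lambda=-\chi_v\circ f$ must be a $p$-adic N\'eron function. Indeed, for each complementary subspace $W'_v$ the associated $\lambda_{\Theta}^{(W'_v)}$ satisfies the \emph{same} functional equation (the function $f$ depends only on $[2]^*\Theta-4\Theta$, not on $W'_v$), and these are genuinely different functions. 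Remark~\ref{R:uniq} flags precisely this: uniqueness from the listed properties alone, without an extra analytic condition tied to $W_v$, is not known. So even if you establish $G_{\Theta,W_v}\circ[2]-4G_{\Theta,W_v}=\log_v\circ f + c$, you cannot conclude $-\tr_v\circ G_{\Theta,W_v}=\lambda_{\Theta}$ without first knowing that the left-hand side is already in the family of $p$-adic N\'eron functions for \emph{this} $W_v$ --- which is the statement you are trying to prove.

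A secondary issue: your characterising identity $dG_{\Theta,W_v}=\omega_\Theta$ with $\omega_\Theta$ a ``form of the third kind with residue divisor $\Theta$'' is curve language transplanted to the Jacobian; on $J_v$ of dimension $>1$ the correct formulation of Colmez's defining property is more delicate, and the argument that $[2]^*\omega_\Theta-4\omega_\Theta$ is exact requires a careful statement of what cohomology class is being controlled. Your alternative route avoids this by working directly with curvature forms, and that is where the actual content lies. In short: drop the functional-equation strategy as the main argument and promote your alternative to the proof.
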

This result will be used in the proof of our main comparison result
Theorem~\ref{T:main} in the ramified case, see the proof of
Corollary~\ref{cor:lambdavlambdaX}.

Here and in the following,
  we usually abuse notation by identifying complementary subspaces of
  $H_{\dR}^1(C/F_v)$ and $H_{\dR}^1(J/F_v)$ using an
  Abel--Jacobi map. Our main application will be to genus~2 curves
  with a unique Weierstrass point at infinity, and we will use the
  Abel--Jacobi map with respect to that point.

\section{Mazur--Tate heights}\label{sec:MT}

Let $A/F$ be an abelian variety. 
The Mazur--Tate height pairing, constructed in~\cite{MT83} and recalled in
this section, is a bilinear pairing 
\begin{equation}\label{}
  (\cdot,\cdot)^{\MT}\colon A(F)\times A^\vee(F)\to \Q_p
\end{equation}
which depends on $\chi$ and on a choice of $\chi_v$-splittings, one for each
finite place $v$ of $F$. 
In fact, the construction of Mazur--Tate allows more general codomains and is sufficiently general to 
include the real-valued N\'eron--Tate height pairing as a special case.
The latter also decomposes as a sum of real-valued N\'eron functions (also known
as canonical local heights), as discussed in~\cite[Chapter~11]{Lan83}
or~\cite[\S9.5]{BG06}. 
In this section we show that we can use 
$p$-adic N\'eron functions to construct $\chi_v$-splittings, allowing us to
construct $p$-adic Mazur--Tate heights as sums of $p$-adic N\'eron
functions.

\subsection{$\chi_v$-splittings and $p$-adic N\'eron functions}\label{subsec:splittings}

Fix a finite place $v$ of $F$.
Let
$\Div_a(A_v)$ be the group of divisors on $A_v$ that are algebraically equivalent to~0 and
$Z^0(A_v)$ the group of  zero cycles on $A_v$, i.e. the divisors on
$A_v$ that have degree~0 and only
$F_v$-rational points in their support.

\begin{defn}\label{D:splitting}
  A \textit{$\chi_v$-splitting} on $A_v$ is a biadditive pairing $(\cdot
  ,\cdot )_v$ which assigns to $\frka \in Z^0(A_v)$ and
  $D\in
  \Div_a(A_v)$ with disjoint support an element of $\Q_p$ such that
  \begin{enumerate}
    \item $(\frka, D)_v=\chi_v(f(\frka))$ if $D=\div(f)$ is principal;
    \item $(\tau_x^*\frka,\tau_x^*D)_v = (\frka,D)_v$ for all $x\in A_v({F_v})$,
      where $\tau_x$ is translation by $x$.
  \end{enumerate} 
\end{defn}
In fact, Mazur and Tate define $\chi_v$-splittings using the theory of biextensions
in~\cite[\S1]{MT83}, but they show in~\cite[\S2.2]{MT83} that their definition is
equivalent to the one given above.

We can construct $\chi_v$-splittings using $p$-adic N\'eron functions, by
linearity. 
\begin{lem}\label{L:MTNeron}
Suppose that $\chi_v$ is unramified.
    Choose a $p$-adic N\'eron function $\lambda_D$ on $A_v$ with respect to
  $D$ and $\chi_v$ for every $D\in
  \Div_a(A_v)$. For $\frka = \sum_{x}n_x(x)\in Z^0(A_v)$ relatively prime to $D$, set 
  \begin{equation}\label{}
    (\frka,D)_v\colonequals -\lambda_D(\frka) \colonequals -\sum_x n_x\lambda_D(x)\,.
  \end{equation}
  Then $(\cdot,\cdot)_v$ defines a $\chi_v$-splitting, which we call the
  {\em N\'eron $\chi_v$-splitting}.  
\end{lem}
\begin{proof}
  This follows from Proposition~\ref{P:Neron}.
\end{proof}

\begin{lem}\label{L:MTNeronp}
  Suppose that $\chi_v$ is ramified. Fix a complementary subspace $W_v$.
For every $D\in \Div_a(A_v)$, let $\lambda_D$ be a $p$-adic N\'eron 
  function with respect
  to $D$, $W_v$ and $\chi_v$. For $\frka = \sum_{x}n_x(x)\in Z^0(A_v)$ relatively prime to $D$, set 
  \begin{equation}\label{}
    (\frka,D)_v\colonequals -\lambda_D(\frka) \colonequals -\sum_x n_x\lambda_D(x)\,.
  \end{equation}
  Then $(\cdot,\cdot)_v$ defines a $\chi_v$-splitting, which we call the
  {\em $\chi_v$-splitting corresponding to $W_v$}.  
\end{lem}
\begin{proof}
 The properties of a $\chi_v$-splitting are satisfied by 
  Proposition~\ref{P:pNF}; see also~\cite[Remark~6.4]{BMS}.
\end{proof}

 \begin{defn}\label{D:anasplit}
 If $\chi_v$ is ramified, then we say that a $\chi_v$-splitting is \textit{analytic} if it is the $\chi_v$-splitting corresponding to some $W_v$.
 \end{defn}

 \subsubsection{Canonical splittings for Jacobians}\label{subsec:}
In some cases, it is possible to define a canonical $\chi_v$-splitting.
Recall that an abelian variety over $F_v$ has \textit{semistable reduction} if the connected component of the
special fibre of its N\'eron model is an
extension of an abelian variety $B$ by a torus.
\begin{defn}\label{D:SemOrd}
  We say that an abelian variety $A/F$ has \textit{semistable ordinary} reduction at $v$ if
  $A_v$ has semistable reduction and if the abelian variety $B$ is ordinary.
\end{defn}
Suppose that $J$ has semistable ordinary reduction at $v$. 
In~\cite[\S1.9]{MT83}, Mazur and Tate construct a
canonical $\chi_v$-splitting using formal
completions;
see~\cite[p.~185]{IW03} for a summary. 
We shall not need this construction; since we have specialized to the case
of a Jacobian, we can
use an alternative description due to Papanikolas~\cite{papanikolas} in terms of the
$p$-adic theta function $\theta_X$ of Norman~\cite{norman1985, norman1986}, defined
with respect to the symmetrised theta divisor
$X = \Theta + [-1]^*\Theta$. Here we assume that $p\geq 3$ and that the curve has an
$F_v$-rational point $P_0$; we let $\Theta$ be the theta divisor with respect
to this $P_0$ (see Remark 5.2 of~\cite{papanikolas}) and we let $\mathcal{J}^f$ be the formal group of the N\'eron model $\mathcal{J}$ of $J_v$.
Norman's theta function $\theta_X$ is discussed for genus~$2$
in~\S\ref{S:Norman}. For any genus and any divisor $D\in \Div_a(J_v)$ such that $D$ is linearly equivalent to a divisor of the form $\sum_{\alpha} m_{\alpha}\tau_{\alpha}^*X$ with $\alpha\in \mathcal{J}^f(\overline{\O}_v)$, the function $\theta_X$ gives rise to a function
$\theta_D\colon \mathcal{J}^f(\mathcal{O}_{\overline{F}_v})\to \overline{F}_v$. Here, $\overline{F}_v$ denotes a fixed algebraic closure of $F_v$, and $\overline{\O}_v$ is its valuation ring.

\begin{prop}
\emph{(Papanikolas~\cite[\S5]{papanikolas})}
\label{P:papa}
  Let $D\in \Div_a(J_v)$ be as above and let $\frka=\sum_xn_x(x)\in \Div^0(J({F_v}))$ such that all $x$ are
  in $\mathcal{J}^f(\O_v)\setminus\supp(D)$.
  Then there is a unique $\chi_v$-splitting that extends 
\begin{equation}\label{pap}
  \chi_v(\theta_D(\frka))\colonequals \sum_xn_x\chi_v(\theta_D(x))\,.
\end{equation}
  We call it the \textit{canonical $\chi_v$-splitting}.
\end{prop}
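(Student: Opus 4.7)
The plan is to verify that the right-hand side of~\eqref{pap} defines the same pairing as the restriction of the Mazur--Tate canonical $\chi_v$-splitting to divisors supported in the formal group $\mathcal{J}^f(\O_v)$, and then argue that this restriction determines the splitting uniquely on all of $\Div^0(J(F_v))\times \Div_a(J_v)$.

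First I would record the key properties of Norman's theta function $\theta_X$ and the induced $\theta_D$. By~\cite{norman1985, norman1986}, for each $\alpha \in \mathcal{J}^f(\overline{\O}_v)$, translation by $\alpha$ relates $\theta_X$ to $\theta_{\tau_\alpha^*X}$ up to a multiplicative factor that is a unit on the formal group. Passing from $\theta_X$ to $\theta_D$ for $D$ linearly equivalent to $\sum_\alpha m_\alpha \tau_\alpha^* X$ is multiplicative, so the assignment $D \mapsto \theta_D$ is additive in $D$ up to multiplicative constants that are $p$-adic units. Consequently, after composing with $\chi_v$, the pairing $(\frka,D)\mapsto \chi_v(\theta_D(\frka))$ is biadditive on its domain; it satisfies the principal-divisor axiom because Norman's normalization forces $\theta_{\div(f)}$ to coincide with $f$ up to a constant with vanishing $\chi_v$-image whenever $f$ is analytic on the formal group; and it is translation invariant as a consequence of the transformation behaviour of $\theta_X$ under translations by points of $\mathcal{J}^f(\overline{\O}_v)$.

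Next, I would compare this candidate pairing with the Mazur--Tate canonical $\chi_v$-splitting, which is constructed in~\cite[\S1.9]{MT83} via a canonical rigidification of the Poincar\'e biextension on the formal completion; such a rigidification exists precisely because the reduction is semistable ordinary (see~\cite[p.~185]{IW03}). The main input from Papanikolas~\cite{papanikolas} is that Norman's theta function provides exactly such a canonical trivialization of the biextension restricted to the formal part: $\theta_X$ is characterized, up to a constant with trivial $\chi_v$-image, by Norman's functional equation, matching the defining property of the Mazur--Tate canonical formal splitting. The main obstacle will be bridging these two descriptions -- the biextension-theoretic one of Mazur--Tate versus the explicit $p$-adic-analytic one of Norman--Papanikolas -- and tracking the implicit rigidifications carefully so that the two canonical trivializations can be identified on the nose.

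Once this identification is in place, the restriction of the canonical $\chi_v$-splitting to pairs $(\frka, D)$ with $\frka$ supported in $\mathcal{J}^f(\O_v)\setminus\supp(D)$ coincides with~\eqref{pap}. For uniqueness of the extension, since $\mathcal{J}^f(\O_v)$ injects into $J(F_v)$ with image of finite index (its cokernel embeds in the finite group of $\kappa_v$-points of the N\'eron special fibre), any two $\chi_v$-splittings that agree on such pairs must agree everywhere: given $x\in J(F_v)$, some positive multiple $Nx$ lies in $\mathcal{J}^f(\O_v)$, and biadditivity in the first variable together with the principal-divisor and translation-invariance axioms of Definition~\ref{D:splitting} forces the full extension to be unique after dividing by $N$ in $\Q_p$.
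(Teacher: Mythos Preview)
The paper does not give its own proof of this proposition: it is stated as a result of Papanikolas, with only the citation \cite[\S5]{papanikolas} and no proof environment. So there is no argument in the paper to compare your proposal against.

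Your sketch is a plausible outline of the ingredients in Papanikolas's argument (biadditivity and translation behaviour of $\theta_D$, identification with the Mazur--Tate formal splitting via the canonical biextension trivialization, and finite-index extension from $\mathcal{J}^f(\O_v)$ to $J(F_v)$). If you want to include a proof rather than a citation, you would need to make the middle step precise: the identification of Norman's theta function with the canonical trivialization of the Poincar\'e biextension over the formal group is exactly the content of Papanikolas's Theorem~5.4, and your phrase ``matching the defining property of the Mazur--Tate canonical formal splitting'' is doing all the work without justification. But since the paper treats this as a black-box citation, the most faithful comparison is simply that no proof is given here.
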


It is a natural question whether the canonical Mazur--Tate splitting is analytic. This would follow from the following conjecture.

\begin{conj}\label{C:TNF1}
Suppose that $J$ has semistable ordinary reduction at $v$ and let
$X=\Theta+[-1]^*\Theta$.
  Then $-\chi_v\circ \theta_X$ 
  is the restriction to
      $\mathcal{J}^f(\O_v)\setminus\supp(X)$ of a $p$-adic N\'eron function with respect to
  $X$, a complementary subspace $W_v$ and $\chi_v$. 
\end{conj}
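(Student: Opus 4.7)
The plan is to exhibit an explicit isotropic complementary subspace $W_v \subset H_{\dR}^1(J_v/F_v)$ whose associated $p$-adic N\'eron function $\lambda_X$ agrees, on $\mathcal{J}^f(\O_v) \setminus \supp(X)$, with $-\chi_v \circ \theta_X$ up to an additive constant in $\im(\chi_v)$. My first step is to reduce the claim to matching a functional equation. Since $X = \Theta + [-1]^*\Theta$ is symmetric, pick $f \in F_v(J_v)^\times$ with $[2]^*X - 4X = \div(f)$. By Remark~\ref{R:lambdaDpuniq}, the $p$-adic N\'eron function $\lambda_X$ attached to any choice of $W_v$ is uniquely determined by
\[
\lambda_X \circ [2] - 4\lambda_X = -\chi_v \circ f.
\]
Norman's theta function $\theta_X$ is constructed so that $(\theta_X \circ [2])/\theta_X^4$ realises the linear equivalence $[2]^*X \sim 4X$, and hence equals $f$ up to a scalar in $\O_v^\times$; applying $-\chi_v$ shows that $-\chi_v \circ \theta_X$ satisfies the very same additive functional equation modulo $\im(\chi_v)$. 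It therefore suffices to produce some $W_v$ for which the resulting $\lambda_X$ actually restricts to $-\chi_v \circ \theta_X$ on $\mathcal{J}^f$; by Proposition~\ref{P:papa} and the uniqueness in Remark~\ref{R:lambdaDpuniq}, agreement on the formal group propagates to the assertion of the conjecture.

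My candidate is a generalised unit-root subspace attached to the Raynaud uniformisation at $v$. In semistable ordinary reduction the extension $0 \to T \to \tilde{\mathcal{J}}^{\circ} \to B \to 0$ has an ordinary abelian quotient $B$ on whose de Rham cohomology Frobenius admits the canonical unit-root splitting of Katz~\cite{katz1973p}; pulling this splitting back through the uniformisation and extending across the toric part produces a subspace $W_v \subset H_{\dR}^1(J_v/F_v)$ complementary to the holomorphic forms. I would then verify isotropy with respect to the cup product from the compatibility of the slope filtration with the Weil pairing on $B$ together with autoduality of $J$. By Proposition~\ref{P:pNFSymGreen}, the associated $\lambda_X$ is $-\tr_v \circ G_{X, W_v}$ up to constant, where $G_{X,W_v}$ is Colmez's symmetric Green function. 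Restricted to the formal group, both $G_{X, W_v}$ and $\log_v \circ \theta_X$ become sigma-type functions characterised by having curvature form the one attached to $W_v$; the desired identification $G_{X, W_v} \equiv \log_v \circ \theta_X + \const$ on $\mathcal{J}^f$ is then a natural generalisation of Blakestad's sigma function construction~\cite{blakestadsthesis}, which the paper employs in genus~$2$ through Theorem~\ref{T:thetaneron}.

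The main obstacle is precisely this last identification in arbitrary dimension and reduction type. One needs a Blakestad-style construction of a $v$-adic sigma function for an arbitrary semistable ordinary Jacobian, together with a normalisation theorem matching it to Norman's $\theta_X$ up to a constant in $F_v$. For odd degree genus~$2$ curves this is carried out in the paper using the Cartier--Manin matrix and Blakestad's explicit formulas; extending both the construction and the pinning down of the normalising constants to higher dimension, and to genuinely semistable (not merely good) ordinary reduction, is where I expect the substantive technical work to lie.
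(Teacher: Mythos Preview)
The statement is a \emph{conjecture} in the paper; the paper does not prove it in general, only for Jacobians of odd degree genus~$2$ curves with semistable ordinary reduction (Corollary~\ref{C:g2TNF1} via Theorem~\ref{T:thetaneron}). So your proposal should be read as a strategy toward the general conjecture, and compared with the paper's genus~$2$ argument.

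Your first paragraph does not achieve a genuine reduction. The functional equation $\lambda_X\circ[2]-4\lambda_X=-\chi_v\circ f$ is satisfied by the $p$-adic N\'eron function for \emph{every} choice of $W_v$; Remark~\ref{R:lambdaDpuniq} gives uniqueness only once $W_v$ is fixed. So observing that $-\chi_v\circ\theta_X$ satisfies the same relation (modulo $\im(\chi_v)$) on the formal group does not single out any $W_v$, nor does it show that $-\chi_v\circ\theta_X$ arises from one. Your sentence ``agreement on the formal group propagates to the assertion of the conjecture'' is circular: the conjecture \emph{is} the assertion of agreement on the formal group with some $\lambda_X$. Nothing is being propagated.

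Your second paragraph proposes the right candidate $W_v$ (a generalised unit-root subspace via the Raynaud extension), which is exactly what Conjecture~\ref{C:TNF2} predicts and what Iovita--Werner's comparison suggests. But you do not verify that the associated $\lambda_X$ matches $-\chi_v\circ\theta_X$; you only assert that both are ``sigma-type functions characterised by having curvature form the one attached to $W_v$''. That characterisation is precisely what needs proof, and you correctly flag it as the obstacle in your third paragraph.

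The paper's genus~$2$ proof takes a different, computational route: it does not construct $W_v$ a priori. Instead it shows directly that $\theta_X$ satisfies a quasi-parallelogram law (Propositions~\ref{prop:quasi_par_formula}, \ref{prop:fct_explicit}), deduces the differential equation $D_iD_j(\log\theta_X)=-2X_{ij}+2f_{ij}$ for explicit constants $f_{ij}\in F_v$ (Proposition~\ref{prop:diff_eq_theta_X}), and then invokes a uniqueness result (Proposition~\ref{prop:uniqueness}) to conclude $\theta_X=\pm\sigma_v^2$ for the $v$-adic sigma function attached to the subspace determined by $(f_{ij})$. The subspace $W_v$ is \emph{read off} from $\theta_X$, not constructed from the reduction data; the identification with the unit-root subspace is only established in the good ordinary case (Corollary~\ref{C:g2TNF2}) and remains conjectural otherwise. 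Your abstract approach, if it could be made to work, would prove Conjectures~\ref{C:TNF1} and~\ref{C:TNF2} simultaneously---but the paper's method sidesteps the harder Conjecture~\ref{C:TNF2} by letting $\theta_X$ dictate $W_v$.
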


In Section~\ref{S:CanThetaZeta}, we prove 
Conjecture~\ref{C:TNF1} for Jacobians with ordinary reduction
of curves of genus~$2$
given by a {semistable} odd degree
model (see Definition~\ref{D:CSemOrd}). In this case, we also give an explicit construction of $\theta_X$.
It follows from this construction and from work of the first author~\cite{Bia23}
that in this case the subspace $W_v$ is the unit root subspace of Frobenius if the
reduction is good ordinary. We conjecture that this is always the case. 

\begin{conj}\label{C:TNF2}
Suppose that $J$ has semistable ordinary reduction at $v$, let
$X=\Theta+[-1]^*\Theta$ and assume that Conjecture~\ref{C:TNF1} holds for $J$.  Then the subspace $W_v$ in Conjecture~\ref{C:TNF1} is the unit root subspace of Frobenius.
\end{conj}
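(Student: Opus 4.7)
The plan is to leverage the explicit description of the $p$-adic N\'eron function that comes (under the assumption of Conjecture~\ref{C:TNF1}) from the Blakestad-type $v$-adic sigma function constructed in \S\ref{S:CanThetaZeta}. The complementary subspace $W_v$ is pinned down by the curvature form of the canonical log function on $\O(X)$, which by pullback from the Poincar\'e bundle is determined by an isotropic complement of the space of holomorphic forms. Since any two subspaces satisfying Conjecture~\ref{C:TNF1} give rise to $p$-adic N\'eron functions that agree modulo $\im(\chi_v)$, Proposition~\ref{P:pNFSymGreen} and the uniqueness noted in Remark~\ref{R:lambdaDpuniq} translate the comparison of subspaces into a comparison of two symmetric Green functions, and hence into an equality of cohomology classes in $H_{\dR}^1(J_v/F_v)$.

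First, I would extract $W_v$ from the construction in \S\ref{S:CanThetaZeta}: the functional equation of the Blakestad $v$-adic sigma function under lattice translation involves a zeta-type function whose differential, modulo holomorphic forms, pins down the class that lies in $W_v$. Concretely, via Proposition~\ref{P:papa} and Theorem~\ref{T:thetaneron}, $\theta_X$ is expressed in terms of the sigma function, whose logarithmic derivative supplies an antiholomorphic complement. Second, I would characterise the unit root subspace on the semistable ordinary side. For good ordinary reduction this is immediate from Frobenius slopes; for semistable ordinary reduction one uses the connected-\'etale sequence of the special fibre $0\to T\to \cJ^0\to B\to 0$, taking the unit root subspace of the isocrystal attached to the ordinary abelian quotient $B$ (and the toric piece contributes a slope-$0$ summand automatically). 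Using the Cartier--Manin criterion from Proposition~\ref{H1ord} and Lemma~\ref{Hvsord}, one can describe this unit root subspace cohomologically on the regular model of $C$: it is the subspace paired to $H^1(\overline{\cC}_v,\O)^{\text{unit}}$ under Serre duality after lifting.

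Finally, I would compare the two subspaces using the elliptic-curve paradigm of Balakrishnan--Besser and \cite[Proposition~3.8]{Bia23}. In good ordinary reduction, \cite[Proposition~3.8]{Bia23} gives the matching directly. To extend to the semistable ordinary case, I would argue that Blakestad's zeta is constructed with lattice behaviour governed by the formal group $\cJ^f$, which over $\O_v$ sees only the ordinary part of $\cJ^0$; hence the differential it produces is the canonical lift of a class killed by the Cartier operator modulo $\pi_v$, which is precisely the defining property of the unit root subspace. The main obstacle I expect is this last passage: controlling the lift uniquely in the presence of the toric part of the reduction, where one must verify that the toric contribution to the Hodge filtration is orthogonal to $W_v$ and that the sigma-function construction of \S\ref{S:CanThetaZeta} respects the connected--\'etale splitting. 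This is exactly the place where the extension of Manin's criterion to semistable reduction (Lemma~\ref{Hvsord}) is needed, together with Katz's characterisation \cite[(A.2.4.1)]{katz1973p} generalised from elliptic curves via the Cartier--Manin matrix to genus~$2$.
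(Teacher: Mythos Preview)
The statement you are attempting to prove is a \emph{conjecture}, and the paper does not prove it. The paper establishes only a partial case: Corollary~\ref{C:g2TNF2} shows that Conjecture~\ref{C:TNF2} holds when the model~\eqref{Ceqn} has good reduction at $v$, $J$ has good ordinary reduction, $p\geq 5$, and $F_v/\Q_p$ is unramified. That proof goes via Lemma~\ref{lem:over_Ov} (integrality of $\theta_X$) combined with Blakestad's characterisation of his canonical sigma function and \cite[Proposition~3.8]{Bia23}, which identifies the corresponding subspace with the unit root subspace in the good ordinary case. The paper explicitly leaves the semistable (non-good) ordinary case open; see the remark immediately following Corollary~\ref{C:g2TNF2}, which suggests that an extension might be possible using \cite{katz1981crystalline, iovita2000formal}, but does not carry it out.

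Your proposal correctly identifies the good ordinary case as handled by \cite[Proposition~3.8]{Bia23}, but your attempt to extend to semistable ordinary reduction is not a proof. The assertion that Blakestad's zeta function ``sees only the ordinary part of $\cJ^0$'' and that its differential is ``the canonical lift of a class killed by the Cartier operator modulo $\pi_v$'' is precisely what needs to be established, and neither Lemma~\ref{Hvsord} nor a hoped-for genus~$2$ analogue of Katz's \cite[(A.2.4.1)]{katz1973p} supplies this. Lemma~\ref{Hvsord} is a numerical ordinarity criterion via the Cartier--Manin determinant; it says nothing about which lift of the unit root subspace the constants $f_{ij}^C$ of~\eqref{eq:fijC} pick out. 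The ``main obstacle'' you flag at the end is not a technical wrinkle but the entire content of the conjecture in the non-good case, and your proposal does not resolve it.
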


\subsection{The global Mazur--Tate height}\label{subsec:MTglob}
Now we let $A/F$ be an abelian variety again.
Choose a $\chi_v$-splitting $(\cdot, \cdot)_v$ for every finite place $v$ of $F$ such that 
for all but finitely many primes, the splitting is the N\'eron
splitting. For $a\in A(F)$ and $b \in A^\vee(F)$, we define the \textit{Mazur--Tate
height pairing} with respect to $\chi_v$ and the chosen splittings by
\begin{equation}\label{MTpair}
  (a, b)^{\MT} \colonequals \sum_v(\frka\otimes F_v, D\otimes F_v)_v
\end{equation}
where $\frka=\sum_xn_x(x)\in Z^0(A)$ satisfies $\sum_xn_xx=a$, and
$D\in \Div_a(A)$ is relatively prime to $\frka$ and its class is $b$. 
\begin{prop}

\emph{(Mazur--Tate~\cite[\S 3]{MT83})}
\label{P:MT}
  The pairing $(\cdot,\cdot)^{\MT}$ is well-defined and bilinear.
\end{prop}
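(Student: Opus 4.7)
The plan is to verify three things: (i) that the sum on the right of~\eqref{MTpair} has only finitely many nonzero terms; (ii) that the right-hand side is independent of the chosen representatives $\frka$ of $a$ and $D$ of $b$; and (iii) that the resulting pairing is bilinear in both arguments. For (i), I would spread $\frka$ and $D$ to disjoint divisors on a smooth model of $A$ over an open subscheme $U\subseteq\Spec\O_F$. At every $v\in U$ where $\chi_v$ is unramified and $(\cdot,\cdot)_v$ is the N\'eron splitting, the local pairing equals a $\Q$-multiple of $\chi_v(\pi_v)$ times an intersection number on a regular model (this is the higher-dimensional analogue of~\eqref{IntersThryForm} used implicitly in Proposition~\ref{P:hvprops}\thinspace{}\eqref{hvS}), which vanishes for all but finitely many $v$ because the intersections extend trivially over $U$.

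For (ii), suppose $\frka'$ is another $0$-cycle representative of $a$; then $\frka' - \frka = \div(g)$ for some $g\in F(A)^\times$, and the task is to show that $\sum_v(\div(g)\otimes F_v, D\otimes F_v)_v = 0$. I would invoke the biextension reformulation of $\chi_v$-splittings in~\cite[\S1]{MT83}: since the Poincar\'e biextension of $A\times A^\vee$ by $\G_m$ is its own symmetric dual, a $\chi_v$-splitting automatically satisfies a companion of property~(1) in Definition~\ref{D:splitting}, giving a formula $(\div(g)\otimes F_v, D\otimes F_v)_v = \chi_v(\angles{g,D}_v)$ for an $F_v^\times$-valued pairing that takes its value in $F^\times$ when the input data is defined over $F$. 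The desired vanishing then follows from the product formula $\chi(F^\times)=0$. Replacing $D$ by $D + \div(f)$ with $f\in F(A)^\times$ relatively prime to $\frka$ changes the total sum by $\sum_v \chi_v(f(\frka\otimes F_v)) = \chi(f(\frka)) = 0$ by property~(1) of Definition~\ref{D:splitting} and the same product formula.

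Finally, (iii) is immediate from the biadditivity built into each local $(\cdot,\cdot)_v$ of Definition~\ref{D:splitting}, combined with the freedom to adjust representatives, justified by (ii), so as to arrange any needed disjoint-support conditions in each bilinear identity.

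The main obstacle is the reciprocity used in~(ii): in the Mazur--Tate biextension framework it comes for free from the symmetry of the Poincar\'e biextension, but from the two axioms in Definition~\ref{D:splitting} taken in isolation it would require an additional Weil-reciprocity-type argument to produce the companion formula when the first argument, rather than the second, is the divisor of a global function. In the self-contained treatment one essentially borrows this input from~\cite[\S 1-\S 3]{MT83}.
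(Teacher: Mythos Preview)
The paper itself gives no proof of this proposition; it simply records the statement and attributes it to Mazur--Tate~\cite[\S3]{MT83}. So there is nothing to compare against except the original biextension argument.

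Your sketch has a genuine gap in step~(ii). You write that if $\frka'$ is another $0$-cycle representing $a$, then $\frka'-\frka=\div(g)$ for some $g\in F(A)^\times$. But for $\dim A>1$ this is a type error: $\frka'-\frka$ is a $0$-cycle (a formal sum of points), whereas $\div(g)$ is a Weil divisor (a codimension-$1$ cycle). Even reinterpreting the claim as ``$\frka'-\frka$ is rationally equivalent to $0$ in $\mathrm{CH}_0(A)$'', it is still false in general: the Albanese kernel of $\mathrm{CH}_0(A)_{\deg 0}$ is typically nonzero (Mumford's theorem already shows this over~$\C$ for surfaces). So the proposed reduction to a product-formula identity for a single global unit cannot get off the ground.

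What actually makes independence in the first argument work in~\cite{MT83} is not a ``principal $0$-cycle'' argument but the biextension formalism itself: translation invariance (property~(2) of Definition~\ref{D:splitting}) together with biadditivity forces $(\frka,D)_v$ to factor through the sum $\sum_x n_x x\in A(F_v)$ up to a correction that, globally, is killed by the product formula for~$\chi$. Concretely, one reduces to $0$-cycles of the form $(x+y)-(x)-(y)+(0)$ and handles these via the theorem of the square / the biextension cocycle, not by exhibiting any rational function. Your instinct that the biextension symmetry is the real input is correct; the issue is that you invoked it only as a supplement, after an incorrect first step, rather than as the mechanism from the outset. Your treatment of independence in the second argument (replacing $D$ by $D+\div(f)$) and of bilinearity is fine.
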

It also satisfies various functoriality properties, see~\cite[\S 3.4]{MT83}.

\subsubsection{Analytic Mazur--Tate heights}\label{subsec:MTan}
Suppose that for all $v$ such that $\chi_v$ is unramified,
the $\chi_v$-splitting is the N\'eron splitting. For all $v$ such that
$\chi_v$ is ramified, we fix a complementary subspace $W_v$, and we choose
the $\chi_v$-splitting corresponding to $W_v$ (in the sense of Lemma~\ref{L:MTNeronp}). We denote the resulting height
pairing by $(\cdot,\cdot)^{\MT}_{\underline{W}}$, where
${\underline{W}}=(W_v)$, and $v$ runs through the ramified primes for
$\chi$.  We call a Mazur--Tate
height pairing \textit{analytic} if all $\chi_v$-splittings for primes $v$
of ramification of $\chi$ are analytic.
We also assume that $A$ is principally polarised,
and we fix 
$\Theta\in\Div(A)$ such that $\phi_\Theta(a) \colonequals
[\tau_a^*\Theta-\Theta]$ defines a principal polarisation
$\phi_\Theta\colon A\to A^\vee$.
For $a\in A(F)$, we define the corresponding Mazur--Tate height function by
\begin{equation}\label{MThtfun}
  h^{\MT}_{\underline{W}}(a) \colonequals - (a,\phi_\Theta(a))^{\MT}_{\underline{W}}\,.
\end{equation}
To ease notation, we write $h^{\MT}\colonequals
h^{\MT}_{\underline{W}}$.

We now show that $h^{\MT}$ can be decomposed into a sum
of $p$-adic N\'eron functions. 
Let $X \colonequals \Theta+[-1]^*\Theta$. 
For every finite place $v$ of $F$, we want to choose a $p$-adic N\'eron function with respect to $X_v$ (and $W_v$, if $\chi_v$ is ramified). These are all only defined up to a constant.
In order to make a consistent choice, we  fix
a function $f$ such that $[2]^*X -4X = \div(f)$ and use
Remarks~\ref{R:lambdaDunique} and~\ref{R:lambdaDpuniq}.
Equivalently, we choose a global rigidification of the line bundle
$\O(X)$.

The following generalises a result for elliptic curves due to Mazur-Stein-Tate~\cite[\S2.6]{MST06}.
\begin{prop}\label{P:MTdec}
For $a\in A(F)\setminus\supp(\Theta)$ non-torsion, we have
\begin{equation*}
h^{\MT}(a) = \sum_{v} \lambda_{X, v}(a) \,.
\end{equation*}
\end{prop}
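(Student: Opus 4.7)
The plan is to compute $h^{\MT}(a)$ directly from the definition and match it to $\sum_v \lambda_{X,v}(a)$, using the theorem of the cube for the symmetric divisor $X$ and the global product formula for $\chi$. Since $X = \Theta + [-1]^*\Theta$ is symmetric, one has $\phi_X = 2\phi_\Theta$ (using $\phi_{[-1]^*\Theta} = \phi_\Theta$), so bilinearity of the Mazur--Tate pairing gives
\[
  h^{\MT}(a) = -(a, \phi_\Theta(a))^{\MT} = -\tfrac{1}{2}(a, \phi_X(a))^{\MT}.
\]
Fix a generic $c \in A(F)$ and take the degree-zero divisor $\frka \colonequals (a + c) - (c)$ as a representative of $a$; pick any $D \in \Div_a(A)$ linearly equivalent to $\tau_a^*X - X$ with support disjoint from $\frka$. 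Applying the $\chi_v$-splittings from Corollaries~\ref{C:MTNeron} and~\ref{C:MTNeronp}, and using the product formula for $\chi$ to kill the contribution of the correction function in $F(A)^\times$, one obtains
\[
  h^{\MT}(a) = \tfrac{1}{2} \sum_v \lambda_{\tau_a^*X - X, v}(\frka).
\]

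The key step is to identify the function $\mu_v(x) \colonequals \lambda_{X, v}(x + a) - \lambda_{X, v}(x)$ as a $p$-adic N\'eron function with respect to $\tau_a^*X - X \in \Div_a(A_v)$ (and to the same $W_v$ in the ramified case). In the unramified case this is immediate from Proposition~\ref{P:Neron}\eqref{NF1} and~\eqref{NF3}. In the ramified case, although Proposition~\ref{P:pNF}\eqref{P:pNF-trans} gives the translation property only for algebraically trivial divisors, one invokes the functoriality of the canonical log function of~\cite{BMS} under translation: since $H^1_{\dR}(A_v/F_v)$ is translation-invariant, so is any complementary subspace $W_v$, and hence $\tau_a^*\log_{\O(X)}$ coincides with $\log_{\tau_a^*\O(X)}$ up to rigidification. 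By uniqueness of $p$-adic N\'eron functions modulo an additive constant in $\im(\chi_v)$ (resp.\ in $S$), $\mu_v$ and $\lambda_{\tau_a^*X - X, v}$ differ by a constant that cancels upon evaluation on the degree-zero divisor $\frka$, giving
\[
  \lambda_{\tau_a^*X - X, v}(\frka) = \lambda_{X, v}(2a + c) - 2\lambda_{X, v}(a + c) + \lambda_{X, v}(c).
\]

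To conclude, apply the theorem of the cube to the symmetric $X$ at the pair $(a + c, a)$, yielding the linear equivalence $\tau_{2a+c}^*X + \tau_c^*X - 2\tau_{a+c}^*X - 2\tau_a^*X + 2X \sim 0$. Expanding the corresponding N\'eron-function identity via the translate-difference interpretation of the previous paragraph (applied to each term), and summing over $v$ so that the $\chi_v$-contribution from the principal divisor vanishes by the product formula, produces the global identity
\[
  \sum_v \bigl[\lambda_{X, v}(2a + c) + \lambda_{X, v}(c) - 2\lambda_{X, v}(a + c) - 2\lambda_{X, v}(a) + 2\lambda_{X, v}(0)\bigr] = 0;
\]
combined with $\sum_v \lambda_{X, v}(0) = 0$ this yields $\sum_v\lambda_{\tau_a^*X-X,v}(\frka) = 2\sum_v\lambda_{X,v}(a)$, and together with the first displayed equation it finishes the proof. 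The vanishing of $\sum_v \lambda_{X, v}(0)$ follows from the functional equation $\lambda_{X, v} \circ [2] - 4\lambda_{X, v} = -\chi_v \circ f$ of Remarks~\ref{R:lambdaDunique} and~\ref{R:lambdaDpuniq} evaluated at $0$: this gives $3\lambda_{X, v}(0) = \chi_v(f(0))$, and summing over $v$ kills the right-hand side via $f(0) \in F^\times$.

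The main obstacle is the identification in the ramified case of $\mu_v$ with $\lambda_{\tau_a^*X - X, v}$, since Proposition~\ref{P:pNF}\eqref{P:pNF-trans} only establishes the translation property for algebraically trivial divisors, whereas here one effectively translates $X$ itself; the resolution via the translation-invariance of the canonical log function of~\cite{BMS} is exactly the property anticipated in the remark following Proposition~\ref{P:pNF}. A subsidiary technical issue, when $0 \in \supp(X)$ as is typical for a theta divisor on a Jacobian, is that the value $\lambda_{X, v}(0)$ appearing in the argument must be interpreted as a constant term with respect to a chosen $F$-rational tangent vector at $0$, following~\cite{balakrishnanbesser2015} and~\cite{MST06}; the functional-equation argument for $\sum_v \lambda_{X, v}(0) = 0$ then goes through verbatim.
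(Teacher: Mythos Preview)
Your approach is genuinely different from the paper's. The paper does not unwind the definition of $(\cdot,\cdot)^{\MT}$ at all; instead it introduces the canonical $p$-adic height $\hat{h}_{\cL}$ of~\cite[\S5]{BMS}, observes that $\hat{h}_{\O(X)} = -\sum_v \lambda_{X,v}$ by definition, and then quotes two results from~\cite{BMS}: the functoriality statement $(\id\times\phi_\Theta)^*\cP \cong \O(X)$ giving $\hat{h}_{\cP}(a,\phi_\Theta(a)) = \hat{h}_{\O(X)}(a)$, and the comparison $h^{\MT}(a) = -\hat{h}_{\cP}(a,\phi_\Theta(a))$ of~\cite[Corollary~6.7]{BMS}. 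Your direct attack via $\chi_v$-splittings, translation, and the theorem of the cube is closer in spirit to the classical arguments for real N\'eron functions.

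However, your argument has a genuine gap at the ramified places. The identification of $\mu_v(x)=\lambda_{X,v}(x+a)-\lambda_{X,v}(x)$ with a $p$-adic N\'eron function for $\tau_a^*X-X$ requires the translation compatibility $\lambda_{\tau_a^*X,v}(x)\equiv\lambda_{X,v}(x+a)$ for the \emph{non-algebraically-trivial} divisor $X$. Proposition~\ref{P:pNF}\eqref{P:pNF-trans} only gives this for $D\in\Div_a(A_v)$, and the paper explicitly flags the general case as open in Remark~\ref{R:} (the one following Proposition~\ref{P:pNF}): ``We expect property~\eqref{P:pNF-trans} in Proposition~\ref{P:pNF} to hold more generally for all $D\in\Div(A_v)$.'' Your justification---translation-invariance of $W_v$ and functoriality of canonical log functions---is exactly the heuristic one would give, but it is not established in the paper, and proving that $\tau_a^*\log_{\O(X)}$ is the canonical log function on $\tau_a^*\O(X)$ requires going back into the construction in~\cite{BMS} (pullback from the Poincar\'e bundle via $(\id\times\phi_\Theta)$, then checking compatibility with translation on $A\times A^\vee$). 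You use this same unproven property a second time in the theorem-of-the-cube step, when converting $\lambda_{\tau_b^*X,v}$ to $\lambda_{X,v}(\cdot+b)$ for several $b$.

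There is also a subsidiary issue: your theorem-of-the-cube argument, as written, only shows that $\sum_v\bigl[\lambda_{X,v}(x+2a+c)+\lambda_{X,v}(x+c)-2\lambda_{X,v}(x+a+c)-2\lambda_{X,v}(x+a)+2\lambda_{X,v}(x)\bigr]$ is \emph{constant} in $x$ (equal to $\sum_v K_v$ for the constants $K_v$ arising from the mod-constant identities), not that it is zero. Summing $-\chi_v(h(x))$ over $v$ kills that term, but the leftover constants $K_v$ do not obviously sum to zero without further control over the rigidifications. You would need either to evaluate on a degree-zero cycle (which changes the shape of the identity) or to argue that all intermediate constants are of the form $\chi_v(c)$ for a single global $c\in F^\times$; neither is supplied.
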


\begin{proof}
  For the proof, we compare the two sides with the canonical $p$-adic
  heights introduced in~\cite[\S5]{BMS}, whose definition we first
  recall. For a line bundle $\cL$ on $A$, we choose a global rigidification
  and we let $\log_{\cL,\p}$ denote the corresponding
  canonical log function at a place $\p$ such that $\chi_\p$ is ramified,
  and we let $v_{\cL, \q}$ denote the  corresponding canonical valuation at
  a place $\q$
  such that $\chi_\q$ is unramified. The canonical height function $\hat{h}_{\cL} = 
\hat{h}_{\cL,\underline{\alpha}, \chi}$ is then defined by
    \begin{equation*}
      \hat{h}_{\cL}(a) = \sum_{\p}t_\p(\log_{\cL,\p}(u)) + \sum_{\q}
      v_{\cL,\q}(u)\cdot\chi_\q(\pi_\q)\in \Q_p\,,
    \end{equation*}
    where $u\in \cL^\times(F)$ is in the fibre above $a$, the first sum is over
    $\p$ such that $\chi_\p$ is ramified and the second sum runs through the
    remaining places. It is easy to see that $\hat{h}_{\cL}$ is independent of the
    choice of $u$ and of the choice of rigidification. 
    We immediately obtain 
    $ \hat{h}_{\O(X)}= -\sum_v\lambda_{X,v}$.

It is shown in~\cite[\S6.1.4]{BMS} that when $A$ is a Jacobian with theta
  divisor $\Theta$, functoriality of the underlying canonical valuations
  and log functions implies
  that the canonical height with respect to the Poincar\'e bundle $\cP$ on
  $A\times A^\vee$ satisfies
  $\hat{h}_{\cP}(a,\phi_\Theta(a))= \hat{h}_{\O(X)}(a)$ for all $a\in A(F)$.  In fact, the proof given there
  works in exactly the same way in the present more general setting, since
  we have $(\id\times \phi_{\Theta})^*\cP =\O(X)$.
  By~\cite[Corollary~6.7]{BMS}, we have
  \begin{equation}\label{BMShcomp}
    h^{\MT}(a)=-\hat{h}_{\cP}(a,\phi_\Theta(a))\,,
  \end{equation}
  which finishes the proof.
\end{proof}

\begin{rem}\label{R:globcomp}
  As discussed in Section~\ref{S:intro}, the global
height functions $h^{\MT}$ and $h^{\CG}$ with respect to the same choices
$(W_v)_v$ and $\chi$ are equal. Namely, let $a\in J(F)$; then
we have $h^{\CG}(a) = -\hat{h}(a,\phi_\Theta(a))$
  by~\cite[Theorem~6.10]{BMS}, where we write $h^{\CG}(a)$ for
  $h^{\CG}(a,a)$.  Hence~\eqref{BMShcomp} implies that $h^{\MT}(a)
= h^{\CG}(a)$.
\end{rem}

\subsubsection{Canonical Mazur--Tate heights}\label{subsec:MTcan}
We continue to assume that for all $v$ such that $\chi_v$ is unramified,
the $\chi_v$-splitting is the N\'eron splitting. We also assume, in
addition, that $A$ has  semistable ordinary reduction at all $v\mid p$ such that $\chi_v$ is ramified and that all $\chi_v$-splittings are canonical when $\chi_v$ is
ramified. 
We call the resulting pairing~\eqref{MTpair}  the \textit{canonical Mazur--Tate height
pairing with respect to $\chi$}.
If Conjecture~\ref{C:TNF1} holds true, then we get a decomposition of
this pairing as in Proposition~\ref{P:MTdec}.
For Jacobians of genus~$2$ curves, our explicit description of the
Norman theta function in Section~\ref{S:CanThetaZeta} gives such a
decomposition.

\section{Comparison in genus~$2$}\label{S:com}
For the remainder of this paper, we fix the following notation. Let $v$ be
a finite place of $F$. We denote 
by $C/F_v$ a smooth projective curve of genus~$2$ given by an affine equation
\begin{equation}\label{Ceqn}
  C\colon y^2 = b(x) =  x^5 + b_1x^4+b_2x^3+b_3x^2+b_4x+b_5\;,\qquad b_1,\ldots,b_5\in
  \O_F\,.  
\end{equation} 
Denote by $J$ the Jacobian of $C$ and by
$\Theta$ the theta divisor on $J$ with respect to the base point
$\infty$. Then $\Theta$ is symmetric and we set $X=2\Theta$.
We let $\psi\colon C\times C\to J$ denote the~\textit{difference morphism}
given by 
\begin{equation}\label{E:diff}
  \psi(Q_1,Q_2) = [Q_1-Q_2]\,.
\end{equation}
By Riemann--Roch, $\psi$ is surjective. 
In fact, any $a\in J\setminus\{0\}$ has at most~2 preimages, namely if 
$\psi(Q_1,Q_2) = a$, then the only other preimage of  $a$ under $\psi$ is
$(Q_2^-,Q_1^-)$, where $Q^-$ is the image of $Q$ under the hyperelliptic
involution. Hence $\psi$ ramifies exactly along the points of order~2. The
 preimage of the origin consists of all pairs $(Q_1,Q_2)$ such that
 $Q_2=Q_1^-$. The support of the theta divisor $\Theta$ consists of all
 points whose preimage contains a point of the form $(Q,\infty)$.
(or, equivalently  $(\infty,Q)$). Finally, note that if $a\in J(L)$ for some extension $L/F_v$, then
 $a=\psi(Q_1,Q_2)$, where either $Q_1,Q_2\in C(L)$ or $Q_1,Q_2$ are
 conjugate over a quadratic extension of $L$.

In this section, we show that we can express the $p$-adic N\'eron function 
with respect to $\Theta$, $\chi_v$ (and a complementary subspace $W_v$, if
$\chi_v$ is ramified) in terms of 
a certain local height function on $J(F_v)\setminus{\supp(\Theta)}$, which we now
define.
The construction is based on the extension of the local Coleman--Gross height pairing discussed
in~\S\ref{subsec:local-arb}
and requires fixing a section of the tangent bundle on $C$, see~\S\ref{subsec:away-common} and
~\S\ref{subsec:above-common}.
As in~\cite{balakrishnan2016quadratic}, we choose the section $t$ determined by duality by $\omega_1=\frac{dx}{2y}$ at all affine points and by $\omega_2 =\frac{xdx}{2y}$ at the point at infinity. 
\begin{defn}\label{D:CGfun}
  Let $a=\psi(Q_1,Q_2)\in J(F_v)\setminus\supp(\Theta)$. If $\chi_v$ is ramified, choose a complementary subspace $W_v$.
  The {\em local Coleman--Gross height function} with respect to $\chi_v$ (and $W_v$, if $\chi_v$ is ramified) is defined by
$$
  \lambda^{\CG}_v(a)\colonequals h^{\CG}_{v,t}(Q_1-Q_2, Q_1-Q_2)\,.
$$
\end{defn}

\begin{rem}\label{R:}
It follows from the definition of $h^{\CG}_{v,t}$ that
$h^{\CG}_{v,t}(Q_1-Q_2, Q_1-Q_2) = h^{\CG}_{v,t}(Q_2^--Q_1^-,
Q_2^--Q_1^-)$. Hence $\lambda_v^{\CG}(a)$ is
well-defined. 
\end{rem}

The main results of the present section are Theorem~\ref{thm:comp_away} and
Theorem~\ref{thm:comp_above}; they compare $\lambda_v^{\CG}$ with the $p$-adic N\'eron
function~$\lambda_{X,v}$ from Definitions~\ref{D:ellNF} and~\ref{D:pNF}.
Since $p$-adic N\'eron functions are only defined up to constant, we first
discuss our normalisation.
We let $\phi_m$ be the $m$-th division polynomial constructed by Kanayama~\cite{kan05} and Uchida~\cite{uch11}. Then we have
$$[2]^*\Theta-4\Theta=\mathrm{div}(\phi_2)\,.$$
By Proposition~\ref{P:Neron} and Proposition~\ref{P:pNF}, there is a unique $p$-adic N\'eron function
$$\lambda_{X,v}\colon J(F_v)\setminus\supp(\Theta)\to \Q_p$$ with respect to
$X=2\Theta$ and $\chi_v$ (and $W_v$, if $\chi_v$ is ramified) that satisfies 
\begin{equation}\label{eq:KU_divpol}
\lambda_{X,v}\circ [2] - 4\lambda_{X,v} = -\chi_v\circ \phi_2^2\,,
\end{equation}
and we will henceforth consider only this normalisation of $\lambda_{X,v}$.
As discussed in~\S\ref{subsec:NFreal} and the proof of
Proposition~\ref{P:pNF}, this corresponds to
a choice of rigidification on $X$, though we will not need this.

\subsection{Unramified primes}\label{sec:comp_away}

The goal of this subsection is to prove the following result.

\begin{thm}\label{thm:comp_away}
Suppose that $\chi_v$ is unramified.
Then we have
    \begin{equation*}
        \lambda_v^{\CG} = \lambda_{X,v}\,.
    \end{equation*}
        \end{thm}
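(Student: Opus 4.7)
The plan is to exhibit both $\lambda^{\CG}_v$ and $\lambda_{X,v}$ as $\chi_v(\pi_v)$ times a common $\Q$-valued quantity computed by arithmetic intersection theory on a proper regular model $\cC_v$ of $C_v$ over $\O_v$, using Zhang's admissible pairing as the bridge between the Jacobian and the curve.

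For $\lambda^{\CG}_v$, this is almost immediate. Given $a=[Q_1-Q_2]$ with $Q_1,Q_2\in C(F_v)\setminus\{\infty\}$ distinct (after possibly passing to a finite extension and invoking Proposition~\ref{P:hvprops}\eqref{hvext} to rescale), Corollary~\ref{C:hvt} gives
\[
\lambda^{\CG}_v(a)=\chi_v(\pi_v)\cdot(\cD\cdot\cD)_t,
\]
where $\cD$ is any $\Q$-divisor on $\cC_v$ extending $Q_1-Q_2$ orthogonally to all vertical components, and the self-intersections $(\bar{Q}_i\cdot\bar{Q}_i)_t$ are computed via Definition~\ref{D:self_int} using the tangent section dual to $\omega_1$ at affine points and to $\omega_2$ at $\infty$.

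The work on $\lambda_{X,v}$ proceeds in two reductions. First, by the construction in \S\ref{subsec:splittings_unram}, since $\chi_v$ is unramified, $\lambda_{X,v}$ is $-\chi_v(\pi_v)$ times a canonical valuation on $\O(X)$, which up to normalisation coincides with the real-valued N\'eron function with respect to $X=2\Theta$. Second, I would invoke Heinz's explicit genus-$2$ formula~\cite{hei04}, which expresses this real-valued N\'eron function at $a=[Q_1-Q_2]$ via Zhang's admissible self-pairing $\angles{Q_1-Q_2,Q_1-Q_2}^{a}_v$ on $C_v$. Zhang's admissible pairing~\cite{zhang93:admissible_pairing} differs from the ordinary arithmetic intersection on a regular model by correction terms supported on the special fibre and governed by the combinatorial Laplacian of the reduction graph; when the extension of a degree-zero divisor is chosen to have trivial intersection with every vertical divisor, these corrections vanish, and the admissible self-pairing reduces exactly to $(\cD\cdot\cD)_t$ for the same tangent section $t$.

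Combining the two computations gives $\lambda^{\CG}_v=\lambda_{X,v}$ up to a locally constant, $S$-valued function on $J(F_v)\setminus\supp(\Theta)$ whose image is closed under multiplication by~$4$; such a function is trivial by the uniqueness argument in the proof of Proposition~\ref{P:Neron}. Equivalently, the additive constant is pinned down by showing that $\lambda^{\CG}_v$ also satisfies the distribution relation~\eqref{eq:KU_divpol} with $f=\phi_2^2$. The main obstacle I anticipate lies in the second reduction: carefully matching Heinz's and Zhang's normalisations on the curve with the specific tangent section used to define $h_{v,t}$, and reconciling them with the rigidification of $\O(X)$ implicit in~\eqref{eq:KU_divpol}. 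This is also where the novelty of the statement is concentrated, since it realises both the real-valued and the $p$-adic N\'eron function with respect to $\Theta$ purely on a regular model of the curve.
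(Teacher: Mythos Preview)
Your overall framework---linking $\lambda_{X,v}$ to Zhang's admissible pairing via Heinz, and $\lambda^{\CG}_v$ to arithmetic intersection via Corollary~\ref{C:hvt}---matches the paper's approach. However, the second reduction has a genuine gap. Heinz's formula does not express $\lambda_{X,v}(a)$ as the admissible \emph{self}-pairing $\langle Q_1-Q_2, Q_1-Q_2\rangle_a$; what it gives (see the proof of~\cite[Proposition~8.5]{muller2016canonical}) is
\[
-\lambda_{X,v}(a)/\chi_v(\pi_v) \;=\; 2(Q_1,Q_2)_a + 2(Q_1+Q_2,\infty)_a + c,
\]
involving cross-terms with the base point $\infty$ rather than the diagonal terms $(Q_i,Q_i)_a$. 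Consequently your claim that the admissible corrections ``vanish'' once $\cD$ is orthogonal to the vertical divisors, reducing everything to $(\cD\cdot\cD)_t$, is not what happens. After expanding $\lambda^{\CG}_v$ and applying adjunction (Lemma~\ref{L:selfint}) to replace $(\bar{Q}_i\cdot\bar{Q}_i)_{t}$ by $-(\bar{Q}_i\cdot\div(\overline{\omega}_1)) = -2(\bar{Q}_i\cdot\overline{\infty}) - (\bar{Q}_i\cdot V)$ for a certain vertical divisor $V$, the comparison with Heinz's expression reduces to a nontrivial combinatorial identity on the reduction graph,
\[
(\bar{P}\cdot V) \;=\; g(\Gamma_P,\Gamma_P) + 2g(\Gamma_P,\Gamma_\infty) + \kappa \qquad\text{for all }P\in C(F_v)\setminus\{\infty\}.
\]
The paper proves this using the Moore--Penrose pseudoinverse of the intersection matrix together with Cinkir's explicit formulas relating Zhang's admissible Green function to effective resistance and voltage. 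This identity---tying the vertical part of the canonical divisor to the admissible Green function---is the technical heart of the proof, and it is exactly the ``obstacle'' you anticipate; it does not dissolve into a normalisation check.

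Finally, the paper does not pin down the additive constant via a distribution relation for $\lambda^{\CG}_v$ (showing that $\lambda^{\CG}_v$ satisfies~\eqref{eq:KU_divpol} would itself require a separate argument). Instead it computes both sides explicitly at a point $a=[Q_1-Q_2]$ with $Q_1=(x,y)$, $Q_2=(x,-y)$ and $v(x)<0$, using the Kummer-coordinate description of $\lambda_{X,v}$ from~\cite{dJM14} on one side and a direct intersection computation on the other.
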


The main tool of the proof is Zhang's admissible pairing on divisors on $C$
with disjoint support
(see~\cite{zhang93:admissible_pairing}). Using work of Heinz and Zhang, we
can use it to give an explicit expression of the $\Q$-valued N\'eron
function $\mu_{X,v}^{\Q}$, and hence the $p$-adic 
N\'eron function $\lambda_{X,v}$, see Proposition~\ref{P:NFad}.
But we can also express $\lambda_v^{\CG}$ in terms of
the admissible pairing via work of Zhang and Cinkir.

  We first recall a construction of Zhang's admissible pairing due to
  Heinz~\cite{hei04}
  using admissible metrics. One first needs to define a notion of
  admissibility on metrics on line bundles on $C$ and on $C\times C$. Heinz
  follows an earlier construction due to Moret-Bailly in the archimedean
  case, see~\cite{MB85}.
  Let $\iota\colon C\hookrightarrow J$ denote the Abel--Jacobi
  embedding with respect to the base point $\infty$. 
  It is easy to see that for any line bundle $\cL/C$ there is an integer $n$
  such that $\cL^{\otimes n}$
  can be written as $i^*\cM$, where $\cM/J$ is a line bundle whose class in the
  N\'eron--Severi group of $J$ is a multiple of $\Theta$
  (see~\cite[\S4.2]{HeinzThesis}). A metric
  $\|\cdot\|$ on $\cL$ is~\textit{admissible} if it is the pullback of an
  admissible metric on $\cM$. By~\cite[Lemma~4.1]{hei04}, this notion is
  independent of the choice of $n$ or $\cM$.
  
  Let $\pi_1,\pi_2\colon C\times C\to C$ denote the projections. 
  Heinz calls a metric $\|\cdot\|$ on a line
  bundle $\cL$ on $C$~\textit{bi-admissible} if for any embedding $i\colon
  C\hookrightarrow C\times C$ such that $\pi_1\circ i$ is the identity and
  $\pi_2\circ i$ is constant (or the other way around), the metric
  $i^*\|\cdot\|$ is an admissible metric. If a bi-admissible metric exists
  on $\cL$, then it is unique up to a constant, and we denote it by
  $\|\cdot\|^{\bi}_{\cL}$. Moreover, the tensor product of
  bi-admissible metrics is bi-admissible, and  the pullback of a bi-admissible
  by $\pi_1$ or $\pi_2$ is admissible. See~\cite[Definition~4.2]{hei04} and
  the discussion following it for these properties. 
If $\|\cdot\|^{\bi}_{\cL}$ takes values in $\Q\cdot c_v$,
we let $v_{\cL}$ denote the corresponding valuation as
  in Remark~\ref{R:GenVal}. 
  By~\cite[Proposition~4.3]{hei04},
  there exists a bi-admissible metric $\|\cdot\|_{\O(\Delta)}^{\bi}$ on the diagonal
  bundle $\O(\Delta)$. Suitably normalized (which we shall assume), it takes values in $\Q\cdot
  c_v$ by construction. 

  The admissible pairing of Zhang and Heinz is real-valued, but for our
  purposes it is more convenient to define the following $\Q$-valued
  version:
  \begin{defn}\label{D:}
    The~\textit{admissible pairing} between two divisors $D=\sum_ia_iP_i$
    and $E=\sum_{j}b_jQ_j$ on $C$ with
    disjoint support is
    \begin{equation}\label{}
      (D,E)_a \colonequals
      v_{\O(\Delta)}(1(D,E))\colonequals-\sum_{i,j}a_ib_jv_{\O(\Delta)}(1(P_i,Q_j))\in
      \Q\,,
    \end{equation}
    where $1$ is the canonical section on $\O(\Delta)$.
  \end{defn}
  The admissible pairing is symmetric and bilinear,
  see~\cite[Theorem~4.4]{hei04}. In fact, it is not hard
  to see using~\cite[Theorem~4.4]{hei04} that $(\cdot,\cdot)_a\cdot \chi_v(\pi_v)$ extends the local height
  pairing $h_v$ defined in~\eqref{IntersThryForm} to divisors of arbitrary
  degree with disjoint support, see~\cite[Remark~4.6]{hei04}. We will present Zhang's original
  construction of the admissible pairing from~\cite[\S4.1]{zhang93:admissible_pairing} in
  Lemma~\ref{L:ZhangAd} below.

  We can now use Theorem~\ref{T:NFmet} and functoriality properties of
  (bi-)admissible metrics to express the $\Q$-valued N\'eron
  function~$\mu^{\Q}_{X,v}$ from Proposition~\ref{P:rNeron}\eqref{rNF8} 
  in terms of the admissible pairing. Recall from Remark~\ref{R:pNFvsRNF}
  that this gives an expression for the $p$-adic N\'eron function.
  \begin{prop}\label{P:NFad}
    Let $a=\psi(Q_1,Q_2)\in J(F_v)\setminus\supp(\Theta)$. Then 
    \begin{equation}\label{}
      \mu^{\Q}_{X,v}(a) \equiv -2(Q_1,Q_2)_a -2(Q_1+Q_2,\infty)_a
      \bmod{\Q}\,.
    \end{equation}
  \end{prop}
  \begin{proof}
    Theorem~\ref{T:NFmet} and Proposition~\ref{P:rNeron} imply that 
    \begin{equation}\label{}
    \mu^{\Q}_{X,v}(a) =2v_{\O(\Theta)}(s(a))\,, 
    \end{equation}
    where $v_{\O(\Theta)}$ is the canonical valuation 
    and $s$ is a section of $\O(\Theta)$ whose
    divisor is $\Theta$. 
    By~\cite[Lemma~4.2.10]{MB85} or~\cite[Lemma~2.1]{hei04}, we have 
    $$
      \psi^*\O(\Theta) \simeq
      \O(\Delta)\otimes\pi_1^*\O(\infty)^{-1}\otimes\pi_2^*\O(\infty)\,,
    $$
from which we   can easily deduce by comparing divisors that
    \begin{equation}\label{}
      \psi^*\Theta = \Delta + \pi_1^*\infty + \pi_2^*\infty\,.
    \end{equation}
    For $i\in \{1,2\}$, let $s_i$ be a section of $\cL_i = \pi_i^*\O(\infty)$
    such that $\div(s_i) = \pi_i^*\infty$.
    By functoriality of (bi)-admissible metrics, we find that
    \begin{equation}\label{NFbi}
      \mu^{\Q}_{\Theta,v}(a)  \equiv v_{\O(\Delta)}(1(Q_1,Q_2))+
      v_{\cL_i}(      s_1(Q_1,Q_2)  + 
      v_{\cL_i}(s_2(Q_1,Q_2))\bmod{\Q}\,,
    \end{equation}
    where the valuations on the bundles $\cL_i$ are bi-admissible by the 
two final bullet points after~\cite[Definition~4.2]{hei04}. The first
    summand in~\eqref{NFbi} is $-(Q_1,Q_2)_a$ by definition. 
    Since $\O(\infty) = i_1^*\O(\Delta)$, where $i_1(Q) = (Q,\infty)$, we
    find
    \[
      v_{\cL_i}(s_i(Q_1,Q_2))  \equiv v_{\O(\infty)}(1(Q_i)) =
      v_{\O(\Delta)}(1(Q_1,Q_2)) \equiv -(Q_i,\infty)_a\bmod{\Q}\,,
    \]
    where the valuation on $\O(\infty)$ is associated to the admissible
    metric as in Remark~\ref{R:GenVal}.
  \end{proof}
 A more general version of Proposition~\ref{P:NFad} was stated and
 used in the proof of \cite[Proposition~8.5]{muller2016canonical}, but no
 proof was given.

We now discuss Zhang's original construction of the admissible pairing from~\cite[\S4.1]{zhang93:admissible_pairing}, or rather its restriction
  to a pair of distinct points in $C(F_v)$.
Let $a=\psi(Q_1,Q_2)\in J(F_v)\setminus\supp(\Theta)$ and let 
  $L_w/F_v$ be a finite extension such that $Q_1,Q_2\in C(L_w)$ and such that $C\times L_w$ has a
  semistable regular model $\mathcal{C}$ with the following properties: The
  model $\mathcal{C}$ is a strong desingularisation of the 
  subscheme of $\P^{1,3,1}_{\O_w}$ defined by  the degree-6
  homogenisation of~\eqref{Ceqn}; 
no two irreducible components of the special fibre $\mathcal{C}_s$ intersect in more than one point and no irreducible component intersects itself.
The need for working with such an extension will become apparent below;
the key reason for choosing a strong desingularisation so that the Zariski
closures on $\mathcal{C}$ of two divisors
on $C$ do not intersect if their naive reductions modulo $w$ are disjoint.
  By the discussion in~\S\ref{subsec:common-nonrat} and by
  Proposition~\ref{P:pNF}~\eqref{P:pNF-ext}, 
  we may assume that $L_w=F_v$. 
The special fibre
  $\mathcal{C}_s$ gives rise, via its dual graph, to a metrised graph $G$
  whose set of vertices $V(G)$ consists of the irreducible components of
  $\mathcal{C}_s$, and whose set of
  edges $E(G)$ corresponds to the intersection points between the components. The
  metric on $G$ is determined by attaching length~1 to all edges.
  Following~\cite{CR93} and~\cite{zhang93:admissible_pairing}, we call $G$
  the \textit{reduction graph} of $C$.
  See~\cite{baker_faber06:metrized_graphs_laplacian_operator_electric, baker_rumely07:harmonic_analysis_metrized_graphs} for
  the facts about metrised graphs that we will need. Since the reduction
  graph is the skeleton of the Berkovich space $C^{\an}/\C_p$, the arguments below can alternatively be phrased in terms of
  Berkovich theory.

  For $P\in C(F_v)$, denote by $\bar{P}\in \mathcal{C}(\O_v)$ the
  corresponding section and by $\Gamma_P\in V(G)$ the component of the special
  fibre $\mathcal{C}_s$ that $\bar{P}$ intersects. 

  For an irreducible component $\Gamma\in V(G)$, we define 
  \begin{equation}\label{aGamma}
    a_\Gamma \colonequals -\Gamma^2+2p_a(\Gamma)-2\,.
  \end{equation}
We call a $\Q$-divisor $\mathcal{K}$ on $\mathcal{C}$ \textit{canonical} if 
$O(\mathcal{K})$ is isomorphic to the relative dualising sheaf
$\omega_{\mathcal{C}/\O_v}$. 
The adjunction
formula~\cite[Theorem~9.1.37]{liu02:algebraic_geometry_arithmet_curves} for
vertical divisors then 
implies that we have  
\begin{equation}\label{adjvert}
  (\mathcal{K}\cdot \Gamma) = a_\Gamma\quad\text{for all}\; \Gamma\in
  V(G)\,.
\end{equation}

Following~\cite[\S2.1]{zhang93:admissible_pairing}, we define the
\textit{canonical divisor on $G$} by  
$$
K_G \colonequals \sum_\Gamma
a_\Gamma \Gamma\in \Q^{V(G)}\,.
$$ 
Zhang assigns in~\cite[\S4.1]{zhang93:admissible_pairing} an Arakelov--Green function $g=g_D$ on $G\times G$ to any
divisor $D\in \Q^{V(G)}$ of
degree not equal to $-2$. We shall only require the \textit{admissible Arakelov--Green
function} 
$$g\colonequals g_{K_G}\colon V(G)\times V(G)\to \Q\,,$$ restricted to
$V(G)$ and normalised to have values in $\Q$ (by multiplying the function
$g_{K_g}$ constructed in~\cite[\S4.1]{zhang93:admissible_pairing} by $c_v^{-1}$).
The reason we care about this function is that it is the main
ingredient of Zhang's construction of the admissible pairing.

  \begin{lem}
  \emph{(\cite[\S4.1]{zhang93:admissible_pairing},~\cite[Theorem~4.4]{hei04})}
  \label{L:ZhangAd} 
  Let $P_1,P_2 \in C(F_v)$ be distinct and
  let $(-\cdot -)$ be the intersection multiplicity on
    $\mathcal{C}$ as in~\S\ref{subsec:away-disjoint}.
  Then we have
    \begin{equation}\label{AdmPairing}
    (P_1,P_2)_a = (\bar{P_1} \cdot \bar{P_2})
      +g(\Gamma_{P},\Gamma_{P'})\,.
    \end{equation}
  \end{lem}

Rather than going through Zhang's construction of $g$, which is quite technical,
we will be content with a fairly explicit formula that relates the admissible
Arakelov--Green functions to the discrete Laplacian operator on $G$.
See~\cite{baker_rumely07:harmonic_analysis_metrized_graphs} for
more properties of Arakelov--Green functions and~\cite{cinkir2014explicit} and~\cite{vDijkKaya} for further explicit
expressions for Arakelov--Green functions.
Let $L$ denote minus the intersection matrix on $\mathcal{C}_s$, then $L$
is the \textit{discrete Laplacian matrix} of $G$. We let $L^+\in
\Q^{\#V(G)\times \#V(G)}$ denote the
Moore--Penrose pseudoinverse of $L$. It satisfies $L^+LL^+=L^+$ and
$LL^+L=L$. By abuse of notation, we also write
$L^+$ for the bilinear pairing on $\Q^{V(G)}$ 
  associated to $L^+$.
  If $E=\sum_{\Gamma}
  c_\Gamma\Gamma \in \Q^{V(G)}$ and $d_\Gamma = (E\cdot \Gamma)$, then  we have
  \begin{equation}\label{L+}
    c_\Gamma=-L^+(D,\Gamma)\,,\quad \text{where}\; D = \sum d_\Gamma\Gamma\,.
  \end{equation}
  See~\cite{cinkir2014explicit} and~\cite[\S7]{HdJ15} for more properties of $L^+$.
  \begin{lem}\label{L:gL+}
  There is a constant $\kappa \in \Q$ such that 
   \begin{equation}\label{E:gformula}
    g(\Gamma_1, \Gamma_2)= \frac{1}{4}\left(-L^+(K_G, \Gamma_1+\Gamma_2)-
    L^+(\Gamma_1, \Gamma_1) - L^+(\Gamma_2, \Gamma_2) +
    4L^+(\Gamma_1,\Gamma_2)\right)+\kappa
   \end{equation}
    for all $\Gamma_1,\Gamma_2\in V(G)$. 
  \end{lem}
  \begin{proof}
    The metrised graph $G$ has an interpretation as a resistive electrical
    network, and we denote by $r(\Gamma_1,\Gamma_2)$ the 
effective resistance between two components and by 
 $j_{\Gamma}(\Gamma_1,\Gamma_2)$ the voltage function
 (see~\cite{baker_faber06:metrized_graphs_laplacian_operator_electric}).
It follows from \cite[Equation~(7)]{cinkir2014explicit}
that there is a constant $\kappa'$ such that
\begin{equation}\label{grj}
  g(\Gamma_1, \Gamma_2) = \frac{1}{4}\left(\sum_{\Gamma}a_\Gamma
  j_{\Gamma}(\Gamma_1,\Gamma_2)-r(\Gamma_1,\Gamma_2) \right) +\kappa'\,.
\end{equation}
By \cite[Lemma~6.1]{cinkir2014explicit}, we have
    \begin{equation}\label{r}
r(\Gamma_1,\Gamma_2)=L^+(\Gamma_1, \Gamma_1)-2L^+(\Gamma_1,
\Gamma_2)+L^+(\Gamma_2,
\Gamma_2)
    \end{equation}
and 
    \begin{equation}\label{j}
j_{\Gamma}(\Gamma_1,\Gamma_2)=L^+(\Gamma, \Gamma)-L^+(\Gamma,
\Gamma_1)-L^+(\Gamma,\Gamma_2)+L^+(\Gamma_1,\Gamma_2)\,.
    \end{equation}
Since we have
    \[
      \sum_\Gamma a_\Gamma L^+(\Gamma, \Gamma') = L^+(K_G, \Gamma')
    \]
    for all $\Gamma'\in V(G)$,~\eqref{j} implies
      \begin{equation}\label{aGammajGamma}
\sum_{\Gamma}a_\Gamma j_{\Gamma}(\Gamma_1,\Gamma_2) 
      = -L^+(K_G, \Gamma_1+\Gamma_2) + L^+(\Gamma_1,\Gamma_2)\cdot\sum_\Gamma
      a_\Gamma + \sum_\Gamma a_\Gamma L^+(\Gamma,\Gamma)\,.
      \end{equation}
    The final term is constant, and we have 
    $ \sum_{\Gamma \in V(G)} a_\Gamma = 2g(C)-2 = 2$.
    Rewriting~\eqref{grj} using~\eqref{aGammajGamma} and~\eqref{r} yields
    the desired result.
  \end{proof}
  For our purposes,~\eqref{E:gformula} can be taken as a definition of
  $g$.
  We end our preliminary considerations with the following simple identity:
  \begin{cor}\label{C:3termg}
    Let $\Gamma_1,\Gamma_2,\Gamma_3 \in V(G)$. Then there exists a constant
    $\kappa(\Gamma_3)\in \Q$, independent of $\Gamma_1$ and $\Gamma_2$, such that
    $$
      2g(\Gamma_1,\Gamma_2) +
      2g(\Gamma_1,\Gamma_3)+2g(\Gamma_2,\Gamma_3)
      =  
      L^+(\Gamma_1+\Gamma_2,2\Gamma_3-K_G)-L^+(\Gamma_1-\Gamma_2,\Gamma_1-\Gamma_2)
      + \kappa(\Gamma_3)\,.$$ 
  \end{cor}
  \begin{proof}
    This follows from Lemma~\eqref{L:gL+} using a straightforward
    computation.
  \end{proof}

\begin{proof}[Proof of Theorem~\ref{thm:comp_away}]
  We first show that $\lambda_v^{\CG}-\lambda_{X,v}$ is constant. 
   Proposition~\ref{P:NFad} and Lemma~\ref{L:ZhangAd} imply
   \[
     -\mu^{\Q}_{X,v}(a)\equiv
        2(\overline{Q}_1\cdot\overline{Q}_2) +
        2(\overline{Q}_1+\overline{Q}_2,\overline{\infty})+
        2g(\Gamma_1,\Gamma_2) + 2g(\Gamma_1+\Gamma_2,
        \Gamma_\infty)\bmod{\Q}\,,
      \]
    where we write $\Gamma_i$ for $\Gamma_{Q_i}$.
    From Corollary~\ref{C:3termg} we get
      \begin{equation}\label{lambdaXform}
        -\mu^{\Q}_{X,v}(a)- 
 2(\overline{Q}_1\cdot\overline{Q}_2) -
        2(\overline{Q}_1+\overline{Q}_2,\overline{\infty})        \equiv L^+(\Gamma_1+\Gamma_2,2\Gamma_\infty-K_G)-
        L^+(\Gamma_1-\Gamma_2,\Gamma_1-\Gamma_2)\bmod{\Q}\,.
      \end{equation}

    In order to compare this to $\lambda_{v}^{\CG}(a)$,
    consider the differential $\omega_\mathcal{C}= \frac{dx}{2y}$ on
    $\mathcal{C}$; its divisor is $2\overline{\infty} + V$ for some
    vertical divisor $V$. We denote by $\Phi\in \Q^{V(G)}$ a vertical $\Q$-divisor on $\mathcal{C}$ such that $(\overline{Q}_1 - \overline{Q}_2) + \Phi$ has intersection multiplicity~0 with all vertical components.
    Let $t_i$ be our chosen tangent vector at $Q_i$.
    Then we have 
    \begin{align*}
\frac{\lambda_{v}^{\CG}(a)}{\chi_v(\pi_v)}&= 
      \left(((\overline{Q}_1-\overline{Q}_2+\Phi)\cdot
      (\overline{Q}_1-\overline{Q}_2+\Phi)\right)\\
      &=     \left((\overline{Q}_1\cdot
        \overline{Q}_1)_{t_1}+ (\overline{Q}_2\cdot  \overline{Q}_2)_{t_2}
        -2(\overline{Q}_1\cdot
        \overline{Q}_2)+2(\overline{Q}_1-\overline{Q}_2\cdot \Phi)
        +\Phi^2\right)\\
      &=     \left((\overline{Q}_1\cdot
        \overline{Q}_1)_{t_1}+ (\overline{Q}_2\cdot  \overline{Q}_2)_{t_2}
        -2(\overline{Q}_1\cdot
        \overline{Q}_2)-\Phi^2\right)
    \end{align*}
    by Corollary~\ref{C:hvt}. Using Lemma~\ref{L:selfint}, we obtain
    \begin{align}
      -\frac{\lambda_{v}^{\CG}(a)}{\chi_v(\pi_v)}&= 
                (\overline{Q}_1\cdot  \mathrm{div}(\omega_\mathcal{C})) +
                (\overline{Q}_2\cdot  \mathrm{div}(\omega_\mathcal{C}))
                +2(\overline{Q}_1 \cdot  \overline{Q}_2) +\Phi^2
                    \nonumber\\&=
                2(\overline{Q}_1\cdot  \overline{Q}_2)
                +2(\overline{Q}_1+\overline{Q}_2\cdot  \overline{\infty})
                +(\overline{Q}_1+\overline{Q}_2\cdot  V) +
                \Phi^2\,.\label{lcg}
                \end{align}

  By~\eqref{lambdaXform} we have shown
  $\lambda_{X,v}\equiv\lambda_v^{\CG}\bmod{\Q\chi_v{\pi_v}}$
  if we can prove 
that 
                \begin{equation}\label{E:zz}
     (\overline{Q}_1+\overline{Q}_2\cdot  V) +
                \Phi^2\equiv L^+(\Gamma_1+\Gamma_2,2\Gamma_\infty - K_G)
                  -L^+(\Gamma_1-\Gamma_2,\Gamma_1-\Gamma_2)\bmod{\Q}\,,
                \end{equation}
                where the constant is independent of $Q_1$ and $Q_2$.
                A simple computation reveals
  \begin{equation}\label{Phiform}
    \Phi^2 =-L^+(\Gamma_1-\Gamma_2,\Gamma_1-\Gamma_2)\,.
  \end{equation}
  See~\cite[Proposition~7.4]{HdJ15} for details, noting that their
  pairing $g_\Gamma$ is our $L^+$.

  It remains to rewrite the intersection multiplicity $
     (\overline{Q}_1+\overline{Q}_2\cdot  V)$ in terms of $L^+$. By adding
     a suitable multiple of the entire special fibre $\mathcal{C}_s$, if
     necessary, we may assume without loss of generality that
     $(\overline{\infty}\cdot V)=0$, so that $\Gamma_\infty$ is not one of
     the components in the support of $V$. 
     The divisor $\mathrm{div}(dx/2y)=2\infty$ is a canonical divisor on $C$; hence
     it extends to a canonical $\Q$-divisor $\mathcal{K}$ on $\mathcal{C}$
     by~\cite[Proposition~2.5]{CK09}.
     This extension is not unique, but we may
     require that $\Gamma_\infty$ is not one of the irreducible
     components in the support of the vertical divisor
     $\mathcal{K}-2\overline{\infty}$, and this uniquely fixes
     $\mathcal{K}$.
     It follows
     from~\cite[Corollary~6.4.13]{liu02:algebraic_geometry_arithmet_curves}
     (see~\cite[\S 5.1]{balakrishnan2016quadratic} for details)
     that $\div(\omega_\mathcal{C})-\mathcal{K}$ is supported only in
     components of multiplicity $>1$, so since $\mathcal{C}$ is semistable,
     we have that 
     \begin{equation}\label{KV}
       \mathcal{K} = 2\overline{\infty}+V\,.
     \end{equation}
     Hence the adjunction
     formula~\eqref{adjvert} implies that
     \[
       (V\cdot \Gamma) = (\mathcal{K} - 2\overline{\infty}\cdot \Gamma) =
       a_\Gamma - \delta_{\Gamma,\Gamma_\infty}\,.
     \]
Writing $V=\sum c_\Gamma\Gamma$ and letting
$D\colonequals K_G-2\Gamma_\infty$, we obtain
     from \eqref{L+} that
     $$
       (\bar{P}\cdot V) = c_\Gamma = -L^+(\Gamma, D)= -L^+(\Gamma_P, K_G) + 2L^+(\Gamma_P,\Gamma_\infty)\,.$$
In particular, we find that 
$$(\bar{Q_1}+\bar{Q_2} \cdot V) = -L^+(\Gamma_1+\Gamma_2, K_G) +
2L^+(\Gamma_1+\Gamma_2,\Gamma_\infty)\,.$$
Together with~\eqref{Phiform} this proves~\eqref{E:zz},
and therefore $\lambda_{X,v}$ and $\lambda_v^{\CG}$ differ only by a
constant.

To show exact equality, we choose an affine point $Q_1=(x,y)\in C(F_v)$
such that $\ord_v(x)<0$, and we let $Q_2$ be the image of $Q_1$ under the
hyperelliptic involution. Then $$(\overline{Q}_1\cdot  \overline{Q}_2) =
2\ord_v(2y)-6\ord_v(x) = \ord_v(4)-\ord_v(x)$$ and $(\overline{Q}_1\cdot
\overline{\infty})=-\frac{1}{2}\ord_v(x)$.
It follows from~\eqref{lcg} that
$$\lambda_v^{\CG}(a) = \chi_v(\pi_v)(3\ord_v(x)-\ord_v(4))\,.$$
By~\cite[Proposition~6.1]{dJM14},  we have $\mu^{\Q}_{X,v}(a) = -\ord_v(k_1)$, where $(k_1,k_2,k_3,k_4)$ are integral coordinates for the image of $a$ on the Kummer surface of $J$ such that one of them is a unit.  In this case, \cite[\S 2]{FlynnSmart} implies that 
$$(k_1:k_2:k_3:k_4) = \left(1:2x:x^2:\frac{T}{4y^2}\right)$$
where $T\in \Z[b_1,\dots,b_5][x]$ is monic of degree $8$. Thus, we conclude that
\begin{equation*}
 \lambda_{X,v}(a) = - \chi_v(\pi_v)v\left(\frac{T}{4y^2}\right) =
  \chi_v(\pi_v)(3\ord_v(x)-\ord_v(4)) = \lambda_v^{\CG}(a). \qedhere
\end{equation*}
\end{proof}
\begin{rem}
A generalisation of Theorem~\ref{thm:comp_away} to hyperelliptic
curves of arbitrary genus is work in
progress due to Tianci Kang.
\end{rem}

\subsection{Ramified primes}\label{S:comram}
We now show that the comparison result Theorem~\ref{thm:comp_away} also
holds in the ramified case. We keep the same notation as
in~\S\ref{sec:comp_away}, except that we now
assume that $\chi_v$ is ramified. 

\subsubsection{$v$-adic sigma functions and $p$-adic N\'eron functions}\label{subsec:vadicsigma}
In the setting that we restricted to in this section, namely that of the Jacobian of a genus $2$ hyperelliptic curve given by an odd degree model, we can give a concrete description of $\lambda_{X,v}$ by combining Proposition~\ref{P:pNFSymGreen} with \cite[Theorem~5.30]{Bia23}.
This requires working with Grant's explicit embedding of $J$ into $\P^8$ and the resulting formal group parameters $(T_1, T_2) =:T$; see \cite{Grant1990} and \cite[\S2]{Bia23} for details. 
Extending work of Blakestad \cite{blakestadsthesis}, \S3.2 (and in particular Proposition~3.5) in \cite{Bia23} provides a bijection between the set of isotropic, complementary subspaces $W_v$, and the set of certain power series $\sigma_v(T) = T_1(1+O(T_1,T_2))\in F_v[[T_1,T_2]]$ that satisfy analogous properties to the complex hyperelliptic sigma function. 

To be more precise, recall that $\iota \colon C \xhookrightarrow{} J$
denotes the
embedding of $C$ into $J$ with respect to the point at infinity $\infty\in
C(F)$. For $i\in \{1,2\}$, consider the invariant differential $\Omega_i$
on $J$ satisfying $\iota^{*}\Omega_i =  \omega_i =\frac{x^{i-1}dx}{2y}$ and let $D_i$ be its dual invariant derivation. Further, for $1\leq i,j\leq 2$, let $X_{ij} = X_{ji}$ be the rational function on $J$ described explicitly on the symmetric square of $C$ by the formulas of \cite[(4.1), (1.4)]{Grant1990}.

If $W_v$ is an isotropic complementary subspace, then by
\cite[Proposition~3.5]{Bia23}, there exists a unique $2\times 2$ symmetric
matrix $c = (c_{ij})$ with entries in $F_v$ such that $\iota^{*}W_v$ is
spanned by the classes of the differentials
\begin{equation}\label{eq:etais}
\begin{aligned}
\eta_1^{(c)} = (-3x^3 - 2b_1x^2 -b_2x + c_{12} x + c_{11})\frac{dx}{2y},\\
 \eta_2^{(c)}= (-x^2 + c_{22} x + c_{12})\frac{dx}{2y}.
 \end{aligned}
\end{equation}
Conversely, given a symmetric matrix $c = (c_{ij})\in F_v^{2\times 2}$, the classes of the differentials \eqref{eq:etais} span a subspace of $H^1_{\dR}(C/F_v)$ that is isotropic with respect to the cup product and complementary to the space of holomorphic forms. For this reason, we say that $W_v$ \emph{corresponds to} the symmetric matrix $c = (c_{ij})$.

\begin{defn}[{\cite[\S 3.2]{Bia23}}]\label{def:sigma}
Let $W_v$ be the subspace corresponding to a symmetric matrix $(c_{ij})\in F_v^{2\times 2}$.
The \emph{$v$-adic sigma function} $\sigma_v(T) = \sigma_v^{(c)}(T)\in F_v[[T_1,T_2]]$ 
attached to $W_v$ is the unique odd solution of the form $T_1 (1+ O(T_1,T_2))$ to the system \begin{equation*}
D_iD_j(\log(\sigma_v^{(c)}(T))) =-X_{ij}(T) + c_{ij}, \qquad \text{for all } 1\leq i,j\leq 2.
\end{equation*}
\end{defn}
 
By \cite[Proposition~3.4]{Bia23}, a $v$-adic sigma function induces a
function on a finite index subgroup $H_v$ of the model-dependent formal
group of $J$. Furthermore, it satisfies
\begin{equation*}
    \frac{\sigma_v([m](T))}{\sigma_v(T)^{m^2}} = \phi_m(T),\qquad \text{for all } m\in \Z_{>0},
\end{equation*}
and we can use this formula to extend the domain of $\sigma_v$ to $J(F_v)$.

This was used in \cite{Bia23} to construct a $p$-adic height on $J$, whose
local term $[F_v:\Q_p]\lambda_v^{B}$ at the ramified prime $v$ (and not
only by \cite[Remark~4.6\thinspace{}(iii)]{Bia23}) is given in terms of the
sigma function $\sigma_v$ determined by our choice of $W_v$.
In particular,
for a non-torsion point $a\in J(F_v)\setminus \supp(\Theta)$, we have
\begin{equation*}
\lambda_v^B(a) = -\frac{2}{ m^2[F_v : \Q_p]}\cdot \chi_{v}\left(\frac{\sigma_v(T(ma))}{\phi_m(a)}\right),
\end{equation*}
for any positive integer $m$ such that $ma\in H_v\setminus\supp(\Theta)$ (see \cite[Definition 4.2]{Bia23}).

\begin{prop}[{\cite[Theorem~5.30]{Bia23}}]\label{P:eqGsigma}
There exists a Colmez symmetric $v$-adic Green function $G_{\Theta, W_v}$ such that
 \begin{equation*}
 \lambda^B_{v} = -\frac{2}{[F_v:\Q_p]}\tr_{v}\circ G_{\Theta, W_v}\,.
 \end{equation*}
\end{prop}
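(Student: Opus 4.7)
The plan is to show that $\lambda_v^B$ and $-\frac{2}{[F_v:\Q_p]}\tr_v\circ G_{\Theta,W_v}$ satisfy the same characterizing properties on $J(F_v)\setminus\supp(\Theta)$, so they must agree up to an additive constant that can be absorbed into $G_{\Theta,W_v}$ (which is only defined up to a constant).

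I would begin by recalling Colmez's characterization: given the isotropic complementary subspace $W_v$, the symmetric Green function $G_{\Theta,W_v}$ is, up to an additive constant, the unique locally analytic $F_v$-valued function on $J(F_v)\setminus\supp(\Theta)$ that (i) has a logarithmic singularity along $\Theta$, (ii) is invariant under $[-1]$, and (iii) has differential whose de Rham cohomology class corresponds to $W_v$. Equivalently, it is pinned down by a doubling-type transformation law reflecting the divisor identity $[2]^*\Theta - 4\Theta = \div(\phi_2)$.

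The heart of the proof is to check that $f \colonequals -\tfrac{[F_v:\Q_p]}{2}\lambda_v^B$ satisfies the analogous properties. Using the definition
\begin{equation*}
\lambda_v^B(a) = -\frac{2}{m^2[F_v:\Q_p]}\chi_v\bigl(\sigma_v(T(ma))/\phi_m(a)\bigr),
\end{equation*}
together with the relation $\sigma_v\circ [m]/\sigma_v^{m^2}=\phi_m$, one sees that $f$ is independent of $m$, well-defined on $J(F_v)\setminus\supp(\Theta)$, has a logarithmic singularity along $\Theta$ (coming from the simple vanishing of $\sigma_v$), and is $[-1]$-invariant since $\sigma_v$ is odd. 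Most importantly, the defining differential equations of $\sigma_v$ in Definition~\ref{def:sigma}, $D_iD_j\log\sigma_v = -X_{ij}+c_{ij}$, pin down the cohomology class of $df$ to be the one determined by the matrix $c$; by Proposition~3.5 of~\cite{Bia23} this matrix corresponds exactly to $W_v$.

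By Colmez's uniqueness, $f$ and $G_{\Theta,W_v}$ differ only by an additive constant, and choosing the constant appropriately in the definition of the Green function yields the claimed identity. The main obstacle is the verification in the previous paragraph that the subspace determined by the sigma-function side coincides with the one pre-chosen on Colmez's side. This matching requires a careful comparison of the explicit differentials $\eta_1^{(c)},\eta_2^{(c)}$ of~\eqref{eq:etais} with the classes cut out by Colmez's Hodge splitting, passing through Grant's explicit embedding of $J$ and the formal group parameters $(T_1,T_2)$; this comparison is the technical content of~\cite[Theorem~5.30]{Bia23}.
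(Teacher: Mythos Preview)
The paper does not give its own proof of this proposition; it is stated with attribution to \cite[Theorem~5.30]{Bia23} and used as a black box. So there is nothing to compare your argument against in this paper.

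That said, your sketch has the right shape but a type mismatch that would block the argument as written. You set $f \colonequals -\tfrac{[F_v:\Q_p]}{2}\lambda_v^B$ and propose to show that $f$ satisfies Colmez's characterizing properties for $G_{\Theta,W_v}$. But $G_{\Theta,W_v}$ is an $F_v$-valued function, whereas $\lambda_v^B$ is $\Q_p$-valued (it factors through $\chi_v = \tr_v\circ\log_v$). When $[F_v:\Q_p]>1$ the trace $\tr_v$ is not injective, so you cannot recover an $F_v$-valued Green function from $f$, and Colmez's uniqueness statement does not apply to $\Q_p$-valued functions. The correct object to test against Colmez's characterization is the $F_v$-valued function built from $\log_v\circ\sigma_v$ (suitably extended from the formal group to all of $J(F_v)\setminus\supp(\Theta)$ via the division polynomials $\phi_m$); once that is shown to be a symmetric Green function for $W_v$, the identity for $\lambda_v^B$ follows by applying $\tr_v$ and the appropriate scalar. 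Your use of the differential equations $D_iD_j\log\sigma_v = -X_{ij}+c_{ij}$ and the correspondence between $c$ and $W_v$ is the right mechanism for the matching, just carried out at the $F_v$-level.
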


\begin{rem}
Unlike in \cite{colmez1998integration} and \cite{Bia23}, our definition of
  $G_{\Theta,W_v}$, introduced in \S\ref{subsec:Colmez}, depends on the choice of branch of the $p$-adic logarithm induced by $\chi_v$. This explains the different trace map in Proposition~\ref{P:eqGsigma} compared to \cite[Theorem~5.30]{Bia23}.
\end{rem}
The following result
justifies why in \cite{Bia23} the function $\lambda_v^{B}$ is called ``a $p$-adic N\'eron function with respect to $X$''.
 \begin{cor} \label{cor:lambdavlambdaX}
 We have the equality
 \begin{equation*}
[F_v:\Q_p]\lambda^B_v = \lambda_{X,v}.
 \end{equation*}
 That is, the function $[F_v:\Q_p]\lambda_v^{B}$ is the $p$-adic N\'eron
   function with respect to $X$, $\chi_v$ and $W_v$ that satisfies~\eqref{eq:KU_divpol}.
 \end{cor}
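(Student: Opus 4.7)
The plan is to show that both functions agree with $-2\tr_v\circ G_{\Theta,W_v}$ up to an additive constant in $\im(\chi_v)$, and then to pin down the constant using the characterising functional equation~\eqref{eq:KU_divpol}.

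First, combine Propositions~\ref{P:pNFSymGreen} and~\ref{P:pNF}\thinspace{}\eqref{P:pNF-add}: since $X=2\Theta$, we have
\begin{equation*}
\lambda_{X,v} \equiv 2\lambda_{\Theta,v} \equiv -2\tr_v\circ G_{\Theta,W_v} \bmod\im(\chi_v),
\end{equation*}
where $G_{\Theta,W_v}$ is a symmetric Colmez Green function with respect to $W_v$ and $\chi_v$. On the other hand, Proposition~\ref{P:eqGsigma} gives $[F_v:\Q_p]\lambda_v^B = -2\tr_v\circ G_{\Theta,W_v}$. Hence $[F_v:\Q_p]\lambda_v^B$ and $\lambda_{X,v}$ differ by an additive constant $c\in\im(\chi_v)$.

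To show $c=0$, I would invoke the uniqueness from Remark~\ref{R:lambdaDpuniq}: since $[2]^*X-4X=\div(\phi_2^2)$, the $p$-adic N\'eron function $\lambda_{X,v}$ is the unique such function satisfying~\eqref{eq:KU_divpol}, namely
\begin{equation*}
\lambda_{X,v}\circ[2] - 4\lambda_{X,v} = -\chi_v\circ\phi_2^2.
\end{equation*}
It therefore suffices to verify that $[F_v:\Q_p]\lambda_v^B$ satisfies the same identity. For a non-torsion point $a\in J(F_v)\setminus\supp(\Theta)$, pick $m$ large enough so that both $ma$ and $2ma$ lie in $H_v\setminus\supp(\Theta)$; then using the definition of $\lambda_v^B$ in terms of $\sigma_v$ together with the identity $\sigma_v([m]T)/\sigma_v(T)^{m^2}=\phi_m(T)$, a short computation shows
\begin{equation*}
[F_v:\Q_p]\bigl(\lambda_v^B([2]a) - 4\lambda_v^B(a)\bigr) = -2\chi_v\!\left(\frac{\sigma_v(T(2ma))/\phi_m(2a)\phi_m(a)^{-4}}{\sigma_v(T(ma))^4}\right) = -\chi_v(\phi_2^2(a)),
\end{equation*}
where the telescoping of the $\phi_m$-factors uses the standard cocycle relation $\phi_m(2a)=\phi_m(a)^4\phi_2(ma)/\phi_2(a)^{m^2}$ (equivalently, the sigma function identity applied to $T(2ma) = [2](T(ma))$ and $T(ma)=[m]T(a)$).

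Finally, any two functions satisfying the same functional equation~\eqref{eq:KU_divpol} differ by a constant $c$ with $c\circ[2]-4c=0$, i.e., $-3c=0$, forcing $c=0$. This yields the equality $[F_v:\Q_p]\lambda_v^B = \lambda_{X,v}$. The main (minor) obstacle is the bookkeeping of the sigma-function cocycle to see that the RHS of the functional equation is exactly $-\chi_v\circ\phi_2^2$ with the correct sign and power, but this is a direct consequence of the scaling property of $\sigma_v$.
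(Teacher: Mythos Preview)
Your approach is essentially the same as the paper's: show the two functions differ by a constant via Propositions~\ref{P:pNF}\thinspace{}\eqref{P:pNF-add}, \ref{P:pNFSymGreen} and~\ref{P:eqGsigma}, then kill the constant using the functional equation~\eqref{eq:KU_divpol}. The only difference is that the paper cites \cite[Proposition~4.7\thinspace{}(ii)]{Bia23} for the fact that $[F_v:\Q_p]\lambda_v^B$ satisfies~\eqref{eq:KU_divpol}, whereas you rederive this directly from the sigma-function identity; your displayed intermediate expression drops the $1/m^2$ factor, but the telescoping via the cocycle relation $\phi_m([2]a)\phi_2(a)^{m^2}=\phi_m(a)^4\phi_2([m]a)$ is correct and yields the right conclusion.
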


  \begin{proof}
 By Proposition~\ref{P:pNF} \eqref{P:pNF-add} (recall that $X = 2\Theta$), Proposition~\ref{P:pNFSymGreen} and Proposition~\ref{P:eqGsigma}, we have that $[F_v:\Q_p]\lambda^B_v - \lambda_{X,v}$ is constant. By our choice of $\lambda_{X,v}$ and \cite[Proposition~4.7\thinspace{}(ii)]{Bia23}, both $[F_v:\Q_p]\lambda^B_v$ and $\lambda_{X,v}$ satisfy~\eqref{eq:KU_divpol}, so the constant is zero. 
\end{proof}
By \cite{Bia23}, we can express the local Coleman--Gross height pairing at $v$, for divisors with disjoint support, in terms of the local height $\lambda_v^B$, and hence, in terms of $\lambda_{X,v}$ in view of Corollary~\ref{cor:lambdavlambdaX}. In particular, we have:
\begin{cor}\label{cor:comp_disjoint}
Let $P_1,P_2,Q_1,Q_2\in C(F_v)\setminus \{\infty\}$ such that $Q_i\neq P_j$ for all $i,j\in\{1,2\}$. 
Then we have 
\begin{equation*}
  h_v^{\CG}(P_1 - P_2,Q_1 - Q_2) = -\frac{1}{2}\sum_{1\leq i,j\leq 2} (-1)^{i+j}\lambda_{X,v}([Q_i - P_j]).
\end{equation*}
\end{cor}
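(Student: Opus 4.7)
The plan is to derive the identity from an explicit formula in \cite{Bia23} expressing the local Coleman--Gross pairing for divisors of disjoint support in terms of the local height $\lambda_v^B$, combined with the normalization identity $[F_v:\Q_p]\lambda_v^B = \lambda_{X,v}$ of Corollary~\ref{cor:lambdavlambdaX}.

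In more detail, the argument proceeds in three steps. First, starting from Proposition~\ref{P:eqGsigma}, which identifies $[F_v:\Q_p]\lambda_v^B$ with $-2\,\tr_v\circ G_{\Theta,W_v}$, and from Colmez's construction~\cite{colmez1998integration} of the local Coleman--Gross pairing in terms of the symmetric Green function $G_{\Theta,W_v}$ (which is the input used in \cite{Bia23} to relate $h_v$ to $\lambda_v^B$), one obtains a four-term alternating expression
\[
h_v\bigl((P_1)-(P_2),(Q_1)-(Q_2)\bigr)\;=\;\tr_v\!\left(\sum_{1\leq i,j\leq 2}(-1)^{i+j}G_{\Theta,W_v}([Q_i-P_j])\right).
\]
Structurally, this identity reflects that both sides polarize the same symmetric bilinear form on $J(F_v)$, with the ``quadratic diagonal'' on the right being (up to a factor $-\tfrac{1}{2}$) the function $[F_v:\Q_p]\lambda_v^B$. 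Second, one substitutes $\tr_v\circ G_{\Theta,W_v} = -\tfrac{1}{2}[F_v:\Q_p]\lambda_v^B$ into the display above. Third, one applies Corollary~\ref{cor:lambdavlambdaX} to replace $[F_v:\Q_p]\lambda_v^B$ by $\lambda_{X,v}$, which yields precisely the stated formula.

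The main obstacle is the careful verification of signs and of the normalization constant $[F_v:\Q_p]$: both Colmez's Green function and the $v$-adic sigma function $\sigma_v$ are only defined up to additive (respectively, multiplicative) constants, and one has to confirm that these constants cancel inside the alternating four-term sum. A secondary technical point is that the sigma-function formulas in \cite{Bia23} are most naturally expressed on the finite-index formal subgroup $H_v$ of the model-dependent formal group of $J_v$; to obtain the identity for arbitrary $P_i,Q_j \in C(F_v)\setminus\{\infty\}$ with $Q_i\neq P_j$, one extends using biadditivity of $h_v$ on disjoint divisors (Proposition~\ref{P:hvprops}\eqref{hvbiadd}) together with the shared functional equation~\eqref{eq:KU_divpol} satisfied both by $\lambda_{X,v}$ and, after rescaling, by $[F_v:\Q_p]\lambda_v^B$. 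Once these normalizations are pinned down, the reduction to the formal subgroup followed by descent via $[2]$ is routine.
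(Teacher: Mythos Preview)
Your proposal is correct and follows essentially the same route as the paper: invoke the formula from \cite[Corollary~5.32, Example~5.33]{Bia23} expressing $h_v$ in terms of $\lambda_v^B$, then apply Corollary~\ref{cor:lambdavlambdaX} to pass from $[F_v:\Q_p]\lambda_v^B$ to $\lambda_{X,v}$. The additional discussion you give (Colmez's Green function, cancellation of constants in the alternating sum, extension from $H_v$ via the functional equation) merely unpacks what is already contained in the cited results of \cite{Bia23}, so none of those ``obstacles'' need to be addressed anew here.
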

\begin{proof}
    This follows immediately from Corollary~\ref{cor:lambdavlambdaX} and \cite[Corollary~5.32, Example 5.33]{Bia23}.
\end{proof}

\subsubsection{Comparison theorem at ramified primes}
Recall that $\chi_v$ is assumed to be ramified. We shall now use
Corollary~\ref{cor:comp_disjoint} to compare $\lambda_{X,v}$ with
$\lambda_v^{\CG}$.

\begin{thm}\label{thm:comp_above}
We have
    \begin{equation*}
        \lambda_v^{\CG} = \lambda_{X,v}\,.  
    \end{equation*}
        \end{thm}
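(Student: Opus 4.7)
The plan is to adapt Balakrishnan--Besser's elliptic argument to the genus~$2$ setting: we reduce $\lambda_v^{\CG}(a) = h_{v,t}(D,D)$ to the disjoint-support pairing via Definition~\ref{D:hvcommon}, evaluate the reduced pairing using Corollary~\ref{cor:comp_disjoint}, and identify the result with $\lambda_{X,v}(a)$ through the $v$-adic sigma function.

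In detail, write $a = [Q_1 - Q_2] \in J(F_v)\setminus\supp(\Theta)$ with distinct $Q_1, Q_2 \in C(\overline{F}_v)\setminus\{\infty\}$, and set $a_i = [Q_i - \infty]$. The base-change compatibility of both sides (\eqref{hvtgeneral} and Proposition~\ref{P:pNF}\eqref{P:pNF-ext}) reduces us to $Q_i \in C(F_v)$, and on a Zariski-dense set we may further assume $Q_i$ are non-Weierstrass and $Q_1 \neq Q_2^-$ (the general case following by $p$-adic continuity of both sides in $a$). Set $D = (Q_1) - (Q_2)$ and take the reducing function $f = (x - x(Q_2))/(x - x(Q_1))$. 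Since $\div(x - x(Q_i)) = (Q_i) + (Q_i^-) - 2(\infty)$, we have $D + \div(f) = (Q_2^-) - (Q_1^-)$, disjoint from $\supp(D)$, and Definition~\ref{D:hvcommon} gives
\[
\lambda_v^{\CG}(a) = h_v\bigl((Q_2^-) - (Q_1^-),\,(Q_1) - (Q_2)\bigr) - \chi_v\bigl(f(D,\underline t)\bigr).
\]
The identity $[Q^- - \infty] = -[Q - \infty]$ (from $Q + Q^- \sim 2\infty$) identifies the four classes $[Q_i - Q_j^-]$ ($i,j\in\{1,2\}$) as $a_1+a_2,\,2a_1,\,2a_2,\,a_1+a_2$, so Corollary~\ref{cor:comp_disjoint} yields
\[
h_v\bigl((Q_2^-) - (Q_1^-),\,(Q_1) - (Q_2)\bigr) = -\lambda_{X,v}(a_1 + a_2) + \tfrac12\lambda_{X,v}(2a_1) + \tfrac12\lambda_{X,v}(2a_2).
\]
A direct expansion of $f$ in the normalised parameter $z_{Q_i} = (x - x(Q_i))/(2y(Q_i)) + \cdots$ dual to $\omega_1 = dx/(2y)$ gives $f(Q_1, t_{Q_1}) = (x(Q_1)-x(Q_2))/(2y(Q_1))$, $f(Q_2, t_{Q_2}) = 2y(Q_2)/(x(Q_2)-x(Q_1))$, so
\[
f(D,\underline t) = \frac{-(x(Q_1) - x(Q_2))^2}{4 y(Q_1) y(Q_2)}.
\]

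The theorem therefore reduces to the identity
\[
\lambda_{X,v}(a_1 - a_2) + \lambda_{X,v}(a_1 + a_2) - \tfrac12\lambda_{X,v}(2a_1) - \tfrac12\lambda_{X,v}(2a_2) = \chi_v\!\left(\frac{(x(Q_1) - x(Q_2))^2}{4 y(Q_1) y(Q_2)}\right).
\]
By Corollary~\ref{cor:lambdavlambdaX} and the formula $\lambda_{X,v}(u) = -2\chi_v(\sigma_v(T(u)))$ recalled in~\S\ref{subsec:vadicsigma}, combined with the doubling relation $\sigma_v([2]u) = \sigma_v(u)^4 \phi_2(u)$, this is equivalent to the two-variable $v$-adic sigma identity
\[
\chi_v\!\left(\frac{\sigma_v(T(a_1+a_2))\,\sigma_v(T(a_1-a_2))}{\sigma_v(T(a_1))^2\,\sigma_v(T(a_2))^2}\right) = \tfrac12\chi_v\!\left(-\phi_2(a_1)\phi_2(a_2)\cdot\frac{(x(Q_1)-x(Q_2))^2}{4 y(Q_1) y(Q_2)}\right).
\]
The main obstacle is the verification of this identity, which is the genus~$2$, $v$-adic analogue of the classical Weierstrass formula $\sigma(u+v)\sigma(u-v) = -\sigma(u)^2\sigma(v)^2(\wp(u) - \wp(v))$ that underlies Balakrishnan--Besser's elliptic proof. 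The strategy is to show that both sides, regarded as functions on a dense subset of $\iota(C)\times\iota(C)\subset J\times J$, have matching divisor structure and leading expansions in the formal parameters $(T_1,T_2)$ dual to $\omega_1,\omega_2$; the PDE characterisation of $\sigma_v$ in Definition~\ref{def:sigma}, together with Grant's explicit two-variable addition formulas from~\cite{Grant1990}, should then reduce the identity to a formal power-series calculation, with any residual constant fixed by the normalisation~\eqref{eq:KU_divpol}.
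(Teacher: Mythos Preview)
Your reduction via a specific principal divisor is a reasonable variant of the paper's approach, but the argument has a genuine gap at the end. You correctly reduce the claim to a two-variable sigma identity, but you do not prove it: the final paragraph is only a strategy (``should then reduce\ldots''), not a verification. Worse, the identity as written involves $\sigma_v(T(a_i))$ with $a_i=[Q_i-\infty]\in\iota(C)=\Theta$, where $\sigma_v$ vanishes; likewise $\phi_2$ has a pole along $\Theta$. So both sides of your displayed sigma identity are indeterminate $0/0$ forms, and any honest proof would have to pass to a limit along a parameter transverse to $\Theta$ and compute leading terms---which is precisely the kind of constant-term calculation the paper carries out, only you have pushed it to a harder spot. (There is also a sign slip: the required right-hand side is $-\chi_v(f(D,\underline t))=\chi_v\bigl(4y(Q_1)y(Q_2)/(x(Q_1)-x(Q_2))^2\bigr)$, not its inverse; this is minor since $\chi_v(-1)=0$, but it propagates.)

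The paper's route is both shorter and avoids the indeterminacy. Instead of moving $D$ to $(Q_2^-)-(Q_1^-)$ and then confronting a four-term sigma identity on $\Theta\times\Theta$, it extends Corollary~\ref{cor:comp_disjoint} itself to $P_i=Q_i$ via constant terms with respect to $t$. The two cross terms $\lambda_{X,v}([Q_1-P_2])$ and $\lambda_{X,v}([Q_2-P_1])$ specialise directly to $\lambda_{X,v}(a)$ (using evenness), so $\lambda_v^{\CG}(a)-\lambda_{X,v}(a)$ equals the sum of the constant terms of $-\tfrac12\lambda_{X,v}([P_i-Q_i])$ at $P_i=Q_i$. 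These are computed by a one-variable expansion: Lemma~\ref{lemma:T1T2} gives $T_1([Q_i(z)-Q_i])=z+O(z^2)$, hence $\sigma_v=z(1+O(z))$ and $\lambda_{X,v}=-2\tr_v(\log_v z+O(z))$, whose constant term is zero. Your approach could in principle be completed by carrying out an analogous limit inside the quasi-parallelogram identity $\sigma_v(u+w)\sigma_v(u-w)/\sigma_v(u)^2\sigma_v(w)^2$ of \cite[Proposition~3.4(iii)]{Bia23} as $u,w\to\Theta$, but that is more work than the paper's direct computation, not less.
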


\begin{proof}
  Let $a = [Q_1-Q_2]\in J(F_v)\setminus\supp(\Theta) $ for some $Q_1,Q_2 \in C(\overline{F}_v)\setminus\{\infty\}$. By \S\ref{subsec:common-nonrat} and Proposition~\ref{P:pNF}~\eqref{P:pNF-ext}, we may assume, possibly after moving to an extension, that $Q_1$ and $Q_2$ are $F_v$-rational. 

We can extend the equality of Corollary~\ref{cor:comp_disjoint} to the case $P_i = Q_i$ using the section $t$ of the tangent bundle of $C$ that we fixed at the beginning of \S\ref{S:com}. 
Since $\lambda_{X,v}$ is even, this yields that
\begin{equation*}
  \lambda_v^{\CG}(a) - \lambda_{X,v}(a)
\end{equation*}
is given by the sum of the values of
\begin{equation}\label{eq:rem_terms}
-\frac{1}{2}\lambda_{X,v}([P_1-Q_1]) , \qquad -\frac{1}{2}\lambda_{X,v}([P_2 - Q_2])
\end{equation}
at $Q_1= P_1$ and $Q_2= P_2$, where these are defined using our choice of $t$, similarly to \S\ref{subsec:above-common}. Let $z$ be the normalised local parameter at $Q_i$ with respect to $t$ of Lemma~\ref{lemma:T1T2}. Then
\begin{align*}
    T_1([ Q_i(z)-Q_i]) = z + O(z^2),\qquad  
    T_2([Q_i(z)-Q_i]) = x(Q_i) z + O(z^2)\,,
\end{align*}
and, for $z\ne 0$ sufficiently small so that $[Q_i(z) - Q_i]\in H_v$,
\begin{equation*}
\lambda_{X,v}([Q_i(z) - Q_i]) = -2\chi_v(\sigma_v([Q_i(z) - Q_i])) = -2\chi_v(z(1+ O(z))) = -2\tr_v(\log_v(z) + O(z)).
\end{equation*}
Since the constant term of $\log_v(z) + O(z)$ is zero, the terms in~\eqref{eq:rem_terms} are zero.
\end{proof}

\begin{lem}\label{lemma:T1T2}
Let $P\in C(F_v)\setminus\{\infty\}$. Then we can make the following choice of a local parameter with respect to $\omega_1$ (and hence $t$):
\begin{enumerate}[label=(\roman*)]
    \item If $P$ is a Weierstrass point (i.e.\ $y(P) = 0$), then $z =
      \frac{y}{b^{\prime}(x(P))}$, where $C$ is defined by $y^2=b(x)$.
    \item \label{it:gen}If $P$ is a non-Weierstrass point, then $z = \frac{x-x(P)}{2y(P)}$.
\end{enumerate}
In both cases, if $P(z)$ denotes the expansion around $P$ in $z$, we have
\begin{align*}
  T_1([P(z)-P]) = z + O(z^2),\qquad  T_2([P(z) -P])= x(P) z + O(z^2)\,.
\end{align*}
Moreover, there exists a neighbourhood $V$ of $0$ for which $\sigma_v(T_1(z),T_2(z))$ converges for all $\alpha\in V$ and its value at such $\alpha$ is $\sigma_v(T_1(\alpha),T_2(\alpha))$.
\end{lem}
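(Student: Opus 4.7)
The plan is to treat the three assertions in turn: first verify the normalisation of the proposed local parameter $z$ with respect to the tangent vector $t$ at $P$; then compute the Taylor expansions of $T_1,T_2$ on $[P(z)-P]$ to leading order in $z$; and finally deduce the convergence statement from the fact that $T_i(z)$ is a power series in $z$ vanishing at $z=0$.

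For the first step, recall that $t_P$ is the tangent vector at $P$ dual to $\omega_1$, so a local parameter $z$ is normalised with respect to $t$ precisely when $(dz/\omega_1)(P)=1$. In case~\eqref{it:gen}, with $y(P)\ne 0$, the function $u:=x-x(P)$ is a local parameter and $\omega_1=du/(2y)$; dividing by $2y(P)$ gives $z=u/(2y(P))$, and $dz=du/(2y(P))$ coincides with $\omega_1$ at $P$. In case~(i), differentiating $y^2=b(x)$ gives $2y\,dy=b'(x)\,dx$, so $\omega_1=dy/b'(x)$, and since $b'(x(P))\ne 0$ (as $b$ has no repeated roots) the function $y$ is a local parameter; dividing by $b'(x(P))$ yields $z=y/b'(x(P))$ with $dz=\omega_1$ at $P$.

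For the second step, I will use the basic compatibility between the invariant differentials on $J$ and the formal group coordinates: by construction (see \cite{Grant1990} and \cite[\S2]{Bia23}), the parameters $T_1,T_2$ at the origin of $J$ satisfy $\Omega_i=dT_i+O(T_1,T_2)\,dT$, i.e.\ $\Omega_i$ and $dT_i$ agree to leading order. Combined with $\iota^*\Omega_i=\omega_i$, this yields, for the class $[P(z)-P]\in J(F_v)$ obtained by Abel--Jacobi from a small displacement in $z$, the expansion
\[
T_i\bigl([P(z)-P]\bigr)=\int_P^{P(z)}\omega_i+O(z^2).
\]
Substituting $\omega_1=(1+O(z))\,dz$ gives $T_1=z+O(z^2)$, and writing $\omega_2=x\,\omega_1$ with $x=x(P)+O(z)$ (case~\eqref{it:gen}) or $x=x(P)+O(z^2)$ (case~(i)) gives $T_2=x(P)\,z+O(z^2)$.

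For the final assertion, note that $T_1(z)$ and $T_2(z)$ are power series in $z$ with no constant term, so for $z$ sufficiently close to $0$ in $F_v$, the pair $(T_1(z),T_2(z))$ lies in the polydisk of convergence of $\sigma_v(T_1,T_2)$; the composition $\sigma_v(T_1(z),T_2(z))$ is therefore a well-defined convergent power series in $z$, and evaluation at $\alpha$ commutes with substitution by standard rearrangement of absolutely convergent series. The main (but mild) obstacle is to make the identification $T_i([P(z)-P])=\int_P^{P(z)}\omega_i+O(z^2)$ precise; this is essentially a consequence of Grant's explicit formulas and the definition of the formal group law on $J$, and is implicit in the references already used in \S\ref{subsec:vadicsigma}.
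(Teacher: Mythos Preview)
Your first step, verifying that the proposed $z$ is normalised with respect to $\omega_1$, is correct and essentially identical to the paper's argument.

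Your second step differs from the paper. You argue via the formal logarithm: assuming $\Omega_i=dT_i+O(T)\,dT$, the identity $T_i([P(z)-P])=\int_P^{P(z)}\omega_i+O(z^2)$ follows, and then the leading terms drop out from $\omega_1=(1+O(z))\,dz$ and $\omega_2=x\,\omega_1$. The paper instead plugs the expansions of $x(z),y(z)$ into Grant's explicit formulas for $T_1,T_2$ as rational functions in the coordinates of $P(z)$ and $P^-$ (deduced from \cite[Theorem~4.2, (4.1), (1.4)]{Grant1990}), and computes directly; it points to \cite[Lemma~B.1]{Bia23} for a template. Your route is cleaner \emph{if} the normalisation $\Omega_i=dT_i+O(T)\,dT$ for Grant's specific parameters is verified --- you flag this yourself as the ``mild obstacle'' at the end, and it does ultimately come down to Grant's formulas, so the saving is modest.

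There is, however, a genuine gap in your third step. You write that ``$T_1(z)$ and $T_2(z)$ are power series in $z$ with no constant term, so for $z$ sufficiently close to~$0$\ldots''. In the $p$-adic setting this is a non sequitur: a formal power series over $F_v$ with zero constant term need not converge anywhere except at $0$ (its coefficients can have arbitrarily negative valuation). You must first show that $T_i(z)$ has positive radius of convergence. The paper does this explicitly: from $y(z)^2=b(x(z))$ one sees that the valuation of the $n$-th coefficient of $y(z)$ is bounded below by a linear function of $n$, and this property is preserved under sums, products, and inverses of power series, hence propagates to $T_1(z),T_2(z)$. Only then can one invoke the convergence of $\sigma_v$ on a polydisk (\cite[Proposition~3.4(i)]{Bia23}) and a substitution theorem (the paper cites \cite{mattuck}) to justify that the formal composition $\sigma_v(T_1(z),T_2(z))$ converges and agrees with the pointwise value. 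Your appeal to ``standard rearrangement of absolutely convergent series'' presupposes exactly the convergence that is at issue.
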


\begin{proof}
In each case, it is clear that the given $z$ is a uniformiser at $P$.

Let us consider~\ref{it:gen} first. Then $P(z) = (x(z),y(z))$ is given by
  $x(z) = x(P) + 2y(P)z$ and $y(z)\in F_v[[z]]$ is the unique power series
  of the form $y(z) = y(P) + O(z)$ such that $y(z)^2 = b(x(z))$ (recall
  that $C$ is given by the equation $y^2=b(x)$). We have
\begin{equation*}
\omega_1(z) = \frac{1}{2y(z)}\frac{d x(z)}{dz} dz = \frac{2y(P)}{2y(P)+O(z)}dz = (1+O(z))dz,
\end{equation*}
so $z$ is normalised with respect to $\omega_1$. Using the explicit
  formulas for $T_i$ in terms of the $x$ and $y$ coordinates of the image
  of $P$ under the hyperelliptic involution and of
  $P(z)$ that one can deduce from \cite[Theorem~4.2, (4.1),
  (1.4)]{Grant1990}, a bit of algebra shows the claim on the leading terms
  of $T_1$ and $T_2$; see \cite[Lemma~B.1]{Bia23} for a similar
  computation. To see that the radius of convergence of the power series
  for $T_1$ and $T_2$ is non-zero, it suffices to observe that from $y(z)^2
  = b(x(z))$ we may deduce that the valuation of the $n$-th coefficient of
  $y(z)$ is bounded from below by a linear polynomial in $n$. Since the
  sum, product and inverse (if defined at all) of power series whose
  coefficients have valuation bounded from below by a linear polynomial in the degree have the same property -- with a possibly different polynomial -- we see that $T_1$ and $T_2$ do too. Then for $\alpha$ with large enough valuation, $T_1(\alpha), T_2(\alpha)$ converge and are such that $\sigma_v(T_1(\alpha),T_2(\alpha))$ converges (see \cite[Proposition~3.4\thinspace{}(i)]{Bia23} and its proof). Moreover, in this case, this value is equal to the value at $\alpha$ of $\sigma_v(T_1(z),T_2(z))$ (see for instance  \cite[\S 2 Substitution Theorem]{mattuck}).

The Weierstrass case is analogous, except that we substitute $y(z) = b^{\prime}(x(P))z$ into $y^2 =b(x)$ and solve for $x$, which gives $x(z) = x(P) + b^{\prime}(x(P))z^2 + O(z^4)\in  F_v[[z]]$. 
\end{proof}
\begin{rem}
Let $v$ be a prime at which the character $\chi$ is unramified. For
  divisors with disjoint support, Colmez \cite{colmez1998integration} gives
  an alternative analytic definition of the local height pairing of
  Coleman--Gross at $v$ (see also \cite[Theorem~5.28]{Bia23}). Since Corollary~\ref{cor:comp_disjoint} still holds \cite[Corollary~5.32]{Bia23}, with a suitable extension of the notion of constant term introduced in \S\ref{subsec:above-common}, it should be possible to use Colmez's definition to give a unified proof of Theorem~\ref{thm:comp_away} and Theorem~\ref{thm:comp_above}.
\end{rem}

\section{Canonical theta and zeta functions in genus~$2$}\label{S:CanThetaZeta}

We continue to denote by $C/F$ a smooth projective genus~$2$ curve given by an affine equation 
\eqref{Ceqn}, with Jacobian $J/F$, and we let $v$ be a finite place of $F$. 

\begin{defn}\label{D:CSemOrd}
We say that $C$ has \textit{semistable reduction} at $v$ if the subscheme
  of $\P^{1,3,1}_{\O_v}$ defined by the degree-$6$ homogenisation
  of~\eqref{Ceqn} is semistable. We say that $C$ has \textit{semistable
  ordinary reduction} at $v$ if $C$ has semistable reduction at $v$ and if
  $J$ has ordinary reduction at $v$. 
\end{defn}

In this section, we show that  Conjecture~\ref{C:TNF1} holds in the situation that $C$ has semistable ordinary reduction at $v$; see Corollary~\ref{C:g2TNF1}. This implies that the canonical Mazur--Tate $\chi_v$-splitting is analytic. Moreover, if $C$ has semistable ordinary reduction at all primes $v$ such that $\chi_v$ is ramified, then the canonical Mazur--Tate height is analytic.

The main ingredient of the proof is to show that the Norman $v$-adic theta function \cite{norman1985, norman1986} with respect to $X= 2\Theta$
is, up to sign, the square of a $v$-adic sigma function (Definition~\ref{def:sigma}). 
The former induces the canonical 
Mazur--Tate splitting by Proposition~\ref{P:papa}, whereas the latter gives a $p$-adic N\'eron function by Corollary~\ref{cor:lambdavlambdaX}.

Having shown that the canonical Mazur--Tate $\chi_v$-splitting is analytic,
in the second part of the section we give an explicit description of the
subspace $W_v$ that it corresponds to; see Theorem~\ref{thm:constants}. In
future work, we will describe an algorithm to compute this subspace. If $C$
has semistable ordinary reduction at all primes $v$ of ramifications for
$\chi$,
by the above together with \cite[Corollary~5.35]{Bia23}, the canonical Mazur--Tate height is then equal to the Coleman--Gross height with respect to the subspaces $W_v$ described in Theorem~\ref{thm:constants}. Combining the algorithm for the computation of $W_v$ with algorithms of \cite{Bia23} and algorithms to compute Vologodsky integrals of differentials of the third kind building on \cite{KatzKaya, Kaya} respectively, we obtain two different methods for computing the canonical Mazur--Tate height.

Some of the techniques used in this section are very similar (and sometimes identical) to those of \cite[\S3, \S4]{blakestadsthesis}. In fact, we essentially extend to semistable reduction some of Blakestad's results for the good reduction case. 
\begin{rem}\label{R:}
If $C$ has semistable ordinary reduction at $v$, then so does $J$, but the
converse is not true in general. In fact, it is possible that $J$ has semistable
  reduction at $v$, so that there is a semistable model of $C$ over $\O_v$, but there is no semistable model of the
  form~\eqref{Ceqn} over $\O_v$, nor over any finite extension. 
  This can happen when $C$ has reduction
  type $[I_n-I_m-\ell]$ with $\ell>0$ in the notation of
  Namikawa--Ueno~\cite{NU73}. See~\cite[\S5, \S10]{muller2016canonical} for
  a related discussion.
\end{rem}

\subsection{Analyticity of the canonical Mazur--Tate splitting}\label{S:Norman}

Suppose that $v\nmid p \geq 3$ and that $C$ has semistable ordinary
reduction at $v$.

Let $\mathcal{J}^f$ be the formal group of the N\'eron model $\mathcal{J}$
of $J$. It is isomorphic to the formal group of Grant's model of $J$
\cite{Grant1990} induced by~\eqref{Ceqn}, that is
\[\mathcal{J}^f \simeq \mathrm{Specf}(\O_v[[T_1,T_2]])\]
where $T_1,T_2$ are as in \S\ref{subsec:vadicsigma}. This follows from \cite[Chapter~9.7, Corollary~2]{BLR90} and the paragraph after it. 

The divisor $X = 2\Theta$ is totally symmetric in the sense of~\cite[Assumption 5.1]{papanikolas} (see
also \cite[Remark 5.2]{papanikolas}), so the construction of Norman applies
and produces a function on $\mathcal{J}^f(\overline{\O}_v)$, where
$\overline{\O}_v$ is the valuation ring of a fixed algebraic closure of
$F_v$. We only give a brief description and refer the reader to
\cite[\S5]{papanikolas} for further details. 

The choices of the parameters $T_1,T_2$ yield a point $\gamma$ in
$\mathcal{J}^f$. Now let $\varepsilon = [2]^{-1}\gamma$, which makes sense
as $p\neq 2$. Since $X$ is a totally symmetric (Cartier) divisor, it can be
written as $\{(U_i,f_i)\}$ where $X\vert_{U_i} = \div(f_i)\vert_{U_i}$,
$[-1]^*U_i = U_i$ and $[-1]^*f_i = f_i$. For $\ell,m > 0$, one constructs a
certain function $\hat{F}_{\ell,m}$ on $\mathcal{J}^f$ by prescribing
congruence conditions on its divisor, and then sets
\[\tilde{\theta}_{X,\ell,m}(z) = \big(\hat{F}_{\ell,m}(\delta + \varepsilon)^{-1}\hat{F}_{\ell,m}(\delta)^{-1}f_i(\delta)\big)\big\vert_{\delta = 0}.\]
The limit of these functions exist, and defines Norman's theta function:
\[\tilde{\theta}_{X}(z) = \lim_{\ell,m\to\infty}\tilde{\theta}_{X,\ell,m}(z).\]
In terms of the parameters $T = (T_1,T_2)$, this function is expressible as a power series $\tilde{\theta}_X(T)\in F_v^{\times}\cdot\O_v[[T]]$. In fact, $\tilde{\theta}_X$ is defined only up to multiplication by a constant in $F_v^{\times}$. We will fix a suitable normalisation later (see Definition~\ref{defn:normalising_theta}).

Given a point $u\in \mathcal{J}^f(\overline{\O}_v)$, let $X_u$ be the image
of $X$ under translation by $u$. Let $D\in \Div_a(J)$ be a divisor
algebraically equivalent to $0$. Then there are $m_u\in \Z$ so that we can
write
$D = \sum_{u\in \mathcal{J}^f(\overline{\O}_v)} m_u X_u$. 
For $w\in \mathcal{J}^f(\overline{\O}_v)$ not in the support of $D$, we set
\begin{equation}
\tilde{\theta}_D(w) = \prod_u \tilde{\theta}_X (w - u)^{m_{u}}
\end{equation}
(this is well-defined).

\begin{prop}
\label{prop:theta_of_lin_eq_0}
If $D$ is linearly equivalent to $0$, there is a choice of rational function $f$ such that $D = \mathrm{div}(f)$ and $\tilde{\theta}_D(T) = f(T)$.
\end{prop}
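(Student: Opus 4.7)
The strategy is to identify $\tilde{\theta}_D$ as a rational function on $J$ with divisor $D$ and then to use that such a function is unique up to a multiplicative constant.

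I would begin by recalling that Norman's theta function $\tilde{\theta}_X$, viewed as a power series on the formal group $\mathcal{J}^f$, is characterised (up to a scalar in $F_v^\times$) by two properties: it locally trivialises the line bundle $\O(X)$ on $\mathcal{J}^f$ — in particular, its formal zero-divisor is $X\cap \mathcal{J}^f$ — and it satisfies a functional equation encoding how $\O(X)$ behaves under translation. Together these pin $\tilde{\theta}_X$ down up to an $F_v^\times$-scalar, and the definition of $\tilde{\theta}_D$ inherits its dependencies from that of $\tilde{\theta}_X$.

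Next, for $D=\sum_u m_u X_u$ algebraically equivalent to $0$, the product
\[
\tilde{\theta}_D(w)=\prod_u \tilde{\theta}_X(w-u)^{m_u}
\]
defines a local trivialisation of $\bigotimes_u \tau_u^* \O(X)^{\otimes m_u}\cong \O(D)$ on $\mathcal{J}^f$, whose formal zero-divisor is $D\cap \mathcal{J}^f$. When $D$ is principal, $\O(D)$ is canonically trivial, and the translation-automorphy factors of the individual $\tilde{\theta}_X(w-u)$ combine to the trivial automorphy factor — precisely because the classes $[X_u]$ with multiplicities $m_u$ sum to $0$ in $\Pic(J)$. Consequently $\tilde{\theta}_D$ carries no nontrivial transformation rule and extends from a power series on $\mathcal{J}^f$ to a meromorphic (hence rational) function on $J$ with divisor exactly $D$.

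Finally, given any $f_0\in F_v(J)^\times$ with $\div(f_0)=D$, both $\tilde{\theta}_D$ and $f_0$ are rational functions on $J$ with the same divisor, so they differ by a constant $c\in F_v^\times$. Setting $f\colonequals c^{-1}f_0$ — which still has divisor $D$ — yields the required identity $\tilde{\theta}_D(T)=f(T)$.

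The main obstacle is the middle step: verifying that the power series $\tilde{\theta}_D$ on $\mathcal{J}^f$ actually extends to a rational function on the whole of $J$. Concretely, one must trace through Norman's functional equation explicitly and check that the automorphy factors for the translates $\tilde{\theta}_X(w-u)$ cancel in the product whenever $\sum_u m_u [X_u]=0$ in $\Pic(J)$. This is the $v$-adic analogue of the classical complex-analytic fact that products of translates of theta functions with multiplicities encoding a principal divisor are meromorphic on the abelian variety; in the genus-$2$ setting it can be made explicit by following the methods of \cite[\S 3, \S 4]{blakestadsthesis} adapted to the semistable case.
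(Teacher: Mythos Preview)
The paper's own proof is a one-line citation to the proof of \cite[Theorem~5.4(b)]{papanikolas}, so there is little to compare against directly. Your overall strategy --- show that $\tilde{\theta}_D$ is the restriction to $\mathcal{J}^f$ of a rational function on $J$ with divisor $D$, then absorb the resulting constant into the choice of $f$ --- is correct and is what lies behind Papanikolas's argument.

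The weak point is your middle step. You phrase the extension from $\mathcal{J}^f$ to $J$ in terms of ``automorphy factors cancelling'', and you describe this as the $v$-adic analogue of the classical complex-analytic fact about products of translated theta functions. But Norman's $p$-adic theta function is not built from a universal cover $\C^g\to J$ with a period lattice; there is no lattice action on $\mathcal{J}^f$ and hence no automorphy factor in the classical sense to cancel. Rather, $\tilde{\theta}_X$ is constructed as a $p$-adic limit of genuine rational functions $\hat{F}_{\ell,m}$ on $J$ (cf.\ the references to \cite[(13),(14)]{papanikolas} in the proof of Proposition~\ref{prop:even}). Papanikolas's argument for Theorem~5.4(b) works with this limit construction directly: when $D$ is principal the product of the approximating rational functions is already, at each finite stage and in the limit, a rational function with the correct divisor, because the underlying line bundle $\O(D)$ is globally trivial. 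So the conclusion you want is right, but the justification should go through the limit description rather than through an automorphy-factor formalism that does not exist here. Your final paragraph --- comparing two rational functions with the same divisor and adjusting by the constant --- is fine.
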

\begin{proof}
This follows from the proof of \cite[Theorem~5.4\thinspace{}(b)]{papanikolas}. 
\end{proof}

\begin{prop}
\label{prop:even}
$\tilde{\theta}_X$ is an even function.
\end{prop}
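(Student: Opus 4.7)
The plan is to exploit the symmetry of the divisor $X = 2\Theta$ together with the near-uniqueness of Norman's $v$-adic theta function. Since $C$ is given by an odd degree model with Weierstrass point $\infty$, and $\Theta = \iota(C)$ with $\iota(P) = [P - \infty]$, the divisor $\Theta$ is symmetric, i.e., $[-1]^*\Theta = \Theta$. Hence $[-1]^*X = X$. I would therefore consider the power series
\[
\tilde{\theta}_X^{-}(T) \colonequals \tilde{\theta}_X(-T) = (\tilde{\theta}_X \circ [-1])(T),
\]
which is the pullback of $\tilde{\theta}_X$ under the inversion automorphism $[-1]$ of the formal group $\mathcal{J}^f$.

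Next I would verify that $\tilde{\theta}_X^-$ again satisfies the defining properties of a Norman $v$-adic theta function for the divisor $[-1]^*X = X$. The key point is that both the divisorial condition and the functional equation (with respect to translations by formal points) that characterise $\tilde{\theta}_X$ in the sense of \cite[\S5]{papanikolas} are preserved under pullback by $[-1]$, because $[-1]$ is an automorphism of $\mathcal{J}^f$ mapping $X$ to itself. By the uniqueness of the Norman theta function up to multiplication by an element of $F_v^\times$, there exists a constant $c\in F_v^\times$ such that $\tilde{\theta}_X(-T) = c\cdot\tilde{\theta}_X(T)$. Applying $[-1]$ once more yields $\tilde{\theta}_X(T) = c^2\,\tilde{\theta}_X(T)$, so $c^2 = 1$ and $c = \pm 1$.

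To rule out $c = -1$, I would compare the leading terms of $\tilde{\theta}_X$ on both sides. Since $\iota$ is a closed immersion at $\infty$, the divisor $\Theta$ is smooth at $0$, and its tangent space at $0$ is a hyperplane cut out by a single linear form $\ell(T_1,T_2)$ in Grant's formal parameters; concretely, using the parameterisation in Lemma~\ref{lemma:T1T2} and the fact that $\omega_1$ vanishes at $\infty$ while $\omega_2$ does not, one identifies $\ell$ explicitly up to scalar (one may take $\ell = T_1$ after the appropriate convention). Since $X = 2\Theta$, the power series $\tilde{\theta}_X$ has leading (lowest-degree homogeneous) part a nonzero scalar multiple of $\ell(T)^2$. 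This quadratic form is invariant under $T \mapsto -T$, so the leading term of $\tilde{\theta}_X(-T)$ agrees with that of $\tilde{\theta}_X(T)$, forcing $c = +1$. This gives $\tilde{\theta}_X(-T) = \tilde{\theta}_X(T)$, as desired.

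The main obstacle is the second step: to apply uniqueness one must spell out the precise functional equation that Norman/Papanikolas uses to characterise $\tilde{\theta}_X$ and check that it behaves correctly under pullback by $[-1]$. The leading-term analysis is largely bookkeeping once Grant's parameters are matched to a basis of $T_0\,\mathcal{J}^f$, but it does require a careful local computation of $\iota$ at $\infty$ in these coordinates.
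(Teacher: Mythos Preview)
Your approach differs from the paper's. The paper proves evenness by a direct computation with Papanikolas's explicit limit formula \cite[(14)]{papanikolas}: writing $\tilde{\theta}_X$ as a limit of ratios of the functions $\hat{F}^z_{\ell,m}$, one observes that symmetry of $X$ allows the choice $\hat{F}^{-z}_{\ell,m}=[-1]^*\hat{F}^z_{\ell,m}$, and then the ratio $\tilde{\theta}_X(z)/\tilde{\theta}_X(-z)$ visibly equals~$1$ after evaluating at $\delta=0$. No leading-term analysis and no uniqueness statement are invoked.

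Your argument via uniqueness is more conceptual and would work, but the final step has a gap as written. To conclude $c=+1$ you assert that the lowest-degree homogeneous part of $\tilde{\theta}_X$ is a scalar multiple of $\ell(T)^2$; this amounts to knowing that $\tilde{\theta}_X$ vanishes to order exactly~$2$ at the origin, matching the multiplicity of $X=2\Theta$ there. In the paper, that vanishing order is only established \emph{after} evenness: Lemma~\ref{lem:over_Ov} deduces $\theta_X=\pm T_1^2(1+O(T)^2)$ from Corollary~\ref{cor:theta_sigma2}, whose proof (via Proposition~\ref{prop:uniqueness}) already uses Proposition~\ref{prop:even}. So within the paper's logical flow your argument is circular. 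To make your route self-contained you would need to extract directly from Norman's construction (or from \cite[\S5]{papanikolas}) that the order of vanishing of $\tilde{\theta}_X$ at a formal point equals the multiplicity of $X$ there; this is plausible and likely routine, but it is an extra input you have not supplied. The paper's computation sidesteps this entirely.
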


\begin{proof}
This follows from \cite[(13), (14)]{papanikolas}. Indeed, the function $\hat{F}_{\ell,m}$ depends on the
  point $\gamma$ discussed above, and we indicate this dependency by adding a
  superscript. Now, since $X$ is symmetric, we may take
  $\hat{F}_{\ell,m}^{-\gamma} = [-1]^*\hat{F}_{\ell,m}^{\gamma}$. Note that the fact
  that $F_{\ell, m, n}^{\gamma}$, with notation as in \emph{loc.\ cit.}, satisfies \cite[(13)]{papanikolas} gives  the
  analogous result for $F_{\ell,m,n}^{-\gamma}\colonequals [-1]^*F_{\ell,m,n}^\gamma$. Then
\begin{align*}
\frac{\tilde{\theta}_X(z)}{\tilde{\theta}_X(-z)} &= \lim_{\ell, m\to \infty}
  \frac{\hat{F}_{\ell,m}^{-\gamma}(\delta -
  \varepsilon)\hat{F}_{\ell,m}^{-\gamma}(\delta)}{\hat{F}_{\ell,m}^{\gamma}(\delta +
  \varepsilon)\hat{F}_{\ell,m}^{\gamma}(\delta)}\bigg\vert_{\delta=0}\\
&= \lim_{\ell, m\to \infty} \frac{\hat{F}_{\ell,m}^{\gamma}(-\delta +
  \varepsilon)\hat{F}_{\ell,m}^{\gamma}(-\delta)}{\hat{F}_{\ell,m}^{\gamma}(\delta +
  \varepsilon)\hat{F}_{\ell,m}^{\gamma}(\delta)}\bigg\vert_{\delta = 0 } = 1. \qedhere
\end{align*}
\end{proof}

Let $m,s,p_1,p_2\colon J\times J \to J$ be defined by $m(u,w) = u+w$, $s(u,w) = u -w$, $p_1(u,w) = u$, $p_2(u,w) = w$. 
\begin{prop}
\label{prop:quasi_par_formula}
The function 
\begin{equation}
\label{eq:prod_sums}
  (u,w)\mapsto 
\frac{\tilde{\theta}_X(u+w)\tilde{\theta}_X(u-w)}{\tilde{\theta}_X(u)^2\tilde{\theta}_X(w)^2}
\end{equation}
is the restriction to $\mathcal{J}^f\times \mathcal{J}^f$ of a rational function with divisor
\begin{equation}
\label{eq:div}
m^{*} X + s^{*} X - 2p_1^{*}X - 2p_2^{*}X.
\end{equation}
\end{prop}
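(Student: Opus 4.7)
The strategy is to proceed in two stages: first verify that the divisor $E\colonequals m^*X + s^*X - 2p_1^*X - 2p_2^*X$ on $J\times J$ is principal, and then identify the formal expression~\eqref{eq:prod_sums} with (the restriction of) a rational function with divisor $E$.

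For principality, I would apply the theorem of the cube to the line bundle $\O(X)$ with the three morphisms $p_1,\,p_2,\,[-1]\circ p_2\colon J\times J\to J$. Their pairwise sums are $m$, $s$, and the zero map, and their total sum is $p_1$, so the theorem yields the triviality in $\Pic(J\times J)$ of
\[ 2\,p_1^*\O(X) + p_2^*\O(X) + p_2^*[-1]^*\O(X) - m^*\O(X) - s^*\O(X). \]
Since $\Theta$ is symmetric with respect to $\infty$, so is $X=2\Theta$, whence $[-1]^*\O(X)\cong \O(X)$ and $\O(E)$ is trivial.

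Now fix a rational function $F$ on $J\times J$ with $\div(F)=E$ (unique up to a non-zero scalar). For any generic $w\in\mathcal{J}^f(\overline{\O}_v)$, the restriction $E\rvert_{J\times\{w\}}$ equals $D_w\colonequals X_{-w}+X_w-2X$, which is principal on $J$ by the theorem of the square applied to the symmetric $X$. Proposition~\ref{prop:theta_of_lin_eq_0} then supplies a rational function $f_w$ on $J$ with $\div(f_w)=D_w$ satisfying
\[ f_w(u) \;=\; \tilde{\theta}_{D_w}(u) \;=\; \frac{\tilde{\theta}_X(u+w)\,\tilde{\theta}_X(u-w)}{\tilde{\theta}_X(u)^2}\qquad\text{for } u\in\mathcal{J}^f. \]
Since $F(\cdot,w)$ and $f_w$ are rational functions on $J$ with the same divisor $D_w$, they differ by a scalar $c(w)$ depending only on $w$. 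Running the analogous argument with the roles of $u$ and $w$ interchanged, and using Proposition~\ref{prop:even} to identify $\tilde{\theta}_{D_u}(w) = \tilde{\theta}_X(u+w)\tilde{\theta}_X(u-w)/\tilde{\theta}_X(w)^2$, I would obtain the parallel identity $F(u,w) = c'(u)\,\tilde{\theta}_X(u+w)\tilde{\theta}_X(u-w)/\tilde{\theta}_X(w)^2$. Equating the two expressions forces $c(w)\,\tilde{\theta}_X(w)^2 = c'(u)\,\tilde{\theta}_X(u)^2$; since the left-hand side depends only on $w$ and the right-hand side only on $u$, both equal a common constant $k\in F_v^{\times}$ (non-zero because $F\not\equiv 0$). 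Therefore $F/k$ is the desired rational function on $J\times J$ with divisor $E$ whose restriction to $\mathcal{J}^f\times\mathcal{J}^f$ recovers~\eqref{eq:prod_sums}.

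The delicate point to watch is that the scalar $c(w)$ genuinely depends only on $w$ (and $c'(u)$ only on $u$), despite $\tilde{\theta}_X$ vanishing along $X$ and rendering several denominators indeterminate at special loci. This is ensured because Proposition~\ref{prop:theta_of_lin_eq_0} pins down $f_w$ uniquely once a normalisation of $\tilde{\theta}_X$ has been fixed, so that for each generic $w$ the ratio $c(w)\colonequals F(u,w)/f_w(u)$ is literally independent of $u$ (both $F(\cdot,w)$ and $f_w$ being rational functions on the whole of $J$ with divisor $D_w$). Once this book-keeping is in place, the two-variable comparison above assembles the identity.
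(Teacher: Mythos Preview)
Your proposal is correct and follows essentially the same approach as the paper's own proof: establish principality of the divisor $E$, fix a rational function $F$ (the paper calls it $\psi$) with $\div(F)=E$, restrict to each factor using Proposition~\ref{prop:theta_of_lin_eq_0}, and use the evenness of $\tilde{\theta}_X$ (Proposition~\ref{prop:even}) to force the two families of scalars to collapse to a single constant. The only cosmetic difference is that you invoke the theorem of the cube for principality, whereas the paper cites the theorem of the square together with the seesaw principle (referring to~\cite{arledge_grant}); and the paper arranges the two restricted identities so that both left-hand sides already equal~\eqref{eq:prod_sums} (up to $\tilde{\theta}_X(w-u)$ versus $\tilde{\theta}_X(u-w)$), making the comparison $c_u=d_w$ immediate, while you keep the $\tilde{\theta}_X(u)^2$ and $\tilde{\theta}_X(w)^2$ factors separate and then separate variables.
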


\begin{proof}
This is the analogue of \cite[Proposition~17]{blakestadsthesis}, which we may follow very closely; see also \cite{arledge_grant} for similar arguments. First of all, there exists a function $\psi$ on $J\times J$ with divisor \eqref{eq:div} by the theorem of the square and the seesaw principle (see, for instance, the introduction to \cite{arledge_grant}).

Fix a point $u\in \mathcal{J}^f(\overline{\O}_v)\setminus \supp(\Theta)$. By Proposition~\ref{prop:theta_of_lin_eq_0} with $D = X_u + X_{-u} - 2X$, we have
\begin{equation}
\label{eq:on_u}
\frac{\tilde{\theta}_X(u+w)\tilde{\theta}_X(w-u)}{\tilde{\theta}_X(u)^2\tilde{\theta}_X(w)^2}
 = c_u\psi(u,w),
\end{equation} 
for some non-zero constant $c_u$ depending on $u$ only. Similarly, for a fixed point $w\in \mathcal{J}^f(\overline{\O}_v) \setminus \supp(\Theta)$, applying Proposition~\ref{prop:theta_of_lin_eq_0} to $D =  X_w + X_{-w} - 2X$  gives that
\begin{equation}
\label{eq:on_v}
\frac{\tilde{\theta}_X(u+w)\tilde{\theta}_X(u-w)}{\tilde{\theta}_X(u)^2\tilde{\theta}_X(w)^2}
 = d_w\psi(u,w),
\end{equation} 
for some non-zero $d_w$ depending on $w$ only. Since $\tilde{\theta}_X$ is even by Proposition~\ref{prop:even}, the expressions of \eqref{eq:on_u} and \eqref{eq:on_v} are equal for all $u,w$. Thus $c_u = d_w$ for every $u,w$.  
\end{proof}

The proposition holds more generally for an abelian variety with semistable ordinary reduction at $v$ and $X$ a divisor satisfying \cite[Assumption 5.1]{papanikolas}. We next make the rational function explicit for $J$, in terms of the rational functions $X_{ij}$  of \cite{Grant1990} that appeared in \S\ref{subsec:vadicsigma}.

\begin{prop}
\label{prop:fct_explicit}
The function of Proposition~\ref{prop:quasi_par_formula} is
\begin{equation}
\label{eq:explicit_fct}
c(X_{11}(u)  - X_{11}(w) + X_{12}(u)X_{22}(w) - X_{12}(w)X_{22}(u))^2
\end{equation}
for some $c\in F_v^{\times}$.
\end{prop}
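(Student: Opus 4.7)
Let $\Psi(u,w) := \tilde\theta_X(u+w)\tilde\theta_X(u-w)/(\tilde\theta_X(u)^2\tilde\theta_X(w)^2)$ denote the function in \eqref{eq:prod_sums}; by Proposition~\ref{prop:quasi_par_formula} it extends to a rational function on $J\times J$ whose divisor is
\[
D := m^*X + s^*X - 2p_1^*X - 2p_2^*X.
\]
Set $\phi(u,w) := X_{11}(u) - X_{11}(w) + X_{12}(u)X_{22}(w) - X_{12}(w)X_{22}(u)$. My plan is to show that $\phi^2$ is also a rational function on $J\times J$ with divisor $D$; since $J\times J$ is complete, the ratio $\Psi/\phi^2$ will then be a regular, nowhere-vanishing function, hence a nonzero constant $c\in F_v^\times$, which is exactly the claim.

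To compute $\operatorname{div}(\phi)$, I first analyse its poles. By Grant's construction the functions $1, X_{11}, X_{12}, X_{22}$ form a basis of $H^0(J,\mathcal{O}(2\Theta))$ (a $4$-dimensional space by Riemann--Roch for abelian surfaces), so each $X_{ij}$ has poles of order exactly $2$ along $\Theta$ and no others. Hence $\phi$ defines a section of $\mathcal{O}(2p_1^*\Theta + 2p_2^*\Theta)$ on $J\times J$; equivalently, $\operatorname{div}(\phi)\ge -2p_1^*\Theta - 2p_2^*\Theta$. Next, $\phi$ is antisymmetric in $u$ and $w$ and invariant under $u\mapsto -u$ and $w\mapsto -w$, since the $X_{ij}$ descend to the Kummer surface $J/\{\pm 1\}$. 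The crucial input is the identity $\phi(u,w)=0$ whenever $u-w\in\Theta$, which by these symmetries also yields $\phi(u,w)=0$ whenever $u+w\in\Theta$; this is the genus-$2$ analogue of the classical elliptic identity $\sigma(u+w)\sigma(u-w) = -\sigma(u)^2\sigma(w)^2(\wp(u)-\wp(w))$, and is an algebraic identity in Grant's coordinates carried out in \cite{Grant1990, arledge_grant} and, in the good reduction setting, in \cite[\S3]{blakestadsthesis}. Granting it, we have $\operatorname{div}(\phi)\ge m^*\Theta + s^*\Theta - 2p_1^*\Theta - 2p_2^*\Theta$.

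To finish, I invoke Mumford's formula (the theorem of the square for the symmetric divisor $\Theta$): applying the three-morphism identity to $p_1, p_2, -p_2$ on $J\times J$ yields $m^*\mathcal{O}(\Theta)\otimes s^*\mathcal{O}(\Theta) \cong p_1^*\mathcal{O}(2\Theta)\otimes p_2^*\mathcal{O}(2\Theta)$, so the divisor $m^*\Theta + s^*\Theta - 2p_1^*\Theta - 2p_2^*\Theta$ is the divisor of some rational function $\phi_0$ on $J\times J$. Then $\phi/\phi_0$ is regular on the complete variety $J\times J$, hence a nonzero constant, giving $\operatorname{div}(\phi) = \operatorname{div}(\phi_0)$ and therefore $\operatorname{div}(\phi^2)=D$, as required. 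I expect the main obstacle to be the vanishing $\phi(u,w)=0$ when $u-w\in\Theta$: while this is a purely algebraic identity over $\Z[b_1,\ldots,b_5]$ that reduces to a direct computation with Grant's defining formulas for the $X_{ij}$ on the symmetric square of $C$, carrying it out cleanly requires tracking the pole cancellations between the three summands of $\phi$ as $u$ or $w$ approaches $\Theta$, and choosing an appropriate parametrisation of $s^{-1}(\Theta)$ (for instance via $P\mapsto (u,\, u-[P-\infty])$ with $P$ running over $C$).
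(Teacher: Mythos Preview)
Your proposal is correct and takes essentially the same approach as the paper. The paper's proof is a one-line citation of \cite[Examples~2]{arledge_grant} for the fact that the explicit expression has divisor $D$, after which the conclusion is immediate; your argument unpacks precisely this computation (poles via $X_{ij}\in H^0(J,\mathcal{O}(2\Theta))$, vanishing along $m^*\Theta+s^*\Theta$, and the theorem of the square to pin down the divisor exactly), so the two proofs coincide in substance, with yours supplying the details that the paper outsources to the reference.
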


\begin{proof}
Since $X = 2\Theta$, by \cite[Examples 2]{arledge_grant}, the function \eqref{eq:explicit_fct} has the right divisor.
\end{proof}

Recall that $\tilde{\theta}_X$ is well-defined only up to multiplication by an element of $F_v^{\times}$. We finally fix a choice of normalisation: namely, we want our theta function to satisfy Proposition~\ref{prop:fct_explicit} with $c=1$. Since rescaling $\tilde{\theta}_X$ by $d$ rescales \eqref{eq:prod_sums} by $d^{-2}$, in principle such a normalisation can only be achieved over a quadratic extension of $F_v$. (Corollary~\ref{cor:theta_sigma2} below implies that the normalisation is in fact defined over $F_v$.)
\begin{defn}
\label{defn:normalising_theta}
Let $\theta_X\in \overline{F}_v^{\times}\cdot \O_v[[T_1,T_2]]$ be a scalar multiple of Norman's theta function $\tilde{\theta}_X$ is the sense above such that $\theta_X$ satisfies Proposition~\ref{prop:fct_explicit} with $c=1$.
\end{defn}

\begin{rem}
$\theta_X$ is unique up to multiplication by $\pm 1$.
\end{rem}
Recall the invariant derivation $D_1, D_2$ on $J$ that we fixed in \S\ref{subsec:vadicsigma}.
\begin{prop}
\label{prop:diff_eq_theta_X}
For $i,j\in \{1,2\}$, we have
\begin{equation*}
D_iD_j(\log(\theta_X)) = -2X_{ij} + 2f_{ij},\qquad \text{for some } f_{ij}\in F_v,
\end{equation*}
with $f_{12} = f_{21}$.
\end{prop}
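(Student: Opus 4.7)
Write $\ell := \log\theta_X$ (formally, so that $D_i\ell = D_i\theta_X/\theta_X$ is a well-defined meromorphic function on $\mathcal{J}^f$) and set
\[
\Psi(u,w) := X_{11}(u) - X_{11}(w) + X_{12}(u)X_{22}(w) - X_{12}(w)X_{22}(u).
\]
By Propositions \ref{prop:quasi_par_formula}--\ref{prop:fct_explicit} under the normalisation of Definition \ref{defn:normalising_theta}, taking logarithms of the quasi-parallelogram identity yields
\begin{equation*}
\ell(u+w) + \ell(u-w) = 2\ell(u) + 2\ell(w) + 2\log\Psi(u,w). \quad (\ast)
\end{equation*}
The plan is first to apply $D_i^{(u)}D_j^{(u)}$ to $(\ast)$, then $D_i^{(w)}D_j^{(w)}$ separately; by the translation-invariance of the $D_k$, the terms $(D_iD_j\ell)(u\pm w)$ coincide in the two resulting equations, so subtracting gives
\begin{equation*}
(D_iD_j\ell)(u) - (D_iD_j\ell)(w) = \bigl[D_i^{(w)}D_j^{(w)} - D_i^{(u)}D_j^{(u)}\bigr]\log\Psi(u,w). \quad (\clubsuit)
\end{equation*}

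The core of the argument is the purely algebraic identity
\begin{equation*}
\bigl[D_i^{(u)}D_j^{(u)} - D_i^{(w)}D_j^{(w)}\bigr]\log\Psi(u,w) = 2X_{ij}(u) - 2X_{ij}(w), \quad (\textrm{key})
\end{equation*}
an equality of meromorphic functions on $J\times J$ that makes no mention of $\theta_X$. I will prove (key) by base-changing to $\mathbb{C}$: the classical Kleinian hyperelliptic $\sigma$-function on $J(\mathbb{C})$ satisfies both $D_iD_j\log\sigma = -X_{ij} + c_{ij}$ (for constants $c_{ij}$) and, after appropriate normalisation, the addition formula $\sigma(u+w)\sigma(u-w) = \sigma(u)^2\sigma(w)^2\,\Psi(u,w)$ (up to a nonzero constant). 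The same differentiation procedure carried out with $\sigma$ in place of $\theta_X$ produces (key) directly. Since both sides of (key) are rational functions on $J\times J$ defined over $F$, the identity then holds in $F_v(J\times J)$ as well.

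Combining $(\clubsuit)$ with (key) shows that $(D_iD_j\ell)(u) + 2X_{ij}(u)$ is independent of $u$; call this constant $2f_{ij}$. The scalar $\mu \in \bar{F}_v^{\times}$ appearing in $\theta_X = \mu\tilde{\theta}_X$ (with $\tilde{\theta}_X \in F_v^{\times}\cdot\O_v[[T_1,T_2]]$) is killed by two logarithmic derivatives, so $D_iD_j\ell$ has $F_v$-rational coefficients in its expansion; combined with $X_{ij}$ being $F_v$-rational, this forces $f_{ij}\in F_v$. Finally, the symmetry $f_{12}=f_{21}$ is immediate from the commutativity of the invariant derivations $D_1$ and $D_2$ on the commutative formal group $\mathcal{J}^f$, together with $X_{12}=X_{21}$.

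The main obstacle is (key). A direct algebraic verification by comparing divisors and leading Laurent coefficients on $J\times J$ is in principle possible but tedious, and the cleanest path is the appeal to the classical complex theory of hyperelliptic Kleinian $\sigma$-functions sketched above, where (key) falls out of the same two-variable differentiation of the complex addition formula.
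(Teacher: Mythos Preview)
Your argument is correct and follows essentially the same route as the paper's proof: differentiate the quasi-parallelogram identity in two variables, then invoke a ``key'' identity for the derivatives of $\log\Psi$ to conclude that $D_iD_j\log\theta_X + 2X_{ij}$ is independent of the point. The paper uses the mixed operator $(D_i^u - D_i^w)(D_j^u + D_j^w)$ and cites Blakestad's direct algebraic computation \cite[Proposition~33]{blakestadsthesis} for the key identity, whereas you use $D_i^{(u)}D_j^{(u)} - D_i^{(w)}D_j^{(w)}$ and derive the key identity by base-change to $\C$ via the classical Kleinian $\sigma$-function; both variants yield the same conclusion, and your reduction to the complex-analytic addition formula is a legitimate alternative to re-doing Blakestad's explicit algebra. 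Your justifications that $f_{ij}\in F_v$ and $f_{12}=f_{21}$ coincide with the paper's.
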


\begin{proof}
This is the analogue of \cite[Proposition~34]{blakestadsthesis} for Blakestad's $v$-adic sigma function and the proof is identical. As for the sigma function, an easy computation shows
\begin{equation*}
(D_i^u - D_i^w)(D_j^u+D_j^w)\log\left(\frac{\theta_X(u+w)\theta_X(u-w)}{\theta_X(u)^2\theta_X(w)^2}\right) = 2 D_iD_j(\log(\theta_X(w))) - 2 D_iD_j(\log(\theta_X(u))).
\end{equation*} 
Now, as in the proof of \cite[Proposition~34]{blakestadsthesis}, we have that\footnote{There are two sign typos in \cite[Proposition~33]{blakestadsthesis} which also carry over to the proof of \cite[Proposition~34]{blakestadsthesis}.}
\begin{equation*}
2(D_i^u - D_i^w)(D_j^u+D_j^w)\log\left(X_{11}(w)  - X_{11}(u) - X_{12}(u)X_{22}(w) + X_{12}(w)X_{22}(u)\right)
\end{equation*}
is equal to  $4X_{ij}(u) - 4X_{ij}(w)$.
By Proposition~\ref{prop:fct_explicit}, we deduce that $D_iD_j(\log(\theta_X)) + 2X_{ij}$ is constant. The constant is easily seen to be in $F_v$, since there is a scalar multiple of $\theta_X$ which belongs to $ \O_v[[T_1,T_2]]$ and the expansion of $X_{ij}$ in $T_1,T_2$ has $F_v$-rational coefficients.
\end{proof}

We now prove that the power series $\theta_X(T)$ is uniquely determined, up to sign, by some of its properties.

\begin{prop}
\label{prop:uniqueness}
Let $f_{ij}$ be as in Proposition~\ref{prop:diff_eq_theta_X}. Up to multiplication by $\pm 1$, there exists a unique power series $G\in \overline{F}_v[[T_1,T_2]]$ satisfying 
\begin{enumerate}[label = (\roman*)]
    \item\label{it:even} $G$ is an even function on  $\mathcal{J}^f$,
    \item\label{it:system} $D_iD_j(\log(G)) = - 2X_{ij} + 2f_{ij}$, and
    \item\label{prope:prod_sum} $\frac{G(u+w)G(u-w)}{G(u)^2G(w)^2} = (X_{11}(u)  - X_{11}(w) + X_{12}(u)X_{22}(w) - X_{12}(w)X_{22}(u))^2$.
\end{enumerate}
\end{prop}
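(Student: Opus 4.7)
The plan is to compare two power series $G_1, G_2 \in \overline{F}_v[[T_1, T_2]]$ both satisfying (i)--(iii), and to show that their ratio $H = G_1/G_2$, taken in the fraction field of $\overline{F}_v[[T_1, T_2]]$, equals $\pm 1$. The strategy is to use the three properties in turn: (ii) to pin down the form of $H$ up to a few constants, (i) to cut it down to a scalar, and (iii) to force the scalar to be $\pm 1$.

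The first step uses (ii). Since $G_1$ and $G_2$ satisfy the same system with the same $f_{ij}$, subtracting gives $D_i(D_j G_1/G_1 - D_j G_2/G_2) = 0$ for all $i, j$, i.e., $D_i(D_j H/H) = 0$. After base change to $\overline{F}_v$, I pass to formal group logarithm coordinates $(z_1, z_2)$ on $\mathcal{J}^f$, under which $D_i$ becomes $\partial/\partial z_i$. Thus $D_j H/H$ is a rational function annihilated by every $\partial/\partial z_i$ and is therefore a constant $c_j \in \overline{F}_v$. Setting $K \colonequals H \cdot \exp(-c_1 z_1 - c_2 z_2)$, a direct calculation shows $D_j K = 0$ for $j = 1, 2$, so $K$ is constant, and therefore $H = C \exp(c_1 z_1 + c_2 z_2)$ for some $C \in \overline{F}_v^\times$. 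In particular $H$ is a unit in $\overline{F}_v[[T_1, T_2]]$, so $G_1$ and $G_2$ have the same vanishing locus in the formal group.

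Next I use evenness. Since both $G_i$ are even, so is $H$; and since the formal group logarithm intertwines $[-1]$ with $-\id$ on coordinates, evenness forces $C \exp(-c_1 z_1 - c_2 z_2) = C \exp(c_1 z_1 + c_2 z_2)$, whence $c_1 = c_2 = 0$. Hence $H = C$ is a constant in $\overline{F}_v^\times$. Substituting $G_1 = C G_2$ into the product-sum identity (iii) gives the same right-hand rational function multiplied by $C^{-2}$; this forces $C^2 = 1$, i.e., $C = \pm 1$, completing the uniqueness.

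The only delicate point I anticipate is making rigorous the various logarithmic derivatives when $G$ vanishes on $\mathcal{J}^f$. I will interpret $D_iD_j(\log G)$ as the rational function $(G \cdot D_i D_j G - D_i G \cdot D_j G)/G^2$ in the fraction field of $\overline{F}_v[[T_1, T_2]]$, and similarly $D_j H/H = (G_2 D_j G_1 - G_1 D_j G_2)/(G_1 G_2)$; this is already how property (ii) must be read for $\theta_X$ itself in Proposition~\ref{prop:diff_eq_theta_X}, and once this convention is in place the argument above proceeds cleanly. This mirrors the good-reduction uniqueness argument in~\cite[\S3--\S4]{blakestadsthesis}.
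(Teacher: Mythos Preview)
Your proof is correct and follows essentially the same three-step route as the paper: use (ii) to determine $\log G$ up to a linear function in the formal logarithm coordinates, use evenness (i) to kill the linear part, and use (iii) to pin down the remaining multiplicative constant to $\pm 1$. The paper phrases the first two steps as ``integrating the system $D_1(F)=G_1,\ D_2(F)=G_2$ up to a constant'' and ``evenness of $G$ implies oddness of $D_j(\log G)$, so the constant term is zero,'' whereas you phrase them via the ratio $H=G_1/G_2$ and the explicit form $H=C\exp(c_1z_1+c_2z_2)$; these are equivalent. One small omission: you do not mention existence, which the paper dispatches by pointing to Propositions~\ref{prop:even}, \ref{prop:fct_explicit}, and~\ref{prop:diff_eq_theta_X} (i.e., $\theta_X$ itself is a witness).
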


\begin{proof}
Propositions~\ref{prop:even},~\ref{prop:fct_explicit}
  and~\ref{prop:diff_eq_theta_X} show existence. For uniqueness, given
  $G_1,G_2$, the system
\begin{align*}
    D_1(F) =G_1, \qquad D_2(F) = G_2
\end{align*}
has at most one solution $F$, up to addition by a constant. Evenness of $G$
  implies oddness of $D_j(\log(G))$, so the constant term in $D_j(\log(G))$ is zero. Similarly, property~\ref{prope:prod_sum} determines the final constant of integration.
\end{proof}

\begin{cor}\label{cor:theta_sigma2}
We have that $\theta_X = \pm \sigma_v^2$ where $\sigma_v$ is the $v$-adic
  sigma function with respect to the subspace $W_v$ of $H^1_{\dR}(J/F_v)$
  corresponding to the symmetric matrix $f=(f_{ij})_{ij}$ from
  Proposition~\ref{prop:diff_eq_theta_X}.
\end{cor}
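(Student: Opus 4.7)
The plan is to apply the uniqueness in Proposition~\ref{prop:uniqueness} to $G = \sigma_v^2$, where $\sigma_v$ is the $v$-adic sigma function attached to the subspace $W_v$ corresponding to the matrix $(f_{ij})$ from Proposition~\ref{prop:diff_eq_theta_X}. Thus it suffices to verify that $\sigma_v^2$ satisfies the three characterising properties~\ref{it:even}--\ref{prope:prod_sum} of Proposition~\ref{prop:uniqueness}; the uniqueness statement then forces $\sigma_v^2 = \pm \theta_X$, which is the claim.

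Properties~\ref{it:even} and~\ref{it:system} are immediate from the definition of $\sigma_v$. By Definition~\ref{def:sigma}, $\sigma_v$ is odd, so $\sigma_v^2$ is even, giving~\ref{it:even}. Moreover, since $W_v$ is chosen so that the matrix of constants attached to $\sigma_v$ is exactly $(f_{ij})$, Definition~\ref{def:sigma} yields
$$D_iD_j\bigl(\log\sigma_v\bigr) = -X_{ij} + f_{ij},$$
and doubling gives $D_iD_j(\log\sigma_v^2) = -2X_{ij} + 2f_{ij}$, which is~\ref{it:system}.

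The substantive step is property~\ref{prope:prod_sum}, which is the quasi-parallelogram (addition) formula for the $v$-adic sigma function. Blakestad establishes such a formula for his $v$-adic sigma function in the good ordinary reduction case (compare \cite[Proposition~17]{blakestadsthesis}), and the extension to $\sigma_v$ in the present semistable ordinary setting is carried out in \cite{Bia23}. It takes the form
$$\frac{\sigma_v(u+w)\,\sigma_v(u-w)}{\sigma_v(u)^2\,\sigma_v(w)^2} = \pm\bigl(X_{11}(u) - X_{11}(w) + X_{12}(u)X_{22}(w) - X_{12}(w)X_{22}(u)\bigr),$$
the sign depending on normalisation conventions. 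Squaring both sides produces exactly property~\ref{prope:prod_sum} for $\sigma_v^2$.

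The only non-formal input is thus the quasi-parallelogram identity for $\sigma_v$ in the semistable reduction case, and this I expect to be the main obstacle; everything else reduces to unwinding the defining properties. Once this identity is in hand, Proposition~\ref{prop:uniqueness} applies and delivers $\sigma_v^2 = \pm\theta_X$.
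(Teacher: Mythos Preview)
Your approach is correct and matches the paper's proof exactly: check that $\sigma_v^2$ satisfies the three properties of Proposition~\ref{prop:uniqueness} (the first two from Definition~\ref{def:sigma}, the third from the quasi-parallelogram identity, which the paper cites as \cite[Proposition~3.4\thinspace{}(iii)]{Bia23}) and invoke uniqueness. Your worry about the semistable ordinary case being ``the main obstacle'' is unnecessary: the addition formula for $\sigma_v$ in \cite{Bia23} is a formal power-series identity valid for the $v$-adic sigma function attached to \emph{any} complementary subspace, with no reduction hypothesis required.
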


\begin{proof}
By Definition~\ref{def:sigma}, the function $\sigma_v^2$ satisfies properties~\ref{it:even} and~\ref{it:system} of Proposition~\ref{prop:uniqueness}. By \cite[Proposition~3.4\thinspace{}(iii)]{Bia23}, it also satisfies~\ref{prope:prod_sum}.
\end{proof}

\begin{thm}\label{T:thetaneron}
Suppose that $C$ has semistable ordinary reduction at $v$. Let $W_v$ be the subspace of $H^1_{\dR}(J/F_v)$ corresponding to the
  symmetric matrix $f =(f_{ij})_{ij}$.
The
$p$-adic N\'eron function with respect to the divisor $X$, the local character $\chi_v$ and the subspace 
$W_v$, normalised so that \eqref{eq:KU_divpol} holds, satisfies  
\begin{equation}\label{}
  \lambda_{X,v}(u) = -\chi_v(\theta_X(u))
\end{equation}
  for all $u\in \mathcal{J}^f(\O_v)\setminus\supp(\Theta)$.
\end{thm}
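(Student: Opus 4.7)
The plan is to combine Corollary~\ref{cor:theta_sigma2} (which yields $\theta_X = \epsilon\sigma_v^2$ in $F_v[[T_1,T_2]]$ for some $\epsilon\in\{\pm 1\}$) with the explicit expression of $\lambda_{X,v}$ in terms of $\sigma_v$ provided by Corollary~\ref{cor:lambdavlambdaX} and the defining formula for $\lambda_v^B$, and then extend the resulting identity from the convergence domain $H_v$ of $\sigma_v$ to all of $\mathcal{J}^f(\O_v)\setminus\supp(\Theta)$.

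First, on $H_v$ the power series $\sigma_v$ converges and $\theta_X = \epsilon\sigma_v^2$ holds pointwise; since $\chi_v$ vanishes on $\{\pm 1\}$, this gives $\chi_v(\theta_X(u)) = 2\chi_v(\sigma_v(u))$ for all $u\in H_v$. Combining with Corollary~\ref{cor:lambdavlambdaX} and the formula for $\lambda_v^B$ with $m=1$ immediately yields the claim on $H_v\setminus\supp(\Theta)$, namely $\lambda_{X,v}(u) = -\chi_v(\theta_X(u))$.

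Next I extend the identity to all of $\mathcal{J}^f(\O_v)\setminus\supp(\Theta)$. Combining the transformation formula $\sigma_v\circ[m] = \sigma_v^{m^2}\phi_m$ with $\theta_X = \epsilon\sigma_v^2$ gives the formal identity
\[ \theta_X\circ [m] \;=\; \epsilon^{\,1-m^2}\,\theta_X^{\,m^2}\,\phi_m^{\,2} \]
in $F_v[[T_1,T_2]]$; since each side is the restriction of a rational function on $J$, the identity extends globally on $J$, and applying $\chi_v$ (which kills the $\pm 1$ factor) yields the key relation
\[ \chi_v(\theta_X([m]u)) \;=\; m^2\,\chi_v(\theta_X(u)) + 2\,\chi_v(\phi_m(u)). \]

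To conclude, fix $u\in\mathcal{J}^f(\O_v)\setminus\supp(\Theta)$ and choose a positive integer $m$ such that $[m]u\in H_v\setminus\supp(\Theta)$; this is possible because $H_v$ has finite index in the formal group and $\supp(\Theta)$ is a proper closed subvariety. The definition of $\lambda_v^B$ combined with Corollary~\ref{cor:lambdavlambdaX} gives
\[ \lambda_{X,v}(u) \;=\; -\frac{2}{m^2}\,\chi_v\!\left(\frac{\sigma_v([m]u)}{\phi_m(u)}\right), \]
so, using the first-step identity at $[m]u\in H_v$ to replace $2\chi_v(\sigma_v([m]u))$ by $\chi_v(\theta_X([m]u))$ and then applying the transformation formula above, this collapses to $\lambda_{X,v}(u) = -\chi_v(\theta_X(u))$. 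The main obstacle is precisely the extension step: $\theta_X$ is a convergent power series on all of $\mathcal{J}^f(\O_v)$, whereas $\sigma_v$ converges only on $H_v$ and must be extended elsewhere via the transformation formula, so one must verify that the global transformation law for $\theta_X$ really follows from the formal identity on $H_v$ and that the sign ambiguity $\epsilon^{1-m^2}$ is indeed absorbed by $\chi_v$.
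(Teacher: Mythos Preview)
Your proof is correct and follows exactly the paper's approach: the paper's proof is the single sentence ``The result follows from Corollary~\ref{cor:theta_sigma2} and Corollary~\ref{cor:lambdavlambdaX},'' and you have simply unpacked how these two ingredients combine, including the passage from $H_v$ to the full formal group via the transformation law. One small correction: your justification ``since each side is the restriction of a rational function on $J$'' is not right, as $\theta_X$ is not rational; the correct reason the formal identity $\theta_X\circ[m]=\epsilon^{1-m^2}\theta_X^{m^2}\phi_m^2$ holds pointwise on $\mathcal{J}^f(\O_v)\setminus\supp(\Theta)$ is that $\theta_X\in\O_v[[T_1,T_2]]$ by Lemma~\ref{lem:over_Ov}, so both $\theta_X$ and $\theta_X\circ[m]$ converge on all of $\mathcal{J}^f(\O_v)$, while $\phi_m$ is rational and regular off $\Theta$---this is enough, and you already flag this point yourself in your final paragraph.
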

\begin{proof}
 The result follows from Corollary~\ref{cor:theta_sigma2} and Corollary~\ref{cor:lambdavlambdaX}.
\end{proof}
\begin{cor}\label{C:g2TNF1}
If $C$ has semistable ordinary reduction at $v$, then Conjecture~\ref{C:TNF1} holds for $J$.
\end{cor}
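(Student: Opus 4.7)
The plan is simply to unwind Theorem~\ref{T:thetaneron} and match it with the statement of Conjecture~\ref{C:TNF1}. The hypothesis ``$C$ has semistable ordinary reduction at $v$'' is exactly the hypothesis of Theorem~\ref{T:thetaneron}, and in particular it implies the hypothesis of Conjecture~\ref{C:TNF1}, namely that $J$ has semistable ordinary reduction at~$v$ (this is built into Definition~\ref{D:CSemOrd}). Note also that the theta divisor $\Theta$ with respect to $\infty$ on an odd degree genus~$2$ Jacobian is symmetric, so $X = \Theta + [-1]^{*}\Theta = 2\Theta$ coincides with the divisor $X$ used throughout Section~\ref{S:com}, and in particular $\supp(X) = \supp(\Theta)$.

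With this identification in hand, I would take $W_v \subset H^{1}_{\dR}(J/F_v)$ to be the complementary subspace supplied by Theorem~\ref{T:thetaneron}, i.e.\ the one corresponding to the symmetric matrix $f = (f_{ij})$ of Proposition~\ref{prop:diff_eq_theta_X}. The theorem then yields a $p$-adic N\'eron function $\lambda_{X,v}$ with respect to $X$, the local character $\chi_v$ and this subspace $W_v$ (normalised so that \eqref{eq:KU_divpol} holds) satisfying
\begin{equation*}
  \lambda_{X,v}(u) \;=\; -\chi_v\bigl(\theta_X(u)\bigr)
  \qquad\text{for all } u\in \mathcal{J}^f(\O_v)\setminus\supp(\Theta).
\end{equation*}
Since $\supp(X)=\supp(\Theta)$, this displayed equality is exactly the assertion of Conjecture~\ref{C:TNF1}: the function $-\chi_v\circ\theta_X$ is the restriction to $\mathcal{J}^f(\O_v)\setminus\supp(X)$ of a $p$-adic N\'eron function with respect to $X$ and the complementary subspace $W_v$.

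There is no substantive obstacle at the level of the corollary itself; the real work has already been absorbed into Theorem~\ref{T:thetaneron}, whose proof in turn rests on identifying the (suitably normalised) Norman theta function with $\pm\sigma_v^{2}$ for an explicit $v$-adic sigma function (Corollary~\ref{cor:theta_sigma2}), which is then linked to $\lambda_{X,v}$ via Corollary~\ref{cor:lambdavlambdaX}. The only thing to double-check when writing the proof out is that the specific normalisation of $\theta_X$ fixed in Definition~\ref{defn:normalising_theta} and the normalisation of $\lambda_{X,v}$ imposed by \eqref{eq:KU_divpol} are compatible, which is guaranteed by \cite[Proposition~4.7\thinspace{}(ii)]{Bia23} as used in the proof of Corollary~\ref{cor:lambdavlambdaX}. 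Hence the proof of the corollary reduces to the single sentence ``apply Theorem~\ref{T:thetaneron}.''
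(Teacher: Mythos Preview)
Your proposal is correct and matches the paper's approach: the corollary is stated immediately after Theorem~\ref{T:thetaneron} without an explicit proof, and your argument spells out precisely the (trivial) matching of hypotheses and conclusions that makes it follow. The observations that $\Theta$ is symmetric (so $X=2\Theta=\Theta+[-1]^*\Theta$) and that $\supp(X)=\supp(\Theta)$ are exactly the bookkeeping needed.
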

Corollary~\ref{cor:theta_sigma2} states that, up to multiplication by $\pm
1$, the normalised theta function $\theta_X$ is the square of a $v$-adic
sigma function. When~\eqref{Ceqn} has good reduction at $v$, $J$ has good
ordinary reduction and we have $p\geq 5$, Blakestad \cite{blakestadsthesis} constructed a canonical $v$-adic sigma function, which by \cite[Proposition~3.8]{Bia23} is the $v$-adic sigma function that corresponds to the unit root eigenspace of Frobenius. We next show that, in the good ordinary case, the sigma function related to $\theta_X$ is in fact Blakestad's.

\begin{lem}
\label{lem:over_Ov}
$\theta_X = \pm T_1^2(1+O(T_1,T_2)^2)\in\O_v[[T_1,T_2]]$.
\end{lem}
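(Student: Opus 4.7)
The plan is to reduce both assertions to analogous statements about the sigma function via Corollary~\ref{cor:theta_sigma2}, which gives $\theta_X = \pm\sigma_v^2$, and then analyse $\sigma_v^2$.

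For the leading-term structure, Definition~\ref{def:sigma} gives that $\sigma_v = T_1(1+O(T_1,T_2))$ is odd. Writing $\sigma_v = T_1 \cdot u$ with $u\in F_v[[T_1,T_2]]$ and $u(0,0)=1$, the oddness of $\sigma_v$ together with the oddness of $T_1$ under the inversion $[-1]$ on Grant's formal group forces $u$ to be even, hence $u = 1+O(T_1,T_2)^2$. Squaring gives
\[
  \sigma_v^2 = T_1^2 \cdot u^2 = T_1^2\bigl(1+O(T_1,T_2)^2\bigr),
\]
which establishes the leading-term part of the claim.

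For integrality, I would prove the stronger statement $\sigma_v\in\O_v[[T_1,T_2]]$; the lemma then follows upon squaring (using that $p\neq 2$ and that the leading coefficient is a unit, so $\sigma_v^2\in\O_v[[T_1,T_2]]$ in fact forces $\sigma_v\in\O_v[[T_1,T_2]]$, and vice versa). The strategy is a Blakestad-style induction on total degree, using the defining PDE
\[
  D_iD_j\log\sigma_v = -X_{ij}+f_{ij}
\]
together with the initial condition $\sigma_v = T_1+\cdots$. Two ingredients are needed. First, the $T_1,T_2$-expansions of the rational functions $X_{ij}$ have coefficients in $\O_v$, because Grant's model and the equation~\eqref{Ceqn} are defined over $\O_v$. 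Second, the constants $f_{ij}\in F_v$ from Proposition~\ref{prop:diff_eq_theta_X} lie in $\O_v$; this can be deduced by applying the same PDE to $\log\tilde\theta_X$ (which produces the same $f_{ij}$) and invoking Norman's integrality $\tilde\theta_X\in F_v^\times\cdot\O_v[[T_1,T_2]]$. With both ingredients in hand, the recursion determining the coefficients of $\sigma_v$ takes place in $\O_v$, provided we know $p\geq 3$.

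The main obstacle is precisely the last point: verifying that the inductive recursion never forces division by $p$. This is the content of Blakestad's integrality argument in the good reduction case~\cite[\S 3]{blakestadsthesis}, which is carried out for $p\geq 5$. Extending it to semistable ordinary reduction with $p\geq 3$ should rely on the oddness of $\sigma_v$, which eliminates even-degree coefficients whose recursion would otherwise introduce the denominator $2$, together with careful bookkeeping of the integrality of Grant's coordinates in the semistable setting.
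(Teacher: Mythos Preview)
Your leading-term argument via oddness of $\sigma_v$ is correct and matches what the paper uses.

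For integrality, however, your approach has genuine gaps and differs substantially from the paper's. You propose to prove $\sigma_v\in\O_v[[T]]$ by running the PDE $D_iD_j\log\sigma_v = -X_{ij}+f_{ij}$ as a recursion over $\O_v$, which requires (a) $f_{ij}\in\O_v$ and (b) that integrating the PDE twice never introduces a denominator divisible by $p$. Your argument for (a) does not work as stated: Norman's integrality only gives $\theta_X = c\cdot g$ with $g\in\O_v[[T]]$ and $c\in F_v^\times$ not a priori a unit; since $g$ vanishes at the origin, $D_iD_j\log g$ is a genuine Laurent series along $T_1=0$, and there is no evident way to read off integrality of the constants $f_{ij}$ from it. In the paper's logical order, $f_{ij}\in\O_v$ is established only later (Theorem~\ref{thm:constants}), via identification with Blakestad's explicitly integral zeta-function constants $f_{ij}^C$, and that argument requires $p\ge 5$. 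As for (b), you flag it yourself as an unresolved obstacle; moreover, Blakestad's integrality results in~\cite{blakestadsthesis} go through the zeta-function \emph{construction}, not through a direct PDE recursion, so ``Blakestad-style induction on total degree using the defining PDE'' does not describe an existing argument you can simply extend.

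The paper's proof sidesteps both issues entirely. It uses two facts already available at this point: $\theta_X\in F_v^\times\cdot\O_v[[T]]$ from Norman's construction, and $u\colonequals \theta_X/(\pm T_1^2)$ is \emph{nowhere vanishing} on the open polydisc $\{\ord_v(T_i)>0\}$, since $\sigma_v$ vanishes only along $T_1=0$ by~\cite[Proposition~3.4(ii)]{Bia23}. If $u\notin\O_v[[T]]$, one locates (first restricting to $T_2=0$, then examining a suitable $T_2$-coefficient of $u$) a one-variable specialisation in $F_v^\times\cdot\O_v[[T_1]]\setminus\O_v[[T_1]]$; the $p$-adic Weierstrass preparation theorem then forces a zero of $u$ in the open polydisc, contradicting non-vanishing. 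This argument needs no information about the $f_{ij}$ and works for $p\ge 3$.
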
 

\begin{proof}
Since $\theta_X\in F_v^{\times}\cdot \O_v[[T]]$, it converges for all $T\in \overline{F}_v^2$ with $\min\{\ord_v(T_i)\}>0$.
Since $\theta_X$ is, up to multiplication by $\pm 1$, the square of a $v$-adic sigma function, it vanishes to order $2$ at $T_1=0$, and is non-zero everywhere else in its domain of convergence (see \cite[Proposition~3.4\thinspace{}(ii)]{Bia23}).
Recall that 
  $$\theta_X = \pm T_1^2 u(T)\in F_v^{\times} \cdot
  \O_v[[T]]\,,\quad\text{with}\; u(T) = 1+O(T_1,T_2)^2\,.$$ So $u(T)$ is non-zero on $\{(T_1,T_2): \ord_v(T_i)>0\}$. 

Assume that $u(T)\not\in \O_v[[T]]$. Consider $u_1(T_1) = u(T_1,0) = 1 + O(T_1)^2$. If $u_1(T_1)\not \in \O_v[[T_1]]$, but $cu_1(T_1)\in \O_v[[T_1]]$ for some $c\neq 0$, then $u_1(T_1)$ has some zeros of positive valuation by the $p$-adic Weierstrass preparation theorem, a contradiction.

Write $u(T) = \sum_{k,\ell} a_{k,\ell}
  T_1^kT_2^{\ell}$. In view of the previous step, we deduce that there
    exists $\ell \in \Z_{> 0}$ such that $h(T_1) = \sum_{k} a_{k,\ell}
    T_1^k\in F_v \cdot \O_v[[T_1]]\setminus \O_v[[T_1]]$. By the $p$-adic
    Weierstrass preparation theorem, we see that we can choose $T_1^{(0)}$
    of positive valuation such that $\ord_v(h(T_1^{(0)}))< 0$. Applying
    once again the $p$-adic Weierstrass preparation theorem, 
    $u(T_1^{(0)},T_2)$ has zeros of positive valuation as a series in
    $T_2$, another contradiction. Therefore $u(T)\in \O_v[[T]]$.
\end{proof}

\begin{cor}\label{C:g2TNF2}
  If the model~\eqref{Ceqn} has good reduction at $v$, $J$ has good
  ordinary reduction at $v$ and $p\geq 5$, then $\theta_X$ is, up to multiplication by $\pm 1$, the square of Blakestad's $v$-adic sigma function. If in addition $F_v/\Q_p$ is unramified, then Conjecture~\ref{C:TNF2} holds for $J$.
\end{cor}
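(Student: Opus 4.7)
The plan is to use Corollary~\ref{cor:theta_sigma2}, which gives $\theta_X = \pm \sigma_v^2$ for the $v$-adic sigma function $\sigma_v$ attached to the symmetric matrix $f = (f_{ij})$ of Proposition~\ref{prop:diff_eq_theta_X}, and to identify $\sigma_v$ with Blakestad's canonical $v$-adic sigma function in the good ordinary setting. Blakestad's canonical sigma function is uniquely characterised amongst all $v$-adic sigma functions by having integral coefficients, i.e.\ by lying in $\O_v[[T_1,T_2]]$ (see~\cite{blakestadsthesis}). Hence the first assertion reduces to verifying this integrality for the $\sigma_v$ extracted in Corollary~\ref{cor:theta_sigma2}.

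To prove integrality, I will extract a formal square root of $\theta_X$. By Lemma~\ref{lem:over_Ov} we may write
$$
  \theta_X = \pm T_1^2(1+h),\qquad h \in (T_1,T_2)^2\O_v[[T_1,T_2]].
$$
Since $p\geq 5$, the binomial coefficients $\binom{1/2}{n}$ all lie in $\Z_p\subset \O_v$, so the formal binomial series $(1+h)^{1/2} = \sum_{n\geq 0}\binom{1/2}{n}h^n$ defines an element of $\O_v[[T_1,T_2]]$ that is an even power series starting with $1$. Therefore $\pm T_1(1+h)^{1/2}\in\O_v[[T_1,T_2]]$ is an odd power series of the shape $T_1(1+O(T_1,T_2))$ whose square equals $\theta_X$. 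By Corollary~\ref{cor:theta_sigma2}, and the uniqueness of an odd square root of this shape, this series agrees with $\pm\sigma_v$. Hence $\sigma_v\in\O_v[[T_1,T_2]]$, and Blakestad's characterisation identifies $\sigma_v$ (up to sign) with his canonical $v$-adic sigma function; this proves the first assertion.

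For the second assertion, assume $F_v/\Q_p$ is unramified. By~\cite[Proposition~3.8]{Bia23}, the complementary subspace corresponding to Blakestad's canonical $v$-adic sigma function is precisely the unit root eigenspace for the action of Frobenius. Combined with the first assertion, this forces the subspace $W_v$ appearing in Theorem~\ref{T:thetaneron} (and hence, via Corollary~\ref{C:g2TNF1}, the subspace featured in Conjecture~\ref{C:TNF1} for $J$) to be the unit root subspace, which is exactly the content of Conjecture~\ref{C:TNF2}.

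The main obstacle is being precise about Blakestad's characterisation of the canonical sigma function by integrality; this is the input from \cite{blakestadsthesis} that makes the matching work, and any subtlety in the normalisation of $\theta_X$ fixed in Definition~\ref{defn:normalising_theta} is absorbed into the sign ambiguity $\pm$. A subsidiary point to verify is that the binomial coefficients $\binom{1/2}{n}$ have denominators supported only at $2$, which is standard and uses nothing more than $p$ odd.
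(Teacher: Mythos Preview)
Your proof is correct and follows essentially the same route as the paper: use Lemma~\ref{lem:over_Ov} to get integrality of $\theta_X$, pass to integrality of its square root $\sigma_v$, and then invoke Blakestad's characterisation of the canonical sigma function together with \cite[Proposition~3.8]{Bia23}. The paper's proof is simply a terse citation of the relevant inputs from \cite{blakestadsthesis} and \cite{Bia23}, whereas you have made the square-root extraction explicit via the binomial series; the one point to tidy is that your claim that $(1+h)^{1/2}$ is \emph{even} is not immediate from $h\in (T_1,T_2)^2\O_v[[T_1,T_2]]$, but you do not actually need it: uniqueness of the formal square root of $u=1+h$ with constant term $1$ already forces $\sigma_v/T_1$ to equal your integral $(1+h)^{1/2}$, since Corollary~\ref{cor:theta_sigma2} gives $(\sigma_v/T_1)^2=u$.
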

\begin{proof}
    The first claim follows from Lemma~\ref{lem:over_Ov} in view of \cite[Propositions~34~and~27, Corollary~37]{blakestadsthesis}; see also \cite[Theorem~3.6]{Bia23}. 
    By \cite[Proposition~3.8]{Bia23}, the corresponding subspace is the unit root eigenspace of Frobenius.
\end{proof}
In general, by Corollary~\ref{cor:theta_sigma2}, the canonical Mazur--Tate splitting corresponds to choosing $W_v$ so that $\iota^{*}W_v\subset H^1_{\dR}(C/F_v)$ is spanned by the classes of the differentials
\begin{equation*}
\begin{aligned}
\eta_1^{(f)} = \left(-3x^3 - 2b_1x^2 -b_2x + f_{12} x + f_{11}\right)\frac{dx}{2y},\\
 \eta_2^{(f)}= \left(-x^2 + f_{22} x + f_{12}\right)\frac{dx}{2y}.
 \end{aligned}
\end{equation*}
\begin{rem}\label{R:}
Under the assumptions of Corollary~\ref{C:g2TNF2}, the classes of $\eta_1^{(f)}$ and $\eta_2^{(f)}$ span the unit root eigenspace of Frobenius. It might be possible to extend this result to semistable
ordinary reduction using
\cite{katz1981crystalline, iovita2000formal}.  
We instead extend the explicit description of the constants $f_{ij}$ provided by Blakestad \cite{blakestadsthesis} in the good ordinary case to the semistable ordinary setting. 
\end{rem}

\subsection{Blakestad's Zeta Functions}\label{subsec:zeta_fcts}

We now extend the construction of Blakestad's zeta functions
\cite[Chapter~3]{blakestadsthesis} to our setting. It is easily checked
that all of his results, including proofs, remain valid except the proof of
\cite[Proposition~23]{blakestadsthesis} which we adapt slightly, see Lemma~\ref{lem:CongForABCDIJRS}. Assume that $p\geq 5$ and $C$ has semistable reduction at $v$.

Set $t = -\frac{x^2}{y}$; it is a local parameter at the point $\infty\in
C(F)$. In~\cite[\S3.2.1]{blakestadsthesis}, Blakestad constructs, for every positive integer $n$, functions $\phi_n$ and $\psi_n$ on $C$, regular away from $\infty$, with $t$-expansions
\begin{align}\label{BlakestadPhinPsin}
\begin{split}
\phi_n &= \frac{1}{t^{3p^n}} + \frac{A_n}{t^3} + \frac{B_n}{t} + I_nt + R_nt^3 + O(t^5), \\
\psi_n &= \frac{1}{t^{p^n}} + \frac{C_n}{t^3} + \frac{D_n}{t} + J_nt + S_nt^3 + O(t^5),
\end{split}
\end{align}
for some $A_n,B_n,C_n,D_n,I_n,J_n,R_n,S_n$ in $\O_F$. 

\begin{lem}\label{lem:CongForABCDIJRS}
For every positive integer $n$, there is a congruence
\[
\begin{pmatrix}
A_{n+1} & B_{n+1} & I_{n+1} & R_{n+1} \\
C_{n+1} & D_{n+1} & J_{n+1} & S_{n+1}
\end{pmatrix}
\equiv 
-\begin{pmatrix}
A_{n}^p & B_{n}^p \\
C_{n}^p & D_{n}^p
\end{pmatrix}
\begin{pmatrix}
A_{n} & B_{n} & I_{n} & R_{n} \\
C_{n} & D_{n} & J_{n} & S_{n}
\end{pmatrix}
(\mod\ \pi_v),
\]
and, therefore we have
\[
\begin{vmatrix}
A_{n} & B_{n} \\ 
C_{n} & D_{n}
\end{vmatrix}
\equiv
\begin{vmatrix}
A_{1} & B_{1} \\ 
C_{1} & D_{1}
\end{vmatrix}^{p^{n-1} + \cdots + p + 1}
(\mod\ \pi_v).
\]
\end{lem}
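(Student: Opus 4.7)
The proof follows Blakestad's argument in \cite[\S3.2, Proposition~23]{blakestadsthesis}, with the mild adaptation for semistable reduction flagged in the preceding text. Working modulo $\pi_v$, the residue field has characteristic $p$, so Frobenius acts as a ring homomorphism. Applying Frobenius termwise to the expansions in \eqref{BlakestadPhinPsin} yields
\[
\phi_n^p \equiv \frac{1}{t^{3p^{n+1}}} + \frac{A_n^p}{t^{3p}} + \frac{B_n^p}{t^p} + I_n^p\,t^p + R_n^p\,t^{3p} + O(t^{5p}) \pmod{\pi_v},
\]
and analogously for $\psi_n^p$. Since $p \geq 5$, the orders $\pm p$ and $\pm 3p$ are strictly separated from the window $\{-3,-1,1,3\}$ in which the coefficients $A_{n+1}, B_{n+1}, I_{n+1}, R_{n+1}$ (and likewise $C_{n+1}, D_{n+1}, J_{n+1}, S_{n+1}$) live.

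Next, I would consider the auxiliary combinations $\phi_n^p - A_n^p \phi_n - B_n^p \psi_n$ and $\psi_n^p - C_n^p \phi_n - D_n^p \psi_n$. A direct computation of their $t$-expansions shows that their coefficients at orders $-3,-1,1,3$ are exactly the entries of the two rows of the $2\times 4$ matrix on the right-hand side of the first congruence in the lemma, the contributions coming entirely from $\phi_n$ and $\psi_n$ since $\phi_n^p$ and $\psi_n^p$ vanish at those orders. The core step, following Blakestad, is then to argue that each combination agrees with $\phi_{n+1}$ (respectively $\psi_{n+1}$) modulo $\pi_v$ up to a function whose $t$-expansion vanishes at orders $-3,-1,1,3$, so that reading off these four coefficients yields the matrix congruence. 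The adaptation to semistable reduction consists in verifying that Blakestad's characterisation of $\phi_n, \psi_n$ by pole order, leading coefficient and prescribed intermediate vanishing is preserved under reduction modulo $\pi_v$; this uses that $\infty$ remains a smooth point of the relevant regular model of $C$ over $\O_v$, so that $t$ is still a well-defined local parameter modulo $\pi_v$. I expect this to be the main---though routine---obstacle, the remaining ingredients of Blakestad's argument transferring directly.

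The determinant congruence then follows by an easy induction on $n$: taking $2 \times 2$ determinants of the first two columns of the matrix congruence, combined with the identity $\det(a_{ij}^p) \equiv \det(a_{ij})^p \pmod{\pi_v}$ valid since the residue field has characteristic $p$, yields the claimed product formula.
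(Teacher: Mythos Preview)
Your overall strategy matches the paper's: both defer to Blakestad's argument, and both locate the semistable adaptation at the smoothness of $\tilde\infty$ on the reduced curve $\tilde C$. Two points, however, need correction or sharpening.

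First, the specific combination you form is wrong for $n\ge 2$. Modulo $\pi_v$, the secondary poles of $\phi_n^p$ sit at orders $3p$ and $p$, whereas $\phi_n$ and $\psi_n$ have leading poles of orders $3p^n$ and $p^n$; subtracting $A_n^p\phi_n+B_n^p\psi_n$ therefore cancels nothing when $n\ge 2$ and introduces new high-order poles. The functions that do the job are $\phi_1$ and $\psi_1$: the difference $\phi_n^p-\phi_{n+1}-A_n^p\phi_1-B_n^p\psi_1$ (and its $\psi$-analogue) has all poles of order $>3$ cancelled, so one is left with a rational function on $\tilde C$, regular away from $\tilde\infty$, with at worst a pole of order $3$ there. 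This also explains why your induction for the determinant fails: taking determinants of the congruence \emph{as you read it} gives $H_{n+1}\equiv H_n^{p+1}$, hence $H_n\equiv H_1^{(p+1)^{n-1}}$, which disagrees with the claimed $H_1^{(p^n-1)/(p-1)}$ already at $n=3$. With subscript $1$ in the second matrix one gets $H_{n+1}\equiv H_n^p\cdot H_1$, which does iterate correctly. (In other words, the displayed congruence in the lemma has a typo you have inherited.)

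Second, the reason the argument survives in semistable reduction is not merely that $t$ remains a local parameter at $\tilde\infty$; that is necessary but not sufficient. The paper isolates the actual content: the Riemann--Roch equalities
\[
\dim\mathcal{L}(3\tilde\infty)=\dim\mathcal{L}(2\tilde\infty)\qquad\text{and}\qquad \dim\mathcal{L}(\tilde\infty)=\dim\mathcal{L}(0)
\]
still hold on the (possibly singular) curve $\tilde C$ because $\tilde\infty$ is smooth. These gap conditions are what force the coefficients of $t^{-3}$ and $t^{-1}$ in the odd difference function above to vanish, and it is this step---not the existence of the $t$-expansion---that delivers the matrix congruence.
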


\begin{proof} 
Write $\tilde{C}$ for the reduction of $C$ modulo $\pi_v$. The first claim
  is due to Blakestad when $C$ has good reduction at $v$; see
  \cite[Proposition~23]{blakestadsthesis}. The key features of his proof
  are the Riemann--Roch space dimension counts 
\begin{equation}\label{eq:DimCounts}
  \dim\mathcal{L}(3\tilde{\infty}) = \dim\mathcal{L}(2\tilde{\infty})\ \ \
  \text{and} \ \ \ \dim\mathcal{L}(\tilde{\infty}) = \dim\mathcal{L}(0),     
\end{equation}
where $\tilde{\infty}$ is the point at infinity on $\tilde{C}$, which
  follow from the Riemann--Roch theorem on the good reduction curve
  $\tilde{C}$. On the other hand, even if $\tilde{C}$ is singular, the
  point $\tilde{\infty}$ is  smooth because of the
  shape of the defining equation of $C$, and hence the equalities in
  \eqref{eq:DimCounts} are still true. In other words, Blakestad's proof
  works almost verbatim in our situation.

The second claim follows from the first one as in the proof of
  \cite[Corollary~24]{blakestadsthesis}.
\end{proof}

For each positive integer $n$, set 
\begin{equation}
\label{BlakestadHn}
H_n \coloneqq A_nD_n - B_nC_n\in \O_F.  
\end{equation}
By Lemma~\ref{lem:CongForABCDIJRS}, we have 
\[H_n\equiv H_1^{\frac{p^n-1}{p-1}}(\mod\ \pi_v).\]
From now on, we
make the assumption that $H_1$ is invertible in $\O_v$. By
Proposition~\ref{H1ord}, this is equivalent to saying that $C$ has semistable
ordinary reduction at $v$. Our assumption clearly implies that $H_n$ is invertible in $\O_v$ for each $n$.

Define functions $\zeta_{1,n},\zeta_{2,n}$ and constants $\alpha_n,\beta_n,\gamma_n,\delta_n$ by
\[\begin{pmatrix}
\zeta_{1,n} \\
\zeta_{2,n}
\end{pmatrix} \colonequals \begin{pmatrix}
A_n & B_n \\
C_n & D_n
\end{pmatrix}^{-1} \begin{pmatrix}
\phi_n \\
\psi_n
\end{pmatrix},\ \quad \ \begin{pmatrix}
\alpha_n & \delta_n \\
\beta_n & \gamma_n
\end{pmatrix} \colonequals \begin{pmatrix}
A_n & B_n \\
C_n & D_n
\end{pmatrix}^{-1} \begin{pmatrix}
I_n & R_n \\
J_n & S_n
\end{pmatrix}.\]
Looking at $t$-expansions modulo $\pi_v^n$, we get
\begin{align*}
-d\zeta_{1,n} \equiv \left(3x^3 + 3b_1x^2 - \alpha_nx - (3b_1b_2 - b_1\alpha_n + 3b_3 + 3\delta_n)\right)\frac{dx}{2y}, \\
-d\zeta_{2,n} \equiv \left(x^2 - \beta_nx - (b_2 - b_1\beta_n + 3\gamma_n)\right)\frac{dx}{2y}.
\end{align*}
We have
\[\alpha_n\to \alpha,\ \ \ \beta_n\to \beta,\ \ \ \gamma_n\to \gamma,\ \ \ \delta_n\to \delta\]
for some constants $\alpha,\beta,\gamma,\delta \in \O_v$, and
\[\zeta_{1,n}\to \zeta_1,\ \ \ \zeta_{2,n}\to \zeta_2\]
for some Laurent series $\zeta_1\in \frac{1}{t^3} + \O_v[[t]]$ and
$\zeta_2\in \frac{1}{t} + \O_v[[t]]$.
This implies that
\begin{align*}
-d\zeta_{1} = \left(3x^3 + 3b_1x^2 - \alpha x - (3b_1b_2 - b_1\alpha + 3b_3 + 3\delta)\right)\frac{dx}{2y}, \\
-d\zeta_{2} = \left(x^2 - \beta x - (b_2 - b_1\beta + 3\gamma)\right)\frac{dx}{2y}.
\end{align*}
We define two differentials $\eta_1\colonequals d\zeta_1 - b_1 d\zeta_2$ and $\eta_2 \colonequals d\zeta_2$ by the following formulas
\begin{equation}
\begin{aligned}
\eta_1 = (-3x^3 - 2b_1x^2 -b_2x + f_{21}^{C} x + f_{11}^{C})\frac{dx}{2y},\\
 \eta_2= (-x^2 + f_{22}^{C} x + f_{12}^{C})\frac{dx}{2y},
 \end{aligned}
\end{equation}
where 
\begin{equation}\label{eq:fijC}
\begin{aligned}
f_{11}^C &\colonequals 2b_1b_2 - b_1\alpha + b_1^2\beta + 3\delta - 3b_1\gamma + 3b_3, \qquad
&& f_{12}^C \colonequals b_2 - b_1\beta + 3\gamma, \\
 f_{21}^C &\colonequals b_2 + \alpha - b_1\beta
, \qquad && f_{22}^C \colonequals \beta.
\end{aligned}
\end{equation}
In particular, they are almost in the same form as $\eta_1^{(f)}$ and
$\eta_2^{(f)}$, except that we do not yet know if the constants $f_{21}^C$
and $f_{12}^C$ are equal. We will show in Theorem~\ref{thm:constants} that
$(f_{ij}^C)$ is indeed a symmetric matrix and that it is equal to the
matrix $f=(f_{ij})$ of Theorem~\ref{T:thetaneron}.

\begin{rem}
The subspace of $H_{\dR}^1(C/F_v)$ generated by $\eta_1$ and $\eta_2$ is comprised of precisely those differential forms whose formal integrals around $\infty$ have coefficients with bounded powers of $p$. 
\end{rem}

\subsection{Cartier--Manin and ordinarity}
\label{S:CartierManinOrdinarity}

We continue to assume that $p\geq 5$ and $C$ has semistable reduction at $v$, as in the previous subsection. Recall the quantity $H_1\in \O_F$ from \eqref{BlakestadHn}.
The goal of this subsection is to show:
\begin{prop}\label{H1ord}
The reduction of  $C$ at $v$ is semistable ordinary if and only if $H_1$ is invertible in $\O_v$.
\end{prop}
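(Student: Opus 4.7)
My strategy follows Manin's original argument for the good-reduction case (encoded in Lemma~\ref{Hvsord}) and verifies that each step remains valid in the semistable setting.

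First, I would work modulo $\pi_v$ and identify $H_1 \bmod \pi_v$ with the determinant of the $2\times 2$ Hasse--Witt matrix $M$ of the (possibly singular) reduction $\tilde{C}$, taken in the basis $\omega_1, \omega_2$ of $H^0(\tilde{C}, \omega_{\tilde{C}})$. The point $\tilde{\infty}$ is smooth on $\tilde{C}$ because~\eqref{Ceqn} is monic of odd degree, and the Riemann--Roch dimensions $\dim L(n \cdot \tilde{\infty})$ on $\tilde{C}$ agree with those on the generic fiber; this is exactly what makes the proof of Lemma~\ref{lem:CongForABCDIJRS} go through. Hence $\tilde{\phi}_1$ and $\tilde{\psi}_1$ are uniquely characterised, just as over the generic fiber, as the elements of $L(3p \cdot \tilde{\infty})$ with the prescribed $t$-expansion~\eqref{BlakestadPhinPsin}. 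A direct computation exploiting the characteristic-$p$ identity $y^{p-1} = b(x)^{(p-1)/2}$, together with an explicit basis of $L(3p \cdot \tilde{\infty})$, expresses $A_1, B_1, C_1, D_1 \bmod \pi_v$ in terms of the coefficients of $b(x)^{(p-1)/2}$ and produces, up to unit factors, the entries of $M$. Thus $H_1 \equiv u \cdot \det M \pmod{\pi_v}$ for some unit $u \in \O_v^{\times}$.

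Next, I would show that $\det M \ne 0$ in $\F_v$ is equivalent to $J$ having semistable ordinary reduction at $v$. The connected component $\tilde{\mathcal{J}}^0$ of the special fiber of the N\'eron model of $J$ fits in an exact sequence $0 \to T \to \tilde{\mathcal{J}}^0 \to B \to 0$, with $T$ a torus of rank $t$ equal to the number of nodes of $\tilde{C}$ and $B = \Jac(\tilde{C}^{\nu})$, where $\tilde{C}^{\nu}$ is the normalisation of $\tilde{C}$ (a hyperelliptic curve of genus $2-t$ over $\F_v$). By definition, $J$ is semistable ordinary iff $B$ is ordinary. Writing $b(x) \equiv \bar{b}(x) \prod_{j=1}^{t} (x - \beta_j)^2 \pmod{\pi_v}$ with $\bar{b}$ having simple roots (so that $\tilde{C}^{\nu}$ is given by $y^2 = \bar{b}(x)$), the factorisation
\[
  b(x)^{(p-1)/2} = \bar{b}(x)^{(p-1)/2} \prod_{j=1}^{t} (x - \beta_j)^{p-1}
\]
allows one to compare $M$ with the (smaller) Hasse--Witt matrix of $\tilde{C}^{\nu}$. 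A short computation shows that $\det M \ne 0$ iff this smaller matrix is invertible, which by Manin's classical criterion (Lemma~\ref{Hvsord}) applied to the smooth curve $\tilde{C}^{\nu}$ is equivalent to $B$ being ordinary. Combining the two steps then gives $H_1 \in \O_v^{\times}$ iff $J$ has semistable ordinary reduction at $v$.

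The main obstacle is the degeneration step in the second part: one must carefully control how the coefficients of $b(x)^{(p-1)/2}$ behave as $b$ acquires double roots, and verify that the resulting $2 \times 2$ determinant detects ordinarity of $\Jac(\tilde{C}^{\nu})$ despite $M$ being constructed on the singular curve $\tilde{C}$. The edge cases $t = 1$ and $t = 2$ are also subtle, since in those cases $\tilde{C}^{\nu}$ has genus $\le 1$ and the comparison between $M$ and the Hasse--Witt matrix of $\tilde{C}^{\nu}$ has to be set up with care.
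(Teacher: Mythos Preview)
Your two-step structure matches the paper's exactly: relate $H_1 \bmod \pi_v$ to the determinant of the Hasse--Witt matrix of $\tilde C$, and then show this determinant is nonzero if and only if the reduction is semistable ordinary. For the first step the paper (Corollary~\ref{cor:Blakestad_vs_CartierManin} in the appendix) computes $A_1,B_1,C_1,D_1$ via residues $\Res_\infty(\phi_1\omega_i)$, $\Res_\infty(\psi_1\omega_i)$ together with explicit $t$-expansions of $\omega_1,\omega_2$ (Proposition~\ref{prop:t_expansions_diff}); your proposed route through $y^{p-1}=b(x)^{(p-1)/2}$ and an explicit basis of $L(3p\cdot\tilde\infty)$ is a reasonable alternative and would lead to the same identity $H_1\equiv\det H\pmod{\pi_v}$.

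The genuine difference is in the second step. You propose to factor $b(x)\equiv \bar b(x)\prod_j(x-\beta_j)^2$ and to compare the $2\times 2$ matrix $M$ directly with the smaller Hasse--Witt matrix of the smooth normalisation $\tilde C^\nu$, then invoke Manin's classical criterion on $\tilde C^\nu$. The paper instead observes that Manin's formula for $M$ in terms of the coefficients of $b(x)^{(p-1)/2}$ is \emph{already} the Cartier--Manin matrix of the singular curve $\tilde C$, by St\"ohr's extension of the theory to irreducible curves~\cite[\S4]{stohr1993poles}, and then applies St\"ohr's result $\sigma(\tilde C)=\sigma(\tilde C^\nu)+s$ (with $s$ the number of nodes) to reduce to ordinarity of $B=\Jac(\tilde C^\nu)$. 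This single citation replaces your entire degeneration analysis and dispatches the edge cases $t=1,2$ uniformly. Your route is more self-contained, but the ``short computation'' you allude to is not obviously short: one must show that the $2\times2$ determinant built from the $c_{ip-j}$ factors, up to a unit, as the $(2-t)\times(2-t)$ Hasse--Witt determinant of $\tilde C^\nu$ times a nonzero contribution from the nodes, which requires either a careful convolution with $\prod_j(x-\beta_j)^{p-1}$ or a change of basis to differentials adapted to the normalisation map. St\"ohr's result is precisely the structural statement that makes this step automatic.
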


In order to prove Proposition~\ref{H1ord}, 
we write 
\[b(x)^{(p-1)/2} \bmod
\pi_v \equalscolon \sum_{k} c_k x^k\] 
and we define a matrix
\begin{equation}\label{HDef}
H = \left(\begin{smallmatrix} c_{p-1} & c_{2p-1}\\
c_{p-2} & c_{2p-2}\end{smallmatrix} \right)\in M_2(k_v)\,,
\end{equation}
where $k_v$ is the residue field of $\O_v$.

\begin{lem}\label{Hvsord}
The curve $C$ has semistable ordinary reduction at $v$ if and only if $\det(H)\neq 0$.
\end{lem}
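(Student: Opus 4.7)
The plan is to extend Manin's classical argument from good reduction to the semistable case by computing the Cartier operator on the dualizing sheaf $\omega_{\tilde C}$ of the semistable special fibre $\tilde C$ and matching it against the semistable ordinarity condition via the Raynaud--BLR description of the Jacobian of a nodal curve.

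After base-changing, I would assume $k_v$ is algebraically closed. Write $\tilde C$ for the special fibre of the Weierstrass model in $\P^{1,3,1}_{\O_v}$ and let $\nu \colon \tilde C^\nu \to \tilde C$ denote its normalisation; since $\tilde C$ is semistable of arithmetic genus $2$, it is a nodal curve with some number $n$ of nodes and $\tilde C^\nu$ is a smooth curve of genus $2-n$. By \cite[Ch.~9]{BLR90}, the connected component of $\Jac(\tilde C)$ fits in an exact sequence
\[1 \to T \to \Jac(\tilde C)^0 \to \Jac(\tilde C^\nu) \to 0,\]
where $T$ is a split torus of rank equal to $n$. By definition, $J$ has semistable ordinary reduction at $v$ precisely when the abelian variety $\Jac(\tilde C^\nu)$ is ordinary.

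Next, I would translate this to a statement about the Cartier operator via Serre duality on the Gorenstein curve $\tilde C$. The normalisation short exact sequence
\[0 \to \O_{\tilde C} \to \nu_*\O_{\tilde C^\nu} \to \textstyle\bigoplus_{\text{nodes}} k_v \to 0\]
yields, on cohomology, a decomposition of $H^1(\tilde C,\O_{\tilde C}) = \Lie(\Jac(\tilde C)^0)$ into a ``combinatorial'' summand of dimension $n$ (corresponding to $\Lie(T)$, on which Frobenius is bijective since it is for any torus) and the abelian part $H^1(\tilde C^\nu, \O)$. Hence Frobenius is bijective on $H^1(\tilde C, \O_{\tilde C})$ iff $\Jac(\tilde C^\nu)$ is ordinary iff $J$ has semistable ordinary reduction at $v$. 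Serre duality on $\tilde C$ then recasts this as bijectivity of the Cartier operator $\mathcal{C}$ on the $2$-dimensional space $H^0(\tilde C, \omega_{\tilde C})$.

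Finally, I would exhibit a basis of $H^0(\tilde C, \omega_{\tilde C})$ in which $\mathcal{C}$ has matrix $H$ (up to transpose). The point at infinity is smooth on $\tilde C$ because $\deg b = 5$; at each node $(\alpha, 0)$ with $b(x) = (x-\alpha)^2 b_1(x)$, the differentials $\omega_1 = \tfrac{dx}{2y}$ and $\omega_2 = \tfrac{x\,dx}{2y}$ pull back under $\nu$ to meromorphic forms with simple poles and opposite residues on the two branches of $\tilde C^\nu$ above the node, so they lie in $\omega_{\tilde C}$; they are visibly linearly independent, and $\dim H^0(\omega_{\tilde C}) = 2$ by Riemann--Roch. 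Writing $\tfrac{1}{y} = \tfrac{b(x)^{(p-1)/2}}{y^p}$ and applying the identities $\mathcal{C}(x^{mp-1}\,dx) = x^{m-1}\,dx$ and $\mathcal{C}(x^{n}\,dx) = 0$ otherwise yields
\[\mathcal{C}\bigl(x^{i-1}\tfrac{dx}{y}\bigr) = c_{p-i}\,\omega_1 + c_{2p-i}\,\omega_2 \quad (i=1,2),\]
where the higher-$m$ terms vanish because $\deg b^{(p-1)/2} = 5(p-1)/2 < 3p-1$ for $p\ge 3$. The resulting matrix of $\mathcal{C}$ in the basis $(\omega_1, \omega_2)$ is the transpose of $H$, so $\det H \neq 0$ iff $\mathcal{C}$ is bijective, which is what we wanted.

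The main obstacle is the verification at the nodes: one has to check that $\omega_1, \omega_2$ really lie in $H^0(\tilde C, \omega_{\tilde C})$ and, more delicately, that the Cartier operator on the dualizing sheaf of $\tilde C$ agrees, near each node, with the expression derived from the Weierstrass equation. This should follow from a direct local analysis at nodes $(\alpha, 0)$ combined with the standard description of $\omega_{\tilde C}$ at a node in terms of meromorphic differentials on $\tilde C^\nu$ with opposite residues on the two branches.
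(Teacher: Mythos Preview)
Your argument is correct and follows essentially the same two-step skeleton as the paper's proof (identify $H$ with the Cartier--Manin matrix on the singular fibre, then relate its invertibility to ordinarity of the abelian part), but the implementation differs. The paper outsources both steps: it invokes St\"ohr's extension of the Cartier--Manin operator to irreducible singular curves \cite[\S4]{stohr1993poles} to get that $H$ is the right matrix, and then uses St\"ohr's formula $\sigma(\tilde C)-\sigma(\tilde C')=s$ (the number of singular points) together with a short case analysis on $s\in\{0,1,2\}$ to match invertibility of $H$ with ordinarity of $B$. You instead work directly with the dualizing sheaf and the normalisation exact sequence on $H^1(\O_{\tilde C})$, which is more self-contained and makes the link to the semiabelian structure of $\Pic^0(\tilde C)$ via~\cite{BLR90} transparent; the paper's route is shorter because it can cite a ready-made result.

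Two small points worth tightening. First, your claim that the torus rank equals the number $n$ of nodes, and that $\tilde C^\nu$ is connected, uses that $\tilde C$ is \emph{irreducible}; the paper states this explicitly (it follows from $\deg b=5$ being odd), and you should too, since otherwise the combinatorics of the dual graph enter. Second, in your displayed formula for $\mathcal C(x^{i-1}\,dx/y)$ the coefficients should strictly be $c_{p-i}^{1/p}$ and $c_{2p-i}^{1/p}$ after base change to the algebraic closure, because the Cartier operator is $p^{-1}$-semilinear; this is harmless for the conclusion since $\det H=0$ iff $\det H^{(1/p)}=0$, but it is worth saying. Your acknowledged ``main obstacle'' (compatibility of the Cartier computation with $\omega_{\tilde C}$ at the nodes) is exactly what St\"ohr's paper handles in the version the authors cite; your local check via opposite residues on the two branches is the correct way to do it by hand.
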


\begin{proof}
If $C$ has good reduction, then the statement is due to Manin~\cite{Man62},
  see also~\cite[Theorem~3.1]{yui1978jacobian}. From now on we suppose that
  the reduction $\tilde{C}$ of $C$ modulo $\pi_v$ is singular. 
The equation of $C$ implies that $\tilde{C}$ is irreducible.
The matrix $H$ then represents the Cartier--Manin operator on $\tilde{C}$ with respect to the generating set $x^i\frac{dx}{2y}$ of the regular
  differentials. This follows from the description of the Cartier--Manin
  operator on  hyperelliptic curves due to Manin~\cite{Man62}, which extends to our
  setting via the work of St\"ohr on the Cartier--Manin operator of irreducible, but not necessarily smooth curves in~\cite[\S4]{stohr1993poles}.

 Define the Hasse--Witt invariant $\sigma(\tilde{C})$  as the rank of the $r^{\text{th}}$ power
  of this operator for any $r\ge2$ (this is independent of $r$. Hence the matrix $H$ is invertible if and only if $\sigma(\tilde{C})=2$.
 Let $\tilde{C}'$ denote the normalisation of $\tilde{C}$.
  By~\cite[Corollary~4.2]{stohr1993poles}, 
  $\sigma(\tilde{C})-\sigma(\tilde{C}')$ is the number of singular points
  in $\tilde{C'}(\overline{\F}_p)$, which we denote by $s$, and which
  is
equal to the difference between the geometric genera of $\tilde{C}$ and $\tilde{C}'$. 

Under our assumptions, $C$ has semistable ordinary reduction at $v$ if and only if $J$ has ordinary reduction at $v$, which means that the abelian variety $B$ is ordinary, where the connected component of the  N\'eron model of $J$ over $\O_v$ is an extension of $B$ by a torus. By the above, $\sigma(\tilde{C})=2$ if and only if one of the following holds:
\begin{itemize}
    \item $s=2$ and $\sigma(\tilde{C}')=0$, in which case $B=0$;
    \item $s=1$ and $\sigma(\tilde{C}')=1$, so that $B$ is an ordinary elliptic curve;
    \item $s=0$ and $\sigma(\tilde{C}')=2$, hence $B = \mathrm{Jac}(\tilde{C}')$ is an ordinary abelian surface. 
\end{itemize}
  These are precisely the cases that $C$ has semistable ordinary reduction at $v$.
\end{proof}
\begin{rem}
The result remains valid if $C$ is any hyperelliptic curve over $F_v$  with semistable reduction. 
In this case, $\tilde{C}$ may be reducible. Then $J$ is ordinary and, after applying a transformation, $\tilde{C}$ is of the form $y^2=h(x)^2$. One can show that this implies $\det(H)=1$.
\end{rem}

\begin{proof}[Proof of Proposition~\ref{H1ord}]
  In the appendix, we relate $H_1$ to the matrix $H$;
  Corollary~\ref{cor:Blakestad_vs_CartierManin} shows that $H_1$ is
  invertible if and only if $\det(H)$ is a unit. Hence the result
follows immediately from Lemma~\ref{Hvsord}.
\end{proof}

\subsection{The canonical subspace}
Assume that $p\geq 5$ and that $C$ has semistable ordinary reduction at
$v$. The goal of this subsection is to show that the differentials
$\eta_i^{(f)}$ arising from the normalised theta function $\theta_X$ are
equal to the differentials $\eta_i$ arising from Blakestad's zeta functions
discussed in \S\ref{subsec:zeta_fcts}. This yields an explicit formula for the symmetric matrix $(f_{ij})$, which, by Corollary~\ref{cor:theta_sigma2}, allows us to compute $\theta_X$ using the algorithms of \cite{Bia23}.

The main result is Theorem~\ref{thm:constants} below. It is an extension to semistable reduction of \cite[Theorem~36, Corollary~37]{blakestadsthesis}. We provide here a slightly different proof, but Blakestad's proof would also be applicable.

We first prove a stronger version of \cite[Lemma~26]{blakestadsthesis}.

\begin{lem}
\label{lemma:frac_implies_tens}
Let $t$ be a uniformiser at $\infty\in C$. Suppose there exists $\xi\in
  \mathrm{Frac}(\O_v[[t]])$ such that $\frac{d \xi}{dt} dt=
  \left(s\frac{dx}{2y} + r\frac{xdx}{2y}\right)(t)$ for some $r,s\in F_v$. Then $\xi$ is constant.
\end{lem}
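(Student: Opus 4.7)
The plan is to reduce to the case $\xi \in \O_v[[t]]$ via Weierstrass preparation, and then show by induction on $k \geq 0$ that $r,s \in \pi_v^k\O_v$ using the Cartier operator on the formal neighbourhood of $\infty$ in the special fibre, together with the ordinarity hypothesis (Proposition~\ref{H1ord}).

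First I would write $\xi = f/g$ with $f,g \in \O_v[[t]]$ coprime. By Weierstrass preparation, $g = \pi_v^a u(t) P(t)$ with $u \in \O_v[[t]]^\times$ and $P$ a distinguished polynomial whose roots lie in the open unit disc of $\overline{F}_v$. Since $s\omega_1 + r\omega_2$ is regular at every point of the formal neighbourhood of $\infty$ (both $\omega_1$ and $\omega_2$ are holomorphic there), the series $\xi$ cannot have poles on the open unit disc in $t$; hence $\deg P = 0$, giving $\xi \in \pi_v^{-a}\O_v[[t]]$. After rescaling $\xi$, $s$, and $r$ by $\pi_v^a$, I may assume $\xi \in \O_v[[t]]$ and $r,s \in \O_v$.

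Next, write $\xi = \sum_{n \geq 0} c_n t^n$ and $(s\omega_1 + r\omega_2)/dt = \sum_{n \geq 0} h_n t^n$, where $h_n = s\alpha_n + r\beta_n \in \O_v$ and $\alpha_n, \beta_n \in \O_v$ are the $t$-expansion coefficients of $\omega_1, \omega_2$ at $\infty$. The identity $(n+1) c_{n+1} = h_n$ yields $v(h_n) \geq v(n+1)$ for every $n \geq 0$. I now proceed by induction on $k$: assuming $r,s \in \pi_v^k\O_v$, write $s = \pi_v^k s_k$, $r = \pi_v^k r_k$ with $s_k, r_k \in \O_v$ and set $\omega_k = s_k\omega_1 + r_k\omega_2$. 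For each $n$ of the form $mp^j - 1$ with $m \geq 1$ and $j = \lceil(k+1)/e_v\rceil$ (where $e_v$ is the ramification index of $v$ over $p$), we have $v(n+1) \geq k+1$, and so the inequality $v(h_n) \geq v(n+1)$ forces $\bar{s_k}\bar{\alpha_n} + \bar{r_k}\bar{\beta_n} = 0$ in $k_v$. Via the explicit formula
\[
\mathrm{Car}(f\,dt) \;=\; \sum_{m \geq 1} a_{mp-1}^{1/p}\,t^{m-1}\,dt \quad \text{for } f = \sum_n a_n t^n \in k_v[[t]],
\]
for the Cartier operator $\mathrm{Car}$ on the formal neighbourhood of $\infty$ in the special fibre, this collection of vanishings is exactly the condition $\mathrm{Car}^j(\bar{\omega_k}) = 0$.

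By Proposition~\ref{H1ord}, ordinarity of $C$ at $v$ is equivalent to invertibility of the Hasse--Witt matrix, so $\mathrm{Car}$ is an isomorphism on the holomorphic differentials of the special fibre, and every iterate $\mathrm{Car}^j$ is injective. Hence $\bar{\omega_k} = 0$, which implies $s_k, r_k \in \pi_v\O_v$ (using linear independence of $\bar\omega_1, \bar\omega_2$ at the smooth point $\infty$), so $r, s \in \pi_v^{k+1}\O_v$. Taking $k \to \infty$ yields $r = s = 0$, so $d\xi = 0$ and $\xi$ is constant. The main technical step is recognising that the integrality constraints $v(h_n) \geq v(n+1)$ arising from $\xi \in \O_v[[t]]$ correspond to vanishing of iterated Cartier operators; once this identification is made, the ordinarity hypothesis drives the inductive descent all the way to zero.
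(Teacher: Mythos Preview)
Your argument is correct and follows the same two-step structure as the paper: first reduce $\xi\in\mathrm{Frac}(\O_v[[t]])$ to $\xi\in F_v\cdot\O_v[[t]]$ via Weierstrass preparation and the holomorphicity of $s\omega_1+r\omega_2$ on the open unit disc, then conclude. For the second step the paper simply invokes \cite[Lemma~26]{blakestadsthesis}, whereas you supply a self-contained proof of that lemma via the Cartier operator and ordinarity (Proposition~\ref{H1ord}); this is very much in the spirit of Blakestad's methods and is a natural way to unpack the citation.

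One small point of sloppiness: rescaling by $\pi_v^a$ puts $\xi$ in $\O_v[[t]]$ but does not automatically force $r,s\in\O_v$, since initially $r,s\in F_v$; you should rescale by a sufficiently large power of $\pi_v$, or observe (from the leading $t$-coefficients $\omega_2/dt=1+\cdots$ and $\omega_1/dt=t^2+\cdots$) that once $\xi\in\O_v[[t]]$ the relations $c_1=r$ and $3c_3=s+rb_1$ already force $r,s\in\O_v$.
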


\begin{proof}
Write $\xi = \frac{a(t)}{b(t)}$ with $a(t), b(t)\in\O_v[[t]]$, $b(t)\neq 0$. Assume that $a(t)$ is also non-zero. Then by the Weierstrass Preparation Theorem, we can write
\begin{equation*}
a(t) = c_a F_a(t) u_a(t),\qquad b(t) = c_b F_b(t) u_b(t),
\end{equation*}
for some non-zero $c_a,c_b\in \O_v$, $u_a(t), u_b(t)\in \O_v[[t]]$ unit power series and $F_a(t),F_b(t)$ distinguished polynomials. 

We first want to show that we must have $\xi\in F_v\otimes \O_v[[t]]$. If not, then $F_b(t)$ is non-trivial and $F_a(t)$ is not divisible by it. Thus $\xi$ has poles at some $t\in \overline{F}_v$ with $\ord_v(t)>0$; hence so does $\frac{d \xi}{dt}dt$,  contradicting the fact that $s\frac{dx}{2y} + r\frac{xdx}{2y}$ is holomorphic. 

After possibly multiplying by some $c\in F_v^{\times}$ to clear denominators, we may then apply \cite[Lemma~26]{blakestadsthesis}.
\end{proof}

\begin{thm}
\label{thm:constants}
Assume that $p\geq 5$, that and that $C$ has semistable ordinary reduction
  at $v$. Then the canonical Mazur--Tate splitting corresponds to the subspace $W_v$ spanned by the classes of 
\begin{equation*}
\begin{aligned}
\eta_1^{(f)} = \left(-3x^3 - 2b_1x^2 -b_2x + f_{12} x + f_{11}\right)\frac{dx}{2y},\\
 \eta_2^{(f)}= \left(-x^2 + f_{22} x + f_{12}\right)\frac{dx}{2y},
 \end{aligned}
\end{equation*}
where  $f_{ij} = f_{ij}^C = f_{ji}^C$ for all $i,j$ are the constants of~\eqref{eq:fijC}.

\end{thm}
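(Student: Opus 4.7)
The plan is to reduce the theorem to showing that Blakestad's differentials $\eta_1, \eta_2$ from \S\ref{subsec:zeta_fcts} coincide with $\eta_1^{(f)}, \eta_2^{(f)}$, where $f = (f_{ij})$ is the symmetric matrix from Proposition~\ref{prop:diff_eq_theta_X}. By Corollary~\ref{cor:theta_sigma2} the normalised Norman theta function factors as $\theta_X = \pm \sigma_v^2$ for the (up to sign) unique $v$-adic sigma function $\sigma_v$ whose associated symmetric matrix is $(f_{ij})$, so the canonical Mazur--Tate subspace $W_v$ is spanned by $\eta_1^{(f)}$ and $\eta_2^{(f)}$. Hence the equalities $\eta_i = \eta_i^{(f)}$ would yield at once the matrix identities $f_{ij} = f_{ij}^C$ and the symmetry $f_{12}^C = f_{21}^C$.

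Following the strategy of \cite[Theorem~36]{blakestadsthesis}, I would compare the $\eta_i$ to pullbacks of logarithmic derivatives of $\sigma_v$. Consider the meromorphic functions $\rho_i \colonequals D_i \log \sigma_v$ on the formal group $\mathcal{J}^f$, well defined up to additive constants. Pulling back via $\iota \colon C \to J$ and expanding in the local parameter $t = -x^2/y$ at $\infty$ yields Laurent series $\tilde\zeta_1(t), \tilde\zeta_2(t)$. Using the $(T_1,T_2)$-expansion of the $X_{ij}$ near the origin of $\mathcal{J}^f$ from \cite{Grant1990} together with the leading terms of $T_1, T_2$ in $t$ coming from Lemma~\ref{lemma:T1T2}, one checks that $\tilde\zeta_1 - b_1 \tilde\zeta_2$ has leading pole $1/t^3$, that $\tilde\zeta_2$ has leading pole $1/t$, and that the defining relations $D_i D_j \log \sigma_v = -X_{ij} + f_{ij}$ of Definition~\ref{def:sigma} translate into an exact matching between the differentials $d\tilde\zeta_1 - b_1\, d\tilde\zeta_2$ and $d\tilde\zeta_2$ on one side and $\iota^* \eta_1^{(f)}$ and $\iota^* \eta_2^{(f)}$ on the other.

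The central step is to show that $\tilde\zeta_1 - b_1\tilde\zeta_2 \in \tfrac{1}{t^3} + \mathrm{Frac}(\O_v[[t]])$ and $\tilde\zeta_2 \in \tfrac{1}{t} + \mathrm{Frac}(\O_v[[t]])$. Granted this, the differences $(\tilde\zeta_1 - b_1\tilde\zeta_2) - \zeta_1$ and $\tilde\zeta_2 - \zeta_2$ lie in $\mathrm{Frac}(\O_v[[t]])$, and their differentials are $F_v$-linear combinations of $\omega_1 = dx/(2y)$ and $\omega_2 = x\,dx/(2y)$ (the differences $\iota^*\eta_i^{(f)} - \iota^*\eta_i$ are holomorphic since their cubic and quadratic parts in $x$ agree by construction). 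Lemma~\ref{lemma:frac_implies_tens} then forces these differences to be constants, and comparing differentials on both sides yields $\eta_i^{(f)} = \eta_i$, completing the proof.

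The main obstacle is to establish the $v$-adic integrality of the $\tilde\zeta_i$, which is the distinguishing analytic feature of the canonical subspace. In Blakestad's good reduction argument this integrality follows from identifying $W_v$ with the unit root subspace of Frobenius; in our semistable ordinary setting, where the Frobenius structure is less directly accessible, I would deduce it instead from Norman's construction together with Lemma~\ref{lem:over_Ov}. Since $\pm\theta_X \in \O_v[[T_1,T_2]]$, the logarithmic derivatives $2 D_i \log \sigma_v = D_i \log \theta_X$ have controlled denominators as power series in $T_1, T_2$, and substituting the $T_i$-expansions in $t$ at $\infty$ (which are $\O_v$-integral by the shape of Grant's embedding) transfers this bound to the $t$-expansions $\tilde\zeta_i$. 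An alternative route is to match the recursive constraints of Lemma~\ref{lem:CongForABCDIJRS} against the Frobenius-type limit implicit in Norman's construction of $\theta_X$, identifying the two characterizations of the canonical subspace level by level.
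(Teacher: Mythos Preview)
Your overall strategy — compare the Blakestad zeta functions to logarithmic derivatives of $\sigma_v$ via integrality, then invoke Lemma~\ref{lemma:frac_implies_tens} — is the same as the paper's, and your use of Lemma~\ref{lem:over_Ov} for integrality is exactly the right idea. But the implementation via pullback along $\iota\colon C\to J$ does not work.

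The problem is that in genus~$2$ the image of $\iota$ is precisely the theta divisor $\Theta$. Near the origin of $\mathcal{J}^f$, $\Theta$ is cut out by $T_1=0$, and $\sigma_v(T)=T_1(1+O(T))$ vanishes there. Consequently $D_i\log\sigma_v$ has a simple pole along $\Theta$, and restricting it to $\iota(C)\subset\Theta$ is undefined: your Laurent series $\tilde\zeta_i(t)$ do not exist. Concretely, in your integrality paragraph you would be substituting $T_1(t)\equiv 0$ into something of the form $T_1^{-1}\cdot\O_v[[T_1,T_2]]$, which does not produce an element of $\mathrm{Frac}(\O_v[[t]])$. (Lemma~\ref{lemma:T1T2} does not help here: it concerns the maps $Q\mapsto[Q-P]$ for \emph{finite} $P$, whose images are not contained in $\Theta$, whereas you are expanding at $\infty$ with $t=-x^2/y$, i.e.\ using $\iota$ itself.)

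The paper fixes this by working on the bidisc $\{D,D\}\subset C^{(2)}$ around $(\infty,\infty)$ instead of on $C$. The Abel--Jacobi map $C^{(2)}\to J$, $(P_1,P_2)\mapsto[P_1+P_2-2\infty]$, is birational and does not land in $\Theta$, so $D_i\log\theta_X$ pulls back to an honest element of $\mathrm{Frac}(\O_v[[t_1,t_2]])$ (using, as you do, that $\theta_X\in\O_v[[T_1,T_2]]$ and that $T_1,T_2$ are $\O_v$-integral in $t_1,t_2$). One then compares $D_i\log\theta_X$ with the symmetrised functions $\xi_i=(\zeta_i)_1+(\zeta_i)_2$ (plus a correction by the rational function $X_{222}$ when $i=1$), finds their difference $G$ satisfies $dG=C_1\sum_k\omega_1(t_k)+C_2\sum_k\omega_2(t_k)$, splits $G=H(t_1)+H(t_2)$, and \emph{specialises} $t_2$ to a nonzero element of $\pi_v\O_v$ to reduce to the one-variable situation where Lemma~\ref{lemma:frac_implies_tens} applies. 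This specialisation step is the substitute for your (unavailable) direct one-variable pullback.
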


\begin{proof}
Let $\zeta_1$ and $\zeta_2$ be as in \S\ref{subsec:zeta_fcts} and consider the function $\xi_i = (\zeta_i)_1+(\zeta_i)_2$ on $\{D,D\}\subset C^{(2)}$, where $D$ is the residue disc of the point at infinity.

By \cite[Lemma~35]{blakestadsthesis}\footnote{There is a typo in the formula for $D_2(\xi_1)$ there: it should be $D_2(\xi_1) = -\wp_{12}-\frac{1}{2}\wp_{2222}-b_1\wp_{22}+\alpha+b_2$.} 
or by the proof of \cite[Lemma~5.40]{Bia23}, we have, as functions on $\{D,D\}$,
\begin{align*}
D_j(2\xi_2) = -2X_{j2} + 2f_{j2}^{C} \ \ \ \text{and} \ \ \ D_j(2\xi_1-2b_1\xi_2+2X_{222}) = -2X_{1j} + 2f_{j1}^{C},
\end{align*}
where $X_{222}$ is the rational function on $J$ defined in \cite[(4.1), (1.4)]{Grant1990}.
Thus, restricting $\theta_X$ to $\{D,D\}\subset C^{(2)}$, we see by Proposition~\ref{prop:diff_eq_theta_X} that
\begin{align*}
D_j(D_2(\log(\theta_X)) - 2\xi_2) &= 2(f_{j2} - f_{j2}^C),\\
D_j(D_1(\log(\theta_X)) - 2(\xi_1-b_1\xi_2+X_{222})) &= 2(f_{j1} -
  f_{j1}^C)\,.
\end{align*}

  It suffices to show that $D_j(G)=0$ for $j\in \{1,2\}$ and $$G\in \{D_2(\log(\theta_X)) - 2\xi_2, D_1(\log(\theta_X)) -
  2(\xi_1-b_1\xi_2+X_{222})\}\,.$$ Since $T_1$ and $T_2$ have power series
  expansions in $t_1 = -\frac{x_1^2}{y_1},t_2  = -\frac{x_2^2}{y_2}$ with
  integral coefficients \cite[(4.6), (4.7)]{blakestadsthesis}, we have that
  $G\in \mathrm{Frac}(\O_v[[t_1,t_2]])$. By definition there exist $C_1,C_2\in F_v$ such that
\begin{align*}
dG &= C_1\left(\frac{dx_1}{2y_1}+\frac{dx_2}{2y_2}\right) + C_2\left(\frac{x_1dx_1}{2y_1}+\frac{x_2dx_2}{2y_2}\right) \\
&= (C_1 + x_1C_2)\frac{dx_1}{2y_1}+ (C_1 +x_2C_2)\frac{dx_2}{2y_2},
\end{align*}
so that $G = H(t_1)+ H(t_2)$ for some $H(t)\in F_v[[t]]$ satisfying
  $\frac{dH}{dt} dt = \left(C_1\frac{dx}{2y} +
  C_2\frac{xdx}{2y}\right)(t)$. Choosing $t_2^0 \in \pi_v\O_v\setminus
  \{0\}$, we have $H(t_1) = G(t_1, t_2^0) - H(t_2^0)\in
  \mathrm{Frac}(\O_v[[t_1]])$. Thus, $H$ is constant by Lemma
 ~\ref{lemma:frac_implies_tens} and hence $D_j(G) = 0$.
\end{proof}
\begin{cor}
$\alpha = 3\gamma$.
\end{cor}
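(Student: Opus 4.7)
The plan is to simply read off the equality from the symmetry statement in Theorem~\ref{thm:constants}. Indeed, that theorem asserts that $f_{ij}^C = f_{ji}^C$ for all $i,j$; in particular the off-diagonal entries satisfy $f_{12}^C = f_{21}^C$.

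Substituting the explicit expressions from \eqref{eq:fijC},
\[
f_{12}^C = b_2 - b_1\beta + 3\gamma, \qquad f_{21}^C = b_2 + \alpha - b_1\beta,
\]
the equality $f_{12}^C = f_{21}^C$ becomes
\[
b_2 - b_1\beta + 3\gamma \;=\; b_2 + \alpha - b_1\beta,
\]
which simplifies at once to $\alpha = 3\gamma$.

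Since all of the work has been done in Theorem~\ref{thm:constants} (whose proof established the symmetry of $(f_{ij}^C)$ by comparing with the symmetric matrix $(f_{ij})$ from Proposition~\ref{prop:diff_eq_theta_X}), the corollary is a one-line consequence and there is no conceptual obstacle to address here; the only ``work'' is the algebraic cancellation above.
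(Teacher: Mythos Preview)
Your proof is correct and follows exactly the same approach as the paper: the paper's proof simply says ``This follows from $f_{12}^C = f_{21}^C$,'' and you have merely spelled out the resulting one-line algebraic cancellation.
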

\begin{proof}
    This follows from $f_{12}^C = f_{21}^C$.
\end{proof}

\appendix
\section{$H_1$ vs $H$}

In this appendix, we relate $H_1$ defined in \eqref{BlakestadHn} to the matrix $H$ defined in \eqref{HDef}; the main result is Corollary~\ref{cor:Blakestad_vs_CartierManin}. Fix a smooth projective genus~$2$ curve $C$ given by an equation of the form 
\[y^2 = b(x)= x^5 + b_1x^4 + b_2x^3 + b_3x^2 + b_4 x + b_5\]
over a field $K$ of characteristic $\ne 2$. Write $\omega_1 = \frac{dx}{2y}$, $\omega_2 = x\frac{dx}{2y}$ and $t = -\frac{x^2}{y}$.

\begin{prop}
\label{prop:t_expansions_diff}
The expansions of the
differentials $\omega_1$ and $\omega_2$
in terms of the parameter $t$ at $\infty\in C(K)$ are given by
\begin{align*}
\frac{\omega_1(t)}{dt} = \sum_{n\geq 1}\left(\sum_{\substack{k_1+2k_2+3k_3+4k_4+5k_5 + 1= n\\
k_0 + \cdots+ k_5 = n}}\binom{n}{k_0,\dots,k_5}\prod_{i=1}^5 b_i^{k_i}\right)t^{2n},\\
\frac{\omega_2(t)}{dt} = 1 + \sum_{n\geq 1}\left(\sum_{\substack{k_1+2k_2+3k_3+4k_4+5k_5 = n\\
k_0 + \cdots +k_5 = n}}  \binom{n}{k_0,\dots,k_5}\prod_{i=1}^5 b_i^{k_i}\right)t^{2n}.
\end{align*}
\end{prop}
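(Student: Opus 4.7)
Set $u \colonequals 1/x$ and $s \colonequals t^2$. The defining equation $y^2 = b(x)$ combined with $t = -x^2/y$ rearranges to
\[
  u = s \cdot \phi(u), \qquad \phi(u) \colonequals 1 + b_1 u + b_2 u^2 + b_3 u^3 + b_4 u^4 + b_5 u^5,
\]
and $y = -1/(t u^2)$. My first step is to record these identities and derive that $u$, viewed as a power series in $t$, is an even function of $t$, i.e.\ a power series in $s$.

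The next step is to rewrite both differentials in terms of $u$ and $s$. From $dx = -du/u^2$ and $y = -1/(tu^2)$ one gets $\omega_1 = \tfrac{t}{2}\, du$, so
\[
  \frac{\omega_1(t)}{dt} = \frac{t}{2}\cdot\frac{du}{dt} = s\cdot \frac{du}{ds}.
\]
For $\omega_2 = x\,\omega_1$ I would use the identity $\log u = \log s + \log \phi(u)$ (valid as formal expansions) to get
\[
  \frac{\omega_2(t)}{dt} = x\cdot s\cdot \frac{du}{ds} = s\cdot \frac{d(\log u)}{ds} = 1 + s\cdot \frac{d(\log \phi(u))}{ds}.
\]

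The main step is then Lagrange inversion applied to the equation $u = s\,\phi(u)$. For any $F$ regular at $0$, one has $F(u) = F(0) + \sum_{n\ge 1} \tfrac{s^n}{n}\,[u^{n-1}]\bigl(F'(u)\phi(u)^n\bigr)$. Applying this with $F(u) = u$ gives
\[
  s\cdot \frac{du}{ds} = \sum_{n\ge 1} [u^{n-1}]\phi(u)^n \cdot s^n,
\]
and applying it with $F(u) = \log \phi(u)$, after using $\phi'(u)\phi(u)^{n-1} = \tfrac{1}{n}(d/du)\phi(u)^n$ and the standard identity $[u^{n-1}](d/du)f(u) = n[u^n]f(u)$, yields
\[
  \log \phi(u) = \sum_{n\ge 1} \frac{s^n}{n}\,[u^n]\phi(u)^n,
\]
hence $s\,d(\log \phi(u))/ds = \sum_{n\ge 1} [u^n]\phi(u)^n\cdot s^n$.

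Finally, I expand $\phi(u)^n$ via the multinomial theorem:
\[
  \phi(u)^n = \sum_{k_0+\cdots+k_5 = n} \binom{n}{k_0,\ldots,k_5}\prod_{i=1}^5 b_i^{k_i}\cdot u^{k_1 + 2k_2 + 3k_3 + 4k_4 + 5k_5}.
\]
Extracting the coefficient of $u^{n-1}$ (respectively $u^n$) gives exactly the inner sum in the formula for $\omega_1/dt$ (respectively $\omega_2/dt$), and substituting $s^n = t^{2n}$ produces the two claimed expansions. The only step that requires any care is the clean bookkeeping in Lagrange inversion for the $\log \phi(u)$ case; the rest is a direct computation.
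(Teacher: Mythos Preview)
Your argument is correct. After the change of variable $u=1/x$, $s=t^2$ you derive the functional equation $u=s\phi(u)$, rewrite $\omega_1/dt=s\,du/ds$ and $\omega_2/dt=(s/u)\,du/ds$, and then read off the coefficients via Lagrange inversion together with the multinomial expansion of $\phi(u)^n$. Everything checks out; the one cosmetic point is that the passage through $\log\phi(u)$ presupposes characteristic~$0$, but since the final identity $(s/u)\,du/ds=\sum_{n\ge 0}[u^n]\phi(u)^n\cdot s^n$ is exactly the division-free Lagrange formula $[s^n]\bigl(G(u)\,du/ds\bigr)=[u^n]G(u)\phi(u)^{n+1}$ applied with $G=1/\phi$, the log is easily bypassed. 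In any case the paper only applies the statement over a number field.

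The paper's own proof takes a genuinely different route, following Yasuda: it passes to the universal curve over $\Z[\tfrac12,b_1,\ldots,b_5,\Delta^{-1}]$, embeds into $\C$, and then argues analytically. For $\omega_2$ it uses Rouch\'e's theorem and Jensen's formula to compute $\log u_0$ as an integral of $\log(1-g(e^{i\theta}))$, whose Taylor expansion yields the multinomial sums; for $\omega_1$ it applies the residue theorem to $uf'(u)/f(u)$ to extract $x_0^{-1}$ and again expands. Your approach is more elementary and intrinsic: it stays entirely within the formal power series ring, avoids the reduction to $\C$ and the complex-analytic machinery, and makes transparent why multinomial coefficients of $\phi(u)^n$ appear. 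The paper's approach has the advantage of being a direct adaptation of an existing reference, so less needs to be written out; yours has the advantage of being self-contained and of working uniformly without choosing an archimedean embedding.
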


\begin{proof}
It suffices to adapt the proof of \cite[Theorem~1]{Yasuda} to our setting.
  As in loc.\ cit., the result is proved for a curve over
  $\Z[\frac{1}{2},b_1,\ldots,b_5,\Delta(b_1,\ldots,b_5)^{-1}]$, where
  $$\Delta(b_1,\ldots,b_5)=\mathrm{disc}(x^5+b_1x^4+b_2x^3 + b_3x^2 +b_4 x
  +b_5)\,.$$ We may work over $\R$ by using an injective ring homomorphism 
\[\Z\left[\frac{1}{2},b_1,\ldots,b_5,\Delta(b_1,\ldots,b_5)^{-1}\right]\hookrightarrow \C\]
such that all $b_i$ are sent to real numbers.
  In order to make the comparison with Yasuda's proof as smooth as possible, we adopt similar notation. It is enough to show equality for $t\in\R$ with $|t|$ sufficiently small; we start with proving the statement on the expansion of $\omega_2$.

Let $u = \frac{1}{x}$. Then $(u,t)$ satisfies $\tilde{f}(u,t) = 0$, where
\begin{equation*}
\tilde{f}(u,t) =   1-\frac{t^2}{u} - b_1t^2 -b_2t^2u  -b_3t^2u^2 - b_4t^2u^3 -b_5t^2u^4.
\end{equation*}
Now fix $t_0\in \R^{\times}$ and consider the following holomorphic function in the variable $u\in \C$:
\begin{equation*}
f(u) = u \cdot \tilde{f}(u,t_0);
\end{equation*}
by Rouch\'e's theorem, $f(u)$ and $u$ have the same number of zeros on $|u|<1$, provided that $|t_0|$ is sufficiently small. Moreover, applying the intermediate value theorem to the restriction of $f$ to the real interval $[0,1]$, we see that the unique zero $u_0$ of $f(u)$ in the open disc $|u|<1$ is a positive real number. 
This gives a point $(x_0,y_0) = (u_0^{-1}, -t_0^{-1}u_0^{-2})\in C(\R)$. Note that in verifying that the assumptions of Rouch\'e's theorem apply, we also see that $f(u)$ has no roots on $|u| = 1$.

Jensen's formula yields
\begin{equation}
\label{eq:logu0}
  \log u_0 = 2\log t_0 - \frac{1}{2\pi} \mathrm{Re}\int_{0}^{2\pi}\log(1-g(e^{i\theta}))d\theta, 
\end{equation}
where $g(u) = 1-\tilde{f}(u,t_0) $. 

Now,
\begin{align*}
    \frac{1}{2\pi} \int_{0}^{2\pi}\log(1-g(e^{i\theta}))d\theta = -\sum_{n\geq 1} \frac{1}{2\pi n}\int_0^{2\pi}g(e^{i\theta})^n d\theta \\
    = -\sum_{n\geq 1}\frac{t_0^{2n}}{n} \sum_{\substack{k_1+2k_2+3k_3+4k_4+5k_5 = n\\
    k_0+\cdots +k_5 = n}}\binom{n}{k_0,\dots,k_5}\prod_{i=1}^5 b_i^{k_i}\,.
\end{align*}
Substituting into \eqref{eq:logu0} and using that $\omega_2 = \frac{t}{2}\frac{du}{u}$, we obtain the series expansion of $\omega_2$ stated in the theorem.  

Consider now the function $u\frac{f^{\prime}(u)}{f(u)}$. By the above, on $|u|\leq 1$, this has a pole only at $u=u_0$, with residue $u_0 = x_0^{-1}$. Thus, by the residue theorem, we have
\begin{equation}
\label{eq:res_thm}
    x_0^{-1} = \frac{1}{2\pi i} \int_{|u| = 1}  \frac{uf^{\prime}(u)}{f(u)}du.
\end{equation}
On the other hand, 
\begin{equation*}
\frac{u f^{\prime}(u)}{f(u)} = \biggl(1 - t_0^2\sum_{i=1}^5 ib_iu^{i-1}\biggr) \sum_{n\geq 1}g(u)^n.    
\end{equation*}
Thus, 
\begin{equation}
\label{eq:integral}
    \frac{1}{2\pi i} \int_{|u| = 1}  \frac{uf^{\prime}(u)}{f(u)}du =  A_{-1}-t_0^2\sum_{i=1}^5 i b_i A_{-i},
\end{equation}
where $A_{-\ell}$ is the coefficient of $u^{-\ell}$ in $\sum_{n\geq 1}g(u)^n$. In particular, we have
\begin{align*}
    A_{-\ell} = \sum_{k_1,\dots, k_5\geq 0 }\binom{ \sum_{j=1}^5 jk_j + \ell}{ \sum_{j=2}^5(j-1)k_j+ \ell, k_1,\dots, k_5}t_0^{2(\sum_{j=1}^5 jk_j + \ell)}\prod_{j=1}^5 b_j^{k_j}
    \end{align*}
and hence
\begin{align*}
    t_0^2ib_iA_{-i}= \sum_{k_{1}, \dots, k_5\geq 0}\frac{i
    k_{i}}{\sum_{j=1}^5 jk_j +1} \binom{\sum_{j=1}^5 jk_j +
    1}{\sum_{j=2}^5(j-1)k_j +1,k_1,\dots, k_5}t_0^{2(\sum_{j=1}^5 jk_j
    +1)}\prod_{j=1}^5 b_j^{k_j}\,.
\end{align*}
Substituting back into \eqref{eq:integral} and \eqref{eq:res_thm}, we find
\begin{equation*}
x_0^{-1} = \sum_{k_1,\dots, k_5 \geq 0}\frac{1}{\sum_{j=1}^5 jk_j +1} \binom{\sum_{j=1}^5 jk_j + 1}{\sum_{j=2}^5(j-1)k_j +1,k_1,\dots, k_5}t_0^{2(\sum_{j=1}^5 jk_j +1)}\prod_{j=1}^5 b_j^{k_j}.
\end{equation*}
Finally, observing that $\frac{\omega_1}{dt} = \frac{t}{2}\frac{d}{dt}(x^{-1})$, we obtain the claimed expansion of $\omega_1$.
\end{proof}

From now on, we specialise to the case in Section~\ref{S:com}: $K = F$ is a number field, $b_1,\dots,b_5\in \O_F$, $v$ is a finite prime of $F$ above $p$. Moreover, we assume that $p\geq 5$ and $C/F$ has semistable reduction at $v$. Write, as in \S\ref{S:CartierManinOrdinarity},
\[b(x)^{(p-1)/2} \bmod \pi_v = \sum_{k} c_k x^k.\] 

\begin{lem}
\label{lem:aidi_in_terms_ci}
Let $a_i$ and $d_i$ be the coefficient of $t^i$ in the expansion of $\frac{\omega_1(t)}{dt}$ and $\frac{\omega_2(t)}{dt}$, respectively. We then have 
\begin{align*}
c_{2p-1} = a_{p-1}, \qquad &c_{p-1} \equiv a_{3p-1} - b_1
  a_{p-1}\bmod{\pi_v}, \\
c_{2p-2} = d_{p-1}, \qquad &c_{p-2}\equiv d_{3p-1}- b_1 d_{p-1}\bmod{\pi_v}.
\end{align*}

\end{lem}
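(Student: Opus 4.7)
The plan is to prove all four claims directly by comparing the explicit multinomial formulas of Proposition~\ref{prop:t_expansions_diff} for $a_n$ and $d_n$ with the multinomial expansion $b(x)^{(p-1)/2} = \sum_k c_k x^k$. Set $m = (p-1)/2$, so $p-1 = 2m$, $2p-1 = 4m+1$, $2p-2 = 4m$ and $3p-1 = 6m+2$.

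For the two exact equalities $c_{2p-1} = a_{p-1}$ and $c_{2p-2} = d_{p-1}$, I will write both sides as multinomial sums. On the $c_k$ side, the indices $(j_0,\ldots,j_5)$ satisfy $\sum_i j_i = m$ and $5j_0 + 4j_1 + 3j_2 + 2j_3 + j_4 = 4m+1$ (respectively $4m$), while on the $a$ or $d$ side, taking $n = m$ in Proposition~\ref{prop:t_expansions_diff}, the indices $(k_0,\ldots,k_5)$ satisfy $\sum_i k_i = m$ and $k_1 + 2k_2 + 3k_3 + 4k_4 + 5k_5 = m-1$ (respectively $m$). The identity
\[5\sum_i k_i - (5k_0 + 4k_1 + 3k_2 + 2k_3 + k_4) = k_1 + 2k_2 + 3k_3 + 4k_4 + 5k_5,\]
combined with $\sum_i k_i = m$, shows that the two constraint systems coincide upon setting $j_i = k_i$; the multinomial coefficients $\binom{m}{\vec{k}}$ and products $b_1^{k_1}\cdots b_5^{k_5}$ then match term-by-term.

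For the congruences I will apply the multinomial version of Lucas's theorem to $a_{3p-1}$, which has $n = (3p-1)/2 = p+m$ with base-$p$ expansion $(m, 1)$. Since $p+m < 2p$, at most one index $k_i$ can be $\geq p$, and then it equals $p + k_i'$ with $k_i' < p$; in this case Lucas's theorem yields $\binom{p+m}{\vec{k}} \equiv \binom{m}{\vec{k}'} \pmod{p}$, where $\vec{k}'$ is the corresponding reduced tuple. A case analysis on the weighted-sum constraint $k_1 + 2k_2 + 3k_3 + 4k_4 + 5k_5 = 3m$ rules out $i \geq 2$, since the residual weighted sum would be $3m - ip = 3m - i(2m+1) < 0$, leaving only $i=0$ and $i=1$. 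The $i = 0$ branch reproduces the multinomial sum defining $c_{p-1}$ via the same algebraic identity as in the previous paragraph (with weighted-sum target $3m = 5m - 2m$); the $i = 1$ branch yields $b_1^p \cdot a_{p-1}$, since $b_1^{p + k_1'} = b_1^p \cdot b_1^{k_1'}$ and the remaining constraints on $(k_0, k_1', k_2, \ldots, k_5)$ are exactly those of $a_{p-1}$. Rearranging gives $c_{p-1} \equiv a_{3p-1} - b_1^p a_{p-1} \pmod{\pi_v}$, and the corresponding statement for $c_{p-2}$ is proved by the identical argument applied to $d_{3p-1}$, with all weighted-sum targets shifted by one due to the absence of the ``$+1$'' in the formula for $\omega_2/dt$ versus $\omega_1/dt$.

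The main obstacle is the case analysis in the Lucas step: one must carefully verify that after enforcing $k_i = p + k_i'$ for $i \in \{0, 1\}$, all remaining indices are automatically $< p$ (which follows from $\sum_j k_j \leq m + 1 < p$), and that no other choice of boosted index is compatible with the weighted-sum constraint. A secondary point of care is the distinction between $b_1$ and $b_1^p$ in the residue field $\O_F / \pi_v$; the Lucas computation naturally produces $b_1^p$ rather than $b_1$. This discrepancy has no effect on the application of the lemma to $\det H = c_{p-1}c_{2p-2} - c_{2p-1}c_{p-2}$ in Corollary~\ref{cor:Blakestad_vs_CartierManin}, since the $b_1^p$ contributions from the two congruences cancel in the determinant; a fully rigorous statement of the lemma may require replacing $b_1$ by $b_1^p$ throughout.
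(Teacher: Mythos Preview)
Your approach is essentially the same as the paper's: both use the explicit multinomial expansions from Proposition~\ref{prop:t_expansions_diff} and a $p$-divisibility analysis on the multinomial coefficient $\binom{(3p-1)/2}{k_0,\dots,k_5}$, splitting according to which (if any) $k_i$ exceeds $p$. The paper phrases the key step as ``there is precisely one factor in $\tfrac{3p-1}{2}!$ divisible by $p$'' rather than invoking multinomial Lucas, but the content is identical.

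You have in fact caught a slip in the paper. Your computation correctly produces $b_1^p a_{p-1}$ (respectively $b_1^p d_{p-1}$) from the $i=1$ branch, whereas the paper writes $b_1 a_{p-1}$; over a residue field larger than $\F_p$ these differ. As you note, this is immaterial for Corollary~\ref{cor:Blakestad_vs_CartierManin}, since in the determinant $c_{p-1}c_{2p-2}-c_{2p-1}c_{p-2}$ the two $b_1^p$-terms cancel. (Indeed, the paper's own proof of that corollary computes $H_1 = a_{3p-1}d_{p-1} - d_{3p-1}a_{p-1}$ directly, with no $b_1$ appearing.) So the lemma as stated should really have $b_1^p$ in place of $b_1$, but the downstream argument is unaffected.
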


\begin{proof}
  The equalities follow immediately from
  Proposition~\ref{prop:t_expansions_diff}.
  The other two expressions are almost identical and we will focus only on
  the former. 
  By Proposition~\ref{prop:t_expansions_diff}, we have
\[a_{3p-1} = \sum_{\substack{k_1+2k_2+3k_3+4k_4+5k_5+1 = (3p-1)/2\\
k_0 + \cdots+ k_5 = (3p-1)/2}}\binom{(3p-1)/2}{k_0,\dots,k_5}\prod_{i=1}^5 b_i^{k_i}.\]
Since $p < \frac{3p-1}{2} < 2p$, there is precisely one factor in $\frac{3p-1}{2}!$ that is divisible by $p$. 

Now let $k_0,\dots,k_5$ be nonnegative integers such that
  $k_1+2k_2+3k_3+4k_4+5k_5+1 = (3p-1)/2$ and $k_0 + \cdots + k_5 =
  (3p-1)/2$. If each $k_i$ is less than $p$, then the corresponding term is
  $0$ modulo $\pi_v$. Otherwise, there are two possibilities: 
\begin{enumerate}
    \item\label{c_p-1} $k_0\geq p$ and $k_1,k_2,k_3,k_4,k_5<p$, or 
    \item\label{b_1a_p-1} $k_1\geq p$ and $k_0,k_2,k_3,k_4,k_5<p$.
\end{enumerate}
One can easily check that the sum over tuples as in \eqref{c_p-1} (resp.
  \eqref{b_1a_p-1}) is nothing but $c_{p-1}$ (resp. $b_1a_{p-1}$) modulo $\pi_v$. The result follows.
\end{proof}

\begin{cor}
\label{cor:Blakestad_vs_CartierManin}
  The matrix $H$ is invertible if and only if
  $H_1\not\equiv 0\bmod{\pi_v}$.
\end{cor}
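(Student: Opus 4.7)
The strategy is to show that, modulo $\pi_v$, both $\det(H)$ and $H_1$ are (up to a sign) equal to the same explicit polynomial in the Laurent expansion coefficients $a_i, d_i$ of $\omega_1, \omega_2$ at $\infty$ provided by Proposition~\ref{prop:t_expansions_diff}. Since $H_1 \in \O_F$ is a unit in $\O_v$ precisely when $H_1\not\equiv 0 \pmod{\pi_v}$, and likewise for $\det(H)$, this yields the equivalence.

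The first half of the argument is a direct computation. Using the four congruences of Lemma~\ref{lem:aidi_in_terms_ci}, we expand
\begin{align*}
\det(H) &= c_{p-1}c_{2p-2} - c_{2p-1}c_{p-2} \\
&\equiv (a_{3p-1} - b_1 a_{p-1})\,d_{p-1} - a_{p-1}(d_{3p-1} - b_1 d_{p-1}) \\
&= a_{3p-1}\,d_{p-1} - a_{p-1}\,d_{3p-1} \pmod{\pi_v},
\end{align*}
so the mixed $b_1$-terms cancel and $\det(H)$ is captured mod $\pi_v$ by a simple bilinear expression in the coefficients of the differentials.

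The second half is the analogous statement for $H_1 = A_1 D_1 - B_1 C_1$, and it is the main obstacle. The plan is to prove the congruences
\begin{align*}
A_1 &\equiv a_{p-1}, & B_1 &\equiv a_{3p-1} - b_1 a_{p-1}, \\
C_1 &\equiv d_{p-1}, & D_1 &\equiv d_{3p-1} - b_1 d_{p-1}
\end{align*}
modulo $\pi_v$, from which a short computation identical to the one above gives $H_1 \equiv -(a_{3p-1}\,d_{p-1} - a_{p-1}\,d_{3p-1})\equiv -\det(H)\pmod{\pi_v}$, closing the argument. To establish these congruences, one unpacks Blakestad's construction of $\phi_1, \psi_1$ from \cite[\S3]{blakestadsthesis}: these are the unique functions on $C$ with poles only at $\infty$, of orders $3p$ and $p$ respectively, whose $t$-expansions~\eqref{BlakestadPhinPsin} have the prescribed leading terms $t^{-3p}, t^{-p}$ and vanish in the intermediate pole slots. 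In characteristic $p$ (after reduction modulo $\pi_v$) the Frobenius $t \mapsto t^p$ allows one to identify the non-leading coefficients of $\phi_1, \psi_1$ with those of the antiderivatives of $\omega_1$ and $\omega_2$ in the positions where pole orders $1$ and $3$ would occur; these positions are precisely gaps in the Weierstrass semigroup at $\infty$, so the obstruction to $1/t^p$ and $1/t^{3p}$ being $p$-th powers on $\tilde{C}$ is measured by the coefficients $a_{p-1}, a_{3p-1}, d_{p-1}, d_{3p-1}$ of the holomorphic differentials. Making this explicit is the technical core of the step, and is expected to proceed by computing the $t$-expansion at $\infty$ of the unique function on $\tilde{C}$ with prescribed leading behaviour, using Proposition~\ref{prop:t_expansions_diff} in the same way as in the proof of Lemma~\ref{lem:aidi_in_terms_ci}.

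Once the congruences in the second step are in hand, combining them with the computation of Step~1 yields $H_1 \equiv -\det(H)\pmod{\pi_v}$, so that $H_1$ is invertible in $\O_v$ if and only if $\det(H) \ne 0$, i.e.\ $H$ is invertible.
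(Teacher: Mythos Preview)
Your overall strategy matches the paper's: compute $\det(H)$ modulo $\pi_v$ via Lemma~\ref{lem:aidi_in_terms_ci}, compute $H_1$ in terms of the same coefficients $a_i,d_i$, and compare. Your first half is correct and identical to the paper's calculation.

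The second half, however, contains a concrete error: the congruences you target for $A_1,B_1,C_1,D_1$ are wrong. In fact one has the \emph{exact} equalities
\[
A_1=-a_{3p-1},\qquad C_1=-a_{p-1},\qquad B_1=-d_{3p-1}-b_1A_1,\qquad D_1=-d_{p-1}-b_1C_1,
\]
not the formulas you wrote (for instance $A_1\equiv a_{p-1}$ is false in general). With the correct values one finds $H_1\equiv \det(H)\pmod{\pi_v}$, not $-\det(H)$; the sign is irrelevant for the corollary, but the intermediate claims would fail.

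The paper obtains these equalities by a much cleaner device than the Frobenius/Weierstrass-gap argument you sketch. Since $\phi_1,\psi_1$ are regular away from $\infty$, the meromorphic differentials $\phi_1\omega_j$ and $\psi_1\omega_j$ have their only pole at $\infty$, so by the residue theorem $\Res_\infty(\phi_1\omega_j)=\Res_\infty(\psi_1\omega_j)=0$. Reading off these residues from the $t$-expansions (using $a_2=1$, $d_0=1$, $d_2=b_1$) gives the four equalities above directly, with no reduction mod $\pi_v$ needed at this stage. Your plan to extract $A_1,\dots,D_1$ from characteristic-$p$ considerations is vaguer and aims at the wrong target; replace it with the residue argument and the proof goes through.
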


\begin{proof}
Recall that $H_1 = A_1D_1 - B_1C_1$ for some $A_1,B_1,C_1,D_1\in \O_F$ that appear in the $t$-expansions of the functions $\phi_1$ and $\psi_1$; see \eqref{BlakestadPhinPsin}. The meromorphic differentials $\phi_1\omega_1, \phi_1\omega_2, \psi_1\omega_1$ and $\psi_1\omega_2$ are regular away from $\infty$, and it is easy to check that
\begin{align*}
& \Res_\infty(\phi_1\omega_1) = a_{3p-1} + A_1, & &\Res_\infty(\phi_1\omega_2) = d_{3p-1} + b_1A_1 + B_1, \\
& \Res_\infty(\psi_1\omega_1) = a_{p-1} + C_1, & &\Res_\infty(\psi_1\omega_2) = d_{p-1} + b_1C_1 + D_1,
\end{align*}
where $a_i$ and $d_i$ are as in Lemma~\ref{lem:aidi_in_terms_ci}. Therefore,
\begin{align*}
& A_1 = -a_{3p-1}, & & B_1 = -b_1A_1-d_{3p-1}, \\
& C_1  = -a_{p-1}, & & D_1 = -d_{p-1}-b_1C_1.
\end{align*}
By Lemma~\ref{lem:aidi_in_terms_ci}, we then conclude that $H_1 \equiv
  \det(H) \bmod{\pi_v}$. 
\end{proof}

\bibliographystyle{amsalpha}
\bibliography{master}

\end{document}